\newcommand{\chisq}[1][{}]{\chi^2_{#1}}
\newcommand{\pdf}{p.d.f.\@}
\newcommand{\iid}{i.i.d.\@}
\newcommand{\random}[1]{\mathbf{#1}}
\newcommand{\prob}[1]{\mathbb{#1}}
\newcommand{\Lap}[1]{\Delta^{\mathbb{#1}}_x}
\newcommand{\varO}{\mathcal{O}}
\newcommand{\varD}{\mathcal{D}}
\newcommand{\vare}{\tilde{e}}
\newcommand{\varV}{\tilde{V}}
\newcommand{\varK}{\widetilde{K}}
\newcommand{\barSigmaM}{\bar{\Sigma}^M}
\newcommand{\barSigmainf}{\bar{\Sigma}^{\infty}}
\newcommand{\tildeSigmaM}{\tilde{\Sigma}^M}
\newcommand{\tildeSigmainf}{\tilde{\Sigma}^{\infty}}
\newcommand{\barGammaM}{\bar{\Gamma}^M}
\newcommand{\barGammainf}{\bar{\Gamma}^{\infty}}
\newcommand{\BbarGammaM}{\bar{\bar{\Gamma}}^M}
\newcommand{\BbarGammainf}{\bar{\bar{\Gamma}}^{\infty}}
\newcommand{\tildeGammaM}{\tilde{\Gamma}^M}
\newcommand{\tildeGammainf}{\tilde{\Gamma}^{\infty}}
\newcommand{\varvarphi}{\tilde{\varphi}}
\newcommand{\varpsi}{\tilde{\psi}}
\newcommand{\tildePsi}{\tilde{\Psi}}
\newcommand{\Lag}[2][M-N]{L^{(#1)}_{#2}}
\newcommand{\qLag}[2][2(M-N)]{L^{(#1)}_{#2}}
\newcommand{\mVandermonde}[1][\lambda]{\ensuremath{\tilde{V}^4}(#1)}
\newcommand{\Qj}[1][j]{\ensuremath{Q_{(#1)}}}
\newcommand{\sj}[1][j]{\ensuremath{s_{(#1)}}}
\newcommand{\ssj}[1][j]{\ensuremath{\tilde{s}_{(#1)}}}
\newcommand{\Pf}{\ensuremath{\mathrm{Pf}}}
\newcommand{\Ptt}[1][T]{\ensuremath{P_{#1}}}
\newcommand{\mPtt}[1][T]{\ensuremath{\widetilde{P}_{#1}}}
\newcommand{\Ttildepsi}{\tilde{\tilde{\psi}}}
\newcommand{\TtildeGammaM}{\tilde{\tilde{\Gamma}}^M}
\DeclareMathOperator{\E}{E}
\DeclareMathOperator{\var}{var}
\DeclareMathOperator{\cov}{cov}
\DeclareMathOperator{\Tr}{Tr}
\DeclareMathOperator{\GOE}{GOE}
\DeclareMathOperator{\GUE}{GUE}
\DeclareMathOperator{\GSE}{GSE}
\DeclareMathOperator{\arm}{arm}
\DeclareMathOperator{\leg}{leg}
\DeclareMathOperator{\Ai}{Ai}
\DeclareMathOperator{\Airy}{Airy}
\DeclareMathOperator{\remote}{remote}
\DeclareMathOperator{\local}{local}
\DeclareMathOperator{\rt}{right}
\DeclareMathOperator{\semi}{semi}
\newtheorem{prop}{Proposition}[chapter]
\newtheorem{lemma}{Lemma}[chapter]
\newtheorem{theorem}{Theorem}[chapter]
\theoremstyle{definition}
\newtheorem{definition}{Definition}[chapter]
\newtheorem{fact}{Fact}[chapter]
\theoremstyle{remark}
\newtheorem{remark}{Remark}[chapter]
\title{Spiked Models in Wishart Ensemble}
\author{Dong Wang}
\begin{document}
%\begin{CJK*}{UTF8}{hei}
\thesisfront

%\preface{}

\chapter{Introduction}

\label{introduction}

\section{Wishart distribution}

The Wishart distribution is a multivariate generalization of the $\chisq$ distribution. The $\chisq$ distribution is defined by the normal distribution. The normal distribution is well known as a distribution of one real variable. However, in this thesis we study statistics of three kinds of variables: real, complex and quaternion. Thus we begin the thesis with a review of the normal distribution of all the three kinds of variables. All results for real variables are standard, see e.g. \cite{Muirhead82}; for complex variables, see \cite{Goodman63}; for quaternion variables, see \cite{Andersson75}.

\subsection{Normal distribution}

The normal distribution of a real variable with mean $\mu$ and variance $\sigma^2$ is defined by the probability density function (\pdf)
\begin{equation}
P(x) = \frac{1}{\sqrt{2\pi}\sigma} e^{-\frac{(x-\mu)^2}{2\sigma^2}}.
\end{equation}

When we study the distribution of a complex variable $z$, we can view it as two possibly dependent real variables $x$ and $y$, which are its real and imaginary parts: $z = x+iy$. The definitions of mean and variance is similar to those of real variables:
\begin{align}
\E(z) = & \E(x) + \E(y), \label{eq:definition_of_mean_on_C} \\
\var(z) = & \E((z - \E(z)) \overline{(z - \E(z))}) = (\E(x^2) - \E^2(x)) + (\E(y^2) - \E^2(y)) \notag \\
 = & \var(x) + \var(y). \label{eq:definition_of_variance_on_C}
\end{align}
The normal distribution of a complex variable $z$ with mean $\mu$ and variance $\sigma^2$ is defined by the \pdf\ 
\begin{equation}
P(z) = \frac{1}{\pi\sigma^2} e^{-\frac{(x-\Re(\mu))^2 + (y-\Im(\mu))^2}{\sigma^2}} = \frac{1}{\pi\sigma^2} e^{-\frac{\lvert z-\mu \rvert^2}{\sigma^2}},
\end{equation}
so that $x$ and $y$ are independent real variables in normal distribution with means $\Re(\mu)$ and $\Im(\mu)$ respectively, and identical variance $\sigma^2/2$.

A quaternion variable $u$ has $4$ real parts: $u = x+iy+jz+kw$. The definitions of mean and variance is similar to \eqref{eq:definition_of_mean_on_C} and \eqref{eq:definition_of_variance_on_C}: \footnote{In this thesis we use the same notations of conjugation and norm for both complex and quaternion variables. For quaternions, $\overline{x+iy+jz+kw} = x-iy-jz-kw$ and $\lvert x+iy+jz+kw \rvert = \sqrt{x^2+y^2+z^2+w^2}$.}
\begin{align}
\E(u) = & \E(x) + i\E(y) + j\E(z) + k\E(w), \\
\var(u) = & \E((u - \E(u)) \overline{(u - \E(u))}) = \var(x) + \var(y) + \var(z) + \var(w).
\end{align}
The normal distribution of a quaternionic variable $u$ with mean $\mu$ and variance $\sigma^2$ is defined by the \pdf\
\begin{equation}
P(u) = \frac{1}{\pi^2\sigma^4/4} e^{-\frac{\lvert u-\mu \rvert^2}{\sigma^2/2}},
\end{equation}
so that the $4$ parts, $x$, $y$, $z$ and $w$ are independent real variables in normal distribution with means the corresponding parts of $\mu$ and identical variance $\sigma^2/4$.

\subsection{$\chisq$ distribution}

The $\chisq$ distribution, like the normal distribution, can be defined for all the three kinds of variables. Let $\random{x}$ be a random variable in normal distribution with mean $0$ and variance $\sigma^2$, and we take $k$ independent measurements of $\random{x}$, with results $x_1$, \dots, $x_k$, which are accordingly random variables with identical independent (\iid) $(0, \sigma^2)$ normal distribution. If we define the random variable 
\begin{equation} \label{eq:definition_of_s}
s = \sum^k_{j=1} \lvert x_j \rvert^2,
\end{equation}
then $s/\sigma^2$ is in the $\chisq[k]$ distribution, i.e., chi-square distribution with parameter $k$.

No matter for what kind of variables $\chisq[k]$ distribution is defined, it is the distribution of a real random variable with support $[0, \infty)$. However, the {\pdf}s of $\chisq[k]$ are not identical for the three kinds of variables:
\begin{itemize}
\item The \pdf\ of $\chisq[k]$ for a real variable:
\begin{equation}
P(x) = \frac{(1/2)^{k/2}}{\Gamma(k/2)} x^{\frac{k}{2}-1} e^{-\frac{x}{2}}.
\end{equation}

\item The \pdf\ of $\chisq[k]$ for a complex variable:
\begin{equation}
P(x) = \frac{1}{\Gamma(k)} x^{k-1} e^{-x}.
\end{equation}

\item The \pdf\ of $\chisq[k]$ for a quaternion variable:
\begin{equation}
P(x) = \frac{4^k}{\Gamma(2k)} x^{2k-1} e^{-2x}.
\end{equation}
\end{itemize}

\subsection{Multivariate normal distribution}

The normal distribution has multivariate versions for all the three kinds of variables. The $n$-variate random variable is represented by an $n$-dimensional column vector $X = (x_1, \dots, x_n)^T$, where $x_j$'s are possibly correlated random variables. The variable in the multivariate normal distribution has a mean $\mu$, which is an $n$-dimensional column vector, and a covariance matrix $\Sigma$, which is a positive defined symmetric, Hermitian or quaternionic Hermitian matrix \footnote{The definition of a quaternionic Hermitian matrix $(a_{ij})_{1 \leq i,j \leq N}$ is similar to that of a Hermitian matrix: (1) Diagonal $a_{ii}$'s are real numbers. (2) Strictly upper-triangular entries $a_{ij}$'s with $i<j$ are arbitrary quaternions. (3) $a_{ij} = \overline{a_{ji}}$ for strictly lower triangular entries $a_{ij}$ with $i>j$.}, depending on which kind of variables we consider. They are given by
\begin{align}
\mu_j = & \E(x_j), \\
\Sigma_{ij} = & \cov(x_i, x_j),
\end{align}
where for any kind of variables,
\begin{equation}
\cov(x,y) = \E((x-\E(x)) \overline{(y-\E(y))}),
\end{equation}
with the overline meaning conjugation for complex and quaternion variables and the identity for real variables.

Conversely, $\mu$ and $\Sigma$ determines the \pdf\ of $X$, by slightly different formulas for the three kinds of variables:
\begin{itemize}
\item The \pdf\ of an $n$-variate real normal variable:
\begin{equation}
P(X) = \frac{1}{(2\pi)^{n/2}\det(\Sigma)^{1/2}} e^{-\frac{1}{2} (X-\mu)^T\Sigma^{-1}(X-\mu)}.
\end{equation}

\item The \pdf\ of an $n$-variate complex normal variable:
\begin{equation}
P(X) = \frac{1}{\pi^n\det(\Sigma)} e^{-\overline{(X-\mu)}^T\Sigma^{-1}(X-\mu)}.
\end{equation}

\item The \pdf\ of an $n$-variate quaternion normal variable:
\begin{equation}
P(X) = \frac{1}{(\pi/2)^{2n}\det(\Sigma)^2} e^{-2\overline{(X-\mu)}^T\Sigma^{-1}(X-\mu)}.
\end{equation}
\end{itemize}

We need to take notice that the determinant is not well defined for quaternion matrices due to the noncommutativity of quaternions. Here since $\Sigma$ is a quaternionic Hermitian matrix with positive real eigenvalues $\sigma_1$, \dots, $\sigma_n$, we define 
\begin{equation}
\det(\Sigma) = \prod^n_{j=1} \sigma_j.
\end{equation}

\subsection{Wishart distribution}

Now we can define the Wishart distribution, which is similar to the $\chisq$ distribution. Let $\random{X}$ be an $N$-variate random variable in the normal distribution with mean $0$ and covariance matrix $\Sigma$, and we take $M$ independent measurements of $\random{X}$ with result $X_1$, \dots, $X_M$, which are accordingly random variables with \iid\ $(0, \Sigma)$ normal distribution. If we let the $N \times M$ matrix $X$ be the juxtaposition of $X_j$'s: $X = (X_1 : , \dots, : X_M)$, then we say that the $N^2$-variate random variable
\begin{equation} \label{eq:definition_of_S}
S = \frac{1}{M} X\bar{X}^T
\end{equation}
has the Wishart distribution $W_N(M, \Sigma)$ with $M$ degrees of freedom and covariance matrix $\Sigma$.

Given $M$ and $\Sigma$, we have explicit formulas of {\pdf}s of $S$ for all the three kinds of variables. Since the spaces of $N \times N$ real symmetric, Hermitian and quaternionic Hermitian matrices are Euclidean spaces with dimensions $N(N-1)/2$, $N^2$ and $N(2N-1)$ respectively, we take the usual definition of the measure: For $S = (s_{pq})$ real symmetric, $dS = \prod_{1 \leq q \leq p \leq N} ds_{pq}$. For $S = (s_{pq})$ Hermitian, $dS = \prod^N_{r=1} ds_{rr} \prod_{1 \leq q < p \leq N}d\Re s_{pq}d\Im s_{pq}$. For $S = (s_{pq})$ quaternionic Hermitian and $s_{pq} = x_{pq}+iy_{pq}+jz_{pq}+kw_{pq}$ for off-diagonal entries, $dS = \prod^N_{r=1}s_{rr} \prod_{1 \leq q < p \leq N} dx_{pq}dy_{pq}dz_{pq}dw_{pq}$. To make the support of $S$ be the full positive definite cone of the space of real symmetric matrices, Hermitian matrices or quaternionic Hermitian matrices, we require that $M \geq N$.
\begin{itemize}
\item The \pdf\ of the real Wishart distribution $W_N(M, \Sigma)$:
\begin{multline} \label{eq:real_Wishart_distribution}
P(S)dS = \frac{1} {2^{MN/2} \pi^{N(N-1)/4} (\det\Sigma)^{M/2} \prod^N_{j=1}\Gamma((M-j+1)/2)} \\
e^{-\frac{1}{2}\Tr(\Sigma^{-1}S)} (\det S)^{(M-N-1)/2} dS.
\end{multline}

\item The \pdf\ of the complex Wishart distribution $W_N(M, \Sigma)$:
\begin{equation}
P(S)dS = \frac{1} {\pi^{N(N-1)/2} (\det\Sigma)^M \prod^N_{j=1}\Gamma(M-j+1)} e^{-\Tr(\Sigma^{-1}S)} (\det S)^{M-N} dS.
\end{equation}

\item The \pdf\ of the quaternion Wishart distribution $W_N(M, \Sigma)$:
\begin{multline} \label{eq:quaternion_Wishart_distribution}
P(S)dS = \frac{2^{2MN}}{\pi^{N(N-1)} (\det\Sigma)^{2M} \prod^N_{j=1}\Gamma(2(M-j+1))} \\
e^{-2\Re\Tr(\Sigma^{-1}S)} (\det S)^{2(M-N)+1} dS.
\end{multline}
\end{itemize}
Here we should take note that although $\Tr(\Sigma^{-1}S)$ is automatically real for $\Sigma$ and $S$ to be real symmetric or complex Hermitian matrices, we need to take the real part explicitly in the quaternionic case due to the noncommutativity.

\section{Spiked models of Wishart ensemble}

In statistics, the eigenvalues $\sigma_1$, \dots, $\sigma_N$ of the covariance matrix $\Sigma$ in the multivariate normal distribution are called population eigenvalues. They are of importance in principal component analysis.

Let $\random{X}$ be a centralized $N$-variate random variable, which means its mean is $0$. Under the assumption that $\random{X}$ has the (multivariate) normal distribution, how can we determine its population eigenvalues by results of measurements?

In the $N=1$ case, it is equivalent to find the variance $\sigma^2$. If we make $k$ independent measurements and get results $x_1$, \dots, $x_k$, we have the random variable $s$ defined in \eqref{eq:definition_of_s}, and it is easy to find that $s/k$ approaches $\sigma^2$ almost surely as $k \rightarrow \infty$.

For general $N$, if we make $M$ measurements and get results $X_1$, \dots, $X_N$, we have the $N \times N$ random matrix $S$ defined in \eqref{eq:definition_of_S}, which is called the sample covariance matrix in statistics. The multivariate counterparts of $s$ are the eigenvalues $\lambda_1$, \dots, $\lambda_N$ of $S$, which are called sample eigenvalues in statistics. A celebrated result of Anderson \cite{Anderson63} states that if $M \gg N$, the sample eigenvalues are good approximations of the population eigenvalues.

However, if $M$ is not much greater than $N$, say, both $N$ and $M$ are large, and $M/N = \gamma^2 \geq 1$, then the sample eigenvalues fail to approximate the population eigenvalues. For example, if $\sigma_j$'s are identically $1$, Mar\v{c}enko and Pastur proved \cite{Marcenko-Pastur63}
\begin{prop}[Mar\v{c}enko-Pastur law]
When $\Sigma = I$, as $M, N \rightarrow \infty$ such that $M/N \rightarrow \gamma^2 \geq 1$, the limiting density of the sample eigenvalues $\lambda_i$ in the complex Wishart ensemble is given by 
\begin{equation}
\frac{1}{N} \#\{ \lambda_j | \lambda_j \leq x \} \rightarrow H(x),
\end{equation}
where ($b_1 = (1 - \gamma^{-1})^2$ and $b_2 = (1 + \gamma^{-1})^2$)
\begin{equation} \label{eq:definition_of_H}
H(x) = 
\begin{cases}
0 & x < b_1 \\
\int^x_{b_1} \frac{\gamma^2}{2\pi t} \sqrt{(t - b_1)(b_2 - t)} & b_1 \leq x \leq b_2 \\
1 & x > b_2
\end{cases}.
\end{equation}
\end{prop}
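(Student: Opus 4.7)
My plan is to reduce the problem to the Laguerre unitary ensemble and then apply Plancherel--Rotach asymptotics. Starting from the complex Wishart \pdf\ with $\Sigma = I$ and diagonalizing $S = U\Lambda U^*$, the Jacobian $\prod_{i<j}(\lambda_i - \lambda_j)^2$ appears and the unitary factor integrates out; after the rescaling appropriate to Mar\v{c}enko-Pastur asymptotics the eigenvalues $\lambda_1,\dots,\lambda_N$ of $S$ have joint density
\begin{equation}
p(\lambda_1,\dots,\lambda_N) = \frac{1}{Z_{M,N}}\prod_{i<j}(\lambda_i - \lambda_j)^2 \prod_{j=1}^N \lambda_j^{M-N} e^{-M\lambda_j}
\end{equation}
on $[0,\infty)^N$. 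This is the Laguerre unitary ensemble with weight $w(x) = x^{M-N} e^{-Mx}$, whose correlation functions are those of a determinantal point process with kernel
\begin{equation}
K_N(x,y) = \sqrt{w(x) w(y)} \sum_{k=0}^{N-1} \hat p_k(x)\hat p_k(y),
\end{equation}
where the $\hat p_k$ are the orthonormal polynomials for $w$. By Christoffel-Darboux this collapses to an expression involving only $\hat p_N$ and $\hat p_{N-1}$, and the mean empirical density is $N^{-1}K_N(x,x)$.

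The proposition then reduces to the weak limit $N^{-1} K_N(x,x) \to H'(x) = \frac{\gamma^2}{2\pi x}\sqrt{(x-b_1)(b_2-x)}\mathbf{1}_{[b_1,b_2]}(x)$ together with concentration around this mean. For the former I would invoke the bulk Plancherel--Rotach asymptotics of $\Lag{N}(Mx)$ in the regime $M/N \to \gamma^2$, obtained by saddle-point analysis of a contour representation: a pair of complex-conjugate saddles off the real axis for $x \in (b_1,b_2)$ yields an oscillatory form
\begin{equation}
\hat p_N(x)\sqrt{w(x)} \sim A(x)\cos\bigl(N\Phi(x) + \alpha(x)\bigr),
\end{equation}
while real saddles outside $[b_1,b_2]$ give exponential decay. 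Substituting these into the Christoffel-Darboux formula and averaging the oscillatory factor by Riemann-Lebesgue produces exactly $H'(x)$. For concentration, the projection-DPP identity $\var\bigl(\sum f(\lambda_j)\bigr) = \tfrac12 \iint (f(x)-f(y))^2 K_N(x,y)^2\,dx\,dy = O(\log N)$ for Lipschitz $f$ upgrades convergence in expectation to convergence in probability, and a smoothing argument then yields pointwise convergence of $N^{-1}\#\{\lambda_j \leq x\}$ to $H(x)$ at continuity points.

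The main technical obstacle is obtaining uniform Laguerre asymptotics across the bulk, especially near the soft edges $b_1$ and $b_2$ where the two saddles coalesce; the cleanest modern route is the Deift-Zhou steepest descent for the associated Riemann-Hilbert problem, but for the Mar\v{c}enko-Pastur law alone a classical saddle-point analysis on the contour integral representation of $\Lag{N}$ is sufficient. A more elementary alternative that bypasses orthogonal polynomials altogether is the moment method: compute $\E N^{-1} \Tr S^k$ by Wick's theorem on the complex Gaussian entries of $X$, show that only non-crossing chord diagrams survive the $N\to\infty$ limit, and match the resulting Narayana-type count with the moments $\int x^k H'(x)\,dx$. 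Since $H$ is compactly supported its moments determine it, so moment convergence together with a second-moment variance bound again yields convergence in probability.
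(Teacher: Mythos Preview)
The paper does not supply a proof of this proposition at all: it is stated as a known result and attributed to Mar\v{c}enko and Pastur \cite{Marcenko-Pastur63}, with the remark that their argument extends to the quaternionic case. So there is no ``paper's own proof'' to compare against.

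That said, your two proposed routes are both standard and correct. The original Mar\v{c}enko--Pastur argument is neither of them: it proceeds via the Stieltjes transform of the empirical spectral measure, deriving a self-consistent equation for the limiting transform $m(z)$ (a quadratic equation in $m$) and then inverting. Your orthogonal-polynomial route exploits the special determinantal structure available only in the complex ($\beta=2$) case and is closer in spirit to the edge analysis the thesis carries out later via the integral representations \eqref{eq:first_integral_formula_of_Laguerre}--\eqref{eq:second_integral_formula_of_Laguerre}; it gives finer local information than needed here. Your moment-method alternative is the most elementary and, like the Stieltjes-transform proof, works uniformly for all three symmetry classes, which is what the paper implicitly relies on when it says the result ``can be transplanted to the quaternion case without much difficulty.''
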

Actually the proof of Mar\v{c}enko and Pastur is only for real and complex variables. However, their proof can be transplanted to the quaternion case without much difficulty.

In this thesis, if the covariance matrix $\Sigma$ is given, we call the distribution of the sample eigenvalues $\lambda_1$, \dots, $\lambda_N$ the Wishart ensemble. It is easy to see that the Wishart ensemble is completely determined by the population eigenvalues $\sigma_1$, \dots, $\sigma_N$ and the number of measurements $M$. The Mar\v{c}enko-Pastur law gives the density of the sample eigenvalues in the $\Sigma = I$ Wishart ensemble, which is commonly called the white Wishart ensemble \footnote{In the random matrix theory literature, the real white Wishart ensemble is called the Laguerre orthogonal ensemble (LOE), and the complex and quaternionic white Wishart ensembles are called the Laguerre unitary ensemble (LUE) and the Laguerre symplectic ensemble (LSE).}.

The general problem of getting information about $\sigma_j$'s from properties of the Wishart ensemble is far too difficult. We begin with the hypothesis testing problem: For $\Sigma$ and $\Sigma'$ with different population eigenvalues, can we tell the difference between the corresponding Wishart ensembles? In the simplest case, the eigenvalues of $\Sigma$ are identically $1$, and we get the white Wishart ensemble; most eigenvalues of $\Sigma'$ are $1$ while the other $r$ sample eigenvalues are $1+\alpha_1$, \dots, $1+\alpha_r$, with $\alpha_j$'s real numbers greater than $-1$, and we call the corresponding Wishart ensemble the spiked model of rank $r$. Now the question is: Can we tell the spiked model from the white Wishart ensemble?

The density of sample eigenvalues fails to detect the difference, since the proof of Mar\v{c}enko and Pastur implies the stronger result \cite{Marcenko-Pastur63}:
\begin{prop}
Let $r$ be a fixed positive integer and $\alpha_1$, \dots, $\alpha_r$ fixed real numbers greater than $-1$. When $M, N \rightarrow \infty$ such that $M/N \rightarrow \gamma^2 \geq 1$, and the population eigenvalues are $1+\alpha_1$, \dots, $1+\alpha_r$ and all others identically $1$, for all the three kinds of variables, the limiting density of the sample eigenvalues $\lambda_j$'s in the rank $k$ spiked model is given by
\begin{equation}
\frac{1}{N} \#\{ \lambda_j | \lambda_j \leq x \} \rightarrow H(x),
\end{equation}
where $H(x)$ is defined in \eqref{eq:definition_of_H}, the same as that in the white Wishart ensemble.
\end{prop}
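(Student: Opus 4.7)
The plan is to reduce the spiked case to the white case via a low-rank perturbation argument, and then invoke the Mar\v{c}enko--Pastur law already stated for $\Sigma=I$. Write $\Sigma' = I + P$, where $P$ is a self-adjoint matrix of rank $r$ (symmetric, Hermitian, or quaternionic Hermitian, depending on the variables). Because $\Sigma'$ commutes with $P$ and agrees with $I$ on the $(N-r)$-dimensional kernel of $P$, the positive square root $\Sigma'^{1/2} = I + Q$ is also a rank-$r$ perturbation of the identity. If $Z$ denotes the $N\times M$ matrix of standardized samples (so $Z$ has \iid\ mean-$0$, variance-$1$ entries of the appropriate type), then the spiked sample covariance and the white sample covariance can be realized on the same probability space as
\begin{equation}
S' = (I+Q)\, S_0\, (I+Q), \qquad S_0 = \tfrac{1}{M} Z \bar Z^T,
\end{equation}
so that
\begin{equation}
S' - S_0 = Q S_0 + S_0 Q + Q S_0 Q
\end{equation}
has rank at most $3r$, independently of $N$ and $M$.

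Next I would apply the Weyl/Cauchy interlacing inequality for self-adjoint matrices: if $A$ and $B$ are self-adjoint and $\mathrm{rank}(A-B) \leq k$, then for every index $j$,
\begin{equation}
\lambda_{j+k}(A) \leq \lambda_j(B) \leq \lambda_{j-k}(A)
\end{equation}
(with the convention that $\lambda_j = +\infty$ for $j \leq 0$ and $\lambda_j = -\infty$ for $j > N$). The variational characterization of eigenvalues used to prove this inequality goes through verbatim in the quaternionic setting, so the bound holds for all three kinds of variables. Applied with $A = S_0$, $B = S'$, $k = 3r$, this yields for every $x \in \mathbb{R}$ the estimate
\begin{equation}
\bigl| \#\{ j : \lambda_j(S') \leq x \} - \#\{ j : \lambda_j(S_0) \leq x \} \bigr| \leq 3r.
\end{equation}

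Dividing by $N$ and letting $N, M \to \infty$ with $M/N \to \gamma^2 \geq 1$, the right-hand side becomes $3r/N \to 0$, so the two empirical cumulative distribution functions have the same limit. By the Mar\v{c}enko--Pastur law in the white case (the previous proposition, which as remarked extends to the quaternionic case with only notational changes), this common limit is $H(x)$. The only subtlety is in the quaternionic case, where each eigenvalue of $S$ is doubly degenerate; but this affects counts only by a constant factor of $2$ and does not interfere with the interlacing estimate above, so the conclusion is unchanged. The main obstacle is really just verifying that the Mar\v{c}enko--Pastur law itself holds for the white quaternionic Wishart ensemble, which is assumed in the paragraph preceding the proposition.
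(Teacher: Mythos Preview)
Your argument is correct. The paper, however, does not give its own proof of this proposition: it simply states that ``the proof of Mar\v{c}enko and Pastur implies the stronger result'' and cites \cite{Marcenko-Pastur63}. In other words, the paper's position is that whatever method (moments or Stieltjes transform) establishes the Mar\v{c}enko--Pastur law for $\Sigma=I$ is insensitive to a finite number of deviant population eigenvalues, so no separate argument is needed.

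Your route is genuinely different and, once the white case is granted, more self-contained: you couple the spiked and white sample covariance matrices on the same probability space and observe that they differ by a matrix of rank at most $3r$ (in fact $2r$ suffices, writing $S'-S_0 = QS_0(I+Q) + S_0Q$), then invoke eigenvalue interlacing to bound the difference of the empirical counting functions by a constant independent of $N$. This avoids reopening the Mar\v{c}enko--Pastur machinery and makes transparent why a fixed-rank perturbation cannot move the limiting density. The only cost is that you must check the min--max characterization of eigenvalues---hence the interlacing inequality---in the quaternionic Hermitian setting; this is routine, and your remark about double degeneracy is unnecessary if you work directly with the real eigenvalues of the quaternionic Hermitian matrix rather than its complex $2N\times 2N$ avatar.
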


However, if some population eigenvalues are large, the limiting distribution of the largest sample eigenvalue may change. First, we have a complete result for the limiting distribution of the largest eigenvalue in the white Wishart ensemble. Unlike the limiting density, these limiting distributions for the three kinds of variables are different \cite{Forrester93}, \cite{Johansson00}, \cite{Johnstone01}.

\begin{prop}[GOE, GUE and GSE Tracy-Widom distributions] \label{prop:definition_of_three_white_Wisharts}
When $\Sigma = I$, as $M, N \rightarrow \infty$ such that $M/N \rightarrow \gamma^2 \geq 1$, the largest sample eigenvalue $\max(\lambda)$ approaches $(1+\gamma^{-1})^2$ almost surely for all the three kinds of variables. However, the limiting distributions for these three kinds of variables are different.
\begin{itemize}
\item For real variables, the limiting distribution of $\max(\lambda)$ is the GOE Tracy-Widom distribution, after proper rescaling: \footnote{By $M \rightarrow \infty$, we mean ``$M,N \rightarrow \infty$ and $M/N = \gamma^2$.}
\begin{equation}
\lim_{M \rightarrow \infty} \prob{P} \left( (\max(\lambda) - (1+\gamma^{-1})^2) \cdot \frac{\gamma M^{2/3}}{(1+\gamma)^{4/3}} < T \right) = F_{\GOE}(T).
\end{equation}

\item For complex variables, the limiting distribution of $\max(\lambda)$ is the GUE Tracy-Widom distribution, after proper rescaling:
\begin{equation}
\lim_{M \rightarrow \infty} \prob{P} \left( (\max(\lambda) - (1+\gamma^{-1})^2) \cdot \frac{\gamma M^{2/3}}{(1+\gamma)^{4/3}} < T \right) = F_{\GUE}(T).
\end{equation}

\item For quaternion variables, the limiting distribution of $\max(\lambda)$ is the GSE Tracy-Widom distribution, after proper rescaling:
\begin{equation}
\lim_{M \rightarrow \infty} \prob{P} \left( (\max(\lambda) - (1+\gamma^{-1})^2) \cdot \frac{\gamma (2M)^{2/3}}{(1+\gamma)^{4/3}} < T \right) = F_{\GSE}(T).
\end{equation}
\end{itemize}
\end{prop}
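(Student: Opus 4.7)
The plan is to reduce the three statements to asymptotics of classical orthogonal and skew-orthogonal polynomials, exploiting the integrable structure of the three white Wishart (Laguerre) ensembles. First, starting from \eqref{eq:real_Wishart_distribution}--\eqref{eq:quaternion_Wishart_distribution} with $\Sigma = I$ and integrating out the orthogonal, unitary, or symplectic angular degrees of freedom, I would obtain the joint density of the sample eigenvalues in the unified Laguerre form
\begin{equation}
P(\lambda_1, \dots, \lambda_N) = C_{N, M, \beta} \prod_{1 \leq i < j \leq N} \lvert \lambda_i - \lambda_j \rvert^{\beta} \prod_{j=1}^{N} \lambda_j^{\alpha_\beta} e^{-c_\beta \lambda_j},
\end{equation}
with $\beta = 1, 2, 4$ in the real, complex, quaternion cases, and $\alpha_\beta$, $c_\beta$ explicit in $M$ and $N$. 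In particular $c_4 = 2$ (coming from the factor $2\Re\Tr$ in \eqref{eq:quaternion_Wishart_distribution}), which will be responsible for the extra $2^{2/3}$ in the quaternionic rescaling.

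For the complex case $\beta = 2$ the ensemble is determinantal, so
\begin{equation}
\prob{P}(\max(\lambda) < T) = \det\bigl(I - K_N \, \mathbf{1}_{[T, \infty)}\bigr),
\end{equation}
where $K_N$ is the Christoffel--Darboux kernel assembled from the Laguerre polynomials $\Lag{j}$ against the weight $\lambda^{M-N} e^{-\lambda}$. At the hard/soft edge $b_2 = (1+\gamma^{-1})^2$ I would apply the Plancherel--Rotach-type asymptotics of $\Lag{j}$ after rescaling $x = (1+\gamma^{-1})^2 + (1+\gamma)^{4/3}\gamma^{-1}M^{-2/3}\xi$; this gives trace-norm convergence of the rescaled $K_N$ to the Airy kernel
\begin{equation}
K_{\Airy}(\xi, \eta) = \frac{\Ai(\xi)\Ai'(\eta) - \Ai'(\xi)\Ai(\eta)}{\xi - \eta},
\end{equation}
and the Fredholm determinant limit is $F_{\GUE}(T)$ by definition.

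For the real and quaternion cases I would invoke the Pfaffian structure of the ensemble: de Bruijn integration turns the gap probability into a Pfaffian of a $2 \times 2$ matrix kernel built from Laguerre skew-orthogonal polynomials with respect to the appropriate skew inner product ($\beta = 1$ uses $\langle f, g \rangle = \tfrac{1}{2} \iint \mathrm{sgn}(x-y) f(x) g(y) w(x) w(y) \, dx \, dy$, $\beta = 4$ uses $\langle f, g \rangle = \tfrac{1}{2} \int (fg' - f'g) w$). Edge asymptotics for these skew-orthogonal polynomials, combined with the Christoffel--Darboux--type structure of the matrix kernel, then yield convergence of the rescaled kernel to the GOE and GSE Airy matrix kernels, whose Pfaffian Fredholm determinants are by definition $F_{\GOE}(T)$ and $F_{\GSE}(T)$; the $(2M)^{2/3}$ in the GSE scaling emerges naturally from $c_4 = 2$ after changing variables $\lambda \mapsto \lambda / 2$ in the weight.

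Finally, the almost-sure convergence $\max(\lambda) \to (1 + \gamma^{-1})^2$ can be deduced by combining the distributional convergence just obtained (which controls the upper tail via $F_\beta(T) \to 1$) with a matching lower bound from the Mar\v{c}enko--Pastur law applied to a shrinking window around $b_2$, then upgrading weak convergence to almost-sure convergence by a Borel--Cantelli argument along a subsequence $M_k$ together with an interlacing/monotonicity estimate between consecutive $M_k$. The main obstacle is the $\beta = 1, 4$ edge analysis: Laguerre skew-orthogonal polynomials are considerably more intricate than their orthogonal counterparts, and one must carefully control the finite-rank corrections introduced by the skew normalization in order to upgrade pointwise convergence of the matrix kernel entries to genuine convergence of the Pfaffian Fredholm determinant — this is the analytic package that separates Johnstone's LOE treatment and Forrester's LSE treatment from the comparatively direct LUE argument.
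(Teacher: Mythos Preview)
The paper does not prove this proposition; it is stated as a known background result with citations to Forrester \cite{Forrester93}, Johansson \cite{Johansson00}, and Johnstone \cite{Johnstone01}, so there is no ``paper's own proof'' to compare against. Your outline is the standard route those references take, and it is correct in spirit.

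That said, the paper does effectively re-derive the $\beta=2$ and $\beta=4$ cases as the $a_s<\gamma^{-1}$ parts of Theorems~\ref{theorem:spiked_model_theorem_for_complex} and \ref{theorem:main_theorem_for_quater_rank_1}, using exactly the machinery you describe: the Christoffel--Darboux/Laguerre kernel for $\beta=2$ (Section~\ref{the max_lambda_<_gamma^-1_part_complex}) and the skew-orthogonal Laguerre polynomials of Adler--Forrester--Nagao--van~Moerbeke for $\beta=4$ (Section~\ref{S_4(xy)_in_terms_of_Laguerre_polynomials} and Subsection~\ref{a<gamma^-1_part}). The paper never touches $\beta=1$, consistent with its remark that the real spiked model is ``unattainable by the method in this thesis.'' Your almost-sure convergence argument via Borel--Cantelli and interlacing is extra content not addressed anywhere in the paper; it is a reasonable sketch, though the interlacing step between consecutive $M$ would need to be made precise.
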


We have explicit formulas for these probability functions \cite{Tracy-Widom94}, \cite{Tracy-Widom96}:
\begin{align}
F_{\GOE}(\xi) = & e^{-\frac{1}{2} \int^{\infty}_{\xi} (x-\xi)q^2(x) dx} e^{-\frac{1}{2} \int^{\infty}_{\xi} q(x) dx}, \\
F_{\GUE}(\xi) = & e^{-\int^{\infty}_{\xi} (x-\xi)q^2(x) dx}, \label{eq:analytic_formula_of_F_GUE} \\
F_{\GSE}(\xi) = & \frac{1}{2} e^{-\frac{1}{2} \int^{\infty}_{\xi} (x-\xi)q^2(x) dx} \left( e^{-\frac{1}{2} \int^{\infty}_{\xi} q(x) dx} + e^{\frac{1}{2} \int^{\infty}_{\xi} q(x) dx} \right), \label{eq:analytic_formula_of_F_GSE}
\end{align}
where $q(x)$ is a solution to the Painlev\'{e} equation
\begin{equation} \label{eq:Painleve_equation_II}
q''(x) = xq(x) + 2q^3(x),
\end{equation}
with 
\begin{equation} \label{eq:boundary_condition_of_Painleve}
q(x) \sim \Ai(x) \quad \text{as} \quad x \rightarrow +\infty.
\end{equation}
Here $\Ai(x)$ is the Airy function, whose definition will be given in \eqref{eq:definition_of_Airy_function_by_contour}.

It is worth noticing that although all previous statements for the three kinds of variables are parallel, the three-fold symmetry breaks down and our statements for the three kinds of variables are going to bifurcate. Despite their similarity in analytic form, $F_{\GUE}$ is most naturally defined by a Fredholm determinant while $F_{\GOE}$ and $F_{\GSE}$ are most naturally defined by Fredholm Pfaffians, and the derivations of $F_{\GOE}$ and $F_{\GSE}$ are somehow similar to each other, and requires more work than the derivation of $F_{\GUE}$ \cite{Mehta04}.

Now the goal is to get the limiting distribution of the largest eigenvalue in spiked models of the three kinds of variables. For the complex variables, Baik, Ben Arous and P\'{e}ch\'{e} got the complete result for any finite rank $k$ \cite{Baik-Ben_Arous-Peche05}. In this thesis we derive their result by a different approach and are going to get that limiting distribution for the rank $1$ quaternionic spiked model. It is desirable to find the counterpart limiting distribution for the rank $1$ real spiked model. However, it seems that the symmetry between real and quaternion variables breaks again, and such a result is unattainable by the method in this thesis.

\section{(Generalized) Zonal polynomials}

From the {\pdf}s of the Wishart distributions \eqref{eq:real_Wishart_distribution}--\eqref{eq:quaternion_Wishart_distribution} for the three kinds of variables, we can get the {\pdf}s of sample eigenvalues $\lambda = (\lambda_1, \dots, \lambda_N)$ by the Weyl integration formula, or more directly, by calculating Jacobians. Since we do not need explicit formulas of the normalization constants, we simply write ``$C$'' from now on. The derivation for real variables is in \cite{Muirhead82}
\begin{itemize}
\item The \pdf\ of the sample eigenvalues in the real Wishart ensemble:
\begin{equation} \label{eq:pdf_of_real_Wishart_ensemble}
P(\lambda) = \frac{1}{C} \lvert V(\lambda) \rvert \prod^N_{j=1} \lambda^{(M-N-1)/2}_j \int_{O(N)} e^{-\frac{M}{2} \Tr(\Sigma^{-1}OSO^{-1})} dO.
\end{equation}

\item The \pdf\ of the sample eigenvalues in the complex Wishart ensemble:
\begin{equation} \label{eq:pdf_of_complex_Wishart_ensemble}
P(\lambda) = \frac{1}{C} V(\lambda)^2 \prod^N_{j=1} \lambda^{M-N}_j \int_{U(N)} e^{-M\Tr(\Sigma^{-1}USU^{-1})} dU.
\end{equation}

\item The \pdf\ of the sample eigenvalues in the quaternion Wishart ensemble:
\begin{equation} \label{eq:pdf_of_quaternion_Wishart_ensemble}
P(\lambda) = \frac{1}{C} V(\lambda)^4 \prod^N_{j=1} \lambda^{2(M-N)+1}_j \int_{Sp(N)} e^{-2M\Re\Tr(\Sigma^{-1}QSQ^{-1})} dQ.
\end{equation}
\end{itemize}
Here $V(\lambda) = \prod_{1 \leq i < j \leq N} (\lambda_i - \lambda_j)$ is the Vandermonde, and the integrals in \eqref{eq:pdf_of_real_Wishart_ensemble}--\eqref{eq:pdf_of_quaternion_Wishart_ensemble} are over orthogonal, unitary and compact symplectic groups with Haar measures respectively. To go further in our analysis, we need to evaluate these integrals.

It is well know among statisticians that we can expand the integral in \eqref{eq:pdf_of_real_Wishart_ensemble} by Zonal polynomials \cite{Muirhead82}. To define Zonal polynomials, and their counterparts for complex and quaternion variables, we need some preliminary definitions.

\begin{definition} \cite{Macdonald95}
A partition $\kappa$ of $k$ is a sequence $\kappa = (\kappa_1, \kappa_2, \dots, \kappa_l)$ where $\kappa_j \geq 0$ are weakly decreasing and $\sum^l_{j=1}\kappa_j = k$. We denote this by $\kappa \vdash k$.
\end{definition}
For example, $\kappa = (2,2,1)$ is a partition of $5$. The number of nonzero parts of $\kappa$ is called the length of $\kappa$, denoted as $l(\kappa)$. If we drop the weakly decreasing condition, we call the sequence a general partition.

If $\kappa$ and $\kappa'$ are two general partitions of $k$, we say $\kappa < \kappa'$ if for some index $j$, $\kappa_i = \kappa'_i$ for $i < j$ and $\kappa_j < \kappa'_j$. For example,
\begin{equation}
(2,1,1,1) < (2,2,1) < (3,2).
\end{equation}

If $\kappa \vdash k$ is a general partition with $l(\kappa) = l$, we define the monomial of degree $k$
\begin{equation}
x^{\kappa} = x^{\kappa_1}_1 x^{\kappa_2}_2 \dots x^{\kappa_l}_l
\end{equation}
and say $x^{\kappa'}$ is of higher weight than $x^{\kappa}$ if $\kappa' > \kappa$.

We need another definition of Laplacians $\Lap{R}$, $\Lap{C}$ and $\Lap{H}$ \cite{Sekiguchi76}:
\begin{definition}
For $N$ variables $x = (x_1, \dots, x_N)$, we define
\begin{align}
\Lap{R} = & \sum^N_{j=1} x^2_j\frac{\partial^2}{\partial^2 x^2_j} + \sum^N_{\substack{i=1 \\ i \neq j}} \frac{x^2_j}{x_j - x_i} \frac{\partial}{\partial x_j}, \\
\Lap{C} = & \sum^N_{j=1} x^2_j\frac{\partial^2}{\partial^2 x^2_j} + 2\sum^N_{\substack{i=1 \\ i \neq j}} \frac{x^2_j}{x_j - x_i} \frac{\partial}{\partial x_j}, \\
\Lap{H} = & \sum^N_{j=1} x^2_j\frac{\partial^2}{\partial^2 x^2_j} + 4\sum^N_{\substack{i=1 \\ i \neq j}} \frac{x^2_j}{x_j - x_i} \frac{\partial}{\partial x_j}.
\end{align}
\end{definition}

Now we can give a definition of Zonal polynomials and their counterparts, complex Zonal polynomials and quaternionic Zonal polynomials \cite{Macdonald86}.
\begin{definition}
For $N$ variables $x = (x_1, \dots, x_N)$, a nonnegative integer $k$ and a partition $\kappa \vdash k$, we have the unique Zonal polynomial $Z_{\kappa}(x)$, complex Zonal polynomial $C_{\kappa}(x)$ and quaternionic Zonal polynomial $Q_{\kappa}(x)$, which are all symmetric, homogeneous polynomials of degree $k$ in $x_j$'s such that
\begin{itemize}
\item The highest weight term in $Z_{\kappa}(x)$ ($C_{\kappa}(x)$, $Q_{\kappa}(x)$) is $x^{\kappa}$.

\item $Z_{\kappa}(x)$ ($C_{\kappa}(x)$, $Q_{\kappa}(x)$) is an eigenfunction of the Laplacian $\Lap{R}$ ($\Lap{C}$, $\Lap{H}$).

\item \begin{equation} \label{eq:three_decompositions}
\sum_{\kappa \vdash k} Z_{\kappa}(x) = \sum_{\kappa \vdash k} C_{\kappa}(x) = \sum_{\kappa \vdash k} Q_{\kappa}(x) = (x_1 + \dots + x_N)^k.
\end{equation}
\end{itemize}
\end{definition}
Form the highest weight property of $Z_{\kappa}(x)$ ($C_{\kappa}(x)$, $Q_{\kappa}(x)$), we have
\begin{fact} \label{fact:variable_number_reduction}
For any $N$ and $\kappa \vdash k$, 
\begin{equation}
Z_{\kappa}(x) = C_{\kappa}(x) = Q_{\kappa}(x) = 0 \quad \text{if} \quad l(\kappa) > N.
\end{equation}
\end{fact}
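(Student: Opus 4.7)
The plan is to deduce the statement entirely from the symmetry and the highest-weight clause in the definition of $Z_\kappa$, $C_\kappa$, $Q_\kappa$; the Laplacian eigenvalue condition and the normalization \eqref{eq:three_decompositions} play no role whatsoever.

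First I would recall that any symmetric polynomial $P(x_1,\dots,x_N)$ admits a unique expansion $P = \sum_\mu c_\mu m_\mu(x_1,\dots,x_N)$ in the basis of monomial symmetric functions, where $m_\mu$ is the sum over the $S_N$-orbit of $x^\mu$. Since placing $l(\mu)$ positive exponents among only $N$ variables is impossible when $l(\mu) > N$, this orbit is empty and $m_\mu(x_1,\dots,x_N) \equiv 0$ in that regime. Consequently, only partitions $\mu$ with $l(\mu) \le N$ can carry a nonzero coefficient in any symmetric polynomial in $N$ variables.

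Applying this to $Z_\kappa$ (and verbatim to $C_\kappa$ and $Q_\kappa$), the highest-weight requirement demands that the coefficient of $m_\kappa$ in the expansion be nonzero, while the preceding observation forces that same coefficient to vanish whenever $l(\kappa) > N$. The two conditions are reconciled only by $Z_\kappa \equiv 0$, since no $m_\mu$ with $\mu < \kappa$ is allowed to take over as the leading term without violating the stipulation that the highest-weight monomial is precisely $x^\kappa$. The identical reasoning gives $C_\kappa \equiv 0$ and $Q_\kappa \equiv 0$ at the same stroke, yielding the fact for all three families simultaneously.

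The only genuine obstacle is interpretational rather than technical: one must read the definition so that when the highest-weight clause cannot be realized in $N$ variables, the ``unique'' polynomial is understood to be the zero polynomial (otherwise existence itself would fail for such $\kappa$). With that convention in place, the argument above is essentially immediate and requires no computation involving $\Lap{R}$, $\Lap{C}$, or $\Lap{H}$, which is presumably why the authors state the result as a \emph{fact} rather than a lemma.
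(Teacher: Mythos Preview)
Your argument is correct and matches the paper's approach exactly: the paper does not give a separate proof but simply states that the fact follows ``from the highest weight property of $Z_{\kappa}(x)$ ($C_{\kappa}(x)$, $Q_{\kappa}(x)$),'' which is precisely what you unpack. Your additional remark about the interpretational convention (that the zero polynomial is the intended meaning when the leading-term clause cannot be realized) is an honest clarification the paper leaves implicit.
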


Latter we apply the notation $Z_{\kappa}(X)$ to mean $Z_{\kappa}(x_1, \dots, x_N)$ if $X$ is an $N \times N$ matrix with eigenvalues $x_1, \dots, x_N$, and similarly for $C_{\kappa}(X)$ and $Q_{\kappa}(X)$.

It can be derived from their eigenfunction property that \cite{Muirhead82}, \cite{Macdonald86}, \cite{Macdonald95}, \cite{Hanlon-Stanley-Stembridge92}
\begin{prop}
Given $X$ and $Y$ to be $N \times N$ symmetric matrices, Hermitian matrices or quaternionic Hermitian matrices, and $I_N$ to be the $N \times N$ identity matrix, we have
\begin{align}
\int_{O(N)} Z_{\kappa}(XOYO^{-1}) dO = & \frac{Z_{\kappa}(X) Z_{\kappa}(Y)}{Z_{\kappa}(I_N)}, \\
\int_{U(N)} C_{\kappa}(XUYU^{-1}) dU = & \frac{C_{\kappa}(X) C_{\kappa}(Y)}{C_{\kappa}(I_N)}, \\
\int_{Sp(N)} \Re Q_{\kappa}(XQYQ^{-1}) dQ = & \frac{Q_{\kappa}(X) Q_{\kappa}(Y)}{Q_{\kappa}(I_N)}.
\end{align}
\end{prop}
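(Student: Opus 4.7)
The plan is to prove all three identities in a unified way. Let $\mathbb{K} = \mathbb{R}, \mathbb{C}, \mathbb{H}$ with associated compact group $G = O(N), U(N), Sp(N)$ and Zonal polynomial $P_\kappa = Z_\kappa, C_\kappa, Q_\kappa$, and set
\[
\Psi(X, Y) := \int_G P_\kappa(XGYG^{-1})\, dG
\]
(with the real part understood in the quaternionic case). Bi-invariance of Haar measure, via the substitutions $G \mapsto HG$ and $G \mapsto GH$, shows $\Psi(HXH^{-1}, Y) = \Psi(X, Y) = \Psi(X, HYH^{-1})$ for any $H \in G$. Combining $\mathrm{eig}(AB) = \mathrm{eig}(BA)$ with inversion invariance of Haar measure gives the symmetry $\Psi(X, Y) = \Psi(Y, X)$. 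Thus $\Psi$ descends to a symmetric polynomial $\Psi(\vec x, \vec y)$ of degree $k$ in each set of eigenvalues, and by Fact \ref{fact:variable_number_reduction} together with the triangularity of $P_\mu$ with respect to the monomial basis, we may expand
\[
\Psi(\vec x, \vec y) = \sum_{\mu, \nu \vdash k} c_{\mu\nu}\, P_\mu(\vec x)\, P_\nu(\vec y).
\]

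The crux is to establish $\Lap{K} \Psi = \lambda_\kappa \Psi$, where $\lambda_\kappa$ is the $P_\kappa$-eigenvalue of $\Lap{K}$. Granting this, distinctness of the $\lambda_\mu$ over $\mu \vdash k$ (which is easily verified from the explicit formula for $\lambda_\mu$ in terms of the parts of $\mu$ and the Jack parameter) forces $c_{\mu\nu} = 0$ whenever $\mu \neq \kappa$. The symmetry $\Psi(X, Y) = \Psi(Y, X)$ then propagates this to $\nu = \kappa$ as well, so $\Psi = c\, P_\kappa(X)\, P_\kappa(Y)$ for a single constant $c$. Setting $Y = I_N$, where $\Psi(X, I_N) = \int_G P_\kappa(X)\, dG = P_\kappa(X)$, pins down $c = 1/P_\kappa(I_N)$, as claimed.

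The main obstacle is justifying the eigenvalue equation for $\Psi$. The strategy is to realize $\Lap{K}$ as the radial part, on $G$-conjugation-invariant functions of the matrix $X$, of a matrix differential operator $\mathcal{L}$ that commutes with $G$-conjugation. Then $\mathcal{L}$ commutes with the integration, giving $\mathcal{L}_X \Psi = \int_G \mathcal{L}_X\bigl[ P_\kappa(XGYG^{-1}) \bigr]\, dG$, and what remains is to exhibit $\mathcal{L}$ in a form for which $P_\kappa$, viewed as a polynomial in the entries of its general-matrix argument through the elementary symmetric functions of eigenvalues, remains an eigenfunction with eigenvalue $\lambda_\kappa$ when its argument is a product $X \cdot A$ with $A$ treated as a parameter. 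This last point is essentially a Capelli-type identity for the relevant invariant differential operator and is where all the work lies; a more conceptual alternative is to recognize $P_\kappa / P_\kappa(I_N)$ as the spherical function for the Gelfand pair $(GL_N(\mathbb{K}), G)$, under which the proposition is precisely the defining functional equation of a spherical function.
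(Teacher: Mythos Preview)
The paper does not supply its own proof of this proposition: it is stated with the remark ``It can be derived from their eigenfunction property'' together with citations to \cite{Muirhead82}, \cite{Macdonald86}, \cite{Macdonald95}, \cite{Hanlon-Stanley-Stembridge92}. Your sketch is precisely the standard argument those references carry out (Muirhead does the $O(N)$ case in detail), and it matches the paper's one-line hint that the eigenfunction property of $\Lap{K}$ is the key input.

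Your outline is essentially correct. A few small comments. First, the step ``$\Psi$ is a symmetric polynomial of degree $k$ in each set of eigenvalues'' needs one more observation: $P_\kappa$ is homogeneous of degree $k$ in the entries of its matrix argument, and since $XGYG^{-1}$ is bilinear in $(X,Y)$, the integrand is already separately homogeneous of degree $k$ in $\vec x$ and in $\vec y$; conjugation invariance then reduces it to eigenvalues. Second, you rightly flag the crux: passing $\Lap{K}$ (which acts on the eigenvalues of $X$) through the integral requires identifying $\Lap{K}$ as the radial part of a $G$-invariant differential operator on the full matrix space, so that the operator acts on $P_\kappa(XGYG^{-1})$ through the eigenvalues of $X$ alone and reproduces the eigenvalue $\lambda_\kappa$. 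This is exactly what Muirhead and Macdonald establish, and your alternative framing via spherical functions on the Gelfand pair $(GL_N(\mathbb{K}),G)$ is the cleanest conceptual route. Since the paper itself defers to the literature here, your honest acknowledgment that this step is where the real work lies is entirely appropriate rather than a gap.
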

Then by \eqref{eq:three_decompositions} we have
\begin{equation}
\begin{split}
\int_{O(N)} e^{\rho \Tr(XOYO^{-1})} dO = & \sum^{\infty}_{k=0} \frac{\rho^k}{k!} \int_{O(N)} \Tr(XOYO^{-1})^k dO \\
= & \sum^{\infty}_{k=0} \frac{\rho^k}{k!} \sum_{\substack{\kappa \vdash k \\ l(\kappa) \leq N}} \int_{O(N)} Z_{\kappa}(XOYO^{-1}) dO \\
= & \sum^{\infty}_{k=0} \frac{\rho^k}{k!} \sum_{\substack{\kappa \vdash k \\ l(\kappa) \leq N}} \frac{Z_{\kappa}(X) Z_{\kappa}(Y)}{Z_{\kappa}(I_N)},
\end{split}
\end{equation}
and similarly
\begin{align}
\int_{U(N)} e^{\rho \Tr(XUYU^{-1})} dU = & \sum^{\infty}_{k=0} \frac{\rho^k}{k!} \sum_{\substack{\kappa \vdash k \\ l(\kappa) \leq N}} \frac{C_{\kappa}(X) C_{\kappa}(Y)}{C_{\kappa}(I_N)}, \\
\int_{Sp(N)} e^{\rho \Re\Tr(XQYQ^{-1})} dQ = & \sum^{\infty}_{k=0} \frac{\rho^k}{k!} \sum_{\substack{\kappa \vdash k \\ l(\kappa) \leq N}} \frac{Q_{\kappa}(X) Q_{\kappa}(Y)}{Q_{\kappa}(I_N)}.
\end{align}

Now we can state the series formulas for joint {\pdf}s for the three kinds of variables of the Wishart ensemble:
\begin{prop}
Let the centralized $N$-variate normal random variable have the covariance matrix $\Sigma$ with population eigenvalues $\sigma_1$, \dots, $\sigma_N$, and the number of measurements be $M \geq N$. The \pdf\ of the sample eigenvalues $\lambda = (\lambda_1, \dots, \lambda_N)$ in the Wishart ensemble is
\begin{itemize}
\item For the real variable case
\begin{multline} \label{eq:Zonal_expanssion}
P(\lambda) = \frac{1}{C} \lvert V(\lambda) \rvert \prod^N_{j=1} \lambda^{(M-N-1)/2}_j \\
\sum^{\infty}_{k=0} \frac{(-M/2)^k}{k!} \sum_{\substack{\kappa \vdash k \\ l(\kappa) \leq N}} \frac{Z_{\kappa}(\sigma^{-1}_1, \dots, \sigma^{-1}_N) Z_{\kappa}(\lambda_1, \dots, \lambda_N)}{Z_{\kappa}(I_N)}.
\end{multline}

\item For the complex variable case
\begin{multline}
P(\lambda) = \frac{1}{C} V(\lambda)^2 \prod^N_{j=1} \lambda^{M-N}_j \\
\sum^{\infty}_{k=0} \frac{(-M)^k}{k!} \sum_{\substack{\kappa \vdash k \\ l(\kappa) \leq N}} \frac{C_{\kappa}(\sigma^{-1}_1, \dots, \sigma^{-1}_N) C_{\kappa}(\lambda_1, \dots, \lambda_N)}{C_{\kappa}(I_N)}.
\end{multline}

\item For the quaternion variable case
\begin{multline} \label{eq:quaternionic_Zonal_expanssion}
P(\lambda) = \frac{1}{C} V(\lambda)^4 \prod^N_{j=1} \lambda^{2(M-N)+1}_j \\
\sum^{\infty}_{k=0} \frac{(-2M)^k}{k!} \sum_{\substack{\kappa \vdash k \\ l(\kappa) \leq N}} \frac{Q_{\kappa}(\sigma^{-1}_1, \dots, \sigma^{-1}_N) Q_{\kappa}(\lambda_1, \dots, \lambda_N)}{Q_{\kappa}(I_N)}.
\end{multline}
\end{itemize}
\end{prop}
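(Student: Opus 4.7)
The statement is essentially a direct corollary of the three joint pdf formulas \eqref{eq:pdf_of_real_Wishart_ensemble}--\eqref{eq:pdf_of_quaternion_Wishart_ensemble} combined with the (generalized) Zonal polynomial series expansions of the Haar integrals derived immediately above the proposition. The plan is to make the substitution explicit and track the constants.

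First I would match parameters. Each pdf contains a factor $e^{\rho \Tr(\Sigma^{-1} G S G^{-1})}$ (with the real part taken in the quaternion case) for an appropriate $\rho$. Reading off from \eqref{eq:pdf_of_real_Wishart_ensemble}, \eqref{eq:pdf_of_complex_Wishart_ensemble}, and \eqref{eq:pdf_of_quaternion_Wishart_ensemble}, one has $\rho = -M/2$, $\rho = -M$, and $\rho = -2M$ respectively. Substituting $X = \Sigma^{-1}$ and $Y = S$ into the corresponding series expansion of $\int_G e^{\rho \Tr(XGYG^{-1})}\,dG$ reproduces the numerical prefactors $(-M/2)^k/k!$, $(-M)^k/k!$, and $(-2M)^k/k!$ exactly as they appear in \eqref{eq:Zonal_expanssion}--\eqref{eq:quaternionic_Zonal_expanssion}.

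Next I would rewrite the Zonal polynomial factors in eigenvalue form. Since $Z_{\kappa}$, $C_{\kappa}$, $Q_{\kappa}$ are symmetric functions of the eigenvalues of their matrix argument, and the eigenvalues of $\Sigma^{-1}$ and $S$ are $\sigma_j^{-1}$ and $\lambda_j$ respectively, the replacements $Z_{\kappa}(\Sigma^{-1}) \to Z_{\kappa}(\sigma_1^{-1},\dots,\sigma_N^{-1})$ and $Z_{\kappa}(S) \to Z_{\kappa}(\lambda_1,\dots,\lambda_N)$ (and analogously for $C_{\kappa}$ and $Q_{\kappa}$) are immediate. Fact \ref{fact:variable_number_reduction} then explains the restriction $l(\kappa) \leq N$ in each inner sum: any partition with more than $N$ parts contributes zero and can be dropped without changing the sum.

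The only technical point is the interchange of infinite summation and Haar integration that was already used in deriving the series expansion of $\int_G e^{\rho\Tr(XGYG^{-1})}\,dG$. Since $G \mapsto \Tr(\Sigma^{-1} G S G^{-1})$ is bounded on the compact groups $O(N)$, $U(N)$, $Sp(N)$, the Taylor partial sums of $e^{\rho\Tr(\cdot)}$ converge uniformly in $G$, so termwise integration is justified by dominated convergence. I expect this to be the only substantive step; after it is in place, the proposition reduces to the bookkeeping outlined above.
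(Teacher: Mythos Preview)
Your proposal is correct and follows exactly the route the paper takes: the proposition is stated immediately after the series expansions of the Haar integrals are derived, and amounts to substituting those expansions into \eqref{eq:pdf_of_real_Wishart_ensemble}--\eqref{eq:pdf_of_quaternion_Wishart_ensemble} with $\rho=-M/2,-M,-2M$. Your added remark on dominated convergence is the only point the paper leaves implicit, and it is justified as you say.
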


\begin{remark}
The Zonal, complex Zonal and quaternionic Zonal polynomials are Jack polynomials with the parameter $\alpha = 2$, $1$ and $1/2$ \cite{Macdonald95}. In particular, complex Zonal polynomials ($\alpha = 1$ Jack polynomials) are essentially Schur polynomials, see \eqref{eq:complex_Zonal_is_Schur}.
\end{remark}

Although in formulas \eqref{eq:Zonal_expanssion}--\eqref{eq:quaternionic_Zonal_expanssion} we get rid of integrals over Lie groups, the number of degree $k$ terms grows very fast as $k$ increases, since the only restriction $l(k) \leq N$ is rather weak when we consider the large $N$. For the general Wishart ensemble these formulas are still impractical. However, they can be much more powerful in spiked models.

Since in the spiked model, lots of $\sigma_j$'s are identically $1$, we can shift coordinates to make them identically $0$, and then by fact \ref{fact:variable_number_reduction}, for rank $r$ spiked model we need only consider (complex, quaternionic) Zonal polynomials with index $l(\kappa) \leq r$, since after the coordinate shift, $N-r$ variables are $0$ and the $N$-variable polynomials is equivalent to an $r$-variable one. For real variables the procedure is (eigenvalues of $S$ are $\lambda_1$, \dots, $\lambda_N$, and eigenvalues of $\Sigma$ are $\alpha_1$, \dots, $\alpha_r$, $1$, \dots, $1$)
\begin{equation}
\begin{split}
& \int_{O(N)} e^{-\frac{M}{2} \Tr(\Sigma^{-1}OSO^{-1})} dO \\
= & \int_{O(N)} e^{-\frac{M}{2} \Tr(IOSO^{-1})} e^{\frac{M}{2} \Tr((I-\Sigma^{-1})OSO^{-1})} dO \\
= & \prod^N_{j=1} e^{-\frac{M}{2} \lambda_j} \int_{O(N)} e^{\frac{M}{2} \Tr((I-\Sigma^{-1})OSO^{-1})} dO \\
= & \prod^N_{j=1} e^{-\frac{M}{2} \lambda_j} \sum^{\infty}_{k=0} \frac{(M/2)^k}{k!} \sum_{\substack{\kappa \vdash k \\ l(\kappa) \leq r}} \frac{Z_{\kappa}(\frac{\alpha_1}{1+\alpha_1}, \dots, \frac{\alpha_r}{1+\alpha_r}) Z_{\kappa}(\lambda_1, \dots, \lambda_N)}{Z_{\kappa}(I_N)},
\end{split}
\end{equation}
since $r$ eigenvalues of $I-\Sigma^{-1}$ are $\frac{\alpha_1}{1+\alpha_1}, \dots, \frac{\alpha_r}{1+\alpha_r}$ and the other $N-r$ eigenvalues are $0$. In this way we can simplify formulas for joint {\pdf}s of sample eigenvalues for the three kinds of variables in the spiked model:
\begin{prop}
Let the centralized $N$-variate normal random variable have the covariance matrix $\Sigma$ with $r$ population eigenvalues $1+\alpha_1$, \dots, $1+\alpha_r$ and the other $N-r$ population eigenvalues identically $1$, and the number of measurements be $M \geq N$. The \pdf\ of the sample eigenvalues $\lambda = (\lambda_1, \dots, \lambda_N)$ in the rank $r$ spiked model is
\begin{itemize}
\item For the real variable case
\begin{multline} \label{eq:Zonal_expanssion_spiked}
P(\lambda) = \frac{1}{C} \lvert V(\lambda) \rvert \prod^N_{j=1} \lambda^{(M-N-1)/2}_j e^{-\frac{M}{2}\lambda_j} \\
\sum^{\infty}_{k=0} \frac{(-M/2)^k}{k!} \sum_{\substack{\kappa \vdash k \\ l(\kappa) \leq r}} \frac{Z_{\kappa}(\frac{\alpha_1}{1+\alpha_1}, \dots, \frac{\alpha_r}{1+\alpha_r}) Z_{\kappa}(\lambda_1, \dots, \lambda_N)}{Z_{\kappa}(I_N)}.
\end{multline}

\item For the complex variable case
\begin{multline} \label{eq:complex_Zonal_expanssion_spiked}
P(\lambda) = \frac{1}{C} V(\lambda)^2 \prod^N_{j=1} \lambda^{M-N}_j e^{-M\lambda_j} \\
\sum^{\infty}_{k=0} \frac{(-M)^k}{k!} \sum_{\substack{\kappa \vdash k \\ l(\kappa) \leq N}} \frac{C_{\kappa}(\frac{\alpha_1}{1+\alpha_1}, \dots, \frac{\alpha_r}{1+\alpha_r}) C_{\kappa}(\lambda_1, \dots, \lambda_N)}{C_{\kappa}(I_N)}.
\end{multline}

\item For the quaternion variable case
\begin{multline} \label{eq:quaternionic_Zonal_expanssion_spiked}
P(\lambda) = \frac{1}{C} V(\lambda)^4 \prod^N_{j=1} \lambda^{2(M-N)+1}_j e^{-2M\lambda_j} \\
\sum^{\infty}_{k=0} \frac{(-2M)^k}{k!} \sum_{\substack{\kappa \vdash k \\ l(\kappa) \leq N}} \frac{Q_{\kappa}(\frac{\alpha_1}{1+\alpha_1}, \dots, \frac{\alpha_r}{1+\alpha_r}) Q_{\kappa}(\lambda_1, \dots, \lambda_N)}{Q_{\kappa}(I_N)}.
\end{multline}
\end{itemize}
\end{prop}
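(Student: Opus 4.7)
The plan is to reduce each of the three Lie-group integrals \eqref{eq:pdf_of_real_Wishart_ensemble}--\eqref{eq:pdf_of_quaternion_Wishart_ensemble} to the asserted Zonal polynomial series by exploiting that $I-\Sigma^{-1}$ has rank at most $r$ in the spiked model. The computation immediately before the statement carries out this reduction for the real variable case; what remains is to mirror it in the complex and quaternionic cases.

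First, in each case I would split the exponent by writing $\Sigma^{-1} = I - (I-\Sigma^{-1})$, which gives
\begin{equation*}
e^{-c\Tr(\Sigma^{-1}USU^{-1})} = e^{-c\Tr(USU^{-1})}\, e^{c\Tr((I-\Sigma^{-1})USU^{-1})},
\end{equation*}
with $c \in \{M/2,\, M,\, 2M\}$ for the real, complex and quaternionic cases respectively, and with $\Re\Tr$ replacing $\Tr$ everywhere in the quaternionic case. Since $\Tr(USU^{-1}) = \sum_j \lambda_j$ is conjugation-invariant, the first factor pulls outside the group integral and produces the $\prod_j e^{-c\lambda_j}$ piece visible in \eqref{eq:Zonal_expanssion_spiked}--\eqref{eq:quaternionic_Zonal_expanssion_spiked}.

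Second, I would Taylor-expand the remaining exponential, apply the decomposition \eqref{eq:three_decompositions} to each power of the trace, and then invoke the three integral identities of the preceding proposition with $X = I-\Sigma^{-1}$ and $Y = S$. In the rank-$r$ spiked model $I - \Sigma^{-1}$ has eigenvalues $\tfrac{\alpha_1}{1+\alpha_1},\dots,\tfrac{\alpha_r}{1+\alpha_r}$ together with $N-r$ zeros, so Fact \ref{fact:variable_number_reduction} kills every term with $l(\kappa) > r$ and allows the corresponding Zonal factor to be written as an $r$-variable polynomial evaluated at $\tfrac{\alpha_j}{1+\alpha_j}$, producing exactly the series in \eqref{eq:Zonal_expanssion_spiked}--\eqref{eq:quaternionic_Zonal_expanssion_spiked} (the ratio of the signs on the prefactor $(-c)^k$ versus the $e^{+c\Tr(\cdot)}$ in the split is absorbed into the constant $C$, since the Jacobian prefactors $\lvert V(\lambda)\rvert$, $V(\lambda)^2$, $V(\lambda)^4$ and the $\lambda_j$ powers come from the Weyl integration formula, independently of $\Sigma$).

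The derivation is entirely mechanical once the three integral identities and \eqref{eq:three_decompositions} are in hand, so there is no genuine obstacle. The one point that deserves minor care is the quaternionic case, where noncommutativity forces $\Re\Tr$ rather than $\Tr$ throughout; this, however, is already built into the identity $\int_{Sp(N)} \Re Q_{\kappa}(XQYQ^{-1})\, dQ = Q_{\kappa}(X)Q_{\kappa}(Y)/Q_{\kappa}(I_N)$ and into \eqref{eq:pdf_of_quaternion_Wishart_ensemble}, so no extra work is needed beyond tracking the $\Re$ through the same steps.
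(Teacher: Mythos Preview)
Your proposal is correct and follows exactly the same approach as the paper: the computation displayed immediately before the proposition performs precisely your splitting $\Sigma^{-1}=I-(I-\Sigma^{-1})$, extraction of the conjugation-invariant factor $\prod_j e^{-c\lambda_j}$, Taylor expansion, application of the group-integral identities, and invocation of Fact~\ref{fact:variable_number_reduction} to truncate to $l(\kappa)\le r$, with the complex and quaternionic cases declared analogous. One small remark: the sign discrepancy you flag (the computation yields $(c)^k$ while the proposition prints $(-c)^k$) is not ``absorbed into $C$''---it is simply a typo in the proposition, as the later use in \eqref{eq:rank_1_pdf_in_complex_case} confirms the positive sign.
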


\section{Statement of Results}

\subsection{Complex spiked model}

To demonstrate the result, we need the language of Fredholm determinant. If $K(x, y)$ is the kernel of an integral operator from $L^2(\mathbb{R})$ to $L^2(\mathbb{R})$, then the Fredholm determinant of the integral operator, which we represent by the same notation as its kernel, is \cite{Gohberg-Krein69}
\begin{multline} \label{eq:definition_of_Fredholm_determinant}
\det \left( I - K(x,y) \right) = \\
1 - \frac{1}{1!} \int^{\infty}_{-\infty} K(x_1, x_1)dx_1 + \frac{1}{2!} \int^{\infty}_{-\infty}\int^{\infty}_{-\infty} 
\begin{vmatrix}
K(x_1, x_1) & K(x_1, x_2) \\
K(x_2, x_1) & K(x_2, x_2)
\end{vmatrix}
dx_1dx_2 - \dots \\
+ \frac{(-1)^n}{n!} \int^{\infty}_{-\infty} \dots \int^{\infty}_{-\infty} 
\begin{vmatrix}
K(x_1, x_1) & \dots & K(x_1, x_n) \\
\vdots & \dots & \vdots \\
K(x_n, x_1) & \dots & K(x_n, x_n)
\end{vmatrix}
dx_1 \dots dx_n + \dots.
\end{multline}
Then we can state the theorem for the complex spiked model \cite{Baik-Ben_Arous-Peche05}

\begin{theorem} \label{theorem:spiked_model_theorem_for_complex}
In the rank $r$ complex spiked model, let non-trivial population eigenvalues be $1+a_1 < \dots < 1+a_s$ with multiplicities respectively $r_1$, \dots, $r_s$, so that $\sum^s_{j=1} r_j = r$.
\begin{enumerate}
\item If $-1 < a_s < \gamma^{-1}$, then the distribution of the largest sample eigenvalue is the same as that of the complex white Wishart ensemble in proposition \ref{prop:definition_of_three_white_Wisharts}.

\item If $a_s = \gamma^{-1}$, then the limit and the fluctuation scale are the same as those of the complex white Wishart ensemble, but the distribution function is
\begin{equation}
\lim_{M \rightarrow \infty} \prob{P} \left( \left( \max(\lambda) - \left( \frac{\gamma+1}{\gamma} \right)^2 \right) \cdot \frac{\gamma M^{2/3}}{(\gamma+1)^{4/3}} < T \right) = F_{\GUE r_s}(T).
\end{equation}

\item If $a_s = a > \gamma^{-1}$, then the limit and the fluctuation scale are changed as well as the distribution function, which is a finite GUE distribution
\begin{multline}
\lim_{M \rightarrow \infty} \prob{P} \left( \left( \max(\lambda) - (1+a)\left( 1 + \frac{1}{\gamma^2 a} \right) \right) \cdot \frac{\sqrt{M}}{(1+a)\sqrt{1 - \frac{1}{\gamma^2 a^2}}} < T \right) = \\
G_{r_s}(T).
\end{multline}
\end{enumerate}
\end{theorem}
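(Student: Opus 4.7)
The plan is to reduce the large-$M$ asymptotics of \eqref{eq:complex_Zonal_expanssion_spiked} to a Riemann--Hilbert problem for multiple orthogonal polynomials. First, I would exploit the fact that complex Zonal polynomials are (up to normalization) Schur functions: for the rank-$r$ spiked $\Sigma$ with distinct non-trivial eigenvalues $1+a_1,\ldots,1+a_s$ of multiplicities $r_1,\ldots,r_s$, the sum in \eqref{eq:complex_Zonal_expanssion_spiked} can be re-summed via a dual Cauchy identity, or equivalently the $U(N)$ integral in \eqref{eq:pdf_of_complex_Wishart_ensemble} can be evaluated by a degenerate Harish-Chandra--Itzykson--Zuber formula, to obtain an expression of the form
\begin{equation*}
P(\lambda) \;=\; \frac{1}{C}\, V(\lambda)\,\det\!\bigl[\,\varphi_i(\lambda_k)\,\bigr]_{i,k=1}^N,
\end{equation*}
where the rows $\varphi_i$ come from the $s+1$ weights $w_0(x) = x^{M-N}e^{-Mx}$ and $w_j(x) = x^{M-N}e^{-Mx/(1+a_j)}$ for $j=1,\ldots,s$, each weight contributing according to its multiplicity. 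This identifies the sample spectrum as a multiple orthogonal polynomial (MOP) ensemble in the sense of Bleher and Kuijlaars.

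Next I would write the correlation kernel $K_M(x,y)$ in Christoffel--Darboux form via the solution $Y(z)$ of an $(s+1)\times(s+1)$ Fokas--Its--Kitaev type Riemann--Hilbert problem: $Y$ is analytic off $[0,\infty)$, has a prescribed jump involving the row of weights $(w_0,\ldots,w_s)$, has diagonal growth at infinity reflecting the multiplicities, and is bounded at the origin. The gap probability $\prob{P}(\max(\lambda) \le T)$ then equals the Fredholm determinant $\det(I - K_M)$ on $(T,\infty)$, so the theorem reduces to a uniform asymptotic analysis of $Y(z)$ as $M \to \infty$.

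The third step is the non-linear steepest-descent analysis of this RHP. One normalises $Y$ using a $g$-function built from the Mar\v{c}enko--Pastur equilibrium measure on $[b_1,b_2]$, performs the standard lens opening, and matches against a global outer parametrix together with Bessel and Airy parametrices at the hard edge $b_1$ and soft edge $b_2$. The spike parameters $a_j$ enter only through the exponential factor $M(x/(1+a_j)-\log x)$, whose saddle point is $x_j = 1+a_j$. The image of $x_j$ under the conformal map induced by $g$ lies below the soft edge iff $a_j<\gamma^{-1}$, on it iff $a_j=\gamma^{-1}$, and above it iff $a_j>\gamma^{-1}$; this trichotomy governs the three cases of the theorem.

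When every $a_j$ is subcritical, the spike contributions can be absorbed into a bounded analytic correction of the outer parametrix, the Airy parametrix at $b_2$ is unaffected, and the $M^{2/3}$-rescaled kernel converges to the ordinary Airy kernel, yielding $F_{\GUE}$. When $a_s > \gamma^{-1}$, the saddle $x_s$ detaches from the bulk and contributes $r_s$ poles that push a point mass to $(1+a)(1+1/(\gamma^2 a))$ on scale $M^{-1/2}$; a Gaussian saddle-point evaluation there produces the finite $r_s \times r_s$ GUE distribution $G_{r_s}$. The main obstacle is the critical case $a_s=\gamma^{-1}$: the Airy parametrix must be replaced by a deformed-Airy parametrix obtained by coalescing $r_s$ poles with the soft edge, and one must verify that its kernel, under the $M^{2/3}$ scaling, reproduces exactly the Fredholm determinant $F_{\GUE r_s}(T)$ of Baik, Ben Arous and P\'ech\'e. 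Constructing this merging-saddle parametrix and controlling the matching errors against the outer parametrix is the delicate part of the argument.
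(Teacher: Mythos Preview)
Your proposal is a reasonable route, but it is not the one the paper takes, and the difference is worth noting.  Both you and the paper start the same way: rewrite \eqref{eq:complex_Zonal_expanssion_spiked} via Schur functions / degenerate HCIZ to get the determinantal form of Proposition~\ref{prop:determinantal_representation_of_joint_pdf}, and recognise the ensemble as a multiple orthogonal polynomial ensemble with the Laguerre-type weights $x^{M-N}e^{-Mx/(1+a_j)}$.  From that point you diverge.  You propose to set up the full $(s+1)\times(s+1)$ Fokas--Its--Kitaev Riemann--Hilbert problem and run the Deift--Zhou nonlinear steepest descent: $g$-function, lens opening, outer parametrix, and in the critical case a bespoke deformed-Airy parametrix for $r_s$ merging poles.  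The paper instead exploits the special feature of these exponential weights: the multiple Laguerre polynomials of types~I and~II have \emph{explicit} single-contour integral representations \eqref{eq:definition_of_varphi}--\eqref{eq:definition_of_varvarphi}, and so does the Christoffel--Darboux kernel (Theorem giving \eqref{eq:integral_form_of_Airy_kernel}).  The asymptotics are then obtained by classical saddle-point analysis on these one-dimensional integrals (Chapter~\ref{asymptotic_analysis}), followed by direct trace-norm convergence of the rescaled kernel via Propositions~\ref{prop:first_trace_norm_convergence_prop} and~\ref{prop:second_trace_norm_convergence_prop}.  No parametrix construction is needed in any of the three cases; the functions $s^{(j)}$, $t^{(j)}$ and the Hermite factors fall out of the local expansion of the explicit integrands.

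What each buys: your RHP approach is more robust and would generalise to weights without closed-form MOPs, but it is considerably heavier here and leaves you, as you note, with the genuinely delicate task of building and matching the critical-case parametrix.  The paper's approach is more elementary and more explicit---the three regimes are distinguished simply by whether the pole $z=1/(1+a_s)$ sits to the right of, on, or to the left of the saddle $z=\gamma/(\gamma+1)$ of the integrand---at the cost of being tied to the Laguerre structure.
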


We need to explain the distribution function $F_{\GUE r_s}$ and $G_{r_s}$. First, let us revisit the distribution $F_{\GUE}$, which can be defined by (We abbreviate the indicator function $\chi_{[T, \infty)}(x)$ as $\chi(x)$.) \cite{Tracy-Widom94}
\begin{equation} \label{eq:determinantal_formula_of_Airy_kernel}
F_{\GUE}(T) = \det \left( 1 - \chi(\xi)K_{\Airy}(\xi, \eta)\chi(\eta) \right),
\end{equation}
where the kernel $K_{\Airy}$ is given by the Airy function
\begin{equation} \label{eq:definition_of_the_scalar_Airy_kernel}
K_{\Airy}(\xi, \eta) = \int^{\infty}_0 \Ai(x+t)\Ai(y+t) dt,
\end{equation}
with \cite{Abramowitz-Stegun64}
\begin{equation} \label{eq:definition_of_Airy_function_by_contour}
\Ai(x) = \frac{1}{2\pi} \int^{\infty e^{\pi i/6}}_{\infty e^{5\pi i/6}} e^{ixz + i\frac{z^3}{3}} dz.
\end{equation}
The integral sign $\int^{\infty e^{\pi i/6}}_{\infty e^{5\pi i/6}}$ means the integral is along an infinite arc from the direction $\infty e^{5\pi i/6}$ to the direction $\infty e^{\pi i/6}$. (We borrow the notation from the real projective geometry on $\mathbb{R}P^2$.) It is easy to see that as $x \rightarrow +\infty$, $\lvert \Ai(x) \rvert \rightarrow 0$ faster than any exponential decay. The equivalency of the Fredholm determinant representation and the formula \eqref{eq:analytic_formula_of_F_GUE} is established by Tracy and Widom.

$F_{\GUE t}$ with $t = 1, 2, \dots$ are variations of $F_{\GUE}$, and are defined as \cite{Baik-Ben_Arous-Peche05}
\begin{equation}
F_{\GUE t}(T) = \det \left( 1 - \chi(\xi) \left( K_{\Airy}(\xi, \eta) + \sum^t_{j=1} t^{(j)}(\xi) s^{(j)}(\eta) \right) \chi(\eta) \right),
\end{equation}
where $s^{(j)}$ and $t^{(j)}$ are variations of the Airy function, with
\begin{align}
s^{(j)}(x) = & \frac{1}{2\pi} \int^{\infty e^{\pi i/6}}_{\infty e^{5\pi i/6}} e^{ixz + i\frac{z^3}{3}} \frac{1}{(iz)^j} dz, \\
t^{(j)}(x) = & \frac{1}{2\pi} \int^{\infty e^{\pi i/6}}_{\infty e^{5\pi i/6}} e^{ixz + i\frac{z^3}{3}} (-iz)^{j-1} dz,
\end{align}
and we require that the infinite arc for the integral of $s^{(j)}$ is below the pole $z = 0$. We can prove that as $x \rightarrow +\infty$, $\lvert t^{(j)}(x) \rvert \rightarrow 0$ faster than any exponential decay, and $s^{(j)}$ grows slower than any exponential growth. Especially,
\begin{align}
s^{(1)}(x) = & 1 - \int^{\infty}_x \Ai(t) dt, \label{eq:expression_of_s_1} \\
t^{(1)}(x) = & \Ai(x). \label{eq:expression_of_t_1}
\end{align}

$G_t$ with $t = 1, 2, \dots$ are defined as
\begin{equation}
G_t(T) = \det \left( 1 - \chi(\xi) \left( \sum^{t-1}_{j=0} \frac{1}{j! \sqrt{2\pi}} H_j(\xi)e^{-\frac{\xi^2}{4}} H_j(\eta)e^{-\frac{\eta^2}{4}} \right) \chi(\eta) \right),
\end{equation}
where $H_j$'s are Hermite polynomials \cite{Szego75}, with $\deg(H_j) = j$ and
\begin{equation}
\int^{\infty}_{-\infty} H_i(x)H_j(x) e^{-\frac{x^2}{2}} dx = j! \sqrt{2\pi} \delta_{ij}.
\end{equation}

\begin{remark}
Since $H_0(x) = 1$ is a constant, we can easily see that $G_1$ is the Gaussian distribution 
\begin{equation} \label{eq:G_1_is_Gaussian}
G_1(T) = \int^T_{-\infty} \frac{1}{\sqrt{2\pi}} e^{-\frac{t^2}{2}} dt.
\end{equation}
\end{remark}

\begin{remark}
We call $F_{\GUE}$, $F_{\GUE t}$ and $G_t$ distribution functions, because they are monotone increasing, as $T \rightarrow +\infty$, the values of these functions approach $1$, and as $T \rightarrow -\infty$, the values approach $0$. Their monotonicity is ensured by their definitions. In the $T \rightarrow +\infty$ direction, we can easily verify the limit property from the determinantal representation. However, in the $T \rightarrow -\infty$ direction, it is a more serious problem.

Since $G_t$ is defined by a finite rank kernel with relatively simple functions, we can verify $\lim_{T \rightarrow -\infty} G_t = 0$ by direct calculation. For $F_{\GUE}$, the analytic formula \eqref{eq:analytic_formula_of_F_GUE}, whose derivation from the determinantal formula \eqref{eq:determinantal_formula_of_Airy_kernel} is highly non-trivial, yields the limit result as $T \rightarrow -\infty$. For $F_{\GUE t}$, we refer to Baik's result \cite{Baik06}.
\end{remark}

\subsection{Rank $1$ quaternionic spiked model}

To state the result for the rank $1$ quaternionic spiked model, we need the definition of Fredholm determinant for a matrix integral operator. If $K$ is an matrix integral operator from $L^2(\mathbb{R})^n$ to $L^2(\mathbb{R})^n$, i.e., $K = (K_{ij}(x,y))_{1 \leq i,j \leq n}$, with $K_{ij}(x,y)$ an integral operator from $L^2(\mathbb{R})^n$ to $L^2(\mathbb{R})^n$, then
\begin{multline} \label{eq:definition_of_matrix_integral_Fredholm_det}
\det(I - K) = 1 + \sum^{\infty}_{m=1} (-1)^m \sum_{\substack{0 \leq r_j \leq m \\ \sum^n_{j=1} r_j = m}} \frac{1}{r_1! \dots r_n!} \\
\int^{\infty}_{-\infty} \dots \int^{\infty}_{-\infty} \prod^{r_1}_{j=1} dx^{(1)}_j \dots \prod^{r_n}_{j=1} dx^{(n)}_j \det \left( K_{kl}(x^{(k)}_i, x^{(l)}_j)_{\substack{1 \leq i \leq r_k \\ 1 \leq l \leq r_l}} \right)_{1 \leq k,l \leq m}.
\end{multline}
Then we can state the theorem for the rank $1$ quaternionic spiked model

\begin{theorem} \label{theorem:main_theorem_for_quater_rank_1}
In the rank $1$ quaternionic spiked model,

\begin{enumerate}
\item If $-1 < a < \gamma^{-1}$, then the distribution of the largest
sample eigenvalue is the same as that of the quaternionic white
Wishart ensemble in proposition \ref{prop:definition_of_three_white_Wisharts}.

\item If $a = \gamma^{-1}$, then the limit and the
fluctuation scale are the same as those of the quaternionic white
Wishart ensemble, but the distribution function is
\begin{equation}
\lim_{M \rightarrow \infty} \mathbb{P} \left( \left( \max(\lambda) -
\left( \frac{\gamma+1}{\gamma} \right)^2 \right) \cdot \frac{\gamma
(2M)^{2/3}} {(1+\gamma)^{4/3}} < T \right) = F_{\GSE1}(T).
\end{equation}

\item If $a > \gamma^{-1}$, then the limit and the
fluctuation scale are changed as well as the distribution function,
which is a Gaussian:
\begin{multline} \label{eq:Gaussian_distribution_for_quaternionic}
\lim_{M \rightarrow \infty} \mathbb{P} \left( \left( \max(\lambda) -
(a+1)\left( 1+\frac{1}{\gamma^2a} \right) \right) \cdot
\frac{\sqrt{2M}} {(a+1)
 \sqrt{1 - \frac{1}{\gamma^2a^2}}} < T \right) = \\
\int^T_{-\infty} \frac{1}{\sqrt{2\pi}} e^{-\frac{t^2}{2}} dt.
\end{multline}
\end{enumerate}
\end{theorem}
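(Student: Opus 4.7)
The starting point is the Zonal expansion \eqref{eq:quaternionic_Zonal_expanssion_spiked}. Because the rank is $r=1$, only partitions $\kappa=(k)$ of length $\le 1$ survive, so the inner sum collapses to a single series indexed by $k$. The one-variable value $Q_{(k)}(\alpha/(1+\alpha))$ is a scalar multiple of $(\alpha/(1+\alpha))^k$, and a standard generating-function identity for one-row Jack polynomials at parameter $\alpha=1/2$ of the form $\sum_k c_k Q_{(k)}(\lambda)\, t^k = \prod_j (1-\lambda_j t)^{-2}$ lets me sum the remaining series in closed form. This yields a \emph{perturbed eigenvalue density}
\begin{equation*}
P(\lambda) = \frac{1}{C}\, V(\lambda)^4 \prod_{j=1}^N \lambda_j^{2(M-N)+1} e^{-2M\lambda_j}\, F_a(\lambda_1,\dots,\lambda_N),
\end{equation*}
where $F_a$ is a symmetric function whose structure is simple enough (essentially rank one in an appropriate contour-integral sense) to be incorporated as a finite-rank modification of the quaternionic white Wishart (LSE) weight.

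With this in hand, I would invoke the skew-orthogonal polynomial machinery for the LSE. The unperturbed quaternionic white Wishart gap probability $\mathbb{P}(\max(\lambda)<T)$ admits a Fredholm Pfaffian representation on $L^2((T,\infty))^2$ with a $2\times 2$ matrix kernel built from Laguerre skew-orthogonal polynomials and their Christoffel--Darboux-type sums. The spike modifies this kernel by a contribution readable off the extra $a$-dependent factor above, producing
\begin{equation*}
\mathbb{P}(\max(\lambda)<T) = \Pf\bigl(J - \widetilde K^{(a)}_M(x,y)\bigr)_{L^2((T,\infty))^2},
\end{equation*}
where $\widetilde K^{(a)}_M$ is the LSE matrix kernel plus an $a$-dependent perturbation of bounded rank.

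The asymptotic analysis proceeds by steepest descent on the double-contour representations of the Laguerre polynomials entering $\widetilde K^{(a)}_M$, with the soft-edge scaling $x = (1+\gamma^{-1})^2 + (1+\gamma)^{4/3}\xi/(\gamma(2M)^{2/3})$. The spike contributes an extra pole whose location depends on $a$, and its position relative to the saddle point dictates the three regimes: for $a<\gamma^{-1}$ the pole sits on the inactive side of the steepest-descent contour and its effect decays, leaving the unperturbed $F_{\GSE}$; at $a=\gamma^{-1}$ the pole coalesces with the saddle and yields the finite modification $F_{\GSE 1}$; for $a>\gamma^{-1}$ the pole lies on the wrong side and must be crossed in deforming the contour, so its residue dominates, corresponding to a single eigenvalue being pulled out of the bulk to the deterministic location $(a+1)(1+1/(\gamma^2 a))$ with Gaussian fluctuations on the scale $1/\sqrt{2M}$ coming from a classical one-dimensional saddle evaluation at the residue, which matches \eqref{eq:Gaussian_distribution_for_quaternionic}. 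The main obstacle I anticipate is twofold: first, making the closed-form summation of the one-row $\alpha=1/2$ Jack series explicit enough to produce a \emph{clean} rank-one modification of the LSE skew-orthogonal structure (the combinatorics of quaternionic Zonal polynomials are notably less tractable than the Schur case used in the complex BBP analysis); and second, controlling the Fredholm Pfaffian, rather than only the kernel, uniformly in $T$ near the critical point, where the standard Fredholm-determinant convergence arguments must be adapted to the Pfaffian setting in order to identify the limit as a bona fide distribution function $F_{\GSE 1}$.
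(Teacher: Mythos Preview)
Your outline is essentially the paper's proof. The generating-function identity you cite is exactly what is used, but the paper pushes it one step further: comparing $\sum_{j}(j+1)Q_{(j)}(\lambda)t^j=\prod_i(1-\lambda_i t)^{-2}$ with $\sum_j s_{(j)}(\lambda)t^j=\prod_i(1-\lambda_i t)^{-1}$ gives $(j+1)Q_{(j)}(\lambda_1,\dots,\lambda_N)=s_{(j)}(\lambda_1,\lambda_1,\dots,\lambda_N,\lambda_N)$, a doubled-variable Schur polynomial. Via the determinantal formula for $s_{(j)}$ and a confluent (l'H\^opital) limit, this produces a $2N\times 2N$ determinant $\widetilde V^4(\lambda)$ that is the usual $V(\lambda)^4$ with only the last row and its derivative replaced by $e^{\frac{a}{1+a}2M\lambda_j}$ and $\frac{a}{1+a}2M\,e^{\frac{a}{1+a}2M\lambda_j}$. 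This is precisely the ``clean rank-one modification'' you were worried about, and it resolves your first obstacle.

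Your second concern is handled not at the Pfaffian level but by squaring: the paper writes $(\mathbb P(\max\lambda\le T))^2$ as a Fredholm determinant of a $2\times2$ matrix kernel (via $\det(I-AB)=\det(I-BA)$), splits that kernel into the LSE piece plus a single rank-one $a$-dependent block coming from the modified $(2N{-}1)$st skew-orthogonal function, and then proves trace-norm convergence of each entry after suitable conjugation. One small correction to your heuristic for $a>\gamma^{-1}$: the Gaussian does not arise from picking up a residue when a pole crosses the steepest-descent contour; rather one recenters at $(a+1)(1+1/(\gamma^2 a))$ with scale $1/\sqrt{2M}$, the LSE block $S_{4a}$ dies in trace norm, and the rank-one block $S_{4b}$ converges (after a carefully chosen matrix conjugation $U(\xi)\cdot U^{-1}(\eta)$ to tame exponentially growing factors) to a Gaussian kernel via a quadratic saddle at $z=\tfrac{1}{1+a}$.
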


Here $F_{\GSE1}$ is a variation of $F_{\GSE}$, and we first give a definition of $F_{\GSE}$ by Fredholm determinant of a matrix integral operator \cite{Forrester-Nagao-Honner99}
\begin{equation} \label{eq:determinantal_definition_of_F_GSE}
F_{\GSE}(T) = \sqrt{\det(1 - \widehat{P}(\xi, \eta))},
\end{equation}
and
\begin{equation}
\widehat{P}(\xi, \eta) = \chi(\xi) \begin{pmatrix}
\widehat{S}_4(\xi, \eta) &
\widehat{SD}_4(\xi, \eta) \\
\widehat{IS}_4(\xi, \eta) & \widehat{S}_4(\eta, \xi,)
\end{pmatrix} \chi(\eta),
\end{equation}
where
\begin{align}
\widehat{S}_4(\xi, \eta) = & \frac{1}{2}K_{\Airy}(\xi, \eta) -
\frac{1}{4}\Ai(\xi)\int^{\infty}_{\eta} \Ai(t)dt, \\
\widehat{SD}_4(\xi, \eta) = & -\frac{1}{2}
\frac{\partial}{\partial\eta} K_{\Airy}(\xi, \eta) -
\frac{1}{4}\Ai(\xi)\Ai(\eta), \\
\widehat{IS}_4(\xi, \eta) = & -\frac{1}{2} \int^{\infty}_{\xi}
K_{\Airy}(t, \eta) dt + \frac{1}{4} \int^{\infty}_{\xi} \Ai(t)dt
\int^{\infty}_{\eta} \Ai(t)dt.
\end{align}
The equivalency of \eqref{eq:determinantal_definition_of_F_GSE} and the formula \eqref{eq:analytic_formula_of_F_GSE} is established in \cite{Tracy-Widom96} and \cite{Tracy-Widom05}.

Now we can define $F_{\GSE1}$ as
\begin{equation}
F_{\GSE1}(T) = \sqrt{\det(1 - \overline{\overline{P}}(\xi, \eta))},
\end{equation}
and
\begin{equation}
\overline{\overline{P}}(\xi, \eta) = \chi(\xi)
\begin{pmatrix}
\overline{\overline{S}}_4(\xi, \eta) &
\overline{\overline{SD}}_4(\xi, \eta) \\
\overline{\overline{IS}}_4(\xi, \eta) &
\overline{\overline{S}}_4(\eta, \xi,)
\end{pmatrix} \chi(\eta),
\end{equation}
where
\begin{align}
\overline{\overline{S}}_4(\xi, \eta) = & \widehat{S}_4(\xi, \eta) +
\frac{1}{2}\Ai(\xi), \\
\overline{\overline{SD}}_4(\xi, \eta) = & \widehat{SD}_4(\xi, \eta), \\
\overline{\overline{IS}}_4(\xi, \eta) = & \widehat{IS}_4(\xi, \eta)
- \frac{1}{2} \int^{\infty}_{\xi} \Ai(t)dt + \frac{1}{2}
\int^{\infty}_{\eta} \Ai(t)dt.
\end{align}

The matrix kernel $\overline{\overline{P}}(\xi, \eta)$ seems to be new in the literature. However, the distribution function $F_{\GSE 1}$ is not new:

\begin{theorem} \label{theorem:F_GSE1=F_GOE}
\begin{equation}
F_{\GSE 1}(T) = F_{\GOE}(T).
\end{equation}
\end{theorem}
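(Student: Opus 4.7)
The plan is to exploit the fact that the three modifications which carry $\widehat{P}$ into $\overline{\overline{P}}$ together form a \emph{rank-two} additive matrix kernel, so that the quotient $\det(1-\overline{\overline{P}})/\det(1-\widehat{P})$ reduces to a $2\times 2$ determinant that can be evaluated in closed form using the Painlev\'{e} II transcendent $q$. Writing $\overline{\overline{P}} = \widehat{P} + R$, a direct inspection of the definitions of $\overline{\overline{S}}_4$, $\overline{\overline{SD}}_4$, $\overline{\overline{IS}}_4$ shows that
\begin{equation*}
R(\xi,\eta) = \chi(\xi)\left[\tfrac{1}{2}\begin{pmatrix}\Ai(\xi)\\ -\int_\xi^\infty \Ai(t)\,dt\end{pmatrix}\otimes\begin{pmatrix}1\\0\end{pmatrix} + \tfrac{1}{2}\begin{pmatrix}0\\1\end{pmatrix}\otimes\begin{pmatrix}\int_\eta^\infty \Ai(t)\,dt \\ \Ai(\eta)\end{pmatrix}\right]\chi(\eta),
\end{equation*}
so $R$ has rank $2$ as an operator on $L^2([T,\infty))\oplus L^2([T,\infty))$. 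By the Weinstein--Aronszajn identity for Fredholm determinants under a finite-rank update,
\begin{equation*}
\det(1 - \overline{\overline{P}}) = \det(1 - \widehat{P})\cdot\det(I_2 - A) = F_{\GSE}(T)^2\cdot\det(I_2 - A),
\end{equation*}
where $A$ is a $2\times 2$ matrix whose entries are inner products of the right factors of $R$ with the resolvent $(1-\widehat{P})^{-1}$ applied to the left factors.

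Next I would reduce the entries of $A$ to scalar expressions in the Painlev\'{e} II transcendent, using the resolvent calculus already developed by Tracy and Widom \cite{Tracy-Widom96},\cite{Tracy-Widom05} in establishing the equivalence of \eqref{eq:determinantal_definition_of_F_GSE} with \eqref{eq:analytic_formula_of_F_GSE}. That machinery expresses the action of $(1-\widehat{P})^{-1}$ on $\Ai$ and on $\int^\infty \Ai(t)\,dt$ purely in terms of $I_q(T) := \int_T^\infty q(x)\,dx$ and $I_{xq^2}(T) := \int_T^\infty (x-T)q^2(x)\,dx$. After a short algebraic manipulation of the resulting $2\times 2$ determinant one obtains
\begin{equation*}
\det(I_2 - A) = \frac{4}{\bigl(1+e^{I_q(T)}\bigr)^2}.
\end{equation*}
On the other hand, dividing the Painlev\'{e} formula for $F_{\GOE}$ stated at the start of the chapter by \eqref{eq:analytic_formula_of_F_GSE} gives directly $F_{\GOE}/F_{\GSE} = 2/(1+e^{I_q(T)})$, so $(F_{\GOE}/F_{\GSE})^2$ coincides with the value of $\det(I_2-A)$ above. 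Combining with the rank-two identity yields $\det(1-\overline{\overline{P}}) = F_{\GOE}(T)^2$; since both $F_{\GSE 1}$ and $F_{\GOE}$ are continuous, positive, and tend to $1$ as $T\to+\infty$, taking positive square roots gives the theorem.

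The main obstacle is the explicit resolvent calculation producing the closed form for $\det(I_2 - A)$. The right factor $(1,0)^T$ in the first summand of $R$ pairs against the constant function $1$, which is not itself in $L^2([T,\infty))$; the relevant pairings become finite only because they appear against Airy-type integrands of super-exponential decay, so the book-keeping has to identify each such integral with a Painlev\'{e} II quantity through Tracy--Widom's integration-by-parts identities rather than evaluate it as a true $L^2$ inner product. A parallel route I would keep in reserve is to recognize $\overline{\overline{P}}$ directly as a determinant-preserving conjugate of the soft-edge GOE matrix kernel of Forrester--Nagao--Honner, in which case $F_{\GOE}^2 = \det(1-\overline{\overline{P}})$ follows immediately from the established determinantal representation of $F_{\GOE}$; the difficulty there is reconciling the $\tfrac{1}{2}, \tfrac{1}{4}$ coefficient conventions inherited from the GSE-style matrix kernel with the $\tfrac{1}{2}, 1$ conventions native to the GOE-style kernel, which is what the symmetric form of $R$ above suggests is possible after a mild row/column rescaling.
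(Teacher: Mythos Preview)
Your rank-two perturbation strategy is a valid plan and the numerical target $\det(I_2-A)=4/(1+e^{I_q(T)})^2$ is indeed what one must hit, but it is a genuinely different route from the paper's. The paper never touches the GSE matrix resolvent $(1-\widehat{P})^{-1}$. Instead it first uses $\det(I-AB)=\det(I-BA)$ together with a triangular conjugation to collapse $\det(1-\overline{\overline{P}})$ to a \emph{scalar} Fredholm determinant, integrates by parts (which turns your problematic constant function into delta terms $\delta_T,\delta_\infty$), and then factors out the \emph{scalar} Airy resolvent $(1-K_{\Airy}\chi)^{-1}$. What remains is a $3\times3$ (reduced by row operations to $2\times2$) numerical determinant that the paper recognizes as $F_{\GUE 1}$ times a correction; the correction is shown to equal $1$ by proving the identity $(I+R)s^{(1)}(T)+\langle (I+R)s^{(1)},\Ai\rangle_T=1$ via the scalar Airy-kernel Painlev\'e calculus of \cite{Tracy-Widom94}, and Forrester's $F_{\GUE 1}=F_{\GOE}^2$ then finishes.

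The trade-off: your approach makes the rank-two structure manifest and is conceptually more direct, but the resolvent calculus you would need is that of the full $2\times2$ GSE matrix kernel, which is substantially heavier than the scalar Airy case---your ``short algebraic manipulation'' is optimistic, since it effectively reproduces the \cite{Tracy-Widom05} derivation of \eqref{eq:analytic_formula_of_F_GSE}. Moreover the non-$L^2$ constant function you flagged is a real obstruction to a clean Weinstein--Aronszajn application; the paper's reduction to scalar form up front is precisely what converts that constant into boundary delta terms and thereby sidesteps the issue. Your backup idea of matching $\overline{\overline{P}}$ directly to a GOE-type matrix kernel is also not what the paper does, though it is in the same spirit as the Forrester identity the paper ultimately invokes.
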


\section{Structure of the thesis}

In chapter \ref{complex_spiked_model} we reproduce Baik, Ben Arous and P\'{e}ch\'{e}'s result on the limiting distribution of the largest sample eigenvalue in the complex spiked model, by the method of multiple orthogonal polynomials suggested by Bleher and Kuijlaars \cite{Bleher-Kuijlaars05}. In chapter \ref{quaternionic_spiked_model_rank_1} we use the same idea and the method of skew multiple orthogonal polynomials, to find the limiting distribution of the largest sample eigenvalue in the rank $1$ quaternionic spiked model, with the help of a combinatorial result of $\alpha = 1/2$ Jack polynomials (quaternionic Zonal polynomials) \cite{Macdonald95}.

To get the limiting distribution, we need technical results of asymptotic analysis. We put all such results involving contour integral in chapter \ref{asymptotic_analysis}, and when we prove theorems in chapter \ref{complex_spiked_model} and \ref{quaternionic_spiked_model_rank_1}, we quote the results therein.

The limiting distribution of the largest sample eigenvalue in the complex spiked model has a phase transition phenomenon, and in the rank $1$ case it is an interpolation from $F_{\GUE}$ to Gaussian via $F^2_{\GOE}$ \cite{Baik-Ben_Arous-Peche05}. In chapter \ref{interpolations} we prove theorem \ref{theorem:F_GSE1=F_GOE}, which together with theorem \ref{theorem:main_theorem_for_quater_rank_1} gives the interpolation result for the rank $1$ quaternionic spiked model, which is from $G_{\GSE}$ to Gaussian via $F_{\GOE}$. We also give conjectures for more general phase transition phenomena.

\chapter{Complex spiked model}

\label{complex_spiked_model}

In this chapter, we consider the complex Wishart ensemble unless otherwise stated.

\section{Determinantal joint \pdf\ formula}

For the complex Wishart ensemble, we have an advantage that the complex Zonal polynomial $C_{\kappa}(x)$ is the same as the Schur polynomial $s_{\kappa}(x)$ up to a constant multiple. To be precise, we have \cite{Stanley89}
\begin{equation} \label{eq:complex_Zonal_is_Schur}
C_{\kappa}(x) = \frac{k!}{H(\kappa)} s_{\kappa}(x),
\end{equation}
where $H(\kappa)$ is the product of hook lengths of $\kappa$. If $\kappa = (\kappa_1, \kappa_2, \dots)$ and $l(\kappa) = l$, then
\begin{equation}
H(\kappa) = \prod^l_{i=1} \prod^{\kappa_i}_{j=1} (\arm_{\kappa}(i,j) + \leg_{\kappa}(i,j) + 1),
\end{equation}
where
\begin{align}
\arm_{\kappa}(i,j) = & \kappa_i - j, \\
\leg_{\kappa}(i,j) = & \min \{i' \mid k_{i'} < j \}.
\end{align}

The nomenclature of $\arm_{\kappa}$ and $\leg_{\kappa}$ is most clearly shown by the Young diagram of the partition $\kappa$. For example, in the Young diagram of the partition $\kappa = (4,3,3,2,1)$ which is drawn in figure \ref{figure:A_SAMPLE_OF_YOUNG_TABLEAU}, $\arm_{\kappa}(2,1)$ is the number of squares to the right of the $1$-st square in the $2$-nd row, and $\leg_{\kappa}(2,1)$ is the number of squares below it. Usually we call $\arm_{\kappa}(i,j) + \leg_{\kappa}(i,j) + 1 = h_{\kappa}(i,j)$ the hook length of the $(i,j)$ square in the Young diagram of the partition $\kappa$.

\begin{figure}[h]
\centering
\includegraphics{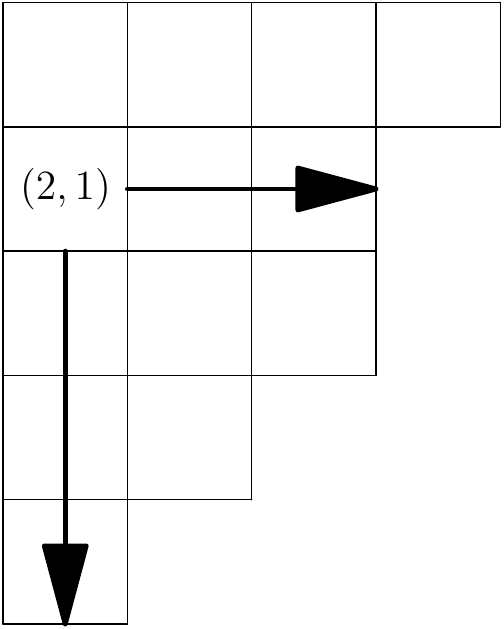}
\label{figure:A_SAMPLE_OF_YOUNG_TABLEAU}
\caption{Young tableau of partition $(4,3,3,2,1)$}
\end{figure}

Let us consider the rank $1$ spiked model first. We assume the $N$ population eigenvalues are $(N-1)$ $1$'s and the other one $1+a$. With $M$ measurements, by \eqref{eq:complex_Zonal_expanssion_spiked}, the joint \pdf\ of sample eigenvalues is
\begin{equation} \label{eq:rank_1_pdf_in_complex_case}
P(\lambda) = \frac{1}{C} V(\lambda)^2 \prod^N_{j=1} \lambda^{M-N}_j e^{-M\lambda_j} \sum^{\infty}_{k=0} \frac{M^k}{k!} \frac{C_{(k)}(\frac{a}{1+a}) C_{(k)}(\lambda_1, \dots, \lambda_N)}{C_{(k)}(I_N)}.
\end{equation}
Since we require the index $\kappa \vdash k$ of complex Zonal polynomials to satisfy $l(\kappa) \leq 1$, for any $k$, there is only one qualified partition $\kappa = (k)$. Therefore the joint \pdf\ of sample eigenvalues for the rank $1$ spiked model is much simplified.

Here we notice that $H((k)) = k!$, so that $C_{(k)}(x) = s_{(k)}(x)$. We have
\begin{align}
s_{(k)}(\frac{a}{1+a}) = & \left( \frac{a}{1+a} \right)^k, \label{eq:Schur_univariable} \\
s_{(k)}(I_N) = & \frac{(N+k-1)!}{(N-1)!k!}. \label{eq:Schur_IN}
\end{align}
Therefore
\begin{equation}
P(\lambda) = \frac{1}{C} V(\lambda)^2 \prod^N_{j=1} \lambda^{M-N}_j e^{-M\lambda_j} \sum^{\infty}_{k=0} \frac{\left( \frac{a}{1+a}M \right)^k}{(N+k-1)!} s_{(k)}(\lambda_1, \dots, \lambda_N).
\end{equation}

Schur polynomials have the well known determinantal representation \cite{Macdonald95}: For any partition $\kappa = (\kappa_1, \kappa_2, \dots)$ with $l(\kappa)=l$,
\begin{equation}
s_{\kappa}(x_1, \dots, x_N) = \frac{
\begin{vmatrix}
1 & 1 & \dots & 1 \\
x_1 & x_2 & \dots & x_N \\
\vdots & \vdots & \dots & \vdots \\
x^{N-l-1}_1 & x^{N-l-1}_2 & \dots & x^{N-l-1}_N \\
x^{N-l+\kappa_l}_1 & x^{N-l+\kappa_l}_2 & \dots & x^{N-l+\kappa_l}_N \\
\vdots & \vdots & \dots & \vdots \\
x^{N-1+\kappa_1}_1 & x^{N-1+\kappa_1}_2 & \dots & x^{N-1+\kappa_1}_N 
\end{vmatrix}}
{\begin{vmatrix}
1 & 1 & \dots & 1 \\
x_1 & x_2 & \dots & x_N \\
\vdots & \vdots & \dots & \vdots \\
x^{N-1}_1 & x^{N-1}_2 & \dots & x^{N-1}_N
\end{vmatrix}},
\end{equation}
where the denominator is the Vandermonde and the numerator is different from the Vandermonde only at the last $l$ rows, with the power of entries of the $j$-th last row increased by $\kappa_j$.

We have
\begin{equation}
s_{(k)}(\lambda_1, \dots, \lambda_N) = \frac{
\begin{vmatrix}
1 & 1 & \dots & 1 \\
\lambda_1 & \lambda_2 & \dots & \lambda_N \\
\vdots & \vdots & \dots & \vdots \\
\lambda^{N-2}_1 & \lambda^{N-2}_2 & \dots & \lambda^{N-2}_N \\
\lambda^{N-1+k}_1 & \lambda^{N-1+k}_2 & \dots & \lambda^{N-1+k}_N 
\end{vmatrix}}{V(\lambda)},
\end{equation}
and 
\begin{equation} \label{eq:raw_formula_for_Schur_sum}
\sum^{\infty}_{k=0} \frac{\left( \frac{a}{1+a}M \right)^k}{(N+k-1)!} s_{(k)}(\lambda_1, \dots, \lambda_N) = \frac{
\begin{vmatrix}
1 & 1 & \dots & 1 \\
\lambda_1 & \lambda_2 & \dots & \lambda_N \\
\vdots & \vdots & \dots & \vdots \\
\lambda^{N-2}_1 & \lambda^{N-2}_2 & \dots & \lambda^{N-2}_N \\
\vare \left( \frac{a}{1+a}M\lambda_1 \right) & \vare \left( \frac{a}{1+a}M\lambda_2 \right) & \dots & \vare \left( \frac{a}{1+a}M\lambda_N \right)
\end{vmatrix}}{V(\lambda)},
\end{equation}
where
\begin{equation}
\begin{split}
\vare \left( \frac{a}{1+a}M\lambda_j \right) = & \sum^{\infty}_{k=0} \frac{1}{(N+k-1)!} \left( \frac{a}{1+a}M \right)^k \lambda^{N+k-1}_j \\
= & \left( \frac{a}{1+a}M \right)^{-(N-1)} \left( e^{\frac{a}{1+a}M\lambda_j} - \sum^{N-2}_{k=0} \frac{1}{k!} \left( \frac{a}{1+a}M\lambda_j \right)^k \right).
\end{split}
\end{equation}
By row operations, we can change the term $\vare \left( \frac{a}{1+a}M\lambda_j \right)$ in \eqref{eq:raw_formula_for_Schur_sum} into $\left( \frac{a}{1+a}M \right)^{-(N-1)} e^{\frac{a}{1+a}M\lambda_j}$, and we have
\begin{equation} \label{eq:joint_pdf_for_r_eq_1}
\begin{split}
P(\lambda) = & \frac{1}{C} V(\lambda)^2 \prod^N_{j=1} \lambda^{M-N}_j e^{-M\lambda_j} \frac{
\begin{vmatrix}
1 & 1 & \dots & 1 \\
\lambda_1 & \lambda_2 & \dots & \lambda_N \\
\vdots & \vdots & \dots & \vdots \\
\lambda^{N-2}_1 & \lambda^{N-2}_2 & \dots & \lambda^{N-2}_N \\
e^{\frac{a}{1+a}M\lambda_1} & e^{\frac{a}{1+a}M\lambda_2} & \dots & e^{\frac{a}{1+a}M\lambda_N}
\end{vmatrix}}{V(\lambda)} \\
= & \frac{1}{C}  V(\lambda) 
\begin{vmatrix}
1 & 1 & \dots & 1 \\
\lambda_1 & \lambda_2 & \dots & \lambda_N \\
\vdots & \vdots & \dots & \vdots \\
\lambda^{N-2}_1 & \lambda^{N-2}_2 & \dots & \lambda^{N-2}_N \\
e^{\frac{a}{1+a}M\lambda_1} & e^{\frac{a}{1+a}M\lambda_2} & \dots & e^{\frac{a}{1+a}M\lambda_N}
\end{vmatrix}
\prod^N_{j=1} \lambda^{M-N}_j e^{-M\lambda_j}.
\end{split}
\end{equation}

We can get similar result for the spiked model with rank $r$ with the same idea, special values of Schur polynomials like \eqref{eq:Schur_univariable} and \eqref{eq:Schur_IN} and more laborious work:

\begin{prop} \label{prop:determinantal_representation_of_joint_pdf}
In a rank $k$ spiked model, let $r$ population eigenvalues be $1+\alpha_1$, \dots, $1+\alpha_r$ and other $N-r$ population eigenvalues be $1$. Some $\alpha_j$'s may not be distinct, but they assume values $a_1 < \dots < a_s$ with $r_1$ of them being $a_1$, \dots, $r_s$ of them being $a_s$ and $\sum^l_{j=1} r_j = r$. If we take $M$ measurements, the joint \pdf\ of sample eigenvalues is
\begin{multline} \label{eq:determinantal_representation_of_joint_pdf}
P(\lambda) = \frac{1}{C}  V(\lambda) 
\begin{vmatrix}
1 & 1 & \dots & 1 \\
\lambda_1 & \lambda_2 & \dots & \lambda_N \\
\vdots & \vdots & \dots & \vdots \\
\lambda^{N-r-1}_1 & \lambda^{N-r-1}_2 & \dots & \lambda^{N-r-1}_N \\
p_1\left( \frac{a_1}{1+a_1}M\lambda_1 \right) & p_1\left( \frac{a_1}{1+a_1}M\lambda_2 \right) & \dots & p_1\left( \frac{a_1}{1+a_1}M\lambda_N \right) \\
\vdots & \vdots & \dots & \vdots \\
p_{r_1}\left( \frac{a_1}{1+a_1}M\lambda_1 \right) & p_{r_1}\left( \frac{a_1}{1+a_1}M\lambda_2 \right) & \dots & p_{r_1}\left( \frac{a_1}{1+a_1}M\lambda_N \right) \\
\vdots & \vdots & \dots & \vdots \\
p_{r_s}\left( \frac{a_s}{1+a_s}M\lambda_1 \right) & p_{r_s}\left( \frac{a_s}{1+a_s}M\lambda_2 \right) & \dots & p_{r_s}\left( \frac{a_s}{1+a_s}M\lambda_N \right)
\end{vmatrix} \\
\prod^N_{j=1} \lambda^{M-N}_j e^{-M\lambda_j},
\end{multline}
where the determinant is similar to a Vandermonde, the only difference being in the last $r$ rows: If $\sum^{s'-1}_{i=1} r_i < r' \leq \sum^{s'}_{i=1} r_i$ and $r' - \sum^{s'-1}_{i=1} r_i = t'$, then in the $N-r+r'$ row, the entries are of the form $p_{t'}\left( \frac{a_{s'}}{1+a_{s'}}M\lambda_j \right)$, where 
\begin{equation} \label{eq:definition_of_pj}
p_j(x) = x^{j-1}e^x.
\end{equation}
\end{prop}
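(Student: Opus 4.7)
The plan is to generalize the rank-$1$ derivation leading to \eqref{eq:joint_pdf_for_r_eq_1} to arbitrary rank $r$ with multiplicities, in two stages: first handle the case in which $\alpha_1,\dots,\alpha_r$ are all distinct, and then extract the general case by a coalescence (L'H\^opital) limit. The non-degenerate step admits two equivalent routes. The first is a direct application of the Harish-Chandra--Itzykson--Zuber formula to the integral $\int_{U(N)} e^{M\Tr((I-\Sigma^{-1})USU^{-1})}\,dU$, which arises from \eqref{eq:pdf_of_complex_Wishart_ensemble} after the shift $\Sigma^{-1}=I-(I-\Sigma^{-1})$ that is carried out explicitly for the real case above \eqref{eq:Zonal_expanssion_spiked}. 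Since $I-\Sigma^{-1}$ has eigenvalues $d_i=\alpha_i/(1+\alpha_i)$ for $i=1,\dots,r$ together with $N-r$ zeros, the HCIZ determinant is degenerate; after the limit in which $N-r$ eigenvalues converge to $0$, the first $N-r$ rows reduce to the Vandermonde-like rows $1,\lambda_j,\dots,\lambda_j^{N-r-1}$ while the last $r$ rows become $e^{Md_i\lambda_j}$. The second route instead reruns the Schur calculation of the excerpt for a general partition $\kappa$ with $l(\kappa)\le r$: substituting $C_\kappa=(k!/H(\kappa))\,s_\kappa$, using the Jacobi bialternant formula for both $s_\kappa(d_1,\dots,d_r)$ and $s_\kappa(\lambda_1,\dots,\lambda_N)$, and summing over $\kappa_1\ge\cdots\ge\kappa_r\ge 0$ by multilinearity of the $N\times N$ determinant in its last $r$ rows, each of those rows reduces (after the row reduction used in the rank-$1$ case) to the exponential $e^{Md_i\lambda_j}$ in the same fashion.

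To deal with multiplicities, let the $\alpha_i$'s coalesce to the distinct values $a_1<\cdots<a_s$ with multiplicities $r_1,\dots,r_s$. In the block of $r_{s'}$ rows corresponding to $a_{s'}$, the rows $e^{Md\lambda_j}$ indexed by the coalescing parameters $d$, together with their natural Vandermonde denominator $V(d)$, form a divided difference. Taking the limit $d\to a_{s'}/(1+a_{s'})$ produces the successive derivatives
\[
\partial_d^{t-1} e^{Md\lambda_j}\Big|_{d=a_{s'}/(1+a_{s'})}=(M\lambda_j)^{t-1} e^{\frac{a_{s'}}{1+a_{s'}}M\lambda_j}=\left(\tfrac{1+a_{s'}}{a_{s'}}\right)^{t-1} p_t\!\left(\tfrac{a_{s'}}{1+a_{s'}} M\lambda_j\right),\quad t=1,\dots,r_{s'},
\]
where $p_t(x)=x^{t-1}e^x$. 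The $\lambda$-independent scalars $(\tfrac{1+a_{s'}}{a_{s'}})^{t-1}$ are absorbed into the overall normalization $C$, so the block of rows takes precisely the form $p_{t'}(\tfrac{a_{s'}}{1+a_{s'}}M\lambda_j)$ required by \eqref{eq:determinantal_representation_of_joint_pdf}. The residual prefactor $V(\lambda)\prod_j \lambda_j^{M-N}e^{-M\lambda_j}$ in \eqref{eq:determinantal_representation_of_joint_pdf} then comes from \eqref{eq:pdf_of_complex_Wishart_ensemble} once the single $V(\lambda)$ in the denominator of the bialternant (or HCIZ) formula cancels against one of the two factors of $V(\lambda)$ in \eqref{eq:pdf_of_complex_Wishart_ensemble}.

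The main obstacle is justifying the two limits cleanly: the degenerate HCIZ limit as $N-r$ eigenvalues of $I-\Sigma^{-1}$ collapse to $0$, and the further coalescence that produces the $p_t$ rows. Both ultimately rest on the classical divided-difference identity
\[
\lim_{b_1,\dots,b_p\to 0}\frac{\det(f(a_i+b_j))_{1\le i,j\le p}}{V(b)}=\det\!\left(\tfrac{1}{(j-1)!}f^{(j-1)}(a_i)\right)_{1\le i,j\le p},
\]
together with its multi-block variant; once it is invoked, the rest is algebraic bookkeeping of scalar constants that can be absorbed into $C$. In the Schur-expansion route the same bookkeeping appears as the $r$-row analogue of the single-row series manipulation done explicitly in \eqref{eq:raw_formula_for_Schur_sum}--\eqref{eq:joint_pdf_for_r_eq_1}, so no essentially new idea beyond the rank-$1$ case is required.
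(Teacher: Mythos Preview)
Your proposal is correct and takes essentially the same approach as the paper: apply the HCIZ formula to obtain the generic Wishart density with $\det(e^{Ml_i\lambda_j})$, then resolve the degeneracies (first the $N-r$ coalescing zeros, then the multiplicities among the spikes) by l'H\^opital/divided differences, absorbing scalar factors into $C$. The paper also remarks, as you do, that the Schur-expansion route generalizing the rank-$1$ computation would work but is more laborious.
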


Later in this thesis we will denote the second determinant in \eqref{eq:determinantal_representation_of_joint_pdf} by $\varV(\lambda)$. We will not give the inductive proof of proposition \ref{prop:determinantal_representation_of_joint_pdf} like the formula \eqref{eq:joint_pdf_for_r_eq_1} for the $r=1$ case, because it can be proven much easier by the Harish-Chandra-Itzykson-Zuber (HCIZ) formula, see e.g. \cite{Mehta04}:
\begin{lemma}[HCIZ formula]
Given two $N \times N$ Hermitian matrices $X$ and $Y$, each having distinct eigenvalues $x_1$, \dots, $x_N$ and $y_1$, \dots, $y_N$, we have
\begin{equation}
\int_{U(N)} e^{\Tr(XUYU^{-1})} dU = \frac{1}{C} \frac{\det(e^{x_iy_j})_{1 \leq i,j \leq N}}{V(x)V(y)}.
\end{equation}
\end{lemma}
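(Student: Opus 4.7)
The plan is the Itzykson--Zuber heat-equation proof. Set $I(X,Y) := \int_{U(N)} e^{\Tr(XUYU^{-1})}\, dU$. The substitutions $U \mapsto UV^{-1}$ and $U \mapsto VU$ (both Haar-invariant) show that $I(X,Y) = I(VXV^{-1}, Y) = I(X, VYV^{-1})$ for any $V \in U(N)$, so $I$ depends only on the eigenvalues $x = (x_1, \dots, x_N)$ of $X$ and $y = (y_1, \dots, y_N)$ of $Y$, separately symmetric in each. Differentiating under the integral sign with the flat Hermitian Laplacian $\Delta_X := \sum_{a,b} \partial^2/\partial X_{ab} \partial X_{ba}$ and using the elementary identity $\Delta_X e^{\Tr(XA)} = \Tr(A^2)\, e^{\Tr(XA)}$ with $A = UYU^{-1}$, so that $\Tr(A^2) = \sum_j y_j^2$, yields the eigenvalue equation
\[
\Delta_X I(X, Y) = \Bigl(\sum_j y_j^2\Bigr)\, I(X, Y),
\]
together with its $Y$-symmetric counterpart $\Delta_Y I = (\sum_i x_i^2)\, I$.

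Next I reduce $\Delta_X$ to its radial part. Using the standard identity, valid on symmetric functions of the eigenvalues and arising from the $V(x)^2$ Jacobian of the Weyl integration formula,
\[
\Delta_X g(x) = \frac{1}{V(x)} \sum_i \frac{\partial^2}{\partial x_i^2}\bigl(V(x)\, g(x)\bigr),
\]
I set $\tilde I(x; y) := V(x) V(y)\, I(x; y)$. Then $\tilde I$ is antisymmetric in $x$ and in $y$ (since $V$ is antisymmetric while $I$ is symmetric), and it satisfies the flat heat-type equations
\[
\sum_i \frac{\partial^2 \tilde I}{\partial x_i^2} = \Bigl(\sum_j y_j^2\Bigr) \tilde I, \qquad \sum_j \frac{\partial^2 \tilde I}{\partial y_j^2} = \Bigl(\sum_i x_i^2\Bigr) \tilde I.
\]
A direct computation shows the candidate $g(x; y) := \det(e^{x_i y_j})_{1 \le i, j \le N}$ satisfies exactly the same pair of PDEs and both antisymmetries: its Leibniz expansion $\sum_\sigma \operatorname{sgn}(\sigma)\, \exp\bigl(\sum_j x_{\sigma(j)} y_j\bigr)$ is a sum of joint eigenfunctions of the two Laplacians with the correct eigenvalues.

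It remains to show $\tilde I = C \cdot g$ for some $C = C(N)$, which will be absorbed into the paper's $1/C$. The main obstacle is precisely this uniqueness step, since the heat PDE alone admits many solutions. To nail it down I would invoke the Peter--Weyl / Schur--Weyl expansion of $I$: unitary invariance in both $X$ and $Y$, combined with Schur--Weyl duality, forces an expansion of the form $I(X, Y) = \sum_\kappa a_\kappa\, s_\kappa(x)\, s_\kappa(y) / s_\kappa(I_N)$ into diagonal Schur-function pairs (the same diagonal structure already displayed in the excerpt for the exponential sum), and the dual Cauchy identity yields a parallel diagonal expansion for $\det(e^{x_i y_j})/V(x) V(y)$; matching a single nonzero coefficient — for instance the $\kappa = \emptyset$ term at $x = y = 0$, using $I(0, 0) = 1$ — then fixes $C$. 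An elegant alternative is the Duistermaat--Heckman exact stationary-phase theorem applied to the conjugation action on the adjoint orbit of $Y$, whose isolated fixed points are the $N!$ permutations of the diagonalization of $Y$; the sum of the localized contributions reproduces the determinant in closed form.
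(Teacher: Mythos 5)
The paper does not prove this lemma at all; it cites \cite{Mehta04} and applies the formula as a black box, so any proof you supply is necessarily ``new'' relative to the text. Your outline is the classical Itzykson--Zuber heat-equation argument and is essentially sound: the conjugation invariance, the identity $\Delta_X e^{\Tr(XA)} = \Tr(A^2)e^{\Tr(XA)}$, the radial reduction $\Delta_X g = V(x)^{-1}\sum_i\partial_{x_i}^2(V(x) g)$ (which rests on the harmonicity of the Vandermonde, $\sum_i\partial_{x_i}^2 V = 0$), and the verification that $\det(e^{x_iy_j})$ is a joint eigenfunction of the two flat Laplacians are all correct. You also correctly flag the uniqueness step as the crux. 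However, your first proposed repair---matching Schur-function expansions---is self-undermining as a \emph{complement} to the PDE argument: if you expand $I(X,Y)$ (which the paper already does, via the $C_\kappa$ identity) and also expand $\det(e^{x_iy_j}) = V(x)V(y)\sum_\lambda s_\lambda(x)s_\lambda(y)/\prod_{l}(\lambda_l+N-l)!$ by Andr\'eief, then directly checking that the two sequences of coefficients coincide already gives a complete, purely algebraic proof that never touches a Laplacian. The uniqueness argument that genuinely pairs with the differential-equation step, and is the one Itzykson and Zuber used, is to insert a time parameter, promote the static eigenvalue equation to an honest heat equation $\partial_t = \tfrac12\sum_i\partial_{x_i}^2$, and identify the $t\to0^+$ initial data of $\tilde I$ and of $g$ as the same signed sum of point masses over the $N!$ permutations of $y$ (read off by Laplace's method on the orbit integral); uniqueness of the Cauchy problem for the heat equation then closes the gap. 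The Duistermaat--Heckman route is also legitimate and independent. As written, the proposal is correct in outline but leaves the one nontrivial step as an unchosen menu rather than a completed argument; pick one route and carry it through.
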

\begin{proof}[Sketch of proof of proposition \ref{prop:determinantal_representation_of_joint_pdf} by HCIZ formula]

By the HCIZ formula, we can simplify the joint \pdf\ \eqref{eq:pdf_of_complex_Wishart_ensemble} of generic Wishart ensemble (i.e., all population eigenvalues are distinct) as
\begin{equation} \label{eq:pdf_of_complex_Wishart_ensemble_generic}
\begin{split}
P(\lambda) = & \frac{1}{C}  V(\lambda) \det(e^{-M\sigma^{-1}_i\lambda_j})_{1 \leq i,j \leq N} \prod^N_{j=1} \lambda^{M-N}_j \\
= & \frac{1}{C(l_1, \dots, l_N)}  V(\lambda) \det(e^{Ml_i\lambda_j})_{1 \leq i,j \leq N} \prod^N_{j=1} \lambda^{M-N}_j e^{-M\lambda_j},
\end{split}
\end{equation}
where we denote $l_i = 1- \sigma^{-1}_i$, and we regard $\{ l_i \}$ as a set of parameters, so that the constant $C$ is a function of $l_i$'s. 

If $\sigma_i$'s are not distinct, which is equivalent to that $l_i$'s are not distinct, then $\det(e^{Ml_i\lambda_j})_{1 \leq i,j \leq N} = 0$, and heuristically, to make $P(\lambda)$ a \pdf\ whose total probability is $1$, $C(l_1, \dots, l_N)$ must be also $0$. In that case, formula \eqref{eq:pdf_of_complex_Wishart_ensemble_generic} becomes $\frac{0}{0}$, and we can apply l'H\^{o}pital's rule to these multiple $l_j$'s to get a reasonable formula. 

For example, if $l_1 = \dots = l_{N-r} = 0$ and $l_{N-r+1}$, \dots, $l_N$ are distinct numbers other than $0$, then we have
\begin{equation} \label{eq:pdf_of_complex_Wishart_ensemble_one_cluster}
\begin{split}
P(\lambda) = & \left. \frac{\varD \det(e^{Ml_i\lambda_j})_{1 \leq i,j \leq N}}{\varD C(l_1, \dots, l_N)} \right\rvert_{l_1 = \dots = l_{N-r} = 0} V(\lambda) \prod^N_{j=1} \lambda^{M-N}_j e^{-M\lambda_j} \\
= & \frac{
\begin{vmatrix}
1 & 1 & \dots & 1 \\
\lambda_1 & \lambda_2 & \dots & \lambda_N \\
\vdots & \vdots & \dots & \vdots \\
\lambda^{N-r-1}_1 & \lambda^{N-r-1}_2 & \dots & \lambda^{N-r-1}_N \\
e^{Ml_{N-r+1}\lambda_1} & e^{Ml_{N-r+1}\lambda_2} & \dots & e^{Ml_{N-r+1}\lambda_N} \\
\vdots & \vdots & \dots & \vdots \\
e^{Ml_N\lambda_1} & e^{Ml_N\lambda_2} & \dots & e^{Ml_N\lambda_N}
\end{vmatrix}}{\varD C(l_1, \dots, l_N)} V(\lambda) \prod^N_{j=1} \lambda^{M-N}_j e^{-M\lambda_j},
\end{split}
\end{equation}
where $\varD$ is the differential operator
\begin{equation}
\varD = \frac{\partial^{(N-r)(N-r-1)/2}}{\partial l_2 \partial l^2_3 \dots \partial l^{N-r-1}_{N-r}}.
\end{equation}
The formula \eqref{eq:pdf_of_complex_Wishart_ensemble_one_cluster} agrees with \eqref{eq:determinantal_representation_of_joint_pdf}, if we regard $\varD C(l_1, \dots, l_N)$ as the constant. For the case that $l_{N-r+1}$, \dots, $l_{N}$ are not distinct, we apply l'H\^{o}pital's rule also to them and still get \eqref{eq:determinantal_representation_of_joint_pdf}.
\end{proof}

\section{Multiple orthogonal Laguerre polynomials}

Our goal is to find the probability that the largest sample eigenvalue is less than a certain value, which is
\begin{equation} \label{eq:integral_of_pdf}
\begin{split}
\prob{P}(\max(\lambda)<T) = & \int^T_0 \dots \int^T_0 P(\lambda) d\lambda_1 \dots \lambda_N \\
= & \frac{1}{C} \int^T_0 \dots \int^T_0 V(\lambda)\varV(\lambda) \prod^N_{j=1} \lambda^{M-N}_j e^{-M\lambda_j} d\lambda_1 \dots \lambda_N.
\end{split}
\end{equation}
To evaluate the integral of determinants \eqref{eq:integral_of_pdf}, we change it into a determinant of integrals \cite{de_Bruijn55}:
\begin{lemma}[de Bruijn's] \label{lemma:de_Bruijn_for_complex}
For any two sets of functions $f_1$, \dots, $f_N$ and $g_1$, \dots, $g_N$ defined over $[a,b]$, we have
\begin{multline} \label{eq:de_Bruijn_for_complex}
\int^b_a \dots \int^b_a 
\begin{vmatrix}
f_1(x_1) & \dots & f_1(x_N) \\
\vdots & \dots & \vdots \\
f_N(x_1) & \dots & f_N(x_N)
\end{vmatrix}
\begin{vmatrix}
g_1(x_1) & \dots & g_1(x_N) \\
\vdots & \dots & \vdots \\
g_N(x_1) & \dots & g_N(x_N)
\end{vmatrix}
dx_1 \dots dx_N \\
= n! \det \left( \int^b_a f_i(x)g_j(x) dx \right)_{1 \leq i,j \leq N}.
\end{multline}
\end{lemma}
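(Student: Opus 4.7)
The plan is a classical two-step manipulation that reduces the integral of a product of two determinants to the determinant of the matrix of pairwise inner products. First I would expand $\det(g_i(x_j))$ by the Leibniz formula,
\begin{equation*}
\det(g_i(x_j))_{1 \leq i,j \leq N} = \sum_{\sigma \in S_N} \operatorname{sgn}(\sigma) \prod_{i=1}^N g_i(x_{\sigma(i)}),
\end{equation*}
interchange the finite sum with the integral, and for each $\sigma$ perform the change of integration variables $y_i = x_{\sigma(i)}$ (which has unit Jacobian, since it is just a relabeling of the $N$ dummy variables). Under this relabeling the product $\prod_i g_i(x_{\sigma(i)})$ becomes $\prod_i g_i(y_i)$, while the other factor $\det(f_i(x_j))$ becomes $\det(f_i(y_{\sigma^{-1}(j)}))$, which differs from $\det(f_i(y_j))$ by the sign $\operatorname{sgn}(\sigma^{-1}) = \operatorname{sgn}(\sigma)$ of the induced column permutation. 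The two sign factors cancel, so every $\sigma$ contributes the same integral; summing over $S_N$ produces the prefactor $N!$.

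Second, the surviving integral $\int_a^b \!\cdots\! \int_a^b \det(f_i(y_j)) \prod_{j=1}^N g_j(y_j)\, dy_1 \cdots dy_N$ is handled by absorbing $g_j(y_j)$ into the $j$-th column of the determinant via multilinearity, yielding $\det(f_i(y_j)\, g_j(y_j))$. Expanding once more by Leibniz and applying Fubini (each column now depends on only one of the integration variables) gives
\begin{equation*}
\sum_{\sigma \in S_N} \operatorname{sgn}(\sigma) \prod_{j=1}^N \int_a^b f_{\sigma(j)}(y)\, g_j(y)\, dy = \det\!\left( \int_a^b f_i(y) g_j(y)\, dy \right)_{1 \leq i,j \leq N},
\end{equation*}
which is exactly the right-hand side of \eqref{eq:de_Bruijn_for_complex}.

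The only point demanding care is the bookkeeping of the two permutation signs in the first step: one must keep straight whether $\sigma$ acts on rows or columns of each determinant and in which direction the relabeling is applied, so that the two factors of $\operatorname{sgn}(\sigma)$ multiply to $1$ rather than to $\operatorname{sgn}(\sigma)^2$ left inside the sum. Once this is pinned down, no further obstacle arises. An equivalent packaging via the Cauchy--Binet identity applied to the continuous ``product'' of $(f_i(x))$ and $(g_j(x))$ gives the same result in one line, but the direct index-bookkeeping above is the shortest fully self-contained derivation and the one I would write out.
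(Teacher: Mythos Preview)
Your proof is correct and is precisely the ``straightforward verification'' the paper alludes to without writing out: expand one determinant by Leibniz, relabel variables to collapse the permutation sum into a factor of $N!$, then use multilinearity and Fubini to recognize the remaining integral as $\det\bigl(\int_a^b f_i g_j\bigr)$. The paper offers no alternative argument, so there is nothing to compare.
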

The verification of the integral formula is straightforward.

Before the application of lemma \ref{lemma:de_Bruijn_for_complex}, we are going to do some preparation to \eqref{eq:determinantal_representation_of_joint_pdf}. Let $\varphi_0$, \dots, $\varphi_{N-1}$ be polynomials of degree $0$, \dots, $N-1$ respectively, i.e., $\varphi_j(x)$ is a linear combination of functions $1$, $x$, \dots, $x^{j}$, which are functions defining the first $j+1$ rows in the Vandermonde matrix $V(\lambda)$. Similarly, let $\varvarphi_j$ ($0 \leq j \leq N-1$) be a linear combination of the $j+1$ functions which define the first $j+1$ rows in $\varV(\lambda)$, so that $\varvarphi_0$, \dots, $\varvarphi_{N-r-1}$ are polynomials but $\varvarphi_{N-r}$, \dots, $\varvarphi_{N-1}$ are not. We require that for $0 \leq i,j \leq N-1$,
\begin{equation} \label{eq:orthonormality}
\int^{\infty}_0 \varphi_i(x)\varvarphi_j(x)x^{M-N}e^{-Mx} dx = \delta_{ij}.
\end{equation}
These orthogonal conditions cannot determine $\varphi_j$ and $\varvarphi_j$ uniquely, since we can multiply a constant $C$ to $\varphi_j$ and $1/C$ to $\varvarphi_j$. For $0 \leq j \leq N-r-1$, the conditions for $\varphi_j$ and $\varvarphi_j$ are symmetric, so $\varvarphi_j = C\varphi_j$, a constant multiple of $\varphi_j$, and \eqref{eq:orthonormality} gives
\begin{equation} 
\int^{\infty}_0 C\varphi_i(x)\varphi_j(x) x^{M-N}e^{-Mx} dx = \delta_{ij},
\end{equation}
so that we can choose for arbitrary $C_j$
\begin{align}
\varphi_j(x) = & C_j \Lag{j}(Mx), \label{eq:Laguerre_form_of_varphi} \\
\varvarphi_j(x) = & \frac{j! M^{M-N+1}}{(M-N+j)! C_j} \Lag{j}(Mx), \label{eq:Laguerre_form_of_varvarphi}
\end{align}
since
\begin{equation}
\int^{\infty}_0 \Lag{i}(x)\Lag{j}(x) x^{M-N}e^{-x} dx = \delta_{ij} \frac{(M-N+j)!}{j!}.
\end{equation}

For notational simplicity, we denote the inner product of two functions $f(x)$ and $g(x)$ on $[0, \infty)$ by $\langle \cdot, \cdot \rangle_2$:
\begin{equation}
\langle f(x), g(x) \rangle_2 = \int^{\infty}_0 f(x)g(x) x^{M-N}e^{-Mx} dx.
\end{equation}

For $j \geq N-r$, we index $\varphi_j$ and $\varvarphi_j$ by a triple $(r', s', t')$, which appears in the statement of proposition \ref{prop:determinantal_representation_of_joint_pdf}, and is defined as
\begin{equation} \label{eq:definition_of_index_trituple}
r' \in \{ 1, \dots, r \}, \quad r' = \left( \sum^{s'-1}_{j=1} r_j \right) +t', \quad 1 \leq t' \leq r_{s'}.
\end{equation}
It is Bleher and Kuijlaars' observation that \cite{Bleher-Kuijlaars05}
\begin{prop}
Given $(r', s', t')$ defined as \eqref{eq:definition_of_index_trituple}, we have
\begin{multline} \label{eq:definition_of_varphi}
\varphi_{N-r+r'-1}(x) = \frac{C_{r'}}{x^{M-N}2\pi i} \\
\oint_{\Sigma} e^{Mxz} \frac{(z-1)^{N-r}}{z^{M-r+r'}} \left( \prod^{s'-1}_{j=1} \left( z - \frac{1}{1+a_j} \right)^{r_j} \right) \left( z - \frac{1}{1+a_{s'}} \right)^{t'-1} dz,
\end{multline}
and
\begin{multline} \label{eq:definition_of_varvarphi}
\varvarphi_{N-r+r'-1}(x) = \frac{Me^{Mx}}{(1+a_{s'})C_{r'}2\pi i} \\
\oint_{\Gamma} e^{-Mxz} \frac{z^{M-r+r'-1}}{(z-1)^{N-r}} \left( \prod^{s'-1}_{j=1} \left( z - \frac{1}{1+a_j} \right)^{-r_j} \right) \left( z - \frac{1}{1+a_{s'}} \right)^{-t'} dz,
\end{multline}
where $C_{r'}$ is an arbitrary constant, $\Sigma$ is a contour around $z=0$ and is to the left of the points $z=1$ and $z=\frac{1}{1+a_j}$ ($j = 1, \dots, s'$), and $\Gamma$ is a contour containing the points $z=1$ and $z=\frac{1}{1+a_j}$ ($j = 1, \dots, s'$), and is to the right of $z=0$.
\end{prop}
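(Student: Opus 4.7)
The plan is to verify, via residue calculus on the integrands in \eqref{eq:definition_of_varphi}--\eqref{eq:definition_of_varvarphi}, that $\varphi_{N-r+r'-1}$ is a polynomial of the claimed degree, that $\varvarphi_{N-r+r'-1}$ lies in the claimed span, and that the biorthogonality \eqref{eq:orthonormality} holds. For the structural claim on $\varphi_{N-r+r'-1}$, the integrand of \eqref{eq:definition_of_varphi} has its only pole inside $\Sigma$ at $z=0$ of order $M-r+r'$; its residue equals $x^{M-N}$ multiplied by the coefficient of $z^{M-r+r'-1}$ in $e^{Mxz}(z-1)^{N-r}\Pi_1(z)$, with $\Pi_1(z)=\prod_{j<s'}(z-\tfrac{1}{1+a_j})^{r_j}(z-\tfrac{1}{1+a_{s'}})^{t'-1}$, yielding after division by $x^{M-N}$ a polynomial in $x$ of exact degree $N-r+r'-1$. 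For $\varvarphi_{N-r+r'-1}$, deforming $\Gamma$ into small loops around the enclosed poles $z=1$ (order $N-r$), $z=\tfrac{1}{1+a_j}$ for $j<s'$ (order $r_j$), and $z=\tfrac{1}{1+a_{s'}}$ (order $t'$), the residues contribute polynomials of degree $\leq N-r-1$, $\leq r_j-1$, and $\leq t'-1$ respectively, each multiplied by the corresponding exponential $e^{-Mx\cdot z_0}$; once combined with the prefactor $e^{Mx}$ these are precisely the first $N-r+r'$ functions defining $\varV(\lambda)$.

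The key step is the biorthogonality check for $i=N-r+r'-1$, $j=N-r+s''-1$ both $\geq N-r$. Substituting both representations into $\langle\varphi_i,\varvarphi_j\rangle_2$ and interchanging integrations, the $x$ integral evaluates to $1/(M(z_2-z_1))$, valid since $\Sigma$ may be chosen so that $\operatorname{Re}z_1<\operatorname{Re}z_2$ for all $z_2\in\Gamma$. The inner product then becomes a rational double contour integral; integrating over $z_1$ first and using that the $z_1$-integrand decays like $z_1^{N-M-2}$ at infinity, the global vanishing of residues on the Riemann sphere equates the $z_1$ integral to $-2\pi i$ times the residue at $z_1=z_2$, and the inner product collapses to
\begin{equation*}
\langle\varphi_i,\varvarphi_j\rangle_2=\frac{C_{r'}}{(1+a_{\sigma'})C_{s''}\cdot 2\pi i}\oint_\Gamma\frac{\Pi_1(z_2)\Pi_2(z_2)}{z_2^{r'-s''+1}}\,dz_2,
\end{equation*}
where $\Pi_2(z)=\prod_{j<\sigma'}(z-\tfrac{1}{1+a_j})^{-r_j}(z-\tfrac{1}{1+a_{\sigma'}})^{-\tau'}$ and the $(z_2-1)^{N-r}$ factors have cancelled. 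A short case analysis on $(s',t')$ versus $(\sigma',\tau')$ then shows vanishing for every $i\neq j$: if $s'>\sigma'$, or $s'=\sigma'$ with $t'>\tau'$, the integrand's only singularity sits at $z_2=0\notin\Gamma$; if $s'<\sigma'$ all poles lie inside $\Gamma$ but the integrand decays as $z_2^{-2}$, so the residue sum vanishes by the infinity argument; if $s'=\sigma'$ with $t'<\tau'$ the single pole inside $\Gamma$ has residue zero by degree counting. At $i=j$ the integrand reduces to $\bigl(z_2(z_2-\tfrac{1}{1+a_{s'}})\bigr)^{-1}$, whose residue at $\tfrac{1}{1+a_{s'}}$ equals $1+a_{s'}$, giving $\langle\varphi_i,\varvarphi_i\rangle_2=1$ after the $C_{r'}$'s cancel.

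The remaining mixed cases, where exactly one of $i,j$ lies below $N-r$, reduce to monomial orthogonality: the contour integral for $\varphi_{N-r+r'-1}$ paired with a monomial $x^k$ ($k<N-r$) collapses to the residue at $z=0$ of $(z-1)^{N-r-k-1}\Pi_1(z)/z^{M-r+r'}$, which vanishes because the numerator has degree $N-r+r'-k-2<M-r+r'-1$ using $M\geq N$; the symmetric case uses the $\varvarphi$ contour integral paired with a Laguerre polynomial, which after performing the $x$ integral yields a rational $z_2$-integrand decaying at infinity and having no poles outside $\Gamma$, so that its contour integral vanishes. The main obstacle is the case analysis in the second paragraph, in particular the bookkeeping required to see that the signed exponents of the $(z-\tfrac{1}{1+a_j})$ factors in $\Pi_1(z_2)\Pi_2(z_2)$ combine across every ordering of $(s',t')$ versus $(\sigma',\tau')$ to force vanishing off-diagonal and produce the correct normalization on the diagonal.
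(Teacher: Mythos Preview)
Your proof is correct. The structural checks on $\varphi_{N-r+r'-1}$ and $\varvarphi_{N-r+r'-1}$, and the monomial orthogonality computations for the mixed cases, track the paper's sketch essentially line for line.

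The genuine difference is how you handle $\langle\varphi_i,\varvarphi_j\rangle_2$ when both $i,j\geq N-r$, including the diagonal normalization. The paper never substitutes both contour integrals simultaneously: for the off-diagonal vanishing it expands $\varvarphi_j$ in the row-defining basis $1,x,\dots,x^{N-r-1},p_1(\tfrac{a_1}{1+a_1}Mx),\dots$ via \eqref{eq:expanssion_of_varvarphi} and checks $\langle\varphi_i,\cdot\rangle_2=0$ against each basis function separately (so it needs the extra computation $\langle\varphi_i,p_{t}(\tfrac{a_j}{1+a_j}Mx)\rangle_2=0$, which you avoid); for the diagonal it first extracts the leading coefficient of $\varphi_i$ from the residue at $z=0$, uses the already-established monomial orthogonality of $\varvarphi_i$ to replace $\varphi_i$ by its top monomial, and then evaluates the resulting single contour integral by a difference-of-contours trick ($\Gamma'$ enclosing all poles including $0$, minus $\Gamma''$ enclosing only $0$). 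Your route---performing the $x$-integral to produce the Cauchy kernel $1/(z_2-z_1)$, then collapsing the $z_1$-integral to the residue at $z_1=z_2$ via the $z_1^{N-M-2}$ decay at infinity---is more uniform: a single mechanism handles every pair with $i,j\geq N-r$, and the case analysis on $(s',t')$ versus $(\sigma',\tau')$ yields the off-diagonal zeros and the diagonal value $1$ in one stroke. This is the standard Bleher--Kuijlaars style computation and is arguably cleaner; the paper's route is more elementary in that it never manipulates two contour variables at once, at the cost of the somewhat ad hoc leading-term-plus-contour-split argument for the normalization.
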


\begin{proof}[Sketch of proof]
By the residue theorem, we can check that $\varphi_{N-r+r'-1}(x)$ defined in \eqref{eq:definition_of_varphi} is a polynomial of degree $N-r+r'-1$, and $\varvarphi_{N-r+r'-1}(x)$ defined in \eqref{eq:definition_of_varvarphi} is also in the correct form: For any $r'$,
\begin{multline} \label{eq:expanssion_of_varvarphi}
\varvarphi_{N-r+r'-1}(x) = c_01 + c_1 x + \dots + c_{N-r-1}x^{N-r-1} \\
+ c_{N-r}p_1 \left( \frac{a_1}{1+a_1}Mx \right) + \dots + c_{N-r+r'-1}p_{t'} \left( \frac{a_{s'}}{1+a_{s'}}Mx \right),
\end{multline}
where the $j$-th term is a constant $c_j$ times the function that defines the $j$-th row in the determinant in \eqref{eq:determinantal_representation_of_joint_pdf}. Then we can check that for $k = 0, \dots, N-r-1$,
\begin{equation}
\begin{split}
& \langle \varphi_{N-r+r'-1}(x), x^k \rangle_2 \\
= & \int^{\infty}_0 x^k \frac{C_r'}{2\pi i} \oint_{\Sigma} e^{Mx(z-1)} \frac{(z-1)^{N-r}}{z^{M-r+r'}} \left( \prod^{s'-1}_{j=1} \left( z - \frac{1}{1+a_j} \right)^{r_j} \right) \left( z - \frac{1}{1+a_{s'}} \right)^{t'-1} dzdx \\
= & \frac{C_{r'}}{2\pi i} \oint_{\Sigma} \left( \int^{\infty}_0 x^ke^{Mx(z-1)} dx \right) \frac{(z-1)^{N-r}}{z^{M-r+r'}} \left( \prod^{s'-1}_{j=1} \left( z - \frac{1}{1+a_j} \right)^{r_j} \right) \left( z - \frac{1}{1+a_{s'}} \right)^{t'-1} dz \\
= & \frac{C_{r'}}{2\pi i} \oint_{\Sigma} \frac{k!}{(M(1-z))^{k+1}} \frac{(z-1)^{N-r}}{z^{M-r+r'}} \left( \prod^{s'-1}_{j=1} \left( z - \frac{1}{1+a_j} \right)^{r_j} \right) \left( z - \frac{1}{1+a_{s'}} \right)^{t'-1} dz \\
= & \frac{k!C_{r'}}{(-M)^{k+1}2\pi i} \oint_{\Sigma} (z-1)^{N-r-k-1} \left( \prod^{s'-1}_{j=1} \left( z - \frac{1}{1+a_j} \right)^{r_j} \right) \left( z - \frac{1}{1+a_{s'}} \right)^{t'-1} \frac{dz}{z^{M-r+r'}},
\end{split}
\end{equation}
which is $0$ by the residue theorem. Similarly, we can prove for $j \leq s'-1$, $i \leq r_j$ or $j=s'$, $i<t'$, using \eqref{eq:definition_of_pj}, that
\begin{equation}
\left\langle \varphi_{N-r+r'-1}(x), p_i\left( \frac{a_j}{1+a_j}Mx \right) \right\rangle_2 = 0,
\end{equation}
so that by linearity, for $0 \leq j < N-r+r'-1$, using \eqref{eq:expanssion_of_varvarphi}, we have
\begin{equation}
\langle \varphi_{N-r+r'-1}(x), \varvarphi_j(x) \rangle_2 = 0,
\end{equation}
In the same way, we can check that for $0 \leq j < N-r+r'-1$,
\begin{align}
\langle x^j, \varvarphi_{N-r+r'-1}(x) \rangle_2 = & 0, \label{eq:orthogonality_of_monomial_with_varvarphi} \\
\langle \varphi_j(x), \varvarphi_{N-r+r'-1}(x) \rangle_2 = & 0.
\end{align}

We can compute the leading term of $\varphi_{N-r+r'-1}(x)$, i.e., the $x^{N-r+r'-1}$ term, from \eqref{eq:definition_of_varphi}, which is
\begin{multline} \label{eq:leading_term_of_varphi}
\frac{C_{r'}}{x^{M-N}2\pi i} \oint_{\Sigma} \frac{(Mxz)^{M-r+r'-1}}{(M-r+r'-1)!} \frac{(z-1)^{N-r}}{z^{M-r+r'}} \\
\left( \prod^{s'-1}_{j=1} \left( z - \frac{1}{1+a_j} \right)^{r_j} \right) \left( z - \frac{1}{1+a_{s'}} \right)^{t'-1} dz = \\
(-1)^{N-r+r'-1} \frac{M^{M-r+r'-1}C_{r'}}{(M-r+r'-1)!} \left( \prod^{s'-1}_{j=1} \frac{1}{(1+a_j)^{r_j}} \right) \frac{1}{(1+a_{s'})^{t'-1}}x^{N-r+r'-1}.
\end{multline}
Therefore by \eqref{eq:orthogonality_of_monomial_with_varvarphi} and \eqref{eq:leading_term_of_varphi}, if we denote $\tilde{C}_{r'} = \left( \prod^{s'-1}_{j=1} \frac{1}{(1+a_j)^{r_j}} \right) \frac{1}{(1+a_{s'})^{t'}}$,
\begin{equation}
\begin{split}
& \langle \varphi_{N-r+r'-1}(x), \varvarphi_{N-r+r'-1}(x) \rangle_2 \\
= & \left\langle (-1)^{N-r+r'-1} \frac{(-M)^{M-r+r'-1}C_{r'}}{(M-r+r'-1)!} \tilde{C}_{r'-1} x^{N-r+r'-1}, \varvarphi_{N-r+r'-1}(x) \right\rangle_2 \\
= & (-1)^{N-r+r'-1} \frac{M^{M-r+r'}}{(M-r+r'-1)!}\frac{\tilde{C}_{r'-1}}{1+a_{s'}} \int^{\infty}_0 x^{M-r+r'-1} \\ 
& \frac{1}{2\pi i} \oint_{\Gamma} e^{-Mxz} \frac{z^{M-r+r'-1}}{(z-1)^{N-r}} \left( \prod^{s'-1}_{j=1} \left( z - \frac{1}{1+a_j} \right)^{-r_j} \right) \left( z - \frac{1}{1+a_{s'}} \right)^{-t'} dzdx \\
= & (-1)^{N-r+r'-1} \frac{M^{M-r+r'}\tilde{C}_{r'}}{(M-r+r'-1)!} \frac{1}{2\pi i} \oint_{\Gamma} \left( \int^{\infty}_0 x^{M-r+r'-1}e^{-Mxz} dx \right) \\
& \frac{z^{M-r+r'-1}}{(z-1)^{N-r}} \left( \prod^{s'-1}_{j=1} \left( z - \frac{1}{1+a_j} \right)^{-r_j} \right) \left( z - \frac{1}{1+a_{s'}} \right)^{-t'} dz \\
= & (-1)^{N-r+r'-1}\tilde{C}_{r'} \frac{1}{2\pi i} \oint_{\Gamma} \frac{1}{z(z-1)^{N-r}} \left( \prod^{s'-1}_{j=1} \left( z - \frac{1}{1+a_j} \right)^{-r_j} \right) \left( z - \frac{1}{1+a_{s'}} \right)^{-t'} dz \\
= & (-1)^{N-r+r'-1}\tilde{C}_{r'} \frac{1}{2\pi i} \left[ \oint_{\Gamma'} \frac{1}{z(z-1)^{N-r}} \left( \prod^{s'-1}_{j=1} \left( z - \frac{1}{1+a_j} \right)^{-r_j} \right) \left( z - \frac{1}{1+a_{s'}} \right)^{-t'} dz \right. \\
& \left. -\oint_{\Gamma''} \frac{1}{z(z-1)^{N-r}} \left( \prod^{s'-1}_{j=1} \left( z - \frac{1}{1+a_j} \right)^{-r_j} \right) \left( z - \frac{1}{1+a_{s'}} \right)^{-t'} dz \right],
\end{split}
\end{equation}
where $\Gamma'$ is a contour including $0$, $1$ and $\frac{1}{1+a_j}$ ($j = 1, \dots, s$), and $\Gamma''$ is a contour including $0$ and excluding $1$ and $\frac{1}{1+a_j}$. We can deform $\Gamma'$ to infinity to see that
\begin{equation}
\frac{1}{2\pi i} \oint_{\Gamma'} \frac{1}{z(z-1)^{N-r}} \left( \prod^{s'-1}_{j=1} \left( z - \frac{1}{1+a_j} \right)^{-r_j} \right) \left( z - \frac{1}{1+a_{s'}} \right)^{-t'} dz = 0,
\end{equation}
and by the residue theorem see that
\begin{equation}
\frac{1}{2\pi i} \oint_{\Gamma''} \frac{1}{z(z-1)^{N-r}} \left( \prod^{s'-1}_{j=1} \left( z - \frac{1}{1+a_j} \right)^{-r_j} \right) \left( z - \frac{1}{1+a_{s'}} \right)^{-t'} dz = \frac{(-1)^{N-r+r'}} {\tilde{C}_{r'}},
\end{equation}
and we check that $\langle \varphi_{N-r+r'-1}(x), \varvarphi_{N-r+r'-1}(x) \rangle_2 = 1$. 
\end{proof}

Although the integral formulas for $\varphi_{N-r+r'-1}(x)$ and $\varvarphi_{N-r+r'-1}(x)$ seems strange, if we compare $\varvarphi_{N-r+r'-1}(x)$ with the integral representation of Laguerre polynomials \cite{Szego75}
\begin{equation} \label{eq:first_integral_formula_of_Laguerre}
\Lag{j}(Mx) = \frac{e^{Mx}}{2\pi i} \oint_{\Gamma} e^{-Mxz} \frac{z^{M-N+j}}{(z-1)^{j+1}} dz,
\end{equation}
and $\varphi_{N-r+r'-1}(x)$ with another integral representation of Laguerre polynomials
\begin{equation} \label{eq:second_integral_formula_of_Laguerre}
\Lag{j}(Mx) = \frac{(M-N+j)!}{j!M^{M-N}} \frac{1}{x^{M-N}2\pi i} \oint_{\Sigma} e^{Mxz} \frac{(z-1)^j}{z^{M-N+j+1}} dz,
\end{equation}
and find that they are variations of Laguerre polynomials. The $\varphi_{N-r+r'-1}(x)$'s are called multiple Laguerre polynomials of type II, and the $\varvarphi_{N-r+r'-1}(x)$'s are equivalent to---although they are not polynomials---multiple Laguerre polynomials of type I, by Bleher and Kuijlaars \cite{Bleher-Kuijlaars05}.

We know that
\begin{equation}
V(\lambda) = \frac{1}{C} 
\begin{vmatrix}
\varphi_0(\lambda_1) & \dots & \varphi_0(\lambda_N) \\
\vdots & \dots & \vdots \\
\varphi_{N-1}(\lambda_1) & \dots & \varphi_{N-1}(\lambda_N)
\end{vmatrix},
\end{equation}
and
\begin{equation}
\varV(\lambda) = \frac{1}{C} 
\begin{vmatrix}
\varvarphi_0(\lambda_1) & \dots & \varvarphi_0(\lambda_N) \\
\vdots & \dots & \vdots \\
\varvarphi_{N-1}(\lambda_1) & \dots & \varvarphi_{N-1}(\lambda_N)
\end{vmatrix},
\end{equation}
so that we can write the formula \eqref{eq:determinantal_representation_of_joint_pdf} for the joint \pdf\ as
\begin{equation}
P(\lambda) = \frac{1}{C} 
\begin{vmatrix}
\varphi_0(\lambda_1) & \dots & \varphi_0(\lambda_N) \\
\vdots & \dots & \vdots \\
\varphi_{N-1}(\lambda_1) & \dots & \varphi_{N-1}(\lambda_N)
\end{vmatrix}
\begin{vmatrix}
\varvarphi_0(\lambda_1) & \dots & \varvarphi_0(\lambda_N) \\
\vdots & \dots & \vdots \\
\varvarphi_{N-1}(\lambda_1) & \dots & \varvarphi_{N-1}(\lambda_N)
\end{vmatrix}
\prod^N_{j=1} \lambda^{M-N}_j e^{-M\lambda_j},
\end{equation}
and by formulas \eqref{eq:integral_of_pdf} and \eqref{eq:de_Bruijn_for_complex}, we have
\begin{equation} \label{eq:intermediate_determinantal_formula}
\prob{P}(\max(\lambda) < T) = \frac{1}{C} \det \left( \int^T_0 \varphi_i(x)\varvarphi_j(x) x^{M-N}e^{-Mx} dx \right)_{0 \leq i,j \leq N-1},
\end{equation}
where we choose the $f_i$ and $g_j$ in \eqref{eq:de_Bruijn_for_complex} to be
\begin{align}
f_i(x) = & \varphi_{i-1}(x) x^{\frac{M-N}{2}}e^{-\frac{M}{2}x}, \\
g_j(x) = & \varvarphi_{j-1}(x) x^{\frac{M-N}{2}}e^{-\frac{M}{2}x}.
\end{align}

By \eqref{eq:orthonormality}, we have
\begin{equation}
\int^T_0 \varphi_i(x)\varvarphi_j(x) x^{M-N}e^{-Mx} dx = \delta_{ij} - \int^{\infty}_T \varphi_i(x)\varvarphi_j(x) x^{M-N}e^{-Mx} dx,
\end{equation}
and so
\begin{multline} \label{eq:identity_of_determinants}
\det \left( \int^T_0 \varphi_i(x)\varvarphi_j(x) x^{M-N}e^{-Mx} dx \right)_{0 \leq i,j \leq N-1} = \\
\det \left( I_N - \left( \int^{\infty}_T \varphi_i(x)\varvarphi_j(x) x^{M-N}e^{-Mx} dx \right)_{0 \leq i,j \leq N-1} \right).
\end{multline}
To evaluate the determinant in \eqref{eq:identity_of_determinants}, we apply the formula
\begin{multline} \label{eq:changed_determinantal_formula_for_complex}
\det \left( I_N - \left( \int^b_a f_i(x)g_j(x) dx \right)_{1 \leq i,j \leq N} \right) = \\
\det \left( I - \chi_{[a,b]}(x) \left( \sum^N_{j=1} f_j(x)g_j(y) \right) \chi_{[a,b]}(y) \right),
\end{multline}
where $\chi_{[a,b]}$ is the indicator function for the interval $[a,b]$, and the latter determinant is the Fredholm determinant for integral operator. Since the integral kernel on the left hand side of \eqref{eq:changed_determinantal_formula_for_complex} is of finite rank, we can check the identity directly. Consider the parametrized determinant
\begin{equation} \label{eq:graded_determinant_formula}
\det \left( I_N - t\left( \int^b_a f_i(x)g_j(x) dx \right)_{1 \leq i,j \leq N} \right) = 1+ c_1t + c_2t^2 + \dots + c_Nt^N,
\end{equation}
we can compute the coefficients degree by degree ($K(x,y) = \sum^N_{j=1} f_j(x)g_j(y)$):
\begin{align}
c_1 = & -\sum^N_{j=1} \int^b_a f_j(x)g_j(x) dx = -\int^b_a K(x_1, x_1) dx_1, \\
c_2 = & \sum^N_{i,j=1} \left( \int^b_a f_i(x)g_i(x) dx \int^b_a f_j(x)g_j(x) dx - \int^b_a f_i(x)g_j(x) dx \int^b_a f_j(x)g_i(x) dx \right) \notag \\
= & \frac{1}{2!} \int^b_a\int^b_a 
\begin{vmatrix}
K(x_1, x_1) & K(x_1, x_2) \\
K(x_2, x_1) & K(x_2, x_2)
\end{vmatrix} dx_1dx_2, \\
& \dots \dots \dots \notag
\end{align}
and we find the right-hand side of \eqref{eq:graded_determinant_formula} is the same as the right-hand side of \eqref{eq:definition_of_Fredholm_determinant} for $t=1$.

Now using \eqref{eq:intermediate_determinantal_formula}, \eqref{eq:identity_of_determinants} and \eqref{eq:changed_determinantal_formula_for_complex} we get the Fredholm determinantal formula for the probability that the largest eigenvalue is less than $T$: ($\chi(x)$ means $\chi_{[T,\infty)}(x)$)
\begin{multline}
\prob{P}(\max(\lambda) < T) = \\
\frac{1}{C} \det \left( I - \chi(x) \left( \sum^{N-1}_0 \varphi_j(x)\varvarphi_j(y)x^{\frac{M-N}{2}}y^{\frac{M-N}{2}}e^{-\frac{M}{2}(x+y)} \right) \chi(x) \right).
\end{multline}
Furthermore, we can determine the constant $C=1$, since letting $T \rightarrow \infty$, $\prob{P}(\max(\lambda) < T) \rightarrow 1$ and the Fredholm determinant on the right hand side also approaches $1$. To undertake real calculations, we need methods to evaluate of the Fredholm determinant other than the formula \eqref{eq:definition_of_Fredholm_determinant}. First, we have the result on taking the limit, see, e.g. \cite{Lax02}:
\begin{prop} \label{prop:trace_norm_convergence_implies_det_convergence}
If a series of integral operators $K_n$ approaches $K$ in trace norm, then
\begin{equation}
\lim_{n \rightarrow \infty} \det(I - K_n) = \det(I - K).
\end{equation}
\end{prop}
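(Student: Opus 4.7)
The plan is to derive the Lipschitz-type bound
$$|\det(I - A) - \det(I - B)| \leq \|A - B\|_1 \exp(\|A\|_1 + \|B\|_1 + 1),$$
from which the proposition follows immediately: since $K_n \to K$ in trace norm, the sequence $\|K_n\|_1$ is eventually bounded by some constant $C$, and therefore $|\det(I - K_n) - \det(I - K)| \leq \|K_n - K\|_1 \, e^{2C + 1} \to 0$.

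To prove the Lipschitz bound, I would first invoke the Plemelj--Smithies expansion
$$\det(I - K) = \sum_{n=0}^\infty (-1)^n \Tr(\Lambda^n K),$$
where $\Lambda^n K$ denotes the $n$-th exterior power of $K$ acting on the antisymmetric tensor space. The Hadamard-type estimate $\|\Lambda^n K\|_1 \leq \|K\|_1^n / n!$ guarantees absolute convergence of the series and yields the companion bound $|\det(I - K)| \leq \exp(\|K\|_1)$.

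Next, a telescoping identity in the exterior algebra writes $\Lambda^n A - \Lambda^n B$ as a sum of $n$ terms, each containing exactly one factor of $A - B$ and $n - 1$ factors chosen from $A$ or $B$. Submultiplicativity of the trace norm then yields the term-by-term estimate
$$|\Tr(\Lambda^n A) - \Tr(\Lambda^n B)| \leq \frac{1}{(n-1)!} \|A - B\|_1 \max(\|A\|_1, \|B\|_1)^{n-1}.$$
Feeding this into the Plemelj--Smithies series for $\det(I - A) - \det(I - B)$ and summing over $n$ produces the desired Lipschitz estimate.

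The one piece that requires genuine care, rather than formal manipulation, is the rigorous construction of exterior powers of a trace class operator on an infinite-dimensional Hilbert space together with the Hadamard-type bound on $\|\Lambda^n K\|_1$. This is standard functional-analytic material, treated in Gohberg--Krein and in Simon's \emph{Trace Ideals and Their Applications}, and I would simply quote it rather than reprove it in the thesis.
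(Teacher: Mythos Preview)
Your proposal is correct and in fact goes well beyond what the paper does: the paper gives no proof at all, simply citing Lax's functional analysis textbook as a reference for this standard fact. Your outline via the Plemelj--Smithies expansion and the Lipschitz bound $|\det(I-A)-\det(I-B)|\le \|A-B\|_1\,e^{\|A\|_1+\|B\|_1+1}$ is one of the standard routes (it is essentially the argument in Simon's \emph{Trace Ideals}, which you mention). One small remark: the telescoping step is cleanest if you pass through the full tensor power, writing $A^{\otimes n}-B^{\otimes n}=\sum_{k=0}^{n-1}A^{\otimes k}\otimes(A-B)\otimes B^{\otimes(n-1-k)}$ and then sandwiching by the antisymmetrizer $P_n$; the factorial in your displayed bound comes from the $1/n!$ built into $\Tr(\Lambda^n K)$ relative to $\Tr(P_n K^{\otimes n})$, so be careful to track that normalization consistently. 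But this is bookkeeping, not a gap.
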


Since as $M \rightarrow \infty$, we expect that the fluctuation scale of the largest eigenvalue shrinks, depending on $M$. We will consider for any $M$ the probability $\prob{P}(\max(\lambda) < p+qT)$, where $p$ and $q$ may depend on $M$. (It turns out that $p$ is a constant.) If we denote
\begin{align}
K_{2a}(x,y) = & \sum^{N-r-1}_{j=0} \varphi_j(x)\varvarphi_j(y)x^{\frac{M-N}{2}}y^{\frac{M-N}{2}}e^{-\frac{M}{2}(x+y)}, \label{eq:raw_form_of_K_2a} \\
K_{2b}(x,y) = & \sum^{N-1}_{j=N-r} \varphi_j(x)\varvarphi_j(y)x^{\frac{M-N}{2}}y^{\frac{M-N}{2}}e^{-\frac{M}{2}(x+y)}, \label{eq:raw_form_of_K_2b} \\
K_2(x,y) = & K_{2a}(x,y) + K_{2b}(x,y), \\
\intertext{and}
\varK_*(\xi, \eta) = & qK_*(p+q\xi,p+q\eta), \label{eq:relation_of_original_and_scaled_kernels}
\end{align}
where $*$ stands for $2$, $2a$ or $2b$, then we have the determinantal formula
\begin{equation}
\begin{split}
\prob{P}(\max(\lambda) < p+qT) = & \det \left( I - \chi_{[p+qT, \infty)}(x) K_2(x,y) \chi_{[p+qT, \infty)}(y) \right) \\ 
= & \det \left( I - \chi(\xi) \varK_2(\xi,\eta) \chi(\eta) \right).
\end{split}
\end{equation}

$K_{2a}(x,y)$, which is composed of Laguerre polynomials, is the kernel for the LUE, and we can write it in an integral form:
\begin{theorem}
\begin{multline} \label{eq:integral_form_of_Airy_kernel}
K_{2a}(x,y) = -M^2 \frac{y^{\frac{M-N}{2}} e^{\frac{M}{2}y}}{x^{\frac{M-N}{2}} e^{\frac{M}{2}x}} \int^{\infty}_0 \left( \frac{1}{2\pi i}\oint_{\Gamma} e^{-M(y+t)z} \frac{z^{M-r}}{(z-1)^{N-r}} dz \right) \\
\left( \frac{1}{2\pi i}\oint_{\Sigma} e^{M(x+t)w} \frac{(w-1)^{N-r}}{w^{M-r}} dw \right) dt.
\end{multline}
\end{theorem}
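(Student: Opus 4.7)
The plan is to reduce the finite sum defining $K_{2a}(x,y)$ to a double contour integral, eliminate the $j$-dependence by summing a geometric series, and then introduce the auxiliary variable $t$ via a Laplace transform. The conceptual core is the Bleher--Kuijlaars device of converting a partial Christoffel--Darboux kernel into the integral form needed for steepest descent.

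First, for $0\le j\le N-r-1$ the multiple orthogonal polynomials reduce to ordinary Laguerre polynomials by \eqref{eq:Laguerre_form_of_varphi} and \eqref{eq:Laguerre_form_of_varvarphi}, so the arbitrary constants $C_j$ cancel in the product and
$$\varphi_j(x)\varvarphi_j(y)=\frac{j!\,M^{M-N+1}}{(M-N+j)!}\,\Lag{j}(Mx)\,\Lag{j}(My).$$
Substituting \eqref{eq:second_integral_formula_of_Laguerre} for $\Lag{j}(Mx)$ and \eqref{eq:first_integral_formula_of_Laguerre} for $\Lag{j}(My)$ eliminates the combinatorial prefactor exactly and yields
$$\varphi_j(x)\varvarphi_j(y)=\frac{M\,e^{My}}{x^{M-N}}\cdot\frac{1}{(2\pi i)^2}\oint_{\Sigma}\oint_{\Gamma}e^{Mxw-Myz}\frac{(w-1)^j\,z^{M-N+j}}{w^{M-N+j+1}(z-1)^{j+1}}\,dz\,dw.$$

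Next, the $j$-dependence is now purely through $\rho^j$ with $\rho=(w-1)z/(w(z-1))$. Using $1-\rho=(z-w)/(w(z-1))$ one computes
$$\sum_{j=0}^{N-r-1}\frac{(w-1)^j\,z^{M-N+j}}{w^{M-N+j+1}(z-1)^{j+1}}=\frac{z^{M-N}}{w^{M-N}(z-w)}-\frac{(w-1)^{N-r}\,z^{M-r}}{w^{M-r}(z-w)(z-1)^{N-r}}.$$
The first ("free") piece contributes zero: for every fixed $w\in\Sigma$, the function $e^{-Myz}z^{M-N}/(z-w)$ is analytic inside $\Gamma$ (the only potential pole $z=w$ lies outside $\Gamma$, and $z^{M-N}$ is a polynomial because $M\ge N$), so Cauchy's theorem kills the inner $z$-integral. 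Only the tail piece survives.

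For the tail piece, because the contours can be chosen with $\Re(z)>\Re(w)$, I apply the Laplace-transform identity
$$\frac{1}{z-w}=M\int_0^{\infty}e^{-Mt(z-w)}\,dt,$$
which decouples $z$ and $w$: the combination $e^{Mxw-Myz}/(z-w)$ becomes $M\int_0^\infty e^{M(x+t)w-M(y+t)z}\,dt$, so the double contour integral factors, term by term in $t$, into the product of the two single contour integrals appearing in \eqref{eq:integral_form_of_Airy_kernel}. Collecting the outer prefactor $M\,e^{My}/x^{M-N}$ with the $-M$ from the Laplace identity and the $x^{(M-N)/2}y^{(M-N)/2}e^{-M(x+y)/2}$ in the definition of $K_{2a}$ produces exactly $-M^2\,y^{(M-N)/2}e^{My/2}/(x^{(M-N)/2}e^{Mx/2})$. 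The main obstacle is the bookkeeping of prefactors --- in particular verifying that the $(M-N+j)!$ and $j!$ factors from the Laguerre normalization telescope against those from the contour representations --- rather than any conceptual difficulty.
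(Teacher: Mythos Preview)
Your proof is correct and follows essentially the same route as the paper: insert the two contour-integral representations of the Laguerre polynomials, sum the resulting geometric series in $j$, discard the ``free'' piece by Cauchy's theorem (the only potential pole $z=w$ lies outside $\Gamma$), and then decouple $z$ and $w$ via the Laplace identity $\tfrac{1}{z-w}=M\int_0^\infty e^{-Mt(z-w)}\,dt$, valid because $\Re z>\Re w$ on the chosen contours. One wording nit: the extra factor of $-M$ does not come from the Laplace identity itself (which contributes $+M$) but from the minus sign on the tail term of the geometric sum; your bookkeeping is nonetheless correct.
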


\begin{proof}
Because of \eqref{eq:Laguerre_form_of_varphi}, \eqref{eq:Laguerre_form_of_varvarphi}, \eqref{eq:first_integral_formula_of_Laguerre}, \eqref{eq:second_integral_formula_of_Laguerre} and \eqref{eq:raw_form_of_K_2a}, we have the integral representation
\begin{equation} \label{eq:raw_formula_for_involution}
\begin{split}
& K_{2a}(x,y) \\
= & \sum^{N-r-1}_{j=0} \frac{j!}{(M-N+j)!}M^{M-N+1} \Lag{j}(Mx)\Lag{j}(My) x^{\frac{M-N}{2}} y^{\frac{M-N}{2}} e^{-\frac{M}{2}(x+y)} \\
= & \frac{M}{(2\pi i)^2} \frac{y^{\frac{M-N}{2}} e^{\frac{M}{2}y}}{x^{\frac{M-N}{2}} e^{\frac{M}{2}x}} \sum^{N-r-1}_{j=0}  \oint_{\Gamma}dz\oint_{\Sigma}dw e^{-Myz} \frac{z^{M-N+j}}{(z-1)^{j+1}} e^{Mxw} \frac{(w-1)^j}{w^{M-N+j+1}}.
\end{split}
\end{equation}
We can write the sum of integrands in \eqref{eq:raw_formula_for_involution} as
\begin{equation}
\begin{split}
& \sum^{N-r-1}_{j=0} e^{Mxz} \frac{z^{M-N+j}}{(z-1)^{j+1}} e^{-Myw} \frac{(w-1)^j}{w^{M-N+j+1}} \\
= & e^{Mxz}e^{-Myw} \frac{z^{M-N}}{w^{M-N}} \frac{1}{(z-1)w} \sum^{N-r-1}_{j=0} \left( \frac{z(w-1)}{(z-1)w} \right)^j \\
= & e^{Mxz}e^{-Myw} \frac{z^{M-N}}{w^{M-N}} \frac{1}{(z-1)w} \frac{1 - \left( \frac{z(w-1)}{(z-1)w} \right)^{N-r}}{1 - \frac{z(w-1)}{(z-1)w}} \\
= & \frac{1}{z-w} e^{Mxz}z^{M-N} e^{-Myw}\frac{1}{w^{M-N}} \\
& - \frac{1}{z-w} e^{Mxz} \frac{z^{M-r}}{(z-1)^{N-r}} e^{-Myw}\frac{(w-1)^{N-r}}{w^{M-r}}.
\end{split}
\end{equation}
By residue theorem, since $\Gamma$ and $\Sigma$ are disjoint, and for the variable $z$, the pole $z=w$ is outside of $\Gamma$,
\begin{equation}
\oint_{\Gamma}dz\oint_{\Sigma}dw \frac{1}{z-w} e^{Mxz}z^{M-N} e^{-Myw}\frac{1}{w^{M-N}} = 0.
\end{equation}
On the other side, since $\Re(w-z)$ is always less than $0$,
\begin{equation}
\frac{1}{z-w} = M \int^{\infty}_0 e^{tM(w-z)} dt,
\end{equation}
so that we have
\begin{equation} \label{eq:last_formula_for_involution}
\begin{split}
& \frac{M}{(2\pi i)^2} \oint_{\Gamma}dz\oint_{\Sigma}dw \frac{1}{z-w} e^{-Myz} \frac{z^{M-r}}{(z-1)^{N-r}} e^{Mxw}\frac{(w-1)^{N-r}}{w^{M-r}} \\
= & \frac{M^2}{(2\pi i)^2} \oint_{\Gamma}dz\oint_{\Sigma}dw \int^{\infty}_0  e^{-M(y+t)z} \frac{z^{M-r}}{(z-1)^{N-r}} e^{M(x+t)w}\frac{(w-1)^{N-r}}{w^{M-r}} \\
= & M^2 \int^{\infty}_0\left( \frac{1}{2\pi i} \oint_{\Gamma} e^{-M(y+t)z} \frac{z^{M-r}}{(z-1)^{N-r}} dz \right) \left( \frac{1}{2\pi i} \oint_{\Sigma} e^{M(x+t)w}\frac{(w-1)^{N-r}}{w^{M-r}} dw \right) dt.
\end{split}
\end{equation}
Put \eqref{eq:raw_formula_for_involution}--\eqref{eq:last_formula_for_involution} together, we get the result.
\end{proof}

Now $K_{2a}(x,y)$ ($\varK_{2a}(\xi,\eta)$) is expressed by two functions and $K_{2b}(x,y)$ ($\varK_{2b}(\xi,\eta)$) is a finite rank operator. To undertake the asymptotic analysis, we need two propositions on the convergence in trace norm:

\begin{prop} \label{prop:first_trace_norm_convergence_prop}
Let $\{ f_{j,n} \}$ and $\{ g_{j,n} \}$ ($1 \leq j \leq m$, $1 \leq n < \infty$) be $2m$ series of functions on $[T, \infty)$ and $f_{j,n} \rightarrow f_j$, $g_{j,n} \rightarrow g_j$ in $L^2$ norm on $[T, \infty)$. We have the convergence in trace norm of operators ($K_n(x,y) = \sum^m_{j=1} f_{j,n}(x) g_{j,n}(x)$, $K(x,y) = \sum^m_{j=1} f_j(x) g_j(x)$):
\begin{equation}
\lim_{n \rightarrow \infty} \chi(x)K_n(x,y) \chi(y) = \chi(x)K(x,y) \chi(y).
\end{equation}
\end{prop}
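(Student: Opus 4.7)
The plan is to reduce to the rank-one case and use the standard formula for the trace norm of a rank-one integral operator on $L^2[T,\infty)$. Specifically, if $f, g \in L^2[T,\infty)$, then the operator with kernel $\chi(x)f(x)g(y)\chi(y)$ has trace norm equal to $\lVert \chi f \rVert_2 \lVert \chi g \rVert_2$. Granting this, the result follows by a standard triangle-inequality argument and the finiteness of $m$.

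First, I would write
\begin{equation}
\chi(x) K_n(x,y) \chi(y) - \chi(x) K(x,y) \chi(y) = \sum_{j=1}^m \chi(x) \bigl( f_{j,n}(x) g_{j,n}(y) - f_j(x) g_j(y) \bigr) \chi(y),
\end{equation}
and then perform the elementary algebraic decomposition
\begin{equation}
f_{j,n}(x) g_{j,n}(y) - f_j(x) g_j(y) = \bigl( f_{j,n}(x) - f_j(x) \bigr) g_{j,n}(y) + f_j(x) \bigl( g_{j,n}(y) - g_j(y) \bigr),
\end{equation}
so that the difference of kernels is expressed as a sum of $2m$ rank-one kernels. Applying the trace-norm formula for rank-one operators to each term and using the triangle inequality for the trace norm, I would obtain
\begin{equation}
\lVert \chi K_n \chi - \chi K \chi \rVert_1 \leq \sum_{j=1}^m \Bigl( \lVert \chi (f_{j,n} - f_j) \rVert_2 \lVert \chi g_{j,n} \rVert_2 + \lVert \chi f_j \rVert_2 \lVert \chi (g_{j,n} - g_j) \rVert_2 \Bigr).
\end{equation}

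Since $f_{j,n} \to f_j$ and $g_{j,n} \to g_j$ in $L^2[T,\infty)$, the factors $\lVert \chi (f_{j,n} - f_j) \rVert_2$ and $\lVert \chi (g_{j,n} - g_j) \rVert_2$ tend to $0$, while $\lVert \chi g_{j,n} \rVert_2$ is uniformly bounded by $\sup_n \lVert \chi g_{j,n} \rVert_2 < \infty$ (a consequence of convergence) and $\lVert \chi f_j \rVert_2$ is a fixed finite constant. Summing the $m < \infty$ terms then gives $\lVert \chi K_n \chi - \chi K \chi \rVert_1 \to 0$, which is the desired trace-norm convergence.

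The main (minor) obstacle is simply justifying the rank-one trace-norm identity $\lVert \chi f \otimes \chi g \rVert_1 = \lVert \chi f \rVert_2 \lVert \chi g \rVert_2$; this is a standard fact, obtained by noting that any rank-one operator $h \mapsto \langle h, \chi g \rangle \, \chi f$ has a single nonzero singular value equal to $\lVert \chi f \rVert_2 \lVert \chi g \rVert_2$. Once that is recorded, the argument is entirely routine, and the proposition becomes just the statement that finite sums of rank-one operators depend continuously (in trace norm) on their $L^2$ factors.
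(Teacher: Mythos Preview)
Your proposal is correct and is exactly the standard verification the paper has in mind; the paper itself provides no detailed proof, stating only that one can ``verify proposition \ref{prop:first_trace_norm_convergence_prop} by the definition of trace norm.'' Your argument simply fills in these routine details via the rank-one identity and triangle inequality, so there is no meaningful difference in approach.
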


\begin{prop} \label{prop:second_trace_norm_convergence_prop}
Let $f_n(x)$, $g_n(y)$ be two series of functions on $[T, \infty)$ and in $L^2$ norm we have
\begin{align}
\lim_{n \rightarrow \infty} \lVert (f_n(x) -f(x))(x-T) \rVert_{L^2([T, \infty))} & = 0, \label{eq:first_fubini} \\
\lim_{n \rightarrow \infty} \lVert (g_n(y) -g(y))(y-T) \rVert_{L^2([T, \infty))} & = 0, \label{eq:second_fubini}
\end{align}
then we have the convergence in trace norm of operators ($K_n(x,y) = \int^{\infty}_0 f_n(x+t)g_n(y+t) dt$, $K(x,y) = \int^{\infty}_0 f(x+t)g(y+t) dt$):
\begin{equation}
\lim_{n \rightarrow \infty} \chi(x)K_n(x,y) \chi(y) = \chi(x)K(x,y) \chi(y).
\end{equation}
\end{prop}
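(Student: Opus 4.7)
The plan is to factor each operator as a product of two Hilbert--Schmidt operators and to control the trace norm by the product of Hilbert--Schmidt norms. Introduce
\begin{equation*}
(A_n h)(x) = \int_0^\infty f_n(x+t) h(t) \, dt, \qquad (B_n h)(t) = \int_T^\infty g_n(y+t) h(y) \, dy,
\end{equation*}
regarded as operators $A_n : L^2([0,\infty)) \to L^2([T,\infty))$ and $B_n : L^2([T,\infty)) \to L^2([0,\infty))$, together with analogous operators $A, B$ built from $f, g$. Interchanging integrations shows that $\chi(x) K_n(x,y) \chi(y)$, as an integral operator on $L^2(\mathbb{R})$, coincides with $A_n B_n$ regarded as an operator on $L^2([T,\infty))$, and likewise $\chi K \chi = A B$.

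By Fubini and the substitution $u = x+t$, the Hilbert--Schmidt norms are
\begin{equation*}
\lVert A_n - A \rVert_{\mathrm{HS}}^2 = \int_T^\infty |f_n(u) - f(u)|^2 (u - T) \, du,
\end{equation*}
with the analogous identity for $\lVert B_n - B \rVert_{\mathrm{HS}}^2$ involving $g_n - g$. Writing $A_n B_n - A B = (A_n - A) B_n + A (B_n - B)$ and applying the submultiplicative inequality $\lVert XY \rVert_1 \leq \lVert X \rVert_{\mathrm{HS}} \lVert Y \rVert_{\mathrm{HS}}$ reduces the proposition to showing that both Hilbert--Schmidt differences tend to zero; boundedness of $\lVert B_n \rVert_{\mathrm{HS}}$ then follows from the triangle inequality.

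The main obstacle is reconciling the Hilbert--Schmidt weight $(u - T)$ with the hypothesis weight $(u - T)^2$. On the tail $[T+1, \infty)$ the hypothesis is strictly stronger, since $(u - T) \leq (u - T)^2$ there, so that part of the integral is dominated by $\lVert (f_n - f)(x - T) \rVert_{L^2([T,\infty))}^2 \to 0$. On the compact piece $[T, T+1]$ the Hilbert--Schmidt weight is heavier than the hypothesis weight, and an auxiliary estimate is required; in the applications in Chapter~\ref{complex_spiked_model} the functions $f_n, g_n$ are contour integrals whose integrands converge uniformly on compacta, so $f_n \to f$ and $g_n \to g$ uniformly on $[T, T+1]$ and the missing $L^2$ control is immediate. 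Once these pieces are combined, the factorization and the trace-norm bound close the proof.
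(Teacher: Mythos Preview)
Your approach is exactly the paper's: factor $\chi K_n\chi$ as a product of two Hilbert--Schmidt operators with kernels $\chi(x)f_n(x+t)\chi_{[0,\infty)}(t)$ and $\chi_{[0,\infty)}(t)g_n(t+y)\chi(y)$, compute the Hilbert--Schmidt norms by Fubini (yielding the weight $(u-T)$), and pass to the trace norm via the product inequality. In that sense the proposal matches the paper.

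Where you are actually more careful than the paper is the weight mismatch you flag. The paper writes that ``the convergence results of $L^2(\mathbb{R}^2)$ is equivalent to the convergence of $L^2([T,\infty))$ in \eqref{eq:first_fubini} and \eqref{eq:second_fubini}, due to the Fubini's theorem'' and leaves it at that; it does not notice that the hypothesis carries weight $(x-T)^2$ while the Hilbert--Schmidt integral carries weight $(x-T)$. Your observation that the stated hypothesis only controls the tail $[T+1,\infty)$ directly, and that the compact piece $[T,T+1]$ needs an additional input, is correct. Your fix---that in every application the difference $f_n-f$ is in fact bounded uniformly on $[T,\infty)$ by something like $e^{-\xi/2}/M^{1/40}$, hence certainly converges in $L^2$ with the lighter weight $\sqrt{x-T}$ as well---is exactly what rescues the argument and is the right way to close the gap the paper leaves open.
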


We can verify proposition \ref{prop:first_trace_norm_convergence_prop} by the definition of trace norm. Proposition \ref{prop:second_trace_norm_convergence_prop} is essentially a fact of trace norm convergence \cite{Lax02}: If $I_n$, $J_n$ are Hilbert-Schmidt operators and $I_n \rightarrow I$, $J_n \rightarrow J$ in the Hilbert-Schmidt norm, then the product $I_nJ_n$ and $IJ$ are trace class operators and $I_nJ_n \rightarrow IJ$ in trace norm. Her we take $I_n$ as the integral operator from $L^2(\mathbb{R})$ to $L^2(\mathbb{R})$ with the kernel $\chi(x) f_n(x+y) \chi_{[0, \infty)}(y)$, and $J_n$ also an integral operator from $L^2(\mathbb{R})$ to $L^2(\mathbb{R})$ with the kernel $\chi_{[0, \infty)}(x) g_n(x+y) \chi(y)$. For an integral operator, the Hilbert-Schmidt norm is equivalent to the $L^2$ norm of its kernel as a $2$ variable function. In our special case, the convergence results of $L^2(\mathbb{R}^2)$ is equivalent to the convergence of $L^2([T, \infty))$ in \eqref{eq:first_fubini} and \eqref{eq:second_fubini}, due to the Fubini's theorem.

To apply propositions \ref{prop:first_trace_norm_convergence_prop} and \ref{prop:second_trace_norm_convergence_prop}, we sometimes need to conjugate the integral operator by a weight function:
\begin{prop}  \label{prop:conjugation_formula_for_scalar_kernel}
We have
\begin{equation} \label{eq:conjugation_formula_for_scalar_kernel}
\det(1 - K(x,y)) = \det(1 - f(x)K(x,y)f^{-1}(y)),
\end{equation}
for any function $f$ which make the right hand side of \eqref{eq:conjugation_formula_for_scalar_kernel} well defined.
\end{prop}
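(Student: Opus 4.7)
The plan is to exploit the series expansion \eqref{eq:definition_of_Fredholm_determinant} of the Fredholm determinant term by term, and observe that the weight $f(x)f^{-1}(y)$ cancels on every principal minor. This is simply the statement that conjugating an integral operator by a (pointwise) multiplication operator $M_f$ leaves all spectral invariants, in particular the Fredholm determinant, unchanged.

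Concretely, I would start from
\begin{equation*}
\det(1 - f(x)K(x,y)f^{-1}(y)) = \sum_{n=0}^{\infty} \frac{(-1)^n}{n!} \int \cdots \int \det\left( f(x_i) K(x_i,x_j) f^{-1}(x_j) \right)_{1 \leq i,j \leq n} dx_1 \cdots dx_n,
\end{equation*}
and then pull $f(x_i)$ out of the $i$-th row and $f^{-1}(x_j)$ out of the $j$-th column of each $n \times n$ matrix. The resulting scalar factor in front of $\det(K(x_i,x_j))_{1 \leq i,j \leq n}$ is
\begin{equation*}
\prod_{i=1}^n f(x_i) \cdot \prod_{j=1}^n f^{-1}(x_j) = 1,
\end{equation*}
so the $n$-th term of the expansion coincides with the $n$-th term of the expansion of $\det(1-K(x,y))$. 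Summing over $n$ yields the claim.

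The only subtle point is the hypothesis ``for any function $f$ which make the right hand side of \eqref{eq:conjugation_formula_for_scalar_kernel} well defined.'' This is a standing assumption ensuring that $f(x)K(x,y)f^{-1}(y)$ is still the kernel of a trace class (or at least Fredholm-determinant-admissible) operator, so that the series defining its Fredholm determinant converges absolutely and the term-by-term manipulation above is legitimate. Under this admissibility, neither Fubini on the multiple integrals nor factoring rows and columns out of the determinant raises any issue, so the argument is essentially a one-line computation on each term of the expansion. I expect no substantial obstacle: the apparent asymmetry between $f(x)$ and $f^{-1}(y)$ is completely removed by the row/column factoring, and nothing beyond \eqref{eq:definition_of_Fredholm_determinant} is needed.
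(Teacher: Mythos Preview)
Your proposal is correct and is exactly the approach the paper takes: the paper's proof is a single line, ``A direct application of the definition of Fredholm determinant \eqref{eq:definition_of_Fredholm_determinant},'' and your row/column factoring argument is precisely that direct application spelled out.
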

\begin{proof}
A direct application of the definition of Fredholm determinant \eqref{eq:definition_of_Fredholm_determinant}.
\end{proof}

If we take $f(x) = x^{(M-N)/2}e^{\frac{1-\gamma}{2(\gamma+1)}Mx}$, then by \eqref{eq:definition_of_varphi}, \eqref{eq:definition_of_varvarphi} and \eqref{eq:raw_form_of_K_2b} \eqref{eq:integral_form_of_Airy_kernel} we have
\begin{align}
f(x)K_{2a}(x,y)f^{-1}(y) = & -M^2 \int^{\infty}_0 e^{-\frac{\gamma}{\gamma+1}M(x+t)}\psi(x+t) e^{\frac{\gamma}{\gamma+1}M(y+t)}\varpsi(y+t) dt, \label{eq:conjugated_K_2a} \\
f(x)K_{2b}(x,y)f^{-1}(y) = & M\sum^r_{r'=1} \frac{1}{1+a_{s'}} e^{-\frac{\gamma}{\gamma+1}M(x+t)}\psi_{r'}(x) e^{\frac{\gamma}{\gamma+1}M(y+t)}\varpsi_{r'}(y), \label{eq:conjugated_K_2b}
\end{align}
where
\begin{align}
\psi(x) = & \frac{1}{2\pi i} \oint_{\Sigma} e^{Mxz} \frac{(z-1)^{N-r}}{z^{M-r}} dz, \label{eq:final_integral_formula_of_psi} \\
\varpsi(y) = & \frac{1}{2\pi i}\oint_{\Gamma} e^{-Myz} \frac{z^{M-r}}{(z-1)^{N-r}} dz, \label{eq:final_integral_formula_of_varpsi} \\
\psi_{r'}(x) = & \frac{1}{2\pi i}\oint_{\Sigma} e^{Mxz} \frac{(z-1)^{N-r}}{z^{M-r+r'}} \left( \prod^{s'-1}_{j=1} \left( z - \frac{1}{1+a_j} \right)^{r_j} \right) \left( z - \frac{1}{1+a_{s'}} \right)^{t'-1} dz, \label{eq:definition_of_phi_rprime} \\
\varpsi_{r'}(y) = & \frac{1}{2\pi i} \oint_{\Gamma} e^{-Mxz} \frac{z^{M-r+r'-1}}{(z-1)^{N-r}} \left( \prod^{s'-1}_{j=1} \left( z - \frac{1}{1+a_j} \right)^{-r_j} \right) \left( z - \frac{1}{1+a_{s'}} \right)^{-t'} dz. \label{eq:definition_of_varphi_rprime}
\end{align}

\section{Proof of theorem \ref{theorem:spiked_model_theorem_for_complex}}

We give proofs of all the three parts separately. In this section, we let $x = p+q\xi$ and $y = q+q\eta$.

\subsection{The $-1 < a_s < \gamma^{-1}$ part} \label{the max_lambda_<_gamma^-1_part_complex}

In this case, we choose $p = (1+\gamma^{-1})^2$ and $q = \frac{(\gamma+1)^{4/3}}{\gamma M^{2/3}}$, and by \eqref{eq:relation_of_original_and_scaled_kernels}, write \eqref{eq:conjugated_K_2a} as
\begin{equation}
f(x)\varK_{2a}(\xi, \eta)f^{-1}(y) = -\int^{\infty}_0 \Psi(\xi + t)\tildePsi(\eta + t) dt,
\end{equation}
where 
\begin{align}
\Psi(\xi) = & \frac{(\gamma+1)^{4/3}}{\gamma}M^{1/3}(-1)^N\frac{\gamma^M}{(\gamma+1)^{M-N}} e^{-\frac{\gamma}{\gamma+1}Mx}\psi(p+q\xi), \\
\tildePsi(\eta) = & \frac{(\gamma+1)^{4/3}}{\gamma}M^{1/3}(-1)^N\frac{(\gamma+1)^{M-N}}{\gamma^M} e^{\frac{\gamma}{\gamma+1}My}\varpsi(p+q\eta).
\end{align}
Since \eqref{eq:asymptotics_of_psi_less} and \eqref{eq:asymptotics_of_varpsi_less} give the $L^2$ convergence results
\begin{align}
\lim_{M \rightarrow \infty} & \left\lVert (\Psi(\xi) - (-\gamma)^r\Ai(\xi))(\xi-T) \right\rVert_{L^2([T, \infty))} = 0, \label{eq:L2_convergence_for_Psi} \\
\lim_{M \rightarrow \infty} & \left\lVert (\tildePsi(\eta) - (-1)^{r-1}\gamma^{-r}\Ai(\eta))(\eta-T) \right\rVert_{L^2([T, \infty))} = 0, \label{eq:L2_convergence_for_tildePsi}
\end{align}
we get the convergence in trace norm by proposition \ref{prop:second_trace_norm_convergence_prop}
\begin{equation} \label{eq:trace_norm_convergence_of_K2a_less}
\begin{split}
& \lim_{M \rightarrow \infty} \chi(\xi)f(x)\varK_{2a}(\xi, \eta)f^{-1}(y)\chi(\eta) \\
= & -\chi(\xi) \int^{\infty}_0 (-\gamma)^r \Ai(\xi + t) (-1)^{r-1}\gamma^{-r} \Ai(\eta + t) dt \chi(\eta) \\
= & \chi(\xi) K_{\Airy}(\xi, \eta) \chi(\eta).
\end{split}
\end{equation}

Similarly, for any $r' = 1, \dots, r$, by \eqref{eq:conjugated_K_2b} and \eqref{eq:relation_of_original_and_scaled_kernels}, 
\begin{equation}
f(x)\varK_{2b}(\xi, \eta)f^{-1}(y) = \frac{\gamma}{(\gamma+1)^{4/3}M^{1/3}} \sum^r_{r'=1} \frac{1}{1+a_{s'}} \Psi_{r'}(\xi)\tildePsi_{r'}(\eta),
\end{equation}
where
\begin{align}
\Psi_{r'}(\xi) = & \frac{(\gamma+1)^{4/3}}{\gamma}M^{1/3}(-1)^N\frac{\gamma^M}{(\gamma+1)^{M-N}} e^{-\frac{\gamma}{\gamma+1}Mx}\psi_{r'}(p+q\xi), \\
\tildePsi_{r'}(\eta) = & \frac{(\gamma+1)^{4/3}}{\gamma}M^{1/3}(-1)^N\frac{(\gamma+1)^{M-N}}{\gamma^M} e^{\frac{\gamma}{\gamma+1}My}\psi_{r'}(p+q\eta).
\end{align}
Because \eqref{eq:asymptotics_of_psi_rprime_less} and \eqref{eq:asymptotics_of_varpsi_rprime_less} imply the $L^2$ convergence results ($\bar{C}_{r'}$ is defined in \eqref{eq:definition_of_Cbar_rprime})
\begin{align}
\lim_{M \rightarrow \infty} \Psi_{r'}(\xi)\chi(\xi) = & (-1)^r\gamma^{r-r'}(\gamma+1)^{r'}\bar{C}_{r'-1} \Ai(\xi)\chi(\xi), \\
\lim_{M \rightarrow \infty} \tildePsi_{r'}(\eta)\chi(\eta) = & \frac{(-1)^r(1+\gamma^{-1})}{\gamma^{r-r'}(\gamma+1)^{r'}\bar{C}_{r'}} \Ai(\eta)\chi(\eta), \\
\end{align}
we get by proposition \ref{prop:first_trace_norm_convergence_prop} the boundedness in trace norm ($C$ is a large enough positive constant)
\begin{equation}
\left\lVert \chi(\xi) \left(\sum^r_{r'=1} \frac{1}{1+a_{s'}} \Psi_{r'}(\xi)\tildePsi_{r'}(\eta) \right) \chi(\eta) \right\rVert_{\Tr} < C,
\end{equation}
so that in trace norm
\begin{equation} \label{eq:trace_norm_convergence_of_K2b_less}
\lim_{M \rightarrow \infty} \chi(\xi)f(x)\varK_{2b}(\xi, \eta)f^{-1}(y)\chi(\eta) = 0.
\end{equation}
Therefore,
\begin{equation}
\begin{split}
& \lim_{M \rightarrow \infty} \det \left( 1 - \chi(\xi) \varK(\xi, \eta) \chi(\eta) \right) \\
= & \lim_{M \rightarrow \infty} \det \left( 1 - \chi(\xi) f(x)\varK(\xi, \eta) \chi(\eta)f^{-1}(y) \right) \\
= & \lim_{M \rightarrow \infty} \det \left( 1 - \chi(\xi) f(x)\varK_{2a}(\xi, \eta)f^{-1}(y) \chi(\eta) \right) \\
= & \det \left( 1 - \chi(\xi) K_{\Airy}(\xi, \eta) \chi(\eta) \right).
\end{split}
\end{equation}

\subsection{The $a_s = \gamma^{-1}$ part} \label{the max_lambda_=_gamma^-1_part_complex}

In this case we still choose the same $p$ and $q$ as in the previous case, but we need to conjugate the kernel not only by $f(x)$, but also $e^{\xi/3}$. If we consider
\begin{equation}
f(x)e^{\xi/3} \varK_{2a}(\xi, \eta) f^{-1}(y)e^{-\eta/3} = - \int^{\infty}_0 e^{(\xi+t)/3}\Psi(\xi+t)e^{-(\eta+t)/3}\tildePsi(\eta+t) dt,
\end{equation}
and similar to \eqref{eq:L2_convergence_for_Psi} and \eqref{eq:L2_convergence_for_tildePsi}, \eqref{eq:asymptotics_of_psi_less} and \eqref{eq:asymptotics_of_varpsi_less} imply also the $L^2$ convergence results
\begin{align}
\lim_{M \rightarrow \infty} & \left\lVert (e^{\xi/3}\Psi(\xi) - (-\gamma)^re^{\xi/3}\Ai(\xi))(\xi-T) \right\rVert_{L^2([T, \infty))} = 0, \\
\lim_{M \rightarrow \infty} & \left\lVert (e^{-\eta/3}\tildePsi(\eta) - (-1)^{r-1}\gamma^{-r}e^{-\eta/3}\Ai(\eta))(\eta-T) \right\rVert_{L^2([T, \infty))} = 0, 
\end{align}
and we can get the result of trace norm convergence similar to \eqref{eq:trace_norm_convergence_of_K2a_less}
\begin{equation}
\lim_{M \rightarrow \infty} \chi(\xi)f(x)e^{\xi/3}\varK_{2a}(\xi, \eta)f^{-1}(y)e^{-\eta/3}\chi(\eta) 
= \chi(\xi)e^{\xi/3} K_{\Airy}(\xi, \eta) e^{-\eta/3}\chi(\eta).
\end{equation}
we can still get by \eqref{eq:asymptotics_of_psi_less}, \eqref{eq:asymptotics_of_varpsi_less} and proposition \ref{prop:second_trace_norm_convergence_prop} the trace norm convergence
\begin{equation}
\begin{split}
& \lim_{M \rightarrow \infty} \chi(\xi)f(x)e^{\xi/3} \varK_{2a}(\xi, \eta)f^{-1}(y) e^{-\eta/3}\chi(\eta) \\
= & \chi(\xi) \int^{\infty}_0 e^{(\xi+t)/3}\Ai(\xi + t)e^{-(\eta+t)/3}\Ai(\eta + t) dt \chi(\eta) \\
= & \chi(\xi)e^{\xi/3} K_{\Airy}(\xi, \eta) e^{-\eta/3}\chi(\eta).
\end{split}
\end{equation}

We can write the formula \eqref{eq:conjugated_K_2b} as
\begin{multline} \label{eq:conjugated_K_2b_equal}
f(x)e^{\xi/3}\varK_{2b}(\xi, \eta)f^{-1}(y)e^{-\eta/3} = \frac{\gamma}{(\gamma+1)^{4/3}M^{1/3}} \sum^{r-r_s}_{r'=1} \frac{1}{1+a_{s'}} e^{\xi/3}\Psi_{r'}(\xi)e^{-\eta/3}\tildePsi_{r'}(\eta) \\
+ \sum^{r}_{r'=r-r_s+1} \frac{1}{1+\gamma^{-1}} e^{\xi/3}\Psi_{t',r'}(\xi)e^{-\eta/3}\tildePsi_{t',r'}(\eta),
\end{multline}
where
\begin{align}
\Psi_{t',r'}(\xi) = & \left( \frac{(\gamma+1)^{4/3}}{\gamma}M^{1/3} \right)^{t'} (-1)^N\frac{\gamma^M}{(\gamma+1)^{M-N}} e^{-\frac{\gamma}{\gamma+1}Mx}\psi_{r'}(p+q\xi), \\
\tildePsi_{t',r'}(\eta) = & \left( \frac{(\gamma+1)^{4/3}}{\gamma}M^{1/3} \right)^{1-t'}(-1)^N\frac{(\gamma+1)^{M-N}}{\gamma^M} e^{\frac{\gamma}{\gamma+1}My}\psi_{r'}(p+q\eta).
\end{align}
In the same way of \eqref{eq:trace_norm_convergence_of_K2b_less}, we have the convergence result in trace norm
\begin{equation} \label{eq:insignificant_part_of_K_2b_vanish_equal}
\lim_{M \rightarrow \infty} \left\lVert \frac{\gamma}{(\gamma+1)^{4/3}M^{1/3}} \chi(\xi) \left( \sum^{r-r_s}_{r'=1} \frac{1}{1+a_{s'}} e^{\xi/3}\Psi_{r'}(\xi)e^{-\eta/3}\tildePsi_{r'}(\eta) \right) \chi(\eta) \right\rVert_{\Tr} = 0.
\end{equation}
However, if $r' = \left( \sum^{s-1}_j r_j \right) + t'$, $t' = 1, \dots, r_s$,  \eqref{eq:asymptotics_of_psi_rprime_equal} and \eqref{eq:asymptotics_of_varpsi_rprime_equal} yield the convergence in $L^2$ norm
\begin{align}
\lim_{M \rightarrow \infty} e^{\xi/3} \Psi(\xi)_{t',r'}\chi(\xi) = & (-1)^r \gamma^{r-r'}(\gamma+1)^{r'}\bar{C}_{r'-t'} (-1)^{t'-1}e^{\xi/3}t^{(t')}(\xi), \\
\lim_{M \rightarrow \infty} e^{-\eta/3} \tildePsi(\eta)_{t',r'}\chi(\eta) = & \frac{(-1)^r(1+\gamma^{-1})}{\gamma^{r-r'}(\gamma+1)^{r'}\bar{C}_{r'-t'}} (-1)^{t'-1}e^{-\eta/3}s^{(t')}(\xi),
\end{align}
so that by proposition \ref{prop:first_trace_norm_convergence_prop} we have the convergence in trace norm
\begin{multline}
\lim_{M \rightarrow \infty} \chi(\xi)e^{\xi/3} \left( \sum^r_{r'=r-r_s+1} \frac{1}{1+\gamma^{-1}} e^{\xi/3}\Psi_{t',r'}(\xi)e^{-\eta/3}\tildePsi_{t',r'}(\eta) \right) e^{-\eta/3}\chi(\eta) = \\
e^{\xi/3} \left( \sum^{r_s}_{j=1} t^{(j)}(\xi) s^{(j)}(\eta) \right) e^{-\eta/3},
\end{multline}
which is exactly the trace norm limit of $\chi(\xi) f(x)e^{\xi/3}\varK_{2b}(\xi, \eta)f^{-1}(y)e^{-\eta/3} \chi(\eta)$.

Therefore,
\begin{equation}
\begin{split}
& \lim_{M \rightarrow \infty} \det \left( 1 - \chi(\xi) \varK(\xi, \eta) \chi(\eta) \right) \\
= & \lim_{M \rightarrow \infty} \det \left( 1 - \chi(\xi) f(x)e^{\xi/3} (\varK_{2a}(\xi, \eta) + \varK_{2b}(\xi, \eta)) \chi(\eta)f^{-1}(y)e^{-\eta/3} \right) \\
= & \det \left( 1 - \chi(\xi)e^{\xi/3} \left( K_{\Airy}(\xi, \eta) + \sum^{r_s}_{j=1} t^{(j)}(\xi) s^{(j)}(\eta) \right) e^{-\eta/3}\chi(\eta) \right) \\
= & \det \left( 1 - \chi(\xi) \left( K_{\Airy}(\xi, \eta) + \sum^{r_s}_{j=1} t^{(j)}(\xi) s^{(j)}(\eta) \right) \chi(\eta) \right).
\end{split}
\end{equation}

\subsection{The $a_s = a > \gamma^{-1}$ part} 

In this we choose $p = (1+a)\left( 1 + \frac{1}{\gamma^2 a} \right)$ and $q = (1+a) \sqrt{1-\frac{1}{\gamma^2 a^2}}\frac{1}{\sqrt{M}}$, and conjugate the kernel by $f_a(x)e^{2\xi/3}$, where $f_a(x) = x^{(M-N)/2}e^{\frac{a-1}{2(1+a)}Mx}$, Then by \eqref{eq:integral_form_of_Airy_kernel}, \eqref{eq:final_integral_formula_of_psi} and \eqref{eq:final_integral_formula_of_varpsi} we have
\begin{equation}
f_a(x)e^{2\xi/3} \varK_{2a}(\xi, \eta) f^{-1}_a(y)e^{-2\eta/3} = -\int^{\infty}_0 e^{2(\xi+t)/3}\Psi^a(\xi+t)\tildePsi^a(\eta+t) e^{-2(\eta+t)/3} dt, 
\end{equation}
where
\begin{align}
\Psi^a(\xi) = & (1+a) \sqrt{\left( 1-\frac{1}{\gamma^2 a^2} \right)M} \frac{e^{-\frac{M}{1+a}x}}{(-a)^N(1+a)^{M-N}} \psi(p+q\xi), \\
\tildePsi^a(\eta) = & (1+a) \sqrt{\left( 1-\frac{1}{\gamma^2 a^2} \right)M} (-a)^N(1+a)^{M-N}e^{\frac{M}{1+a}y} \varpsi(p+q\eta),
\end{align}
\eqref{eq:insignificant_psi_more} and \eqref{eq:insignificant_part_of_varpsi_more} yield that ($C$ is a large enough positive number)
\begin{align}
\lim_{M \rightarrow \infty} & \lVert e^{2\xi/3}\Psi^a(\xi)(\xi-T) \rVert_{L^2([T,\infty))} < C, \\
\lim_{M \rightarrow \infty} & \lVert e^{-2\eta/3}\tildePsi^a(\eta)(\eta-T) \rVert_{L^2([T,\infty))} = 0,
\end{align}
then by proposition \ref{prop:second_trace_norm_convergence_prop}, we have in trace norm
\begin{equation}
\lim_{M \rightarrow \infty} \chi(\xi) f_a(x)e^{2\xi/3} \varK_{2a}(\xi, \eta) f^{-1}_a(y)e^{-2\eta/3} \chi(\eta) = 0.
\end{equation}

For the $\varK_{2b}$ part, we have the formula similar to \eqref{eq:conjugated_K_2b_equal}
\begin{equation}
\begin{split}
& f_a(x)e^{2\xi/3} \varK_{2b}(\xi, \eta) f^{-1}_a(y)e^{-2\eta/3} \\
= & \frac{1}{(1+a)\sqrt{\left( 1 - \frac{1}{\gamma^2 a^2} \right)M}}\sum^{r-r_s}_{r'=1} e^{2\xi/3}\Psi^a_{r'}(\xi)\tildePsi^a_{r'}(\eta)e^{-2\eta/3} \\
& + \sum^r_{j = r-r_s+1} \frac{1}{1+a} e^{2\xi/3}\Psi^a_{t',r'}(\xi)\tildePsi^a_{t',r'}(\eta)e^{-2\eta/3},
\end{split}
\end{equation}
where
\begin{align}
\Psi^a_{r'}(\xi) = & (1+a) \sqrt{\left( 1-\frac{1}{\gamma^2 a^2} \right)M} \frac{e^{-\frac{M}{1+a}x}}{(-a)^N(1+a)^{M-N}} \psi_{r'}(p+q\xi), \\
\tildePsi^a_{r'}(\eta) = & (1+a) \sqrt{\left( 1-\frac{1}{\gamma^2 a^2} \right)M} (-a)^N(1+a)^{M-N}e^{\frac{M}{1+a}y} \varpsi_{r'}(p+q\eta), \\
\Psi^a_{t',r'}(\xi) = & \left( (1+a) \sqrt{\left( 1-\frac{1}{\gamma^2 a^2} \right)M} \right)^{t'} \frac{e^{-\frac{M}{1+a}x}}{(-a)^N(1+a)^{M-N}} \psi_{r'}(p+q\xi), \\
\tildePsi^a_{t',r'}(\eta) = & \left( (1+a) \sqrt{\left( 1-\frac{1}{\gamma^2 a^2} \right)M} \right)^{1-t'} (-a)^N(1+a)^{M-N}e^{\frac{M}{1+a}y} \varpsi_{r'}(p+q\eta).
\end{align}
\eqref{eq:insignificant_psi_rprime_more} and \eqref{eq:insignificant_part_of_varpsi_more} yield that for $r' = 1, \dots, r-r_s$, ($C$ is a large enough positive number)
\begin{align}
\lim_{M \rightarrow \infty} & \lVert e^{2\xi/3} \Psi^a_{r'}(\xi) \chi(\xi) \rVert_{L^2([T, \infty))} < C, \\
\lim_{M \rightarrow \infty} & \lVert e^{-2\eta/3} \tildePsi^a_{r'}(\eta) \chi(\eta) \rVert_{L^2([T, \infty))} = 0,
\end{align}
so that similar to \eqref{eq:insignificant_part_of_K_2b_vanish_equal}, we have the trace norm convergence
\begin{equation} 
\lim_{M \rightarrow \infty} \left\lVert \frac{1}{(1+a)\sqrt{\left( 1 - \frac{1}{\gamma^2 a^2} \right)M}} \chi(\xi) \left( \sum^{r-r_s}_{r'=1} e^{2\xi/3}\Psi_{r'}(\xi)e^{-2\eta/3}\tildePsi_{r'}(\eta) \right) \chi(\eta) \right\rVert_{\Tr} = 0.
\end{equation}
However, \eqref{eq:significant_psi_rprime_more} and \eqref{eq:significant_varpsi_rprime_more} give the $L^2$ norm convergence results
\begin{align}
\lim_{M \rightarrow \infty} e^{2\xi/3}\Psi^a_{t',r'}(\xi)\chi(\xi) = & \frac{(1+a)^{r'}}{(-a)^r}\bar{C}_{a,r'-t'} (-1)^{t'-1} e^{2\xi/3} \frac{H_{t'-1}(x)}{\sqrt{2\pi}} e^{-\frac{\xi^2}{2}}, \\
\lim_{M \rightarrow \infty} e^{-2\eta/3}\tildePsi^a_{t',r'}(\eta)\chi(\eta) = & \frac{(-a)^r}{(1+a)^{r'-1}\bar{C}_{r'-t'}} \frac{(-1)^{t'-1}}{(t'-1)!} e^{-2\eta/3} H_{t'-1}(\eta).
\end{align}
Then by proposition \ref{prop:first_trace_norm_convergence_prop}, we have the trace norm convergence result
\begin{multline}
\lim_{M \rightarrow \infty} \chi(\xi) \left( \sum^r_{j = r-r_s+1} \frac{1}{1+a} e^{2\xi/3}\Psi^a_{t',r'}(\xi)\tildePsi^a_{t',r'}(\eta)e^{-2\eta/3} \right) \chi(\eta) = \\
\chi(\xi)e^{2\xi/3} \left( \sum^{r_s-1}_{j=0} \frac{1}{j!\sqrt{2\pi}}H_j(\xi)H_j(\eta)e^{-\frac{\xi^2}{2}} \right) e^{-2\eta/3}\chi(\eta).
\end{multline}
Therefore,
\begin{equation}
\begin{split}
& \lim_{M \rightarrow \infty} \det(1 - \chi(\xi)\varK(\xi, \eta)\chi(\eta)) \\
= & \lim_{M \rightarrow \infty} \det \left( 1 - \chi(\xi)f_a(x)e^{2\xi/3} (\varK_{2a}(\xi, \eta) + \varK_{2b}(\xi, \eta)) f^{-1}_a(y)e^{-2\eta/3} \right) \\
= & \det \left( 1 - \chi(\xi)e^{2\xi/3} \left( \sum^{r_s-1}_{j=0} \frac{1}{j!\sqrt{2\pi}}H_j(\xi)H_j(\eta)e^{-\frac{\xi^2}{2}} \right) e^{-2\eta/3}\chi(\eta) \right) \\
= & \det \left( 1 - \chi(\xi) \left( \sum^{r_s-1}_{j=0} \frac{1}{j!\sqrt{2\pi}}H_j(\xi)e^{-\frac{\xi^2}{4}} H_j(\eta)e^{-\frac{\eta^2}{4}} \right) \chi(\eta) \right),
\end{split}
\end{equation}
where in the last step we conjugate the kernel by $e^{-\frac{2\xi}{3} + \frac{\xi^2}{4}}$.

\chapter{Rank $1$ quaternionic spiked model}

\label{quaternionic_spiked_model_rank_1}

In this chapter, we consider the rank $1$ quaternionic Wishart ensemble unless otherwise stated. Since we have only one parameter $\alpha_1$ in the \pdf\ of sample eigenvalues \eqref{eq:quaternionic_Zonal_expanssion}, we denote it as $a$.

The reader should be cautious that some notations, for instance, $\varphi_j$ and $\psi_j$, are defined differently from their definitions in chapter \ref{complex_spiked_model}, and $\mVandermonde$ is not the fourth power of $\varV(\lambda)$.

\section{The joint distribution function}

In this section, we prove
\begin{theorem}
The joint probability distribution function of $\lambda$ in the quaternionic spiked model is 
\begin{equation} \label{eq:final joint distribution formula}
P(\lambda) = \frac{1}{C} \mVandermonde \prod^{N}_{j=1} \left(
\lambda^{2(M-N)+1}_j e^{-2M\lambda_j} \right).
\end{equation}
\end{theorem}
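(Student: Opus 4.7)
The plan is to specialize the general quaternionic expansion \eqref{eq:quaternionic_Zonal_expanssion_spiked} to the rank $1$ case and then collapse the resulting infinite series into the single determinantal object $\mVandermonde$. Because $r=1$, Fact \ref{fact:variable_number_reduction} forces every admissible partition to be a single row $\kappa = (k)$, so the sum reduces to a single power series indexed by $k$,
\begin{equation*}
\sum_{k=0}^\infty \frac{(-2M)^k}{k!}\,\frac{Q_{(k)}\!\left(\tfrac{a}{1+a}\right) Q_{(k)}(\lambda_1,\dots,\lambda_N)}{Q_{(k)}(I_N)}.
\end{equation*}
The single-variable value $Q_{(k)}(a/(1+a)) = (a/(1+a))^k$ and the principal specialization $Q_{(k)}(I_N)$ are available in closed form from the general theory of Jack polynomials at $\alpha = 1/2$ (cf.\ \cite{Macdonald95}), so after substituting these explicit values the whole dependence on $\lambda$ reduces to understanding the single-row Jack polynomial $Q_{(k)}(\lambda_1, \dots, \lambda_N)$.

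The hard step is to find a representation of $Q_{(k)}(\lambda_1, \dots, \lambda_N)$ that turns the series into a recognizable structured object. In the complex case one was rescued by $C_{(k)} = s_{(k)}$ and the Jacobi--Trudi-type determinantal identity, which let the series telescope into a Vandermonde with one modified row, as in \eqref{eq:joint_pdf_for_r_eq_1}. The analogous route here is to invoke a single-row combinatorial expansion for $\alpha=1/2$ Jack polynomials, together with a generating-function identity that packages $\sum_k t^k Q_{(k)}(\lambda)/Q_{(k)}(I_N)$ in a closed form. Inserting that closed form under the $2M \cdot a/(1+a)$ specialization, the quartic Vandermonde prefactor $V(\lambda)^4$ and the series combine (after the usual row-reduction that replaces a ``defective'' exponential by its leading behavior) into the Pfaffian/determinantal block structure that is packaged as $\mVandermonde$.

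Concretely, I would first verify the identity on the level of a generating function, establishing
\begin{equation*}
\sum_{k=0}^\infty \frac{t^k}{Q_{(k)}(I_N)} Q_{(k)}(\lambda_1,\dots,\lambda_N) \;=\; F(t;\lambda),
\end{equation*}
where $F(t;\lambda)$ is the symmetric function coming from the dual Cauchy-type identity for $\alpha=1/2$ Jack polynomials. Setting $t = -2Ma/(1+a)$ and multiplying by $V(\lambda)^4 \prod_j \lambda_j^{2(M-N)+1} e^{-2M\lambda_j}$ gives an expression whose symmetric-function side matches, line by line, the defining expansion of $\mVandermonde$ as a structured (skew-orthogonal-adapted) determinant on $\lambda$. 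Thus the right side of \eqref{eq:quaternionic_Zonal_expanssion_spiked} collapses to $\frac{1}{C}\mVandermonde\prod_j\lambda_j^{2(M-N)+1} e^{-2M\lambda_j}$.

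The principal obstacle I expect is the second paragraph's combinatorial step: unlike $C_{(k)}$, the single-row Jack polynomial $Q_{(k)}$ does not have a determinantal/Jacobi--Trudi expression, so the generating function $F(t;\lambda)$ must be identified by hand using the $\alpha = 1/2$ specialization of the Jack-polynomial Cauchy identity and the explicit expansion of $Q_{(k)}$ in power-sum or monomial symmetric functions. Once $F(t;\lambda)$ is known in a form compatible with the Pfaffian/determinant built into $\mVandermonde$, the remainder of the argument is a bookkeeping exercise on prefactors and on the row operations analogous to those used to reach \eqref{eq:joint_pdf_for_r_eq_1}.
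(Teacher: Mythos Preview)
Your overall plan is sound and matches the paper's strategy in outline: specialize to single-row partitions, insert the one-variable and principal-specialization values, and collapse the series against $V(\lambda)^4$ into $\mVandermonde$. But the crucial bridge is missing, and your final paragraph actually misstates it. You write that $Q_{(k)}$ ``does not have a determinantal/Jacobi--Trudi expression'' and that $F(t;\lambda)$ must be matched to $\mVandermonde$ ``by hand.'' In fact the paper's whole point is that $Q_{(k)}$ \emph{does} admit a determinantal form, via the identity
\[
(k+1)\,Q_{(k)}(\lambda_1,\dots,\lambda_N)\;=\;s_{(k)}(\lambda_1,\lambda_1,\lambda_2,\lambda_2,\dots,\lambda_N,\lambda_N),
\]
obtained by comparing the generating function $\sum_{k\ge 0}(k+1)Q_{(k)}(\lambda)t^k=\prod_j(1+\lambda_jt)^{-2}$ with the standard one for Schur polynomials. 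This is exactly your $F(t;\lambda)$, but recognizing it as a \emph{doubled-variable} Schur generating function is the step you do not take.

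Once that identification is made, the determinantal structure is immediate: the Jacobi--Trudi determinant for $s_{(k)}$ in $2N$ variables, specialized to pairwise-equal arguments by L'H\^opital, yields the $2N\times 2N$ matrix whose odd columns are powers of $\lambda_j$ and whose even columns are their derivatives (Lemma~\ref{lemma:matrix representation}). Summing over $k$ then modifies only the last row, and the same row operations you allude to from \eqref{eq:joint_pdf_for_r_eq_1} replace the tail series by $e^{\frac{a}{1+a}2M\lambda_j}$ and its derivative, producing $\mVandermonde$. Without the doubled-variable Schur link your plan has no mechanism to pass from the product $\prod_j(1+\lambda_jt)^{-2}$ to a $2N\times 2N$ determinant, so as written the proposal has a genuine gap at precisely the step you flag as the obstacle.
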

Here
\begin{equation}
\mVandermonde =
\begin{vmatrix}
1 & 0 & \dots & 1 & 0 \\
\lambda_1 & 1 & \dots & \lambda_N & 1 \\
\lambda^2_1 & 2\lambda_1 & \dots & \lambda^2_N & 2\lambda_N \\
\vdots & \vdots & \dots & \vdots & \vdots \\
\lambda^{2N-2}_1 & (2N-2)\lambda^{2N-3}_1 & \dots & \lambda^{2N-2}_N
& (2N-2)\lambda^{2N-3}_N \\
e^{\frac{a}{1+a}2M\lambda_1} & \frac{a}{1+a}2M
e^{\frac{a}{1+a}2M\lambda_1} & \dots & e^{\frac{a}{1+a}2M\lambda_N}
& \frac{a}{1+a}2M e^{\frac{a}{1+a}2M\lambda_N}
\end{vmatrix},
\end{equation}
the determinant of a $2N \times 2N$ matrix whose $(2N, 2k-1)$ entry is $e^{\frac{a}{1+a}2M\lambda_k}$, $(j, 2k-1)$ entry is $\lambda^{j-1}_k$ for $j = 1, \dots, 2N-1$, and $2i$-th column is the derivative of the $(2i-1)$-st column. $\mVandermonde$ is a variation of the $V(\lambda)^4$ appearing in the LSE (see \cite{Mehta04}).

Since rank $r = 1$, we can simplify \eqref{eq:quaternionic_Zonal_expanssion} as
\begin{equation} \label{joint+pdf_for_quater_rank_1}
P(\lambda) = \frac{1}{C} V(\lambda)^4 \prod^N_{j=1} \lambda^{2(M-N)+1}_j e^{-2M\lambda_j} \sum^{\infty}_{j=0} \frac{(2M)^j}{j!} \frac{\Qj(\frac{a}{1+a}) \Qj(\lambda_1, \dots, \lambda_N)}{\Qj(I_N)},
\end{equation}
just like formula \eqref{eq:rank_1_pdf_in_complex_case} for the complex case.

We have \cite{Stanley89}
\begin{align}
\Qj (\frac{a}{1+a}) = & \left( \frac{a}{1+a} \right)^j \\
\intertext{and since the number of variables is $N$ \cite{Stanley89}} 
\Qj (1, \dots, 1) = & \frac{1}{(j+1)!} \prod^{j-1}_{i=0} (2N+i),
\end{align}
so we get
\begin{multline}
\sum^{\infty}_{j=0} \frac{(2M)^j}{j!} \frac{\Qj(\frac{a}{1+a}) \Qj(\lambda_1, \dots, \lambda_N)}{\Qj(I_N)} = \\
\sum^{\infty}_{j=0} \frac{j+1}{\prod^{j-1}_{i=0} (2N+i)} \left( \frac{a}{1+a} 2M \right)^j \Qj(\lambda_1, \dots, \lambda_N).
\end{multline}
In \cite{Stanley89} there is an identity
\begin{equation}
\sum^{\infty}_{j=0} (j+1)\Qj(\lambda_1, \dots, \lambda_N)t^j = \prod^{N}_{j=1} \frac{1}{(1+\lambda_{j}t)^2}.
\end{equation}
Comparing it with the well-known identity for Schur polynomials
\begin{equation}
\sum^{\infty}_{j=0} \sj(\lambda_1, \dots, \lambda_N)t^j = \prod^{N}_{j=1} \frac{1}{1+\lambda_{j}t},
\end{equation}
we get the identity
\begin{equation} \label{eq:defination of double Schur}
(j+1)\Qj(\lambda_1, \dots, \lambda_N) = \sj(\lambda_1, \lambda_1, \lambda_2, \lambda_2,\dots, \lambda_N, \lambda_N),
\end{equation}
with each $\lambda_i$ appearing twice as variables of the $\sj$. For notational simplicity, we denote the right hand side of (\ref{eq:defination of double Schur}) as $\ssj(\Lambda)$, which is a plethysm \cite{Macdonald95}
\begin{equation}
\ssj(\lambda_1, \dots, \lambda_N) = \sj \circ 2p_1(\lambda_1, \dots, \lambda_N).
\end{equation}
Now we get
\begin{multline} \label{eq:sum in double Schur}
\sum^{\infty}_{j=0} \frac{(2M)^j}{j!} \frac{\Qj(\frac{a}{1+a}) \Qj(\lambda_1, \dots, \lambda_N)}{\Qj(I_N)} = \\
\sum^{\infty}_{j=0} \frac{1}{\prod^{j-1}_{i=0} (2N+i)} \left( \frac{a}{1+a} 2M \right)^j \ssj(\lambda_1, \dots, \lambda_N).
\end{multline}

Then we need a lemma to simplify (\ref{eq:sum in double Schur}) further.

\begin{lemma} \label{lemma:matrix representation}
\begin{equation} \label{eq:matrix_representation}
\ssj(\Lambda) = \frac{
\begin{vmatrix}
1 & 0 & \dots & 1 & 0 \\
\lambda_1 & 1 & \dots & \lambda_N & 1 \\
\lambda^2_1 & 2\lambda_1 & \dots & \lambda^2_N & 2\lambda_N \\
\vdots & \vdots & \dots & \vdots & \vdots \\
\lambda^{2N-2}_1 & (2N-2)\lambda^{2N-3}_1 & \dots & \lambda^{2N-2}_N & (2N-2)\lambda^{2N-3}_N \\
\lambda^{2N+j-1}_1 & (2N+j-1)\lambda^{2N+j-2}_1 & \dots & \lambda^{2N+j-1}_N & (2N+j-1)\lambda^{2N+j-2}_N
\end{vmatrix}
} {V(\lambda)^4},
\end{equation}
with the $(k,2j-1)$ entry of the matrix being a power of $\lambda_j$ with the exponent $k-1$ if $k \neq 2N$ and $2N+j-1$ if $k = 2N$, and the $(k,2j)$ entry being the derivative of the $(k,2j-1)$ entry with respect to $\lambda_j$.
\end{lemma}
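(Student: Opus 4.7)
The plan is to obtain (\ref{eq:matrix_representation}) as a confluent limit of the bialternant formula for the Schur polynomial in $2N$ variables, letting the variables coalesce in pairs. For the one-row partition $(j)$ and $2N$ variables, the Schur formula stated earlier in the paper specialises to $s_{(j)}(x_1,\dots,x_{2N}) = \det B(x)/\det V_{2N}(x)$, where $V_{2N}(x)$ is the matrix with $(k,l)$-entry $x_l^{k-1}$ and $B(x)$ is obtained from $V_{2N}(x)$ by replacing the bottom row $(x_l^{2N-1})_l$ with $(x_l^{2N-1+j})_l$. By the definition of the plethysm, $\ssj(\Lambda)$ is the value of this ratio at $x_{2i-1}=x_{2i}=\lambda_i$; both determinants vanish there because columns pair up, so the right-hand side of (\ref{eq:matrix_representation}) should emerge as the l'H\^{o}pital answer.

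To implement this, I would perturb $x_{2i-1}=\lambda_i$, $x_{2i}=\lambda_i+\varepsilon_i$ and, for each $i$, perform the column operation $C_{2i}\mapsto (C_{2i}-C_{2i-1})/\varepsilon_i$. The subtraction is determinant-preserving, while the division introduces the global factor $\prod_i \varepsilon_i^{-1}$; as $\varepsilon_i\to 0$, each entry of the new column $2i$ converges to the $\lambda_i$-derivative of the corresponding entry of column $2i-1$. Applied to $B(x)$, the limiting $2N\times 2N$ matrix is precisely the one whose determinant is $\mVandermonde$ in (\ref{eq:matrix_representation}), so
\[
\det B(x) = \Bigl(\prod_{i=1}^N \varepsilon_i\Bigr)\mVandermonde \;+\; o\Bigl(\prod_{i=1}^N \varepsilon_i\Bigr).
\]

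For the denominator a direct product computation suffices: from $\det V_{2N}(x) = \prod_{1\le a<b\le 2N}(x_b-x_a)$, the within-pair factor $(a,b)=(2i-1,2i)$ gives exactly $\varepsilon_i$, while the four cross-pair factors between blocks $i<j$ collectively yield $(\lambda_j-\lambda_i)^4 + O(\varepsilon)$, so the product stabilises to $\bigl(\prod_i\varepsilon_i\bigr)V(\lambda)^4$ in the limit (the fourth power absorbs any sign discrepancy between $\prod_{i<j}(x_j-x_i)$ and the paper's convention $V(\lambda)=\prod_{i<j}(\lambda_i-\lambda_j)$). The $\prod_i\varepsilon_i$ factors cancel between numerator and denominator, yielding (\ref{eq:matrix_representation}). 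The only real subtlety is verifying that the $o(\prod_i\varepsilon_i)$ remainder in $\det B(x)$ truly drops out, but this follows because the column operation is an exact algebraic identity for every finite $\varepsilon$ with the derivative columns as the uniform pointwise limits, so I do not expect a genuine obstacle beyond careful bookkeeping of the confluent limit.
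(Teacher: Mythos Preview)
Your proposal is correct and matches the paper's approach: both obtain the formula as a confluent limit of the Schur bialternant, using l'H\^{o}pital to resolve the $0/0$ at coalescing pairs (the paper phrases it as applying $\frac{\partial^N}{\partial x_2\partial x_4\cdots\partial x_{2N}}$ to numerator and denominator, which is exactly your column-operation limit). The only cosmetic difference is that you compute the denominator limit directly from the Vandermonde product, whereas the paper invokes the standard confluent Vandermonde identity $V(\lambda)^4=\det(\lambda_j^{k-1},(k-1)\lambda_j^{k-2})$ (itself proved by the same l'H\^{o}pital trick).
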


To prove this lemma, we need the well known fact (see \cite{Mehta04}), proven by L'H\^{o}pital's rule
\begin{equation} \label{eq:Vandemonde_to_fourth}
V(\lambda)^4 =
\begin{vmatrix}
1 & 0 & \dots & 1 & 0 \\
\lambda_1 & 1 & \dots & \lambda_N & 1 \\
\vdots & \vdots & \dots & \vdots & \vdots \\
\lambda^{2N-1}_1 & (2N-1)\lambda^{2N-2}_1 & \dots & \lambda^{2N-1}_N & (2N-1)\lambda^{2N-2}_N
\end{vmatrix},
\end{equation}
with the $(k,2j-1)$ entry being $\lambda^{k-1}_j$ and the $(k,2j)$ entry $(k-1)\lambda^{k-2}_j$.

\begin{proof}[Proof of the lemma]
Applying the L'H\^{o}pital's rule repeatedly with respect to $x_{2i}$, $i=1,\dots,N$, we get the identity
\begin{equation}
\begin{split}
& \frac{
\begin{vmatrix}
1 & 0 & \dots & 1 & 0 \\
\lambda_1 & 1 & \dots & \lambda_N & 1 \\
\lambda^2_1 & 2\lambda_1 & \dots & \lambda^2_N & 2\lambda_N \\
\vdots & \vdots & \dots & \vdots & \vdots \\
\lambda^{2N-2}_1 & (2N-2)\lambda^{2N-3}_1 & \dots & \lambda^{2N-2}_N & (2N-2)\lambda^{2N-3}_N \\
\lambda^{2N+j-1}_1 & (2N+j-1)\lambda^{2N+j-2}_1 & \dots &
\lambda^{2N+j-1}_N & (2N+j-1)\lambda^{2N+j-2}_N
\end{vmatrix}}
{\begin{vmatrix}
1 & 0 & \dots & 1 & 0 \\
\lambda_1 & 1 & \dots & \lambda_N & 1 \\
\vdots & \vdots & \dots & \vdots & \vdots \\
\lambda^{2N-1}_1 & (2N-1)\lambda^{2N-2} & \dots & \lambda^{2N-1}_N & (2N-1)\lambda^{2N-2}_N
\end{vmatrix}} \\
= & \left. \frac{\frac{\partial^N}{\partial x_2 \partial x_4 \dots \partial x_{2N}}
\begin{vmatrix}
1 & 1 & \dots & 1 & 1 \\
x_1 & x_2 & \dots & x_{2N-1} & x_{2N} \\
\vdots & \vdots & \dots & \vdots & \vdots \\
x^{2N-2}_1 & x^{2N-2}_2 & \dots & x^{2N-2}_{2N-1} & x^{2N-2}_{2N} \\
x^{2N+j-1}_1 & x^{2N+j-1}_2 & \dots & x^{2N+j-1}_{2N-1} & x^{2N+j-1}_{2N}
\end{vmatrix}} {\frac{\partial^N}{\partial x_2 \partial x_4 \dots \partial x_{2N}}
\begin{vmatrix}
1 & 1 & \dots & 1 & 1 \\
x_1 & x_2 & \dots & x_{2N-1} & x_{2N} \\
\vdots & \vdots & \dots & \vdots & \vdots \\
x^{2N-2}_1 & x^{2N-2}_2 & \dots & x^{2N-2}_{2N-1} & x^{2N-2}_{2N} \\
x^{2N+j-1}_1 & x^{2N+j-1}_2 & \dots & x^{2N+j-1}_{2N-1} & x^{2N+j-1}_{2N}
\end{vmatrix}} \right|_{\substack{x_{2i-1}=x_{2i}=\lambda_i \\
i=1.\dots,N}} \\
= & \sj(\lambda_1, \lambda_1, \lambda_2, \lambda_2, \dots, \lambda_N, \lambda_N) = \ssj(\lambda_1, \dots, \lambda_N),
\end{split}
\end{equation}
from the matrix representation of Schur polynomials, and now use (\ref{eq:Vandemonde_to_fourth}) to get the compact formula (\ref{eq:matrix_representation}).
\end{proof}

Substituting (\ref{eq:matrix_representation}) into (\ref{eq:sum in double Schur}), we get
\begin{equation} \label{eq:modified 2Nx2N matrix}
\begin{split}
& V(\lambda)^4 \sum^{\infty}_{j=0} \frac{(2M)^j}{j!} \frac{\Qj(\frac{a}{1+a}) \Qj(\lambda_1, \dots, \lambda_N)}{\Qj(I_N)} \\
= &
\begin{vmatrix}
1 & 0 & \dots & 1 & 0 \\
\lambda_1 & 1 & \dots & \lambda_N & 1 \\
\lambda^2_1 & 2\lambda_1 & \dots & \lambda^2_N & 2\lambda_N \\
\vdots & \vdots & \dots & \vdots & \vdots \\
\lambda^{2N-2}_1 & (2N-2)\lambda^{2N-3}_1 & \dots & \lambda^{2N-2}_N & (2N-2)\lambda^{2N-3}_N \\
p(\lambda_1) & p'(\lambda_1) & \dots & p(\lambda_N) & p'(\lambda_N) \end{vmatrix} \\
= & \frac{1}{C}
\begin{vmatrix}
1 & 0 & \dots & 1 & 0 \\
\lambda_1 & 1 & \dots & \lambda_N & 1 \\
\lambda^2_1 & 2\lambda_1 & \dots & \lambda^2_N & 2\lambda_N \\
\vdots & \vdots & \dots & \vdots & \vdots \\
\lambda^{2N-2}_1 & (2N-2)\lambda^{2N-3}_1 & \dots & \lambda^{2N-2}_N & (2N-2)\lambda^{2N-3}_N \\
e^{\frac{a}{1+a}2M\lambda_1} & \frac{a}{1+a}2M e^{\frac{a}{1+a}2M\lambda_1} & \dots & e^{\frac{a}{1+a}2M\lambda_N} & \frac{a}{1+a}2M e^{\frac{a}{1+a}2M\lambda_N}
\end{vmatrix} \\
= & \frac{1}{C} \mVandermonde,
\end{split}
\end{equation}
where
\begin{equation}
\begin{split}
p(x) = & \sum^{\infty}_{j=0} \frac{1}{\prod^{j-1}_{i=0} (2N+i)} \left( \frac{a}{1+a} 2M \right)^j x^{2N+j-1} \\
= & \frac{(2N-1)!}{\left( \frac{a}{1+a} 2M \right)^{2N-1}} \left( e^{\frac{a}{1+a}2Mx} - \sum^{2N-2}_{j=0} \frac{1}{j!} \left( \frac{a}{1+a} 2Mx \right)^j \right),
\end{split}
\end{equation}
and if $k \neq 2N$, the $(k,2j-1)$ entries in both matrices are $\lambda^{k-1}_{j}$, and the $(k,2j)$ entries are $(k-1)\lambda^{k-2}_{j}$, and the $2N,2i-1$ entry in the former (latter) matrix is $p(\lambda_i)$ (resp. $e^{\frac{a}{1+a}2M\lambda_i}$) and the $2N,2i$ entry $p'(\lambda_i)$ (resp. $\frac{a}{1+a}2M e^{\frac{a}{1+a}2M\lambda_i}$).

\begin{proof}[Proof of the theorem]
Formulas (\ref{joint+pdf_for_quater_rank_1}) and (\ref{eq:modified 2Nx2N matrix}) together give the result (\ref{eq:final joint distribution formula}).
\end{proof}

\section{The determinantal formula}

With the formula (\ref{eq:final joint distribution formula}) ready to use, we get the limiting  distribution formula for the largest sample eigenvalue, in the same spirit as the solution of the quaternionic white Wishart ensemble. Our process below is closely parallel to that in \cite{Tracy-Widom98}.

First, we find a skew orthogonal basis $\{ \varphi_0(x), \varphi_1(x), \dots, \varphi_{2N-1}(x) \}$ of the linear space spanned by $\{ 1, x, x^2, \dots, x^{2N-2}, e^{\frac{a}{1+a}2Mx} \}$.We require that the $\varphi_j(x)$ is a linear combination of $\{ 1,x, x^2, \dots, x^j \}$ if $j < 2N-1$, while $\varphi_{2N-1}(x)$ can be arbitrary, with the skew inner products among them
\begin{equation}
\begin{split}
\langle \varphi_j(x), \varphi_k(x) \rangle_4 = & \int^{\infty}_0
(\varphi_j(x)\varphi'_k(x) - \varphi'_j(x)\varphi_k(x))
x^{2(M-N)+1}e^{-2Mx} dx \\
= &
\begin{cases}
r_{j/2} & \textrm{if $j$ is even and $k=j+1$,} \\
-r_{k/2} & \textrm{if $k$ is even and $j=k+1$,} \\
0 & \textrm{otherwise.}
\end{cases}
\end{split}
\end{equation}

\begin{remark}
Due to the shortage of notations, we abuse the language so that we use $\varphi$ and $\psi$ in this chapter to mean functions different from those in chapter \ref{complex_spiked_model}.
\end{remark}

Then we can reformulate the distribution function of $\lambda$ as
\begin{equation} \label{eq:skew polynomial form distribution}
\begin{split}
P(\lambda) = & \frac{1}{C}
\begin{vmatrix}
\varphi_0(\lambda_1) & \varphi'_0(\lambda_1) & \dots & \varphi_0(\lambda_N) & \varphi'_0(\lambda_N) \\
\varphi_1(\lambda_1) & \varphi'_1(\lambda_1) & \dots & \varphi_1(\lambda_N) & \varphi'_1(\lambda_N) \\
\vdots & \vdots & \dots & \vdots & \vdots \\
\varphi_{2N-1}(\lambda_1) & \varphi'_{2N-1}(\lambda_1) & \dots & \varphi_{2N-1}(\lambda_N) & \varphi'_{2N-1}(\lambda_N)
\end{vmatrix} \\
& \phantom{\frac{1}{C}} \prod^{N}_{j=1} \left( \lambda^{2(M-N)+1}_j e^{-2M\lambda_j} \right) \\
= & \frac{1}{C}
\begin{vmatrix}
\psi_0(\lambda_1) & \psi'_0(\lambda_1) & \dots & \psi_0(\lambda_N) & \psi'_0(\lambda_N) \\
\psi_1(\lambda_1) & \psi'_1(\lambda_1) & \dots & \psi_1(\lambda_N) & \psi'_1(\lambda_N) \\
\vdots & \vdots & \dots & \vdots & \vdots \\
\psi_{2N-1}(\lambda_1) & \psi'_{2N-1}(\lambda_1) & \dots & \psi_{2N-1}(\lambda_N) & \psi'_{2N-1}(\lambda_N)
\end{vmatrix},
\end{split}
\end{equation}
where
\begin{equation} \label{eq:definition_of_psi}
\psi_i(x) = \varphi_i(x) x^{M-N+1/2}e^{-Mx}.
\end{equation}
For an arbitrary function $f(x)$ on $[0,\infty)$, by the formula of de Bruijn \cite{de_Bruijn55},
\begin{multline} \label{eq:Pfaffian formula}
\int^{\infty}_0 \dots \int^{\infty}_0
\begin{vmatrix}
\psi_0(\lambda_1) & \psi'_0(\lambda_1) & \dots & \psi_0(\lambda_N) & \psi'_0(\lambda_N) \\
\psi_1(\lambda_1) & \psi'_1(\lambda_1) & \dots & \psi_1(\lambda_N) & \psi'_1(\lambda_N) \\
\vdots & \vdots & \dots & \vdots & \vdots \\
\psi_{2N-1}(\lambda_1) & \psi'_{2N-1}(\lambda_1) & \dots &  \psi_{2N-1}(\lambda_N) & \psi'_{2N-1} (\lambda_N)
\end{vmatrix} \\
\prod^N_{i=1}(1+f(\lambda_i))d\lambda_i = C \Pf(P(1+f)),
\end{multline}
where $P(1+f)$ is a $2N \times 2N$ matrix, whose entries depend on $1+f$ in the following way
\begin{equation}
(P(1+f))_{j,k} = \int^{\infty}_0 (\psi_{j-1}(x)\psi'_{k-1}(x) - \psi'_{j-1}(x)\psi_{k-1}(x)) (1+f(x))dx.
\end{equation}
Now we define a matrix $Z$ as
\begin{equation}
Z=
\begin{pmatrix}
0 & r_0 & & & & & \\
-r_0 & 0 & & & & & \\
& & 0 & r_1 & & & \\
& & -r_1 & 0 & & & \\
& & & & \ddots & & \\
& & & & & 0 & r_{N-1} \\
& & & & & -r_{N-1} & 0
\end{pmatrix},
\end{equation}
with
\begin{equation}
Z_{j,k} =
\begin{cases}
r_{k/2-1} & \textrm{if $k$ is even and $j=k-1$,} \\
-r_{j/2-1} & \textrm{if $j$ is even and $k=j-1$,} \\
0 & \textrm{otherwise,}
\end{cases}
\end{equation}
and define for $j = 0, \dots, N-1$, $\eta = Z^{-1}\psi$, i.e.,
\begin{equation}
\eta_{2j}(x) = -\frac{\psi_{2j+1}(x)}{r_j} \quad \textrm{and} \quad
\eta_{2j+1}(x) = \frac{\psi_{2j}(x)}{r_j}.
\end{equation}
So we have
\begin{equation}
\begin{split}
(P(1+f))_{j,k} = & \int^{\infty}_0 (\psi_{j-1}(x)\psi'_{k-1}(x) - \psi'_{j-1}(x)\psi_{k-1}(x)) dx \\
& + \int^{\infty}_0 (\psi_{j-1}(x)\psi'_{k-1}(x) - \psi'_{j-1}(x)\psi_{k-1}(x)) f(x)dx \\
= & Z_{j,k} + \int^{\infty}_0 (\psi_{j-1}(x)\psi'_{k-1}(x) - \psi'_{j-1}(x)\psi_{k-1}(x)) f(x)dx.
\end{split}
\end{equation}
And if we denote $Q(1+f) = Z^{-1}P(1+f)$, then
\begin{equation}
Q(1+f)_{j,k} = \delta_{j,k} + \int^{\infty}_0 (\eta_{j-1}(x)\psi'_{k-1}(x) - \eta'_{j-1}(x)\psi_{k-1}(x)) f(x)dx.
\end{equation}

If we choose $f$ to be $-\chi_{(T,\infty)}$, then the integral on the left hand side of (\ref{eq:Pfaffian formula}), after multiplying a constant, is the probability of all $\lambda_i$'s smaller than $T$. In latter part of the paper, we abbreviate $\chi_{(T,\infty)}$ to $\chi$ as before. So we get for a $T$-independent constant
\begin{equation}
\mathbb{P}(\max(\lambda_i) \leq T) = C\Pf(P(1-\chi)),
\end{equation}
and
\begin{equation}
(\mathbb{P}(\max(\lambda_i) \leq T))^2 = C^2\det(P(1-\chi)) =
C^2\det(Q(1-\chi)).
\end{equation}

Now we apply a matrix version of \eqref{eq:changed_determinantal_formula_for_complex}. In linear algebra, we have the determinant identity
\begin{equation} \label{eq:det(I-AB)=det(I-BA)}
\det(I-AB) = \det(I-BA),
\end{equation}
for $A$ an linear map from $\mathbb{R}^n$ to $\mathbb{R}^m$ and $B$ an linear map from $\mathbb{R}^n$ to $\mathbb{R}^m$, and the identity still holds in infinite dimensional settings \cite{Gohberg-Krein69}. Letting $\det$ mean a Fredholm determinant for a matrix integral operator defined in \eqref{eq:definition_of_matrix_integral_Fredholm_det}, we describe a setting due to Tracy-Widom \cite{Tracy-Widom98}.

If $A$ is an operator from $L^2(\mathbb{R}) \times L^2(\mathbb{R})$ to the vector space $\mathbb{R}^{2N}$ with
\begin{equation}
A
\begin{pmatrix}
g(x) \\
h(x)
\end{pmatrix}_j
= \int^{\infty}_0 \chi(x)\eta_{j-1}(x)g(x)dx - \int^{\infty}_0 \chi(x)\eta'_{j-1}(x)h(x)dx,
\end{equation}
and $B$ is an operator from $\mathbb{R}^{2N}$ to $L^2(\mathbb{R})
\times L^2(\mathbb{R})$ with
\begin{equation}
B
\begin{pmatrix}
c_1 \\
\vdots \\
c_{2N}
\end{pmatrix}
=
\begin{pmatrix}
\sum^{2N}_{k=1} c_k\psi'_{k-1}(x)\chi(x) \\
\sum^{2N}_{k=1} c_k\psi_{k-1}(x)\chi(x)
\end{pmatrix},
\end{equation}
then
\begin{equation}
I-AB = Q(1-\chi),
\end{equation}
and
\begin{equation}
I-BA = I - \chi(x)
\begin{pmatrix}
S_4(x,y) & SD_4(x,y) \\
IS_4(x,y) & S_4(y,x)
\end{pmatrix}\chi(y),
\end{equation}
where $S_4(x,y)$, $IS_4(x,y)$ and $SD_4(x,y)$ are integral operators whose kernels are
\begin{align}
S_4(x,y) = & \sum^{2N-1}_{j=0} \psi'_j(x)\eta_j(y) = \sum^{N-1}_{j=0} \frac{1}{r_j} (-\psi'_{2j}(x)\psi_{2j+1}(y) + \psi'_{2j+1}(x)\psi_{2j}(y)), \label{eq:definition_of_S_4(xi,eta)}\\
SD_4(x,y) = & \sum^{2N-1}_{j=0} -\psi'_j(x)\eta'_j(y) = \sum^{N-1}_{j=0} \frac{1}{r_j} (\psi'_{2j}(x)\psi'_{2j+1}(y) - \psi'_{2j+1}(x)\psi'_{2j}(y)), \\
IS_4(x,y) = & \sum^{2N-1}_{j=0} \psi_j(x)\eta_j(y) = \sum^{N-1}_{j=0} \frac{1}{r_j} (-\psi_{2j}(x)\psi_{2j+1}(y) + \psi_{2j+1}(x)\psi_{2j}(y)), \\
S_4(y,x) = & \sum^{2N-1}_{j=0} -\psi_j(x)\eta'_j(y) = \sum^{N-1}_{j=0} \frac{1}{r_j} (\psi_{2j}(x)\psi'_{2j+1}(y) - \psi_{2j+1}(x)\psi'_{2j}(y)). \label{eq:definition_of_S_4(eta,xi)}
\end{align}

\begin{remark}
It is clear that the nomenclature of $SD_4(x,y)$ is due to the fact that $SD_4(x,y)$ is the negative of the derivative of $S_4(x,y)$. But $IS_4(x,y)$, which gets its name in the same way in earlier literature in GSE (e.g., \cite{Tracy-Widom96}), in our problem may not satisfy the equation
\begin{equation}
IS_4(x,y) = -\int^{\infty}_x S_4(t,y)dt,
\end{equation}
since the integral on the right hand side may diverge.
\end{remark}

In conclusion,
\begin{equation}
(\mathbb{P}(\max(\lambda_i) \leq T))^2 = C^2\det \left( I - \chi(x)
\begin{pmatrix}
S_4(x,y) & SD_4(x,y) \\
IS_4(x,y) & S_4(y,x)
\end{pmatrix} \chi(y) \right),
\end{equation}
and we can find that $C^2=1$ by taking the limit $T \rightarrow
\infty$. We define a $2 \times 2$ matrix kernel as
\begin{equation}
\begin{split}
\Ptt(x,y) = & \chi(x) \begin{pmatrix}
S_4(x,y) & SD_4(x,y) \\
IS_4(x,y) & S_4(y,x)
\end{pmatrix} \chi(y) \\
= & \begin{pmatrix}
\chi(x) S_4(x,y) \chi(y) & \chi(x) DS_4(x,y) \chi(y) \\
\chi(x) IS_4(x,y) \chi(y) & \chi(x) S_4(y,x) \chi(y)
\end{pmatrix},
\end{split}
\end{equation}
then we have
\begin{equation}
(\mathbb{P}(\max(\lambda_i) \leq T))^2 = \det(I-\Ptt(x,y)).
\end{equation}

\section{$S_4(x,y)$ in terms of Laguerre polynomials}
\label{S_4(xy)_in_terms_of_Laguerre_polynomials}

In manipulation of skew orthogonal polynomials, we take the approach of \cite{Adler-Forrester-Nagao-van_Moerbeke00}, and all properties of Laguerre polynomials are from \cite{Szego75}.

Since Laguerre polynomials by definition satisfy the orthogonal property
\begin{equation}
\int^{\infty}_0 \qLag{j}\qLag{k} x^{2(M-N)}e^{-x}dx = \frac{(j+2(M-N))!}{j!}\delta_{j,k},
\end{equation}
and they have the differential identity \footnote{We assume $\qLag{n}(x) = 0$ if $n < 0$.} \begin{equation} \label{eq:differential identity}
x\frac{d}{dx}\qLag{n}(x) = n\qLag{n}(x) - (n+2(M-N))\qLag{n-1}(x),
\end{equation}
it is easy to get that
\begin{equation}
\begin{split}
& \left\langle \qLag{j}(2Mx), \qLag{k}(2Mx) \right\rangle_4 \\
= & \int^{\infty}_0 \left( \qLag{j}(2Mx)\frac{d}{dx}\qLag{k}(2Mx) \right. \\
& \phantom{\int^{\infty}_0} - \left. \qLag{k}(2Mx)\frac{d}{dx}\qLag{j}(2Mx) \right) x^{2(M-N)+1}e^{-2Mx}dx \\
= &
\begin{cases}
\left( \frac{1}{2M} \right)^{2(M-N)+1} \frac{(j+2(M-N))!}{(j-1)!} & \textrm{if $j=k+1$,} \\
-\left( \frac{1}{2M} \right)^{2(M-N)+1} \frac{(k+2(M-N))!}{(k-1)!} & \textrm{if $k=j+1$,} \\
0 & \textrm{otherwise.}
\end{cases}
\end{split}
\end{equation}
So we can choose for $j = 0, \dots, N-2$,
\begin{align}
\varphi_{2j}(x) = & \sum^j_{k=0} \left( \prod^k_{i=1} \frac{2i-1}{2i+2(M-N)} \right) \qLag{2k}(2Mx), \label{eq:defination of even varphi} \\
\varphi_{2j+1}(x) = & -\qLag{2j+1}(2Mx), \label{eq:definition of odd varphi} \\
\intertext{and} r_j = & \left( \frac{1}{2M} \right)^{2(M-N)+1} \frac{(2j+2(M-N)+1)!}{(2j)!} \prod^j_{k=1}\frac{2k-1}{2k+2(M-N)}.
\label{eq:definition_of_r_i}
\end{align}
We can also choose
\begin{equation}
\varphi_{2N-2}(x) = \sum^{N-1}_{k=0} \left( \prod^k_{i=1} \frac{2i-1}{2i+2(M-N)} \right) \qLag{2k}(2Mx),
\end{equation}
but $\varphi_{2N-1}(x)$ is not a polynomial and needs to be treated separately.

By the Rodrigues' representation
\begin{equation}
x^{2(M-N)}e^{-x}\qLag{n}(x) = \frac{1}{n!} \frac{d^n}{dx^n} (e^{-x}x^{n+2(M-N)}),
\end{equation}
and repeated integration by parts, we get for $n > 0$
\begin{multline}
\left\langle e^{\frac{a}{1+a}2Mx}, \qLag{n}(2Mx) \right\rangle_4 = \\
\left( \frac{1+a}{2M} \right)^{2(M-N)+1} \left( (-a)^{n+1}\frac{(n+2(M-N)+1)!}{n!} - (-a)^{n-1}\frac{(n+2(M-N))!}{(n-1)!} \right)
\end{multline}
and
\begin{equation}
\left\langle e^{\frac{a}{1+a}2Mx}, \qLag{0}(2Mx) \right\rangle_4 = - \left( \frac{1+a}{2M} \right)^{2(M-N)+1} a(2(M-N)+1)!,
\end{equation}
so that
\begin{multline}
\left\langle e^{\frac{a}{1+a}2Mx}, \varphi_{2j}(x) \right\rangle_4 = \\
-\left( \frac{1+a}{2M} \right)^{2(M-N)+1} a^{2j+1}\frac{(2j+2(M-N)+1)!}{(2j)!} \prod^j_{k=1}\frac{2k-1}{2k+2(M-N)}
\end{multline}
and
\begin{multline}
\left\langle e^{\frac{a}{1+a}2Mx}, \varphi_{2j+1}(x) \right\rangle_4 = \\
-\left( \frac{1+a}{2M} \right)^{2(M-N)+1} \left( a^{2j+2}\frac{(2j+2(M-N)+2)!}{(2j+1)!} - a^{2j}\frac{(2j+2(M-N)+1)!}{(2j)!} \right).
\end{multline}
Now by the skew orthogonality, we can choose
\begin{equation} \label{eq:psi_2N-1_raw_formula}
\begin{split}
\varphi_{2N-1}(x) = & - \sum^{N-2}_{j=0} \frac{1}{r_j}\left( \left\langle e^{\frac{a}{1+a}2Mx}, \varphi_{2j+1}(x) \right\rangle_4 \varphi_{2j}(x) - \left\langle e^{\frac{a}{1+a}2Mx}, \varphi_{2j}(x) \right\rangle_4 \varphi_{2j+1}(x) \right) \\
& - (1+a)^{2(M-N)+1} a^{2N-2}\prod^{N-1}_{j=1}\frac{2j+2(M-N)}{2j-1} \varphi_{2N-2}(x) + e^{\frac{a}{1+a}2Mx} \\
= & e^{\frac{a}{1+a}2Mx} - (1+a)^{2(M-N)+1} \sum^{2N-2}_{j=0} (-a)^j \qLag{j}(2Mx)
\end{split}
\end{equation}
and
\begin{equation}
r_{N-1} = \left( \frac{1+a}{2M} \right)^{2(M-N)+1} a^{2N-1}\frac{(2M-1)!}{(2N-2)!} \prod^{N-1}_{k=1}\frac{2k-1}{2k+2(M-N)}.
\end{equation}

Now, we write $S_4(x,y)$ as $S_{4a}(x,y) + S_{4b}(x,y)$, where
\begin{align}
S_{4a}(x,y) = & \sum^{N-2}_{j=0} \frac{1}{r_j} (-\psi'_{2j}(x)\psi_{2j+1}(y) + \psi'_{2j+1}(x)\psi_{2j}(y)) \label{eq:S4a}\\
\intertext{and} 
S_{4b}(x,y) = & \frac{1}{r_{N-1}} (-\psi'_{2N-2}(x)\psi_{2N-1}(y) + \psi'_{2N-1}(x)\psi_{2N-2}(y)),
\end{align}
and simplify them separately.

The formula (\ref{eq:S4a}) of our $S_{4a}(x,y)$ is also the formula for $S_4(x,y)$ in the LSE problem, with parameters $M$ and $N-2$, and has been well studied. For completeness we derive its Laguerre polynomial expression here, following \cite{Adler-Forrester-Nagao-van_Moerbeke00}.

By the differential identity (\ref{eq:differential identity}) and the identity
\begin{multline}
n\qLag{n}(x) = \\
(-x+2n+2(M-N)-1)\qLag{n-1}(x) - (n+2(M-N)-1)\qLag{n-2}(x),
\end{multline}
we get, remembering the definition (\ref{eq:definition_of_psi}), the telescoping sequence
\begin{equation}
\begin{split}
\psi'_{2j}(x) = & \sum^j_{k=0} \left( \prod^k_{i=1}\frac{2i-1}{2i+2(M-N)} \right. \\
& \left. \phantom{\sum^j_{k=0}} \left( M-N+1/2-Mx+x\frac{d}{dx} \right) \qLag{2k}(2Mx) \right) x^{M-N-1/2}e^{-Mx} \\
= & \frac{1}{2}\sum^j_{k=0} \prod^k_{i=1} \frac{2i-1}{2i+2(M-N)} \left(
(2k+1)\qLag{2k+1}(2Mx) \right. \\
& \phantom{\frac{1}{2}\sum^j_{k=0}} \left. - (2k+2(M-N))\qLag{2k-1}(2Mx) \right) x^{M-N-1/2}e^{-Mx} \\
= & \frac{1}{2} \left( \prod^j_{k=1}\frac{2k-1}{2k+2(M-N)} \right) (2j+1)\qLag{2j+1}(2Mx) x^{M-N-1/2}e^{-Mx} \label{eq:even derivative}
\end{split}
\end{equation}
and
\begin{equation}
\begin{split}
\psi'_{2j+1}(x) = & -\left( M-N+1/2-Mx+x\frac{d}{dx} \right) \qLag{2j+1}(2Mx) x^{M-N-1/2}e^{-Mx} \\
= & -\frac{1}{2} \left( (2j+2)\qLag{2j+2}(2Mx) \right. \\
& \phantom{-\frac{1}{2}} \left. - (2j+2(M-N)+1)\qLag{2j}(2Mx) \right) x^{M-N-1/2}e^{-Mx}. \label{eq:odd derivative}
\end{split}
\end{equation}
Therefore, if we plug in (\ref{eq:definition_of_r_i}), (\ref{eq:even derivative}) and (\ref{eq:odd derivative}) into (\ref{eq:S4a}), we get after some trick,
\begin{equation}
\begin{split}
S_{4a}(x,y) = & \frac{1}{2}(2M)^{2(M-N)+1} x^{M-N-1/2}e^{-Mx}y^{M-N+1/2}e^{-My} \\
& \left\{ \sum^{2N-2}_{j=0} \frac{j!}{(j+2(M-N))!} \qLag{j}(2Mx)\qLag{j}(2My) \right. \\
& \left. -\frac{(2N-2)!}{(2M-2)!} \left( \prod^{N-1}_{j=1}\frac{2j+2(M-N)}{2j-1} \right)
\qLag{2N-2}(2Mx)\varphi_{2N-2}(y) \right\}.
\end{split}
\end{equation}

Furthermore, we can simplify $\psi_{2N-2}(x)$. Since for $j \neq 2N-1$, (if we define $\varphi_j(x)$ and then $\psi_j(x)$ for $j>2N-1$ by the formula (\ref{eq:defination of even varphi}) and (\ref{eq:definition of odd varphi}),)
\begin{equation}
\int^{\infty}_0 \left( \psi_{2N-2}(x)\psi'_j(x) - \psi'_{2N-2}(x)\psi_j(x) \right) dx = 0,
\end{equation}
we get for $j \neq 2N-1$, using integration by parts,
\begin{equation}
\int^{\infty}_0 \psi'_{2N-2}(x)\qLag{j}(2Mx) x^{M-N+1/2}e^{-Mx}dx = 0.
\end{equation}
So by the orthogonal property of Laguerre polynomials, we get
\begin{equation}
\psi'_{2N-2}(x) = C\qLag{2N-1}(2Mx) x^{M-N-1/2}e^{-Mx},
\end{equation}
and we can determine that
\begin{equation}
C = \frac{2N-1}{2}\prod^{N-1}_{j=1}\frac{2j-1}{2j+2(M-N)}
\end{equation}
without much difficulty. Together with the fact $\lim_{x\rightarrow\infty}\psi_{2N-2}(x)=0$, we get
\begin{equation}
\psi_{2N-2}(x) = -\frac{2N-1}{2}\prod^{N-1}_{j=1}\frac{2j-1}{2j+2(M-N)} \int^{\infty}_x t^{M-N-1/2}e^{-Mt}\qLag{2N-1}(2Mt)dt.
\end{equation}

Now, we can write $S_{4a}(x,y)$ as $S_{4a1}(x,y) + S_{4a2}(x,y)$, where
\begin{multline}
S_{4a1}(x,y) = \frac{1}{2}(2M)^{2(M-N)+1} x^{M-N-1/2}e^{-Mx} y^{M-N+1/2}e^{-My} \\
\sum^{2N-2}_{j=0} \frac{j!}{(j+2(M-N))!} \qLag{j}(2Mx) \qLag{j}(2My) 
\end{multline}
and
\begin{multline} \label{eq:definition_of_S_4a2}
S_{4a2}(x,y) = \frac{1}{4} (2M)^{2(M-N)+1} \frac{(2N-1)!}{(2M-2)!} \\
\qLag{2N-2}(2Mx)x^{M-N-1/2}e^{-Mx} \int^{\infty}_y t^{M-N-1/2}e^{-Mt}\qLag{2N-1}(2Mt)dt.
\end{multline}

Finally,
\begin{multline} \label{eq:formula_for_S_4b} S_{4b}(x,y) =  -\frac{1}{2} \left( \frac{2M}{1+a}
\right)^{2(M-N)+1} a^{-(2N-1)} \frac{(2N-1)!}{(2M-1)!} \\
\left\{ \vphantom{\int^{\infty}_x} \qLag{2N-1}(2Mx)x^{M-N-1/2}e^{-Mx}\psi_{2N-1}(y) \right. \\
+ \left. \psi'_{2N-1}(x) \int^{\infty}_y \qLag{2N-1}(2Mt)t^{M-N-1/2}e^{-Mt}dt \right\},
\end{multline}
and we can take the asymptotic analyses of $S_{4a1}(x,y)$,
$S_{4a2}(x,y)$ and $S_{4b}(x,y)$ separately.

\section{Proof of theorem \ref{theorem:main_theorem_for_quater_rank_1}} \label{Asymptotic analysis}

The same as in the complex case, we consider the rescaled distribution problem, and wish to find the probability of the largest sample eigenvalue being in the domain $(0,p+qT]$. We can put the kernel in the new coordinate system (after a conjugation by $\left( \begin{smallmatrix} q^{1/2} & 0 \\ 0 & q^{-1/2} \end{smallmatrix} \right)$), and get
\begin{equation}
\begin{split}
(\mathbb{P}(\max(\lambda_i) \leq p+qT))^2 = & \det \left( I -
\begin{pmatrix}
\widetilde{S}_4(\xi,\eta) & \widetilde{SD}_4(\xi,\eta) \\
\widetilde{IS}_4(\xi,\eta) & \widetilde{S}_4(\eta,\xi)
\end{pmatrix} \chi(\eta) \right) \\
= & \det(I-\mPtt(\xi, \eta)),
\end{split}
\end{equation}
where as $L^2$ functions,
\begin{align}
\widetilde{SD}_4(\xi,\eta) = & q^2SD_4(x,y) |_{\substack{x=p+q\xi \\ y=p+q\eta}}, \label{eq:transformation_of_SD_4} \\
\widetilde{S}_4(\xi,\eta) = & qS_4(x,y) |_{\substack{x=p+q\xi \\ y=p+q\eta}}, \label{eq:transformation_of_S_4}\\
\widetilde{IS}_4(\xi,\eta) = & IS_4(x,y) |_{\substack{x=p+q\xi \\ y=p+q\eta}}, \label{eq:transformation_of_IS_4}
\end{align}
and
\begin{equation}
\mPtt(\xi, \eta) = \chi(\xi) \begin{pmatrix}
\widetilde{S}_4(\xi,\eta) & \widetilde{SD}_4(\xi,\eta) \\
\widetilde{IS}_4(\xi,\eta) & \widetilde{S}_4(\eta,\xi)
\end{pmatrix} \chi(\eta).
\end{equation}

In the proof of theorem \ref{theorem:main_theorem_for_quater_rank_1}, we need the matrix version of propositions \ref{prop:trace_norm_convergence_implies_det_convergence}---\ref{prop:conjugation_formula_for_scalar_kernel}, and the fact that the convergence in trace norm of a matrix integral operator is equivalent to the convergence in trace norm of all its entries.

Since the $IS_4(x,y)$ and $DS_4(x,y)$ are of the same form as $S_4(x,y)$, we only show the asymptotic analysis of $S_4(x,y)$, and state the result for the other two, for which the arguments are the same.
We give proofs of all the three parts below.

\subsection{The $-1 < a < \gamma^{-1}$ part} \label{a<gamma^-1_part}

In case $-1 < a \leq \gamma^{-1}$, we choose $p = (1+\gamma^{-1})^2$ and $q = \frac{(1+\gamma)^{4/3}}{\gamma(2M)^{2/3}}$, and denote \footnote{Here $*$ stands for $4$, $4a$, $4a1$, $4a2$ and $4b$. The definition of $\widetilde{S}_*(\xi,\eta)$ in (\ref{eq:transform_formula_for_a_leq_gamma^-1}) is only used in subsection \ref{a<gamma^-1_part} and \ref{a=gamma^-1_part}.} 
\begin{equation} \label{eq:transform_formula_for_a_leq_gamma^-1} 
\left. \widetilde{S}_*(\xi,\eta) = \frac{(1+\gamma)^{4/3}}{\gamma(2M)^{2/3}} S_*(x,y) \right|_{\substack{x=(1+\gamma^{-1})^2 + \frac{(1+\gamma)^{4/3}}{\gamma(2M)^{2/3}} \xi \\ y=(1+\gamma^{-1})^2 + \frac{(1+\gamma)^{4/3}}{\gamma(2M)^{2/3}}\eta}}.
\end{equation}

$S_{4a}(x,y)$ is the formula for the upper-left entry of the $2 \times 2$ matrix kernel of the quaternionic white Wishart ensemble with parameters $M$ and $N-1$, and its asymptotic behavior is well studied \cite{Forrester-Nagao-Honner99}. We want to prove that as $M \rightarrow \infty$, $S_{4a}(x,y)$ dominates $S_4(x,y)$ in the domain that we are interested in, and so naturally the distribution of the largest sample eigenvalue in the perturbed problem is the same as that in the quaternionic white Wishart ensemble. (The difference between $N$ and $N-1$ is negligible.)

$S_{4a1}(x,y)$ is almost the kernel for the complex white Wishart ensemble with parameters $2M-2$ and $2N-2$, besides a factor $\sqrt{y/x}/2$. By arguments in subsection \ref{the max_lambda_<_gamma^-1_part_complex} we have
\begin{equation} \label{eq:s_a1 convergence} 
\lim_{M \rightarrow \infty} \chi(\xi) \widetilde{S}_{4a1}(\xi,\eta) \chi(\eta) = \frac{1}{2} \chi(\xi) K_{\Airy}(\xi, \eta) \chi(\eta).
\end{equation}

For the $S_{4a2}(x,y)$ part, we also have in trace norm \cite{Forrester-Nagao-Honner99},
\begin{equation} \label{eq:s_a2 convergence}
\lim_{M \rightarrow \infty} \chi(\xi) \widetilde{S}_{4a2}(\xi,\eta) \chi(\eta) = -\frac{1}{4} \chi(\xi) \Ai(\xi)\int^{\infty}_{\eta}\Ai(t)dt \chi(\eta).
\end{equation}
We get the proof of \eqref{eq:s_a2 convergence} by asymptotics analysis. Formula \eqref{eq:asymototics_Laguerre_subcritical} and a similar result for $\qLag{2N-2}$ imply the convergence in $L^2$ norm of functions in $\xi$ and respectively
$\eta$,
\begin{equation} \label{eq:first_fomula_for_S_4a2}
\lim_{M \rightarrow \infty} \gamma^{-2N}(1+\gamma)^{4/3}(2M)^{1/3}e^{M-N} \qLag{2N-2} (2Mx) x^{M-N-1/2}e^{-Mx} \chi(\xi) = \Ai(\xi) \chi(\xi),
\end{equation}
\begin{equation} \label{eq:second_formula_for_S_4a2}
\lim_{M \rightarrow \infty} \gamma^{-2N}2M e^{M-N} \int^{\infty}_y \qLag{2N-1} (2Mt) t^{M-N-1/2}e^{-Mt}dt \chi(\eta) = -\int^{\infty}_{\eta} \Ai(t)dt \chi(\eta),
\end{equation}
and by the Stirling's formula,
\begin{equation}
\lim_{M \rightarrow \infty} (2M)^{2(M-N)-1} \frac{(2N-1)!}{(2M-2)!} e^{2(N-M)} \gamma^{4N-1} = 1.
\end{equation}
By (\ref{eq:definition_of_S_4a2}) and (\ref{eq:transform_formula_for_a_leq_gamma^-1}), we get
\begin{multline}
\chi(\xi) \widetilde{S}_{4a2}(\xi,\eta) \chi(\eta) = \frac{1}{4} (2M)^{2(M-N)-1} \frac{(2N-1)!}{(2M-2)!} e^{2(N-M)} \gamma^{4N-1} \\
\gamma^{-2N}(1+\gamma)^{4/3}(2M)^{1/3}e^{M-N} \qLag{2N-2} (2Mx) x^{M-N-1/2}e^{-Mx} \chi(\xi) \\
\gamma^{-2N}2M e^{M-N} \int^{\infty}_y \qLag{2N-1} (2Mt) t^{M-N-1/2}e^{-Mt}dt \chi(\eta).
\end{multline}
Therefore we get the trace norm convergence \eqref{eq:s_a2 convergence} from the $L^2$ convergence (\ref{eq:first_fomula_for_S_4a2}) and (\ref{eq:second_formula_for_S_4a2}) by proposition \ref{prop:first_trace_norm_convergence_prop}.

Now we need to analyze the term $S_{4b}(\xi,\eta)$, new to the perturbed problem. We need the following results of $L^2$ convergence, which are direct consequences of \eqref{eq:asymototics_Laguerre_subcritical}, \eqref{eq:asymptotics_psi_2N-1_critical} and a similar result of $\psi'_{2N-1}$:
\begin{multline}
\lim_{M \rightarrow \infty} \gamma^{-2N-1}(1+\gamma)^{4/3}(2M)^{1/3}e^{M-N} \qLag{2N-1}(2Mx)x^{M-N-1/2}e^{-Mx} \chi(\xi) = \\
 -\Ai(\xi) \chi(\xi),
\end{multline}
\begin{align}
& \lim_{M \rightarrow \infty} \gamma^{-2N}2M e^{M-N} \int^{\infty}_y \qLag{2N-1} (2Mt) t^{M-N-1/2}e^{-Mt}dt \chi(\eta) = -\int^{\infty}_{\eta} \Ai(t)dt \chi(\eta), \\
& \lim_{M \rightarrow \infty} (1+a)^{2(N-M)-1}a^{-2N+1} \frac{(1-a\gamma)(2M)^{1/3}}{(\gamma+1)^{2/3}\gamma^{2N-1}} e^{M-N} \psi_{2N-1}(y)\chi(\eta) = \Ai(\eta)\chi(\eta), \\
& \lim_{M \rightarrow \infty} (1+a)^{2(N-M)-1}a^{-2N+1} \frac{(1-a\gamma)(\gamma+1)^{2/3}}{\gamma^{2N}(2M)^{1/3}} e^{M-N} \psi'_{2N-1}(y)\chi(\xi) = \Ai'(\xi)\chi(\xi).
\end{align}
By the Stirling's formula, we get
\begin{equation} \label{eq:stirling's}
\lim_{M\rightarrow \infty} (2M)^{2(M-N)}\frac{(2N-1)!}{(2M-1)!}e^{2(N-M)}\gamma^{4N-1} = 1,
\end{equation}
and then by (\ref{eq:formula_for_S_4b}), (\ref{eq:transform_formula_for_a_leq_gamma^-1}) and proposition \ref{prop:first_trace_norm_convergence_prop}, we have the convergence in trace norm
\begin{multline} \label{eq:agamma-1}
\lim_{M\rightarrow \infty} \frac{(1-a\gamma)(2M)^{1/3}}{(1+\gamma)^{2/3}} \chi(\xi) \widetilde{S}_{4b}(\xi, \eta) \chi(\eta) = \\
\frac{1}{2} \chi(\xi) \left( \Ai(\xi)\Ai(\eta) + \Ai'(\xi) \int^{\infty}_{\eta}\Ai(t)dt \right) \chi(\eta),
\end{multline}
which implies that in trace norm,
\begin{equation}
\lim_{M\rightarrow \infty} \chi(\xi) \widetilde{S}_{4b}(\xi, \eta)
\chi(\eta) = 0.
\end{equation}

Now we get the desired result
\begin{equation}
\lim_{M\rightarrow \infty} \chi(\xi) \widetilde{S}_4(\xi, \eta)
\chi(\eta) = \lim_{M\rightarrow \infty} \chi(\xi)
\widetilde{S}_{4a}(\xi, \eta) \chi(\eta) = \chi(\xi)
\widehat{S}_4(\xi,\eta) \chi(\eta),
\end{equation}
and in the same way
\begin{align}
\lim_{M\rightarrow \infty} \chi(\xi) \widetilde{SD}_4(\xi, \eta)
\chi(\eta) = & \chi(\xi) \widehat{SD}_4(\xi,\eta) \chi(\eta), \\
\lim_{M\rightarrow \infty} \chi(\xi) \widetilde{IS}_4(\xi, \eta)
\chi(\eta) = & \chi(\xi) \widehat{IS}_4(\xi,\eta) \chi(\eta).
\end{align}
 
Therefore, in trace norm
\begin{equation}
\lim_{M\rightarrow \infty} \mPtt(\xi, \eta) =
\chi(\xi)
\begin{pmatrix}
\widehat{S}_4(\xi,\eta) & \widehat{SD}_4(\xi,\eta) \\
\widehat{IS}_4(\xi,\eta) & \widehat{S}_4(\eta. \xi)
\end{pmatrix} \chi(\eta),
\end{equation}
and the convergence of Fredholm determinant follows.

\subsection{The $a = \gamma^{-1}$ part} \label{a=gamma^-1_part}

When $a=\gamma^{-1}$, the $1-a\gamma^{-1}$ in (\ref{eq:agamma-1}) vanishes, so we need other asymptotic formulas for $\psi_{2N-1}(\eta)$ and $\psi'_{2N-1}(\eta)$. The approach is similar to that in the $a<\gamma^{-1}$ case, with the same choice of $p$ and $q$. We need the $L^2$ convergence results given by \eqref{eq:asymptotics_psi_2N-1_critical} and similar results:
\begin{multline}
\lim_{M \rightarrow \infty} \gamma^{-2N-1}(1+\gamma)^{4/3}(2M)^{1/3}e^{M-N} e^{\xi/3} \qLag{2N-1}(2Mx)x^{M-N-1/2}e^{-Mx} \chi(\xi) = \\
-e^{\xi/3}\Ai(\xi) \chi(\xi),
\end{multline}
\begin{multline}
\lim_{M \rightarrow \infty} \gamma^{-2N}2M e^{M-N} e^{-\eta/3}\int^{\infty}_y \qLag{2N-1} (2Mt) t^{M-N-1/2}e^{-Mt}dt \chi(\eta) = \\
-e^{-\eta/3}\int^{\infty}_{\eta} \Ai(t)dt \chi(\eta),
\end{multline}
\begin{align} 
& \lim_{M \rightarrow \infty} (1+a)^{2(N-M)-1}a^{-2N+1} e^{M-N}\gamma^{-2N+1} e^{-\eta/3}\psi_{2N-1}(y)\chi(\eta) = e^{-\eta/3} s^{(1)}(\eta)\chi(\eta), \label{eq:first identity in second case} \\
& \lim_{M \rightarrow \infty} (1+a)^{2(N-M)-1}a^{-2N+1} e^{M-N}\frac{(\gamma+1)^{4/3}}{\gamma^{2N}(2M)^{-2/3}} e^{\xi/3}\psi'_{2N-1}(x)\chi(\xi) = e^{\xi/3} \Ai(\xi)\chi(\xi).
\end{align}

Now we conclude the proof of the $a = \gamma^{-1}$ part of theorem \ref{theorem:main_theorem_for_quater_rank_1}. Using (\ref{eq:formula_for_S_4b}), (\ref{eq:stirling's}) and proposition \ref{prop:first_trace_norm_convergence_prop} we have the convergence in trace norm
\begin{equation}
\begin{split}
\lim_{M\rightarrow \infty} \chi(\xi)e^{\xi/3} \widetilde{S}_{4b}(\xi, \eta) e^{-\eta/3}\chi(\eta) = & \frac{1}{2} \chi(\xi)e^{\xi/3} \left( \Ai(\xi)s^{(1)}(\eta) + \Ai(\xi)\int^{\infty}_{\eta}\Ai(t)dt \right) e^{-\eta/3}\chi(\eta) \\
= & \frac{1}{2} \chi(\xi)e^{\xi/3} \Ai(\xi) e^{-\eta/3}\chi(\eta),
\end{split}
\end{equation}
and this together with the conjugated convergence result of $\widetilde{S}_{4a}(\xi, \eta)$ in formulas (\ref{eq:s_a1 convergence}) and (\ref{eq:s_a2 convergence}) of subsection \ref{a<gamma^-1_part}, which can be proved by arguments in subsection \ref{the max_lambda_=_gamma^-1_part_complex}, conclude
\begin{equation}
\lim_{M\rightarrow \infty} \chi(\xi) e^{\xi/3} \widetilde{S}_4(\xi, \eta) e^{-\eta/3} \chi(\eta) = \chi(\xi) e^{\xi/3} \overline{\overline{S}}_4(\xi,\eta) e^{-\eta/3} \chi(\eta).
\end{equation}
In the same way we get
\begin{align}
\lim_{M\rightarrow \infty} \chi(\xi) e^{\xi/3} \widetilde{SD}_4(\xi, \eta) e^{\eta/3} \chi(\eta) = & \chi(\xi) e^{\xi/3} \overline{\overline{SD}}_4(\xi,\eta) e^{\eta/3} \chi(\eta), \\
\lim_{M\rightarrow \infty} \chi(\xi) e^{-\xi/3} \widetilde{IS}_4(\xi, \eta) e^{-\eta/3} \chi(\eta) = & \chi(\xi) e^{-\xi/3} \overline{\overline{IS}}_4(\xi,\eta) e^{-\eta/3} \chi (\eta).
\end{align}

Then we get the convergence in trace norm of a conjugate of $\chi(\xi) \mPtt(\xi, \eta) \chi(\eta)$
\begin{equation}
\begin{split}
& \lim_{M\rightarrow \infty} \chi(\xi)
\begin{pmatrix}
e^{\xi/3}\widetilde{S}_4(\xi,\eta)e^{-\eta/3} & e^{\xi/3}\widetilde{SD}_4(\xi,\eta)e^{\eta/3} \\
e^{-\xi/3}\widetilde{IS}_4(\xi,\eta)e^{-\eta/3} & e^{-\xi/3}\widetilde{S}_4(\eta,\xi)e^{\eta/3}
\end{pmatrix} \chi(\eta) \\
= & \chi(\xi)
\begin{pmatrix}
e^{\xi/3} \overline{\overline{S}}_4(\xi,\eta) e^{-\eta/3} & e^{\xi/3} \overline{\overline{SD}}_4(\xi,\eta) e^{\eta/3} \\
e^{-\xi/3} \overline{\overline{IS}}_4(\xi,\eta) e^{-\eta/3} & e^{-\xi/3} \overline{\overline{S}}_4(\eta,\xi) e^{\eta/3}
\end{pmatrix} \chi(\eta) \\
= & \chi(\xi)
\begin{pmatrix}
e^{\xi/3} & 0 \\
0 & e^{-\xi/3}
\end{pmatrix}
\begin{pmatrix}
\overline{\overline{S}}_4(\xi,\eta) & \overline{\overline{SD}}_4(\xi,\eta) \\
\overline{\overline{IS}}_4(\xi,\eta) & \overline{\overline{S}}_4(\eta,\xi)
\end{pmatrix} 
\begin{pmatrix}
e^{-\eta/3} & 0 \\
0 & e^{\eta/3}
\end{pmatrix}
\chi(\eta).
\end{split}
\end{equation}
Therefore,
\begin{equation}
\begin{split}
& \lim_{M\rightarrow \infty} \det(I - \mPtt(\xi, \eta)) \\
= & \det \left( I - \chi(\xi)
\begin{pmatrix}
e^{\xi/3} \overline{\overline{S}}_4(\xi,\eta) e^{-\eta/3} & e^{\xi/3} \overline{\overline{SD}}_4(\xi,\eta) e^{\eta/3} \\
e^{-\xi/3} \overline{\overline{IS}}_4(\xi,\eta) e^{-\eta/3} & e^{-\xi/3} \overline{\overline{S}}_4(\eta,\xi) e^{\eta/3}
\end{pmatrix} \chi(\eta) \right) \\
= & \det \left( I - \chi(\xi) 
\begin{pmatrix}
\overline{\overline{S}}_4(\xi,\eta) & \overline{\overline{SD}}_4(\xi,\eta) \\
\overline{\overline{IS}}_4(\xi,\eta) & \overline{\overline{S}}_4(\eta,\xi)
\end{pmatrix}  \chi(\eta) \right),
\end{split}
\end{equation}
we we use the matrix version of proposition \ref{prop:conjugation_formula_for_scalar_kernel}.

\subsection{The $a > \gamma^{-1}$ part}

If $a>\gamma^{-1}$, the location as well as the fluctuation scale of the largest sample eigenvalue is changed. We change variables as $p = (a+1)\left( 1+\frac{1}{\gamma^2a} \right)$ and $q = (a+1)
 \sqrt{1 - \frac{1}{\gamma^2a^2}} \frac{1}{\sqrt{2M}}$, and then by (\ref{eq:transformation_of_S_4}) the kernel $S_*(x,y)$ after substitution is \footnote{Here $*$ stands for $4$, $4a$ or $4b$, and the $\widetilde{S}_*(\xi,\eta)$ in this subsection is not identical to that in subsection \ref{a<gamma^-1_part} and \ref{a=gamma^-1_part}.}
\begin{equation} \label{eq:another_transform_formula}
\widetilde{S}_*(\xi,\eta) = \left. (a+1) \sqrt{1 - \frac{1}{\gamma^2a^2}} \frac{1}{\sqrt{2M}} S_*(x,y) \right|_{\substack{x=(a+1)\left( 1+\frac{1}{\gamma^2a} \right) + (a+1) \sqrt{1 - \frac{1}{\gamma^2a^2}} \frac{1}{\sqrt{2M}} \xi \\ y=(a+1)\left( 1+\frac{1}{\gamma^2a} \right) + (a+1) \sqrt{1 - \frac{1}{\gamma^2a^2}} \frac{1}{\sqrt{2M}}\eta}}.
\end{equation}
We analyze $\widetilde{S}_{4b}(\xi,\eta)$ first. As before, we have $L^2$ convergence results by \eqref{eq:asymototics_Laguerre_supercritical} and similar results
\begin{multline} \label{eq:first_in_third_case}
\lim_{M \rightarrow \infty} \frac{(\gamma^2a+1)^{M-N+1/2}}
{(\gamma^2a)^{M+N+1/2}(a+1)^{M-N-1/2}} \sqrt{(\gamma^2a^2-1)2M}
e^{M-N} e^{\frac{\gamma^2a^2-1} {(\gamma^2a+1)(a+1)}Mx} \\
e^{2\xi/3} \qLag{2N-1}(2Mx)x^{M-N-1/2}e^{-Mx} \chi(\xi) =
-\frac{1}{\sqrt{2\pi}} e^{-\frac{1}{4}
\frac{\gamma^4a^2+\gamma^2a^2+4\gamma^2a+\gamma^2+1}
{(\gamma^2a+1)^2}\xi^2 + 2\xi/3} \chi(\xi),
\end{multline}
\begin{multline} \label{eq:second_in_third_case}
\lim_{M \rightarrow \infty} \frac{1}{2}
\frac{(\gamma^2a+1)^{M-N-1/2}(\gamma^2a^2-1)}
{(\gamma^2a)^{M+N+1/2}(a+1)^{M-N+1/2}} \sqrt{\gamma^2a^2-1}
(2M)^{3/2} e^{M-N} e^{\frac{\gamma^2a^2-1}
{(\gamma^2a+1)(a+1)}My} \\
e^{2\eta/3} \int^{\infty}_y \qLag{2N-1}(2Mt)
t^{M-N-1/2}e^{-Mt}dt \chi(\eta) = \\
-\frac{1}{\sqrt{2\pi}}
e^{-\frac{1}{4} \frac{\gamma^4a^2+\gamma^2a^2+4\gamma^2a+\gamma^2+1}
{(\gamma^2a+1)^2}\eta^2 + 2\eta/3} \chi(\eta),
\end{multline}
\begin{multline} \label{eq:third in third case}
\lim_{M \rightarrow \infty} \left(
\frac{\gamma^2a}{(\gamma^2a+1)(a+1)} \right)^{M-N+1/2}e^{M-N}
e^{-\frac{\gamma^2a^2-1}{(\gamma^2a+1)(a+1)}My} \\
e^{-2\eta/3} \psi_{2N-1}(y) \chi(\eta) =
e^{-\frac{1}{4}\frac{(\gamma^2a^2-1)(\gamma^2-1)}
{(\gamma^2a+1)^2}\eta^2 - 2\eta/3} \chi(\eta),
\end{multline}
\begin{multline} \label{eq:forth_in_third_case}
\lim_{M \rightarrow \infty} \left(
\frac{\gamma^2a}{(\gamma^2a+1)(a+1)} \right)^{M-N-1/2}e^{M-N}
\frac{\gamma^2a}{(\gamma^2a^2-1)M} e^{-\frac{\gamma^2a^2-1}
{(\gamma^2a+1)(a+1)}Mx} \\
e^{-2\xi/3} \psi'_{2N-1}(x) \chi(\xi) =
e^{-\frac{1}{4}\frac{(\gamma^2a^2-1)(\gamma^2-1)}
{(\gamma^2a+1)^2}\xi^2 - 2\xi/3} \chi(\xi).
\end{multline}

For notational simplicity, we denote functions on the left-hand
sides of (\ref{eq:first_in_third_case}) --
(\ref{eq:forth_in_third_case}) by $F_1(\xi)$, $F_2(\eta)$,
$F_3(\eta)$ and $F_4(\xi)$, and denote
\begin{equation}
c_M = (2M)^{2(M-N)}\frac{(2N-1)!}{(2M-1)!}e^{2(N-M)}\gamma^{4N-1}
\end{equation}
By (\ref{eq:stirling's}), we have
\begin{equation}
\lim_{M \rightarrow \infty} c_M = 1.
\end{equation}
Then we get from (\ref{eq:formula_for_S_4b}),
(\ref{eq:another_transform_formula}) and
(\ref{eq:first_in_third_case})---(\ref{eq:forth_in_third_case})
\begin{multline} \label{eq:F_expression_of_S4b}
\widetilde{S}_{4b}(\xi, \eta) = -\frac{c_M}{2} \left(
e^{-\frac{\gamma^2a^2-1}{(\gamma^2a+1)(a+1)}M(x-y) -
2(\xi-\eta)/3} F_1(\xi)F_3(\eta) \right. \\
\left. + e^{\frac{\gamma^2a^2-1}{(\gamma^2a+1)(a+1)}M(x-y) +
2(\xi-\eta)/3} F_4(\xi)F_2(\eta) \right).
\end{multline}

If we define
\begin{align}
SD_{4a}(x,y) = & \sum^{N-2}_{j=0} \frac{1}{r_j}
(\psi'_{2j}(x)\psi'_{2j+1}(y) - \psi'_{2j+1}(x)\psi'_{2j}(y)), \\
IS_{4a}(x,y) = & \sum^{N-2}_{j=0} \frac{1}{r_j}
(-\psi_{2j}(x)\psi_{2j+1}(y) + \psi_{2j+1}(x)\psi_{2j}(y)),
\end{align}
and
\begin{align}
SD_{4b}(x,y) = & \frac{1}{r_{N-1}}
(\psi'_{2N-2}(x)\psi'_{2N-1}(y) - \psi'_{2N-1}(x)\psi'_{2N-2}(y)), \\
IS_{4b}(x,y) = & \frac{1}{r_{N-1}} (-\psi_{2N-2}(x)\psi_{2N-1}(y) +
\psi_{2N-1}(x)\psi_{2N-2}(y)),
\end{align}
and by (\ref{eq:transformation_of_SD_4}) and
(\ref{eq:transformation_of_IS_4}) like
(\ref{eq:another_transform_formula}) \footnote{$*$ stands for $4$,
$4a$ or $4b$.}
\begin{align}
\widetilde{SD}_*(\xi,\eta) = & \left. (a+1)^2 \left( 1 -
\frac{1}{\gamma^2a^2} \right) \frac{1}{2M} SD_*(x,y)
\right|_{\substack{x=(a+1)\left( 1+\frac{1}{\gamma^2a} \right) +
(a+1) \sqrt{1 - \frac{1}{\gamma^2a^2}} \frac{1}{\sqrt{2M}} \xi
\\ y=(a+1)\left( 1+\frac{1}{\gamma^2a} \right) + (a+1)
\sqrt{1 - \frac{1}{\gamma^2a^2}} \frac{1}{\sqrt{2M}}\eta}}, \\
\widetilde{IS}_*(\xi,\eta) = & \left. IS_*(x,y)
\right|_{\substack{x=(a+1)\left( 1+\frac{1}{\gamma^2a} \right) +
(a+1) \sqrt{1 - \frac{1}{\gamma^2a^2}} \frac{1}{\sqrt{2M}} \xi
\\ y=(a+1)\left( 1+\frac{1}{\gamma^2a} \right) + (a+1)
\sqrt{1 - \frac{1}{\gamma^2a^2}} \frac{1}{\sqrt{2M}}\eta}},
\end{align}
then in the same way of (\ref{eq:F_expression_of_S4b}), we have
\begin{multline}
\widetilde{SD}_{4b}(\xi,\eta) = \frac{c_M}{4} C_M \left(
e^{-\frac{\gamma^2a^2-1}{(\gamma^2a+1)(a+1)}M(x-y) -
2(\xi-\eta)/3} F_1(\xi)F_4(\eta) \right. \\
\left. - e^{\frac{\gamma^2a^2-1}{(\gamma^2a+1)(a+1)}M(x-y) +
2(\xi-\eta)/3} F_4(\xi)F_1(\eta) \right),
\end{multline}
\begin{multline}
\widetilde{IS}_{4b}(\xi,\eta) = \frac{c_M}{C_M} \left(
e^{-\frac{\gamma^2a^2-1}{(\gamma^2a+1)(a+1)}M(x-y) -
2(\xi-\eta)/3} F_2(\xi)F_3(\eta) \right. \\
\left. - e^{\frac{\gamma^2a^2-1}{(\gamma^2a+1)(a+1)}M(x-y) +
2(\xi-\eta)/3} F_3(\xi)F_2(\eta) \right),
\end{multline}
with
\begin{equation}
C_M = \frac{(\gamma^2a^2-1)^{3/2} \sqrt{2M}}{a\gamma(\gamma^2a+1)}.
\end{equation}

Now we write $\widetilde{P}_T(\xi,\eta)$ as the sum
\begin{equation}
\widetilde{P}_T(\xi,\eta) = \widetilde{P}_{Ta}(\xi,\eta) +
\widetilde{P}_{Tb}(\xi,\eta),
\end{equation}
with
\begin{align}
\widetilde{P}_{Ta}(\xi,\eta) = & \chi(\eta)
\begin{pmatrix}
\widetilde{S}_{4a}(\xi,\eta) & \widetilde{SD}_{4a}(\xi,\eta) \\
\widetilde{IS}_{4a}(\xi,\eta) & \widetilde{S}_{4a}(\eta,\xi)
\end{pmatrix} \chi(\eta), \\
\widetilde{P}_{Tb}(\xi,\eta) = & \chi(\eta)
\begin{pmatrix}
\widetilde{S}_{4b}(\xi,\eta) & \widetilde{SD}_{4b}(\xi,\eta) \\
\widetilde{IS}_{4b}(\xi,\eta) & \widetilde{S}_{4b}(\eta,\xi)
\end{pmatrix} \chi(\eta).
\end{align}
If we denote
\begin{align}
U(\xi) = &
\begin{pmatrix}
e^{\frac{\gamma^2a^2-1}{(\gamma^2a+1)(a+1)}Mx + 2\xi/3} &
-\frac{C_M}{2}\frac{F_4(\xi)}{F_3(\xi)}
e^{\frac{\gamma^2a^2-1}{(\gamma^2a+1)(a+1)}Mx + 2\xi/3} \\
0 & e^{-\frac{\gamma^2a^2-1}{(\gamma^2a+1)(a+1)}Mx - 2\xi/3}
\end{pmatrix}, \\
U^{-1}(\eta) = &
\begin{pmatrix}
e^{-\frac{\gamma^2a^2-1}{(\gamma^2a+1)(a+1)}My - 2\eta/3} &
\frac{C_M}{2}\frac{F_4(\xi)}{F_3(\xi)}
e^{-\frac{\gamma^2a^2-1}{(\gamma^2a+1)(a+1)}Mx - 2\xi/3} \\
0 & e^{\frac{\gamma^2a^2-1}{(\gamma^2a+1)(a+1)}Mx + 2\xi/3}
\end{pmatrix},
\end{align}
then we have the result of kernel conjugation
\begin{multline}
U(\xi)\widetilde{P}_{Tb}(\xi,\eta)U^{-1}(\eta) = \\
\chi(\xi)
\begin{pmatrix}
-\frac{c_M}{2} \left( F_1(\xi) +
\frac{F_2(\xi)F_4(\xi)}{F_3(\xi)}
\right) F_3(\eta) & 0 \\
U(\xi)\widetilde{P}_{Tb}(\xi,\eta)U^{-1}(\eta)_{21} & -\frac{c_M}{2}
F_3(\xi) \left( F_1(\eta) + \frac{F_2(\eta)F_4(\eta)}{F_3(\eta)}
\right)
\end{pmatrix} \chi(\eta),
\end{multline}
with the entry
\begin{multline}
U(\xi)\widetilde{P}_{Tb}(\xi,\eta)U^{-1}(\eta)_{21} = \\
\frac{c_M}{C_M} \left(
e^{-\frac{2(\gamma^2a^2-1)}{(\gamma^2a+1)(a+1)}Mx - 4\xi/3}
F_2(\xi)F_3(\eta) - F_3(\xi)F_2(\eta)
e^{-\frac{2(\gamma^2a^2-1)}{(\gamma^2a+1)(a+1)}My - 4\eta/3}
\right).
\end{multline}

We want $U(\xi)\widetilde{P}_{Tb}(\xi,\eta)U^{-1}(\eta)$ to converge
in trace norm as $M \rightarrow \infty$, and need the result
\begin{lemma} \label{lemma:F2F4_F3}
In $L^2$ norm,
\begin{equation}
\lim_{M \rightarrow \infty}
\frac{F_2(\xi)F_4(\xi)}{F_3(\xi)}\chi(\xi) = -\frac{1}{\sqrt{2\pi}}
e^{-\frac{1}{4} \frac{\gamma^4a^2+\gamma^2a^2+4\gamma^2a+\gamma^2+1}
{(\gamma^2a+1)^2}\xi^2 + 2\xi/3} \chi(\xi).
\end{equation}
\end{lemma}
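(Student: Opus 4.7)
The plan is to exploit the identity that $F_3$ and $F_4$ have the same limit function as stated in \eqref{eq:third in third case} and \eqref{eq:forth_in_third_case}, so that formally $F_2 F_4 / F_3$ tends to the same limit as $F_2$, which by \eqref{eq:second_in_third_case} is precisely the right-hand side of the lemma. The real work is to upgrade this formal pointwise cancellation to convergence in $L^2([T,\infty))$.

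The first step would be to write the ratio explicitly. Inspecting the normalizing prefactors defining $F_3$ and $F_4$ directly above \eqref{eq:third in third case} and \eqref{eq:forth_in_third_case}, one obtains
\[
\frac{F_4(\xi)}{F_3(\xi)} = \frac{(\gamma^2a+1)(a+1)}{(\gamma^2a^2-1)M}\cdot\frac{\psi'_{2N-1}(x)}{\psi_{2N-1}(x)}, \qquad x = p+q\xi,
\]
so the task reduces to analyzing a single logarithmic derivative. Next I would invoke the saddle point analysis of $\psi_{2N-1}$ developed in Chapter \ref{asymptotic_analysis} and already used to establish \eqref{eq:third in third case} and \eqref{eq:forth_in_third_case}. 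That analysis represents $\psi_{2N-1}$ via a contour integral with large parameter $M$ whose unique relevant saddle at $x=p$ determines a phase $\Phi$ satisfying $\Phi'(p) = (\gamma^2a^2-1)/[(\gamma^2a+1)(a+1)]$. Differentiating the integral representation under the integral sign and re-running the same steepest descent yields $\psi'_{2N-1}(x)/\psi_{2N-1}(x) = M\Phi'(x) + O(1)$, and combined with the prefactor this gives $F_4(\xi)/F_3(\xi) \to 1$ uniformly on compact subsets of $[T,\infty)$.

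To conclude, I would use the split
\[
\frac{F_2(\xi) F_4(\xi)}{F_3(\xi)} - L(\xi) = (F_2(\xi) - L(\xi))\frac{F_4(\xi)}{F_3(\xi)} + L(\xi)\Bigl(\frac{F_4(\xi)}{F_3(\xi)} - 1\Bigr),
\]
where $L(\xi)$ denotes the asserted limit on the right-hand side of the lemma. The first term vanishes in $L^2$ by the convergence \eqref{eq:second_in_third_case} combined with a uniform bound on $F_4/F_3$. The second term vanishes by dominated convergence, since $L$ is a rapidly decaying Gaussian and $F_4/F_3 - 1 \to 0$ pointwise while remaining uniformly bounded.

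The main obstacle will be the uniform $L^\infty$ control of $F_4/F_3$ on the unbounded half-line $[T,\infty)$: the saddle point approximation directly controls the regime $\xi$ bounded, but for $\xi$ large one needs to follow the steepest descent contour away from the saddle and verify that the Gaussian decay of $\psi_{2N-1}$ and that of $\psi'_{2N-1}/M$ really match, not only to leading order but with enough uniformity that their ratio stays bounded throughout the scaling window. Granting the standard steepest descent estimates that underlie \eqref{eq:third in third case} and \eqref{eq:forth_in_third_case} (which provide matched Gaussian envelopes for the two quantities), the uniform boundedness of $F_4/F_3$ follows, and the argument above closes.
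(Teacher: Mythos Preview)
Your proposal is correct and matches the paper's approach exactly: the paper leaves the proof to the reader, citing as the two ingredients precisely the $L^2$ convergence \eqref{eq:second_in_third_case} of $F_2$ and the fact that $F_4(\xi)/F_3(\xi)\to 1$ uniformly on $[T,\infty)$. Your splitting of $F_2F_4/F_3 - L$ and your identification of the uniform control of $F_4/F_3$ on the whole half-line as the one nontrivial point are exactly what the paper's hint points to.
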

The proof is left to the reader. The main ingredient is
(\ref{eq:second_in_third_case}) and the fact that
$F_4(\xi)/F_3(\xi)$ approaches to $1$ uniformly on $[T, \infty)$.

We need another convergence result on
$U(\xi)\widetilde{P}_{Ta}(\xi,\eta)U^{-1}(\eta)$:
\begin{prop} \label{prop:convergence_of_P_Ta}
In trace norm,
\begin{equation} \label{eq:convergence_of_P_Ta}
\lim_{M \rightarrow \infty}
U(\xi)\widetilde{P}_{Ta}(\xi,\eta)U^{-1}(\eta) = 0.
\end{equation}
\end{prop}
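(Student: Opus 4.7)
The plan is to exploit the fact that $\widetilde{P}_{Ta}$ is (up to the harmless shift $N \mapsto N-1$) the $2\times 2$ matrix kernel for the rank $0$ quaternionic white Wishart ensemble rescaled around $p=(a+1)(1+\frac{1}{\gamma^2 a})$. Since $a>\gamma^{-1}$ we have $p>(1+\gamma^{-1})^2$ strictly, so we are evaluating the unperturbed LSE kernel well beyond the right edge of its Marchenko--Pastur bulk, where every entry is exponentially small in $M$. The whole proof reduces to showing that this exponential decay is not overcome by the exponential growth introduced by the conjugation with $U$.

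First I would plug in the Laguerre expressions from section \ref{S_4(xy)_in_terms_of_Laguerre_polynomials} for $\widetilde{S}_{4a1}$ and $\widetilde{S}_{4a2}$ (and the analogous decompositions for $\widetilde{SD}_{4a}$ and $\widetilde{IS}_{4a}$), and then apply saddle-point analysis to the double contour integral representation \eqref{eq:first_integral_formula_of_Laguerre}--\eqref{eq:second_integral_formula_of_Laguerre} for $\qLag{2N-1}(2Mx)$ and $\qLag{2N-2}(2Mx)$. In the supercritical regime $x>(1+\gamma^{-1})^2$ the saddle lies off the real axis, and the deformation of contours yields
\begin{equation}
\bigl| \qLag{n}(2Mx)\, x^{M-N-1/2} e^{-Mx} \bigr| \leq C\, e^{-M\rho(x)}, \qquad n\in\{2N-2,2N-1\},
\end{equation}
where $\rho(x)>0$ is the right-edge large-deviation rate function for the Marchenko--Pastur law; integration by parts transfers the same rate onto the boundary integrals in $\widetilde{S}_{4a2}$. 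This is precisely the regime in which \eqref{eq:asymototics_Laguerre_supercritical} was established, so the desired bound is already implicit in the asymptotic machinery developed in the previous subsection.

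Next I would track the exponential factors accumulated through the conjugation. Every entry of $U(\xi)\widetilde{P}_{Ta}(\xi,\eta)U^{-1}(\eta)$ is, up to a prefactor that is at worst polynomial in $M$ (coming from the $C_M$ inside $U$ and the $F_4/F_3$ ratios, which are controlled by lemma \ref{lemma:F2F4_F3}), the corresponding entry of $\widetilde{P}_{Ta}$ multiplied by $\exp\bigl(\pm\alpha M(x\pm y) \pm \tfrac{2}{3}(\xi\pm\eta)\bigr)$ with $\alpha=\frac{\gamma^{2}a^{2}-1}{(\gamma^{2}a+1)(a+1)}$. Because $x=p+q\xi$ and $y=p+q\eta$ with $q\sim M^{-1/2}$, the contribution $\alpha M(x-y)=\alpha\sqrt{M}(a+1)\sqrt{1-\gamma^{-2}a^{-2}}(\xi-\eta)$ is only of order $\sqrt{M}$, while the matching $\alpha M(x+y)$ term is absorbed into a common factor whose cancellation is built into the definition of $U$ (this is the same balance that makes $\widetilde{P}_{Tb}$ converge to a nontrivial limit). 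Thus each entry of the conjugated $\widetilde{P}_{Ta}$ is dominated by $C\,M^{k}\,e^{-M(\rho(p+q\xi)+\rho(p+q\eta))+O(\sqrt{M})}$, which decays at least as $e^{-cM}$ uniformly for $\xi,\eta\in[T,\infty)$, for some $c>0$.

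Finally I would convert these pointwise bounds into trace-norm convergence using propositions \ref{prop:first_trace_norm_convergence_prop} and \ref{prop:second_trace_norm_convergence_prop}: each entry is a finite sum of rank-one operators (after writing the Laguerre sum via the Christoffel--Darboux identity or the double contour integral) whose factors go to zero in $L^2([T,\infty))$ exponentially fast. Since a matrix integral operator converges to zero in trace norm as soon as each of its finitely many scalar entries does, \eqref{eq:convergence_of_P_Ta} follows. The main obstacle is the saddle-point step: one must verify cleanly that the saddle contour deformation is legitimate in the supercritical regime and that the resulting rate $\rho(p)$ strictly exceeds the conjugation rate $\alpha p$. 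Once that inequality is established, everything else is bookkeeping of polynomial prefactors.
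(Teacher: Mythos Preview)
Your plan coincides with the paper's own treatment: the paper gives no proof either, only the one-line hint that the Laguerre asymptotics \eqref{eq:first_in_third_case}--\eqref{eq:second_in_third_case} suffice, and your proposal is precisely an elaboration of that hint.

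There is, however, a genuine soft spot in your write-up that the paper's hint also glosses over. Your claim that ``the matching $\alpha M(x+y)$ term is absorbed into a common factor whose cancellation is built into the definition of $U$'' is not right. The off-diagonal block of $U$ was engineered so that the $(1,2)$ entry of $U\widetilde P_{Tb}U^{-1}$ vanishes \emph{algebraically}; no such identity holds for $\widetilde P_{Ta}$. For the $(1,2)$ entry of $U\widetilde P_{Ta}U^{-1}$ the conjugation really does multiply by $e^{\alpha M(x+y)}$, and from \eqref{eq:first_in_third_case} the single-factor decay rate of $\qLag{2N-1}(2Mx)x^{M-N-1/2}e^{-Mx}$ is exactly $e^{-\alpha Mx}$ times the constant $K_M^{-1}$ appearing there, not a strictly larger rate $\rho$. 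So after the conjugation the leading exponentials in $x$ and $y$ cancel to $O(1)$, and what decides whether the entry tends to $0$ or to $\infty$ is the bookkeeping of the residual $M$-dependent constants: the $K_M^{-2}$ from two Laguerre factors, the $1/r_j$ and factorial prefactors in $S_{4a1},S_{4a2}$, the powers of $q$, and the $C_M\,F_4/F_3$ pieces coming from the upper-triangular part of $U$. Your inequality ``$\rho(p)>\alpha p$'' does not capture this; the comparison is marginal at the exponential level and the work is in the constants.

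In short, steps (1), (2) and (4) of your plan are fine and match the paper, but step (3)---your treatment of the $(1,2)$ and $(2,1)$ matrix entries---needs to be redone: rather than a rate inequality, you must either track all the explicit constants produced by \eqref{eq:first_in_third_case}--\eqref{eq:second_in_third_case} through the four terms $\tfrac{C_M}{2}\tfrac{F_4(\eta)}{F_3(\eta)}\widetilde S_{4a}$, $\widetilde{SD}_{4a}$, $\tfrac{C_M^2}{4}\tfrac{F_4(\xi)F_4(\eta)}{F_3(\xi)F_3(\eta)}\widetilde{IS}_{4a}$, $\tfrac{C_M}{2}\tfrac{F_4(\xi)}{F_3(\xi)}\widetilde S_{4a}^T$ and check they combine to something small, or exploit the symplectic self-duality $JU^TJ^{-1}=U^{-1}$ to control the antisymmetric block directly.
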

The proof is left to the reader. Since all the four entries in
$\widetilde{P}_{Ta}(\xi,\eta)$ can be expressed by Laguerre
polynomials, the asymptotic results like
(\ref{eq:first_in_third_case}) and (\ref{eq:second_in_third_case})
give the convergence (\ref{eq:convergence_of_P_Ta}).

By lemma \ref{lemma:F2F4_F3} and proposition
\ref{prop:convergence_of_P_Ta}, we get in trace norm
\begin{equation}
\begin{split}
& \lim_{M \rightarrow \infty} \det(I - \widetilde{P}_T(\xi,\eta)) \\
= & \lim_{M \rightarrow \infty} \det(I -
U(\xi)\widetilde{P}_T(\xi,\eta)U^{-1}(\eta)) \\
= & \lim_{M \rightarrow \infty} \det(I -
U(\xi)\widetilde{P}_{Tb}(\xi,\eta)U^{-1}(\eta)) \\
= & \left( \int^T_{\infty} \frac{1}{\sqrt{2\pi}} e^{-\frac{t^2}{2}}
dt \right)^2,
\end{split}
\end{equation}
and we get the proof of the $a > \gamma^{-1}$ part of theorem
\ref{theorem:main_theorem_for_quater_rank_1}.

\chapter{Phase transition phenomenon}

\label{interpolations}

\section{Rank $1$ complex spiked model}

Here we assume that the single spiked population eigenvalue is $1+a = 1 + \gamma^{-1}$, and by the part 2 of theorem \ref{theorem:spiked_model_theorem_for_complex}, we know the distribution function of the largest sample eigenvalue is $F_{\GUE1}$, which is, according to \eqref{eq:expression_of_t_1}, defined as
\begin{equation} \label{eq:simplified_F_GUE1}
F_{\GUE1}(T) = \det\left( 1 - \chi(\xi) \left( K_{\Airy}(\xi, \eta) + \Ai(\xi)s^{(1)}(\eta) \right) \chi(\eta) \right).
\end{equation}
Forrester recognized that \cite{Forrester00}
\begin{equation} \label{eq:F_GUE1=F^2_GOE_by_Forrester}
F_{\GUE1}(T) = F^2_{\GOE}(T).
\end{equation}
Therefore as the perturbative parameter $a$ increases, by \eqref{eq:G_1_is_Gaussian} and \eqref{eq:F_GUE1=F^2_GOE_by_Forrester} we have the $F_{\GUE}$---$F^2_{\GOE}$---Gaussian phase transition phenomenon around $a = \gamma^{-1}$.

\section{Rank $1$ quaternionic spiked model}

Again we assume that the single spiked population eigenvalue is $1+a = 1 + \gamma^{-1}$. As the perturbative parameter $a$ increases, we have the  $F_{\GSE}$---$F_{\GOE}$---Gaussian phase transition phenomenon around $a = \gamma^{-1}$, by results of theorem \ref{theorem:main_theorem_for_quater_rank_1} and \ref{theorem:F_GSE1=F_GOE}. In this section we prove theorem \ref{theorem:F_GSE1=F_GOE}.

In manipulation of kernels, we follow the method of
\cite{Tracy-Widom96}. The procedure seems informal and cursory, but
is carefully justified in \cite{Tracy-Widom96}.

For notational simplicity, we denote ($\chi(\xi) = \chi_{(T,
\infty)}(\xi)$)
\begin{equation}
B(\xi) = 1 - s^{(1)}(\xi) = \int^{\infty}_{\xi} \Ai(t)dt.
\end{equation}

First, we express the integral operator
\begin{equation}
\chi(\xi) \overline{\overline{P}}(\xi, \eta) \chi(\eta) =
\begin{pmatrix}
\chi(\xi) \overline{\overline{S}}_4(\xi, \eta) \chi(\eta) &
\chi(\xi) \overline{\overline{SD}}_4(\xi, \eta) \chi(\eta) \\
\chi(\xi) \overline{\overline{IS}}_4(\xi, \eta) \chi(\eta) &
\chi(\xi) \overline{\overline{S}}_4(\eta, \xi,) \chi(\eta)
\end{pmatrix}
\end{equation}
by
\begin{equation}
\begin{pmatrix}
\chi(\xi) \frac{\partial}{\partial \xi} & 0 \\
0 & \chi(\xi)
\end{pmatrix}
\begin{pmatrix}
\overline{\overline{IS}}_4(\xi, \eta) \chi(\eta) &
\overline{\overline{S}}_4(\eta, \xi) \chi(\eta) \\
\overline{\overline{IS}}_4(\xi, \eta) \chi(\eta) &
\overline{\overline{S}}_4(\eta, \xi,) \chi(\eta)
\end{pmatrix},
\end{equation}
since by (\ref{eq:definition_of_S_4(xi,eta)}) --
(\ref{eq:definition_of_S_4(eta,xi)}) and taking limit,
\begin{align}
\frac{\partial}{\partial \xi} \overline{\overline{IS}}_4 (\xi, \eta)
= & \overline{\overline{S}}_4(\xi, \eta), \\
\frac{\partial}{\partial \xi} \overline{\overline{S}}_4 (\eta, \xi)
= & \overline{\overline{SD}}_4(\xi, \eta).
\end{align}
Then using (\ref{eq:det(I-AB)=det(I-BA)}) for $A$ bounded and $B$
trace class, upon suitably defining the Hilbert spaces our operators
$A$ and $B$ are acting on, we find
\begin{align}
& \det \left( I -
\begin{pmatrix}
\chi(\xi) \frac{\partial}{\partial \xi} & 0 \\
0 & \chi(\xi)
\end{pmatrix}
\begin{pmatrix}
\overline{\overline{IS}}_4(\xi, \eta) \chi(\eta) &
\overline{\overline{S}}_4(\eta, \xi) \chi(\eta) \\
\overline{\overline{IS}}_4(\xi, \eta) \chi(\eta) &
\overline{\overline{S}}_4(\eta, \xi,) \chi(\eta)
\end{pmatrix}
\right) \notag \\
= & \det \left( I -
\begin{pmatrix}
\overline{\overline{IS}}_4(\xi, \eta) \chi(\eta) &
\overline{\overline{S}}_4(\eta, \xi) \chi(\eta) \\
\overline{\overline{IS}}_4(\xi, \eta) \chi(\eta) &
\overline{\overline{S}}_4(\eta, \xi,) \chi(\eta)
\end{pmatrix}
\begin{pmatrix}
\chi(\eta) \frac{\partial}{\partial \eta} & 0 \\
0 & \chi(\eta)
\end{pmatrix}
\right) \notag \\
= & \det \left( I -
\begin{pmatrix}
\overline{\overline{IS}}_4(\xi, \eta) \chi(\eta)
\frac{\partial}{\partial \eta}&
\overline{\overline{S}}_4(\eta, \xi) \chi(\eta) \\
\overline{\overline{IS}}_4(\xi, \eta) \chi(\eta)
\frac{\partial}{\partial \eta} & \overline{\overline{S}}_4(\eta,
\xi,) \chi(\eta)
\end{pmatrix}
\right), \notag \\
\intertext{and by conjugation with $\begin{pmatrix} 1 & 0 \\ -1 & 1
\end{pmatrix}$, we get} = & \det \left( I -
\begin{pmatrix}
\overline{\overline{IS}}_4(\xi, \eta) \chi(\eta)
\frac{\partial}{\partial \eta} + \overline{\overline{S}}_4(\eta,
\xi) \chi(\eta) &
\overline{\overline{S}}_4(\eta, \xi) \chi(\eta) \\
0 & 0
\end{pmatrix}
\right) \notag \\
= & \det \left( I - \left( \overline{\overline{IS}}_4(\xi, \eta)
\chi(\eta) \frac{\partial}{\partial \eta} +
\overline{\overline{S}}_4(\eta, \xi) \chi(\eta) \right) \right).
\end{align}

Since
\begin{equation}
\int^{\infty}_T \overline{\overline{IS}}_4(\xi, \eta)
\frac{\partial}{\partial \eta} f(\eta) d\eta = \left.
\overline{\overline{IS}}_4(\xi, \eta) f(\eta) \right|^{\eta =
\infty}_{\eta = T} - \int^{\infty}_T \frac{\partial}{\partial \eta}
\overline{\overline{IS}}_4(\xi, \eta) f(\eta) d\eta,
\end{equation}
as an operator
\begin{equation}
\overline{\overline{IS}}_4(\xi, \eta) \chi(\eta)
\frac{\partial}{\partial \eta} = \overline{\overline{IS}}_4(\xi,
\infty) \delta_{\infty}(\eta) - \overline{\overline{IS}}_4(\xi, T)
\delta_{T}(\eta) - \frac{\partial}{\partial \eta}
\overline{\overline{IS}}_4(\xi, \eta) \chi(\eta),
\end{equation}
where $\delta_{\infty}$ and $\delta_{T}$ are (generalized) Dirac
functions. Then with the help of identity
\begin{equation}
\int^{\infty}_{\xi} K_{\Airy}(t,\eta)dt + \int^{\infty}_{\eta}
K_{\Airy}(\xi,t)dt = \int^{\infty}_{\xi} \Ai(t)dt
\int^{\infty}_{\xi} \Ai(t)dt,
\end{equation}
which can be proved directly from
(\ref{eq:definition_of_the_scalar_Airy_kernel}), we get
\begin{equation}
\begin{split}
I & - \left( \overline{\overline{IS}}_4(\xi, \eta) \chi(\eta)
\frac{\partial}{\partial \eta} + \overline{\overline{S}}_4(\eta,
\xi) \chi(\eta) \right) \\
= I & - \left( K_{\Airy}(\xi, \eta) - \frac{1}{2} B(\xi)\Ai(\eta)
+ \Ai(\eta) \right)\chi(\eta) \\
& + \left( \frac{1}{2} \int^{\infty}_T K_{\Airy}(\xi, t)dt -
\frac{1}{4} B(T)B(\xi) - \frac{1}{2} B(\xi) + \frac{1}{2} B(T)
\right) \delta_T(\eta) \\
& + \frac{1}{2} B(\xi) \delta_{\infty}(\eta).
\end{split}
\end{equation}

Now we denote $R(\xi, \eta)$ as the resolvent of $K_{\Airy}(\xi,
\eta)\chi(\eta)$, such that as integral operators
\begin{equation} \label{eq:definition_of_resolvent}
I + R(\xi, \eta) = (I-K_{\Airy}(\xi, \eta)\chi(\eta))^{-1},
\end{equation}
then
\begin{equation}
\begin{split}
& I - \left( \overline{\overline{IS}}_4(\xi, \eta) \chi(\eta)
\frac{\partial}{\partial \eta} + \overline{\overline{S}}_4(\eta,
\xi) \chi(\eta) \right) \\
= & (I-K_{\Airy}(\xi, \eta)\chi(\eta)) \left(
I - (I+R)(1 - \frac{1}{2} B(\xi))\Ai(\eta)\chi(\eta) \right. \\
& + (I+R) \left( \frac{1}{2} \int^{\infty}_T K_{\Airy}(\xi, t)dt -
\frac{1}{4} B(T)B(\xi) - \frac{1}{2} B(\xi) + \frac{1}{2} B(T)
\right) \delta_T(\eta) \\
& \left. + \frac{1}{2}(I+R)B(\xi) \delta_{\infty}(\eta)
\vphantom{\frac{1}{2}} \right).
\end{split}
\end{equation}

Again by the formula (\ref{eq:det(I-AB)=det(I-BA)}), in the form of
(formula (17) in \cite{Tracy-Widom96})
\begin{equation} \label{eq:another_form_of_det_identity}
\det \left( I - \sum^n_{k=1} \alpha_k \otimes \beta_k \right) = \det
\left( \delta_{j,k} - (\alpha_j, \beta_k) \right)_{j,k = 1, \dots,
n}
\end{equation}
we get
\begin{equation}
\begin{split}
\det & \left( I - (I+R)(1 - \frac{1}{2} B(\xi))\Ai(\eta)\chi(\eta)
\right. \\
& + (I+R) \left( \frac{1}{2} \int^{\infty}_T K_{\Airy}(\xi, t)dt -
\frac{1}{4} B(T)B(\xi) - \frac{1}{2} B(\xi) + \frac{1}{2} B(T)
\right) \delta_T(\eta) \\
& \left. + \frac{1}{2}(I+R)B(\xi) \delta_{\infty}(\eta)
\vphantom{\frac{1}{2}} \right)\\
= \det &
\begin{pmatrix}
1+\alpha_{11} & \alpha_{12} & \alpha_{13} \\
\alpha_{21} & 1+\alpha_{22} & \alpha_{23} \\
\alpha_{31} & \alpha_{32} & 1+\alpha_{33}
\end{pmatrix},
\end{split}
\end{equation}
where upon the definition
\begin{equation}
\langle f(\xi), g(\xi) \rangle_T = \int^{\infty}_T f(\xi)g(\xi)
d\xi,
\end{equation}
we have
\begin{align}
\alpha_{11} = & \left\langle (I+R)(1 - \frac{1}{2} B(\xi)),
-\Ai(\xi)
\right\rangle_T, \\
\alpha_{12} = & \left\langle (I+R) \left( \frac{1}{2}
\int^{\infty}_T K_{\Airy}(\xi, t)dt - \frac{1}{4} B(T)B(\xi) -
\frac{1}{2} B(\xi) + \frac{1}{2} B(T) \right), -\Ai(\xi)
\right\rangle_T, \\
\alpha_{13} = & \left\langle \frac{1}{2}(I+R)B(\xi), -\Ai(\xi)
\right\rangle_T, \\
\alpha_{21} = & \left. (I+R)(1 - \frac{1}{2} B(\xi)) \right|_{\xi =
T}, \\
\alpha_{22} = & \left. (I+R) \left( \frac{1}{2} \int^{\infty}_T
K_{\Airy}(\xi, t)dt - \frac{1}{4} B(T)B(\xi) - \frac{1}{2} B(\xi) +
\frac{1}{2} B(T) \right) \right|_{\xi = T}, \\
\alpha_{23} = & \left. \frac{1}{2}(I+R)B(\xi) \right|_{\xi =
T}, \\
\alpha_{31} = & \left. (I+R)(1 - \frac{1}{2} B(\xi)) \right|_{\xi =
\infty} = 1, \\
\alpha_{32} = & \left. (I+R) \left( \frac{1}{2} \int^{\infty}_T
K_{\Airy}(\xi, t)dt - \frac{1}{4} B(T)B(\xi) - \frac{1}{2} B(\xi) +
\frac{1}{2} B(T) \right) \right|_{\xi = \infty} \notag \\
= & \frac{1}{2} B(T). \\
\alpha_{33} = & \left. \frac{1}{2}(I+R)B(\xi) \right|_{\xi = \infty}
= 0,
\end{align}

If we take elementary row operations, we get
\begin{equation}
\begin{split}
& \det
\begin{pmatrix}
1+\alpha_{11} & \alpha_{12} & \alpha_{13} \\
\alpha_{21} & 1+\alpha_{22} & \alpha_{23} \\
\alpha_{31} & \alpha_{32} & 1+\alpha_{33}
\end{pmatrix} \\
= & \det
\begin{pmatrix}
1+\alpha_{11} - \alpha_{13} & \alpha_{12} - \frac{1}{2}B(T) \alpha_{13} & \alpha_{13} \\
\alpha_{21} - \alpha_{23} & 1+\alpha_{22} - \frac{1}{2}B(T) \alpha_{23} & \alpha_{23} \\
0 & 0 & 1
\end{pmatrix} \\
= & \det
\begin{pmatrix}
1+\beta_{11} & \beta_{12} \\
\beta_{21} & 1+ \beta_{22}
\end{pmatrix},
\end{split}
\end{equation}
where
\begin{align}
\beta_{11} = & \left\langle (I+R)(1 - B(\xi)), -\Ai(\xi)
\right\rangle_T, \\
\beta_{12} = & \left\langle \frac{1}{2} (I+R)\left( \int^{\infty}_T
K_{\Airy}(\xi, t)dt - B(T)B(\xi) - B(\xi) + B(T) \right),
-\Ai(\xi) \right\rangle_T, \\
\beta_{21} = & \left. (I+R)(1 - B(\xi))
\right|_{\xi=T}, \\
\beta_{22} = & \left. \frac{1}{2} (I+R)\left( \int^{\infty}_T
K_{\Airy}(\xi, t)dt - B(T)B(\xi) - B(\xi) + B(T) \right)
\right|_{\xi=T}.
\end{align}

Using (\ref{eq:definition_of_resolvent}) and
(\ref{eq:another_form_of_det_identity}), we observe ($s^{(1)}(\xi) = 1 -
B(\xi)$)
\begin{equation}
\begin{split}
\det & (I-K_{\Airy}(\xi, \eta)\chi(\eta)) \det
\begin{pmatrix}
1+\beta_{11} & \beta_{12} \\
\beta_{21} & 1+ \beta_{22}
\end{pmatrix} \\
= \det & \left( I-(K_{\Airy}(\xi, \eta)\chi(\eta) +
s^{(1)}(\xi)\Ai(\eta))\chi(\eta) \vphantom{\frac{1}{2}} \right. \\
& \left. + \frac{1}{2} \left( \int^{\infty}_T K_{\Airy}(\xi, t)dt -
B(T)B(\xi) - B(\xi) + B(T) \right) \delta_T(\eta) \right).
\end{split}
\end{equation}

If we denote $\tilde{R}(\xi, \eta)$ as the resolvent of
$(K_{\Airy}(\xi, \eta)\chi(\eta) + s^{(1)}(\xi)\Ai(\eta))\chi(\eta)$,
so that as operators
\begin{equation}
I + \tilde{R}(\xi, \eta) = \left(I + (K_{\Airy}(\xi,
\eta)\chi(\eta) + s^{(1)}(\xi)\Ai(\eta))\chi(\eta) \right)^{-1},
\end{equation}
and
\begin{equation}
Q(\xi) = (I+\tilde{R}) \left( \int^{\infty}_T K_{\Airy}(\xi, t)dt -
B(T)B(\xi) - B(\xi) + B(T) \right),
\end{equation}
then
\begin{equation}
\begin{split}
F_{\GSE1} = & \det \left( I-(K_{\Airy}(\xi, \eta)\chi(\eta) +
s^{(1)}(\xi)\Ai(\eta))\chi(\eta) \right) \det \left( I + \frac{1}{2}
Q(\xi)\delta_T(\eta) \right).
\end{split}
\end{equation}

To prove theorem \ref{theorem:F_GSE1=F_GOE}, we need only
\eqref{eq:simplified_F_GUE1} and \eqref{eq:F_GUE1=F^2_GOE_by_Forrester} with $\xi$ and $\eta$ swapped, and
\begin{equation}
\det \left( I + \frac{1}{2} Q(\xi)\delta_T(\eta) \right) = 1,
\end{equation}
which by (\ref{eq:another_form_of_det_identity}) is equivalent to
\begin{equation} \label{eq:Q(T)=0}
Q(T) = 0.
\end{equation}

If we take $f(\xi) = Q(\xi)+1$, then (\ref{eq:Q(T)=0}) is
\begin{multline}
\left(I - (K_{\Airy}(\xi, \eta)\chi(\eta) +
s^{(1)}(\xi)\Ai(\eta))\chi(\eta) \right) (f(\xi)-1) = \\
\int^{\infty}_T K_{\Airy}(\xi, t)dt - B(T)B(\xi) - B(\xi) + B(T),
\end{multline}
which is equivalent to
\begin{equation} \label{eq:integral equation}
\left(I - (K_{\Airy}(\xi, \eta)\chi(\eta) +
s^{(1)}(\xi)\Ai(\eta))\chi(\eta) \right) f(\xi) = s^{(1)}(\xi).
\end{equation}

The integral equation (\ref{eq:integral equation}) is solvable, and
the solution is
\begin{equation}
f(\xi) = \frac{(I+R) s^{(1)}(\xi)} {1 - \langle (I+R) s^{(1)}(\xi), \Ai(\xi)
\rangle_T}.
\end{equation}
Therefore to prove the theorem (\ref{theorem:F_GSE1=F_GOE}) we need
only to prove $f(T)=1$, which is equivalent to
\begin{equation}
(I+R) s^{(1)}(T) = 1 - \langle (I+R) s^{(1)}(\xi), \Ai(\xi) \rangle_T.
\end{equation}
This is a nontrivial result, but it can be derived by results in
\cite{Tracy-Widom96}, with \footnote{In section VII of
\cite{Tracy-Widom96} Tracy and Widom define function $\bar{q}$ and
$\bar{u}$ for both GOE and GSE. Our $(I+R) s^{(1)}(T)$ is equal to
$\sqrt{2}$ times their $\bar{q}$ in GOE and our $\langle (I+R)
s^{(1)}(\xi), \Ai(\xi) \rangle_T$ is equal to $2$ times their $\bar{u}$
in GOE.}
\begin{align}
(I+R) s^{(1)}(T) = & e^{-\int^{\infty}_{T} q(s)ds}, \label{eq:first_Tracy_Widom_result} \\
\langle (I+R) s^{(1)}(\xi), \Ai(\xi) \rangle_T = & 1-
e^{-\int^{\infty}_{T} q(s)ds}, \label{eq:second_Tracy_Widom_result}
\end{align}
where $q$ is the Painlev\'{e} II function described in \eqref{eq:Painleve_equation_II} and \eqref{eq:boundary_condition_of_Painleve}

We can give a proof of \eqref{eq:first_Tracy_Widom_result} and \eqref{eq:second_Tracy_Widom_result}, based on the method and results in \cite{Tracy-Widom94}. First, assume $T$ is fixed, then $(I+R) s^{(1)}$ is a function, and we have
\begin{equation}
\frac{d}{d\xi} (I+R) s^{(1)}(\xi) = (I+R) \frac{ds^{(1)}(\xi)}{d\xi} + \left[ \frac{d}{d\xi}, (1+R) \right] s^{(1)}(\xi).
\end{equation}
Since $\frac{d}{d\xi}s^{(1)}(\xi) = \Ai(\xi)$ and we have (2.13) in \cite{Tracy-Widom94}, which is
\begin{equation}
\left[ \frac{d}{d\xi}, (1+R) \right] = -(2+R)\Ai(\xi) \cdot (1-K^t)^{-1}(\Ai(\eta)\chi(\eta)) + R(\eta, T) \cdot \rho(T, \eta),
\end{equation}
where $\rho(x, y) = \delta(x-y) + R(x,y)$ is the distribution kernel of $1+R$, and $K^t$ is the transpose (as an operator) of $K_{\Airy}(\xi, \eta)\chi(\eta)$, we have
\begin{multline}
\frac{d}{d\xi} (I+R) s^{(1)}(\xi) = (1+R)\Ai(\xi) \\
- (1+R)\Ai(\xi) \cdot \langle (I+R) s^{(1)}(\xi), \Ai(\xi) \rangle_T + R(\xi, T) \cdot (1+R)s^{(1)}(T).
\end{multline}
If we regard $T$ as a parameter, then we have
\begin{equation} \label{eq:derivative_wrt_parameter_T}
\frac{d}{dT} (I+R) s^{(1)}(\xi; T) = -R(\xi, T) \cdot (1+R)s^{(1)}(T),
\end{equation}
because (2.16) in \cite{Tracy-Widom94} gives
\begin{equation}
\frac{1}{dT}(1+R) = R(\xi, T) \cdot \rho(T, \eta).
\end{equation}
Therefore, if we set $\xi = T$ and take the derivative with respect to the parameter $T$, we have
\begin{equation}
\begin{split}
\frac{d}{dT} ((1+R)s^{(1)}(T)) = & \left. \left( \frac{d}{d\xi} + \frac{d}{dT} \right) ((1+R)s^{(1)}(T)) \right\rvert_{\xi = T} \\
= & (1+R)\Ai(T) \cdot (1 - \langle (I+R) s^{(1)}(\xi), \Ai(\xi) \rangle_T).
\end{split}
\end{equation}
On the other hand, by \eqref{eq:derivative_wrt_parameter_T} we have
\begin{equation}
\begin{split}
\frac{d}{dT} \langle (I+R) s^{(1)}(\xi), \Ai(\xi) \rangle_T = & -(1+R)s^{(1)}(T) \cdot \Ai(T) +  \langle \frac{d}{dT} (I+R) s^{(1)}(\xi), \Ai(\xi) \rangle_T \\
= & -(1+R)s^{(1)}(T) \cdot \left( \Ai(T) + \int^{\infty}_T R(\xi, T)\Ai(\xi) d\xi \right) \\
= & -(1+R)s^{(1)}(T) \cdot (1+R)\Ai(T).
\end{split}
\end{equation}
(1.11) and (1.12) in \cite{Tracy-Widom94} give the result
\begin{equation}
(1+R)\Ai(T) = q(T),
\end{equation}
and now we if we denote $(I+R) s^{(1)}(T) = s_T$ and $\langle (I+R) s^{(1)}(\xi), \Ai(\xi) \rangle_T = w_T$, we have
\begin{equation}
\left\{
\begin{split}
\frac{d}{dT}s_T = & q(1-w_T) \\
\frac{d}{dT}(1-w_T) = & qs_T.
\end{split}
\right.
\end{equation}
Now we can get \eqref{eq:first_Tracy_Widom_result} and \eqref{eq:second_Tracy_Widom_result} by boundary conditions.

\section{Conjectures of phase transition in quaternionic and real spiked models}

In the complex spiked model, we have more complicated phase transition phenomenon for the limiting distribution of the largest sample eigenvalue, if the rank is greater than $1$. For example, if there are two spiked population eigenvalues $1+\alpha_1$ and $1+\alpha_2$, we have the phase diagram as $\alpha_1$ and $\alpha_2$ vary from $-1$ to $\infty$:
\begin{figure}[h]
\centering
\includegraphics{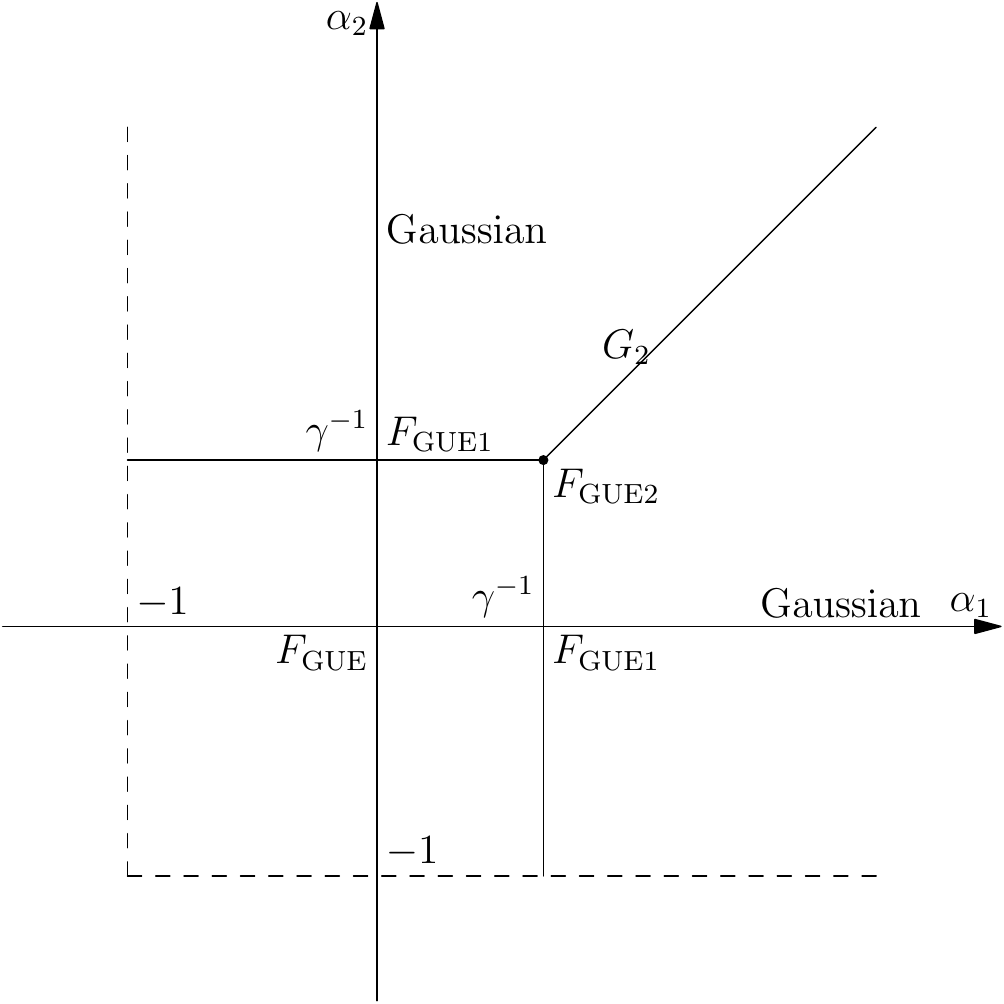}
\caption{Phase diagram of rank 2 complex spiked model}
\end{figure}

Analogously, we conjecture the phase diagram for the rank $2$ quaternionic spiked model for the limiting distribution of the largest sample eigenvalue, with spiked population eigenvalues $1+\alpha_1$ and $1+\alpha_2$.
\begin{figure}[h]
\centering
\includegraphics{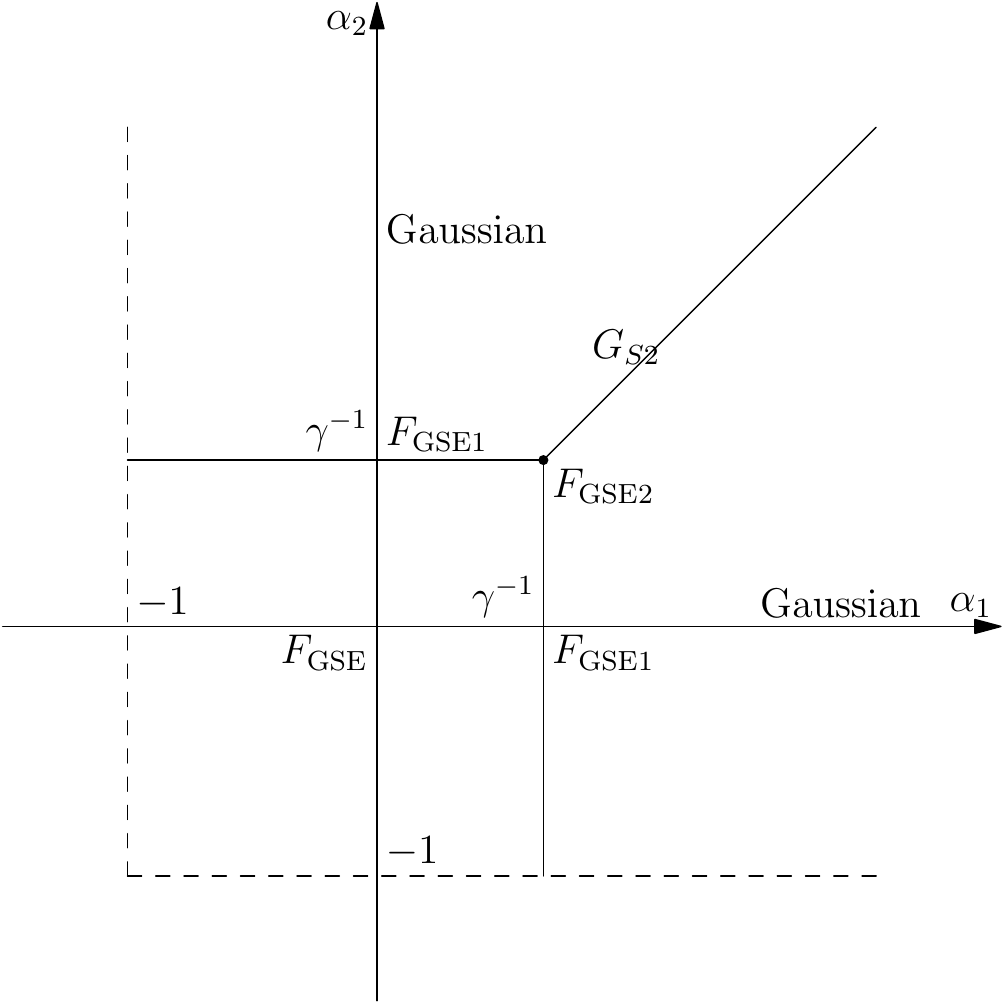}
\caption{Phase diagram of rank 2 quaternionic spiked model (conjecture)}
\end{figure}
The $G_{S1}$ is the distribution function of the largest eigenvalue of a $2 \times 2$ random quaternionic Hermitian matrix $\left( \begin{smallmatrix} a & c+id+je+kf \\ c-id-je-kf & b \end{smallmatrix} \right)$, where $a, \dots, f$ are independent normal random variables with mean $0$, the variance of $a$ and $b$ is $1$, and the variance of $c, \dots, f$ is $1/2$. Our conjecture for $G_{S2}$ is based on the pattern of $G_t$ for the rank $t$ complex spiked model: Actually $G_t$ is the distribution function of the largest eigenvalue of a $t \times t$ random Hermitian matrix $(a_{ij})_{1 \leq i,j \leq t}$, where $a_{ii}$, $\Re(a_{ij})$, $\Im(a_{ij})$ $(i<j)$ are independent normal random variables with mean $0$, the variance of diagonal entries is $1$, and the variance of real and imaginary parts of off-diagonal entries is $1/2$. Therefore we guess such distribution function for rank $2$ quaternionic spiked model should be the distribution function of of the largest eigenvalue of a $t \times t$ random Hermitian matrix. It is obvious that in the rank $1$ case, the Gaussian distribution satisfies the conjecture trivially, and the next distribution function is $G_{S2}$. But what is $F_{\GSE 2}$? Is $F^2_{\GSE 2}$ a Fredholm determinant? Our only clue is that the $F_{\GSE 2}$ should be similar to the $F^{\semi}_0$ defined in \cite{Prahofer-Spohn02a}.

For the real spiked model, even the rank $1$ case is speculative. We conjecture that if the only spiked population eigenvalue is $1+a$, then the limiting distribution of the largest sample eigenvalue has the pattern $F_{\GOE}$---$F_{\GSE}$---Gaussian. The Gaussian part has been proved in \cite{Paul08}, and other results are missing.

\chapter{Asymptotic analysis}

\label{asymptotic_analysis}

In sections \ref{asymotptics_less}--\ref{asymptotics_more}, the convention of notations is the same as that in chapter \ref{complex_spiked_model}, e.g., $\psi_{r'}$ is defined by \eqref{eq:definition_of_phi_rprime}. In section \ref{asymptotics_Laguerre}, the convention of notations is the same as that in chapter \ref{quaternionic_spiked_model_rank_1}, e.g., $\psi_{r'}$ is defined by \eqref{eq:definition_of_psi}.

\section{Asymptotics of $\psi(p+q\xi)$, $\psi_{r'}(p+q\xi)$, $\varpsi(p+q\eta)$ and $\varpsi_{r'}(p+q\eta)$ when $a_{s'} < \gamma^{-1}$}

\label{asymotptics_less}

In this section, we assume $x = p+q\xi$ and $y = p+q\eta$, where $p = (1+\gamma^{-1})^2$ and $q = \frac{(1+\gamma)^{4/3}}{\gamma M^{2/3}}$.

For the asymptotic analysis, we define $\barSigmainf = \barSigmainf_1 \cup \barSigmainf_2 \cup \barSigmainf_3$, where
\begin{align}
\barSigmainf_1 = & \{ t e^{\frac{2\pi i}{3}} \mid t \geq 1 \}, \\
\barSigmainf_2 = & \{ e^{-t\pi i} \mid -\frac{4}{3} \leq t \leq -\frac{2}{3} \}, \\
\barSigmainf_3 = & \{ -t e^{\frac{4\pi i}{3}} \mid t \leq -1 \},
\end{align}
and ($c < 0$)
\begin{align}
\barSigmainf_{>c} = & \{ w \in \barSigmainf \mid \Re(w) > c \}, \\
\barSigmainf_{\leq c} = & \barSigmainf \setminus \barSigmainf_{>c}.
\end{align}
\begin{figure}[h]
\centering
\includegraphics{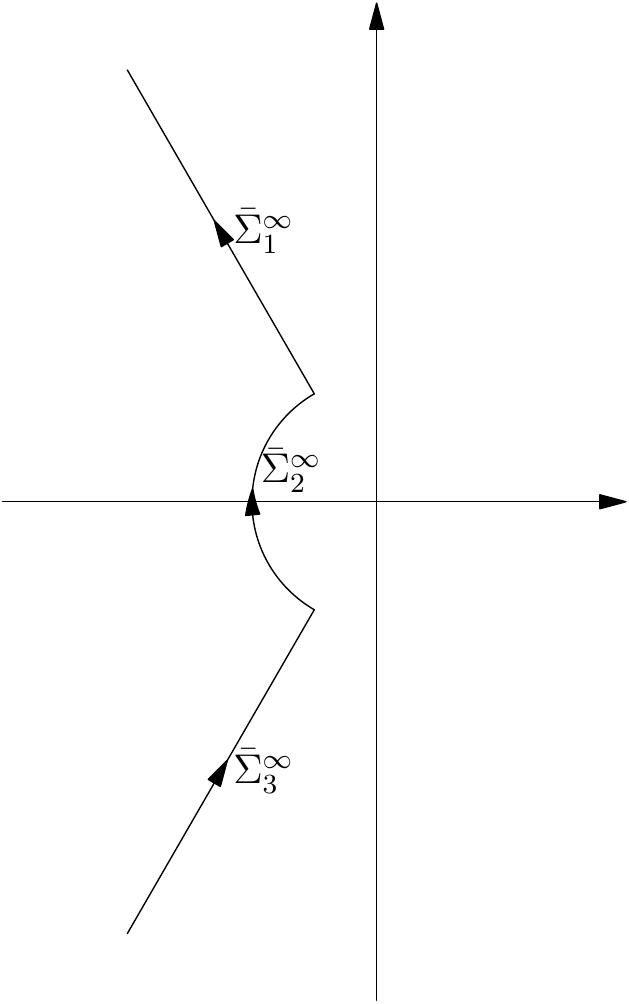}
\caption{$\barSigmainf$}
\end{figure}
We have
\begin{equation} \label{eq:one_integral_formula_of_Airy_function}
\frac{1}{2\pi i} \int_{\barSigmainf} e^{\xi u - \frac{u^3}{3}} du = \Ai(\xi),
\end{equation}
since by the substitution $u = iv$, we get ($\int^{\infty e^{\pi i/6}}_{\infty e^{5\pi i/6}}$ is defined below \ref{eq:definition_of_Airy_function_by_contour})
\begin{equation} \label{eq:simple_change_of_variable_for_Airy}
\frac{1}{2\pi i} \int_{\barSigmainf} e^{\xi u - \frac{u^3}{3}} du = \frac{1}{2\pi} \int^{\infty e^{\pi i/6}}_{\infty e^{5\pi i/6}} e^{i\xi v + \frac{iv^3}{3}} dv,
\end{equation}
which agrees with the integral definition of the Airy function, with the integration on the right hand side from $\infty e^{5\pi i/6}$ to $\infty e^{\pi i/6}$. By direct calculation, we also have the result that for any $T$, if $-c$ is large enough
\begin{equation}
\left\lvert \frac{1}{2\pi i} \int_{\barSigmainf_{\leq c}} e^{Tu - \frac{u^3}{3}} du \right\rvert < \frac{1}{-c}.
\end{equation}

Similarly, we define $\barGammainf = \barGammainf_1 \cup \barGammainf_2 \cup \barGammainf_3$, where
\begin{align}
\barGammainf_1 = & \{ -t e^{\frac{\pi i}{3}} \mid t \leq -1 \}, \\
\barGammainf_2 = & \{ e^{-t\pi i} \mid -\frac{1}{3} \leq t \leq \frac{1}{3} \}, \\
\barGammainf_3 = & \{ t e^{\frac{5\pi i}{3}} \mid t \geq 1 \}, 
\end{align}
and
\begin{align}
\barGammainf_{<c} = & \{ w \in \barGammainf \mid \Re(w) < c \}, \\
\barGammainf_{\geq c} = & \barGammainf \setminus \barSigmainf_{<c}. \label{eq:definition_of_xbarGammainf_geq_c}
\end{align}
\begin{figure}[h]
\centering
\includegraphics{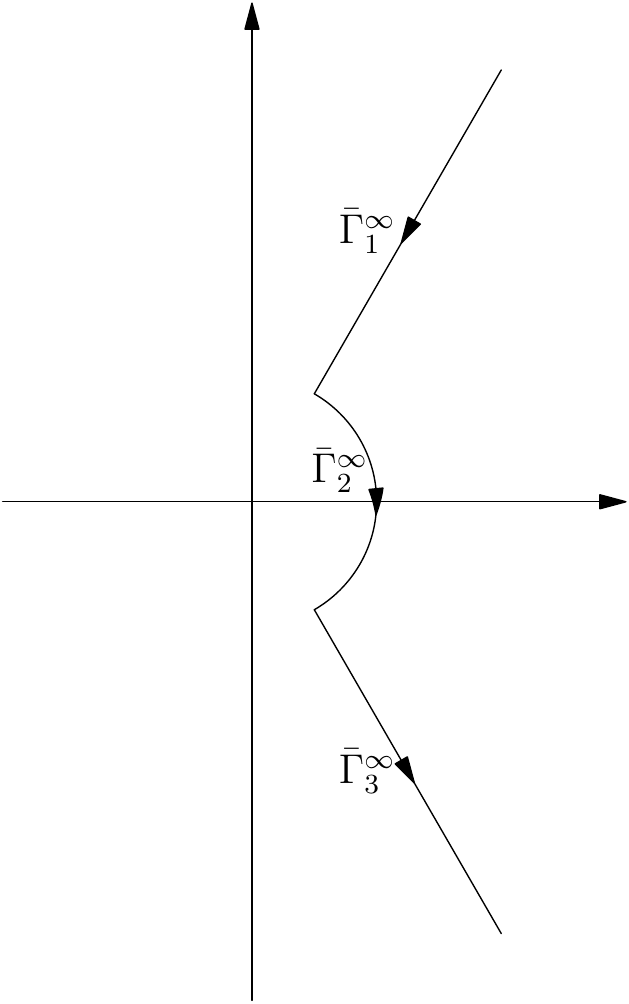}
\caption{$\barGammainf$}
\end{figure}
We have (similar to \eqref{eq:one_integral_formula_of_Airy_function})
\begin{equation}
\frac{1}{2\pi i} \int_{\barGammainf} e^{-\eta u + \frac{u^3}{3}} du = -\Ai(\xi),
\end{equation}
and for any $T$, if $c$ is large enough
\begin{equation}
\left\lvert \frac{1}{2\pi i} \int_{\barGammainf_{\geq c}} e^{-Tu + \frac{u^3}{3}} du \right\rvert < \frac{1}{c}.
\end{equation}

\subsection{Asymptotics of $\psi(p+q\xi)$ and $\psi_{r'}(p+q\xi)$}

We only analyze $\psi_{r'}(p+q\xi)$. The analysis of $\psi(p+q\xi)$ is similar and simpler, and we only give the result.

First we have
\begin{equation}
e^{Mxz} \frac{(z-1)^N}{z^M} = e^{-Mf(z) + \frac{(1+\gamma)^{4/3}}{\gamma}M^{1/3}\xi z}, \\
\end{equation}
where 
\begin{equation} \label{eq:definition_of_fz}
f(z) = -(1+\gamma^{-1})^2 z + \log z - \gamma^{-2}\log(z-1),
\end{equation}
and here and later, we do not need to concern ourselves about the ambiguity of the value of logarithmic functions. Now we can write \eqref{eq:definition_of_phi_rprime} as
\begin{multline} \label{eq:integral_representation_of_psi_by_f}
\psi_{r'}(x) = \frac{1}{2\pi i} \\
\oint_{\Sigma} e^{-Mf(z) + \frac{(1+\gamma)^{4/3}}{\gamma}M^{1/3}\xi z} \frac{z^{r-r'}}{(z-1)^r} \left( \prod^{s'-1}_{j=1} \left( z - \frac{1}{1+a_j} \right)^{r_j} \right) \left( z - \frac{1}{1+a_{s'}} \right)^{t'-1} dz.
\end{multline}

For $f(z)$ we have
\begin{itemize}
\item $\displaystyle f'(z) = \frac{((1+\gamma^{-1})z-1)^2}{z(z-1)}$, with the zero point $z = \frac{\gamma}{\gamma+1}$;

\item  $\displaystyle f''\left( \frac{\gamma}{\gamma+1} \right) = 0$;

\item  $\displaystyle f'''\left( \frac{\gamma}{\gamma+1} \right) = \frac{2(\gamma+1)^4}{\gamma^3} > 0$.
\end{itemize}
Hence locally around $z = \frac{\gamma}{\gamma+1}$,
\begin{equation} \label{eq:change_z_into_w}
f\left( \frac{\gamma}{\gamma+1} + w \right) = -\frac{\gamma+1}{\gamma} + \log\gamma - (1-\gamma^2)\log(\gamma+1) + \gamma^{-2}\pi i + \frac{(\gamma+1)^4}{3\gamma^3}w^3 + R_1(w),
\end{equation}
where
\begin{equation}
R_1(w) = \varO(w^4), \quad \text{as} \quad w \rightarrow 0.
\end{equation}
After the substitution $z = w + \frac{\gamma}{\gamma+1}$, we get by \eqref{eq:integral_representation_of_psi_by_f}
\begin{equation} \label{eq:substitution_of_z_by_w_psi_less}
\begin{split}
& \psi_{r'}(p+q\xi) \\
= & \frac{1}{2\pi i} \oint_{\barSigmaM} e^{M \left( \frac{\gamma+1}{\gamma} - \log\gamma + (1-\gamma^{-2})\log(\gamma+1) - \gamma^{-2}\pi i - \frac{(\gamma+1)^4}{3\gamma^3}w^3 - R_1(w) \right) + \frac{(1+\gamma)^{4/3}}{\gamma} M^{1/3}\xi \left( w+ \frac{\gamma}{\gamma+1} \right)} \\
& \frac{\left( w + \frac{\gamma}{\gamma+1} \right)^{r-r'}}{\left( w - \frac{1}{\gamma+1} \right)^r} \left( \prod^{s'-1}_{j=1} \left( w + \frac{a_j - \gamma^{-1}}{(1+\gamma^{-1})(1+a_j)} \right)^{r_j} \right) \left( w + \frac{a_{s'} - \gamma^{-1}}{(1+\gamma^{-1})(1+a_{s'})} \right)^{t'-1} dw \\
= & \frac{(-1)^N}{2\pi i} \frac{(\gamma+1)^{M-N}}{\gamma^M} e^{\frac{\gamma}{\gamma+1}Mx} \oint_{\barSigmaM} e^{\frac{(1+\gamma)^{4/3}}{\gamma} M^{1/3}\xi w - \frac{(\gamma+1)^4}{3\gamma^3}Mw^3 - MR_1(w)} \\
& \frac{\left( w + \frac{\gamma}{\gamma+1} \right)^{r-r'}}{\left( w - \frac{1}{\gamma+1} \right)^r} \left( \prod^{s'-1}_{j=1} \left( w + \frac{a_j - \gamma^{-1}}{(1+\gamma^{-1})(1+a_j)} \right)^{r_j} \right) \left( w + \frac{a_{s'} - \gamma^{-1}}{(1+\gamma^{-1})(1+a_{s'})} \right)^{t'-1} dw,
\end{split}
\end{equation}
where $\barSigmaM$ is a contour around $w = - \frac{\gamma}{\gamma+1}$, composed of $\barSigmaM_1$, $\barSigmaM_2$, $\barSigmaM_3$ and $\barSigmaM_4$, which are defined as
\begin{align}
\barSigmaM_1 = & \left\{ \left. t\frac{\gamma}{\gamma+1}e^{\frac{2\pi i}{3}} \right\rvert \frac{1}{(\gamma+1)^{1/3}}M^{-1/3} \leq t \leq 4 \right\}, \label{eq:definition_of_barGammaM_1} \\
\barSigmaM_2 = & \left\{ \left. \frac{\gamma}{(\gamma+1)^{4/3}}M^{-1/3}e^{-t\pi i} \right\rvert -\frac{4}{3} \leq t \leq -\frac{2}{3} \right\}, \\
\barSigmaM_3 = & \left\{ \left. (4-t)\frac{\gamma}{\gamma+1}e^{\frac{4\pi i}{3}} \right\rvert 0 \leq t \leq 4 - \frac{1}{(\gamma+1)^{1/3}}M^{-1/3} \right\}, \\
\barSigmaM_4 = & \left\{ \left. -2\frac{\gamma}{\gamma+1} - it \right\rvert -2\sqrt{3}\frac{\gamma}{(\gamma+1)} \leq t \leq 2\sqrt{3}\frac{\gamma}{(\gamma+1)} \right\}. \label{eq:definition_of_barGammaM_4}
\end{align}
\begin{figure}[h]
\centering
\includegraphics{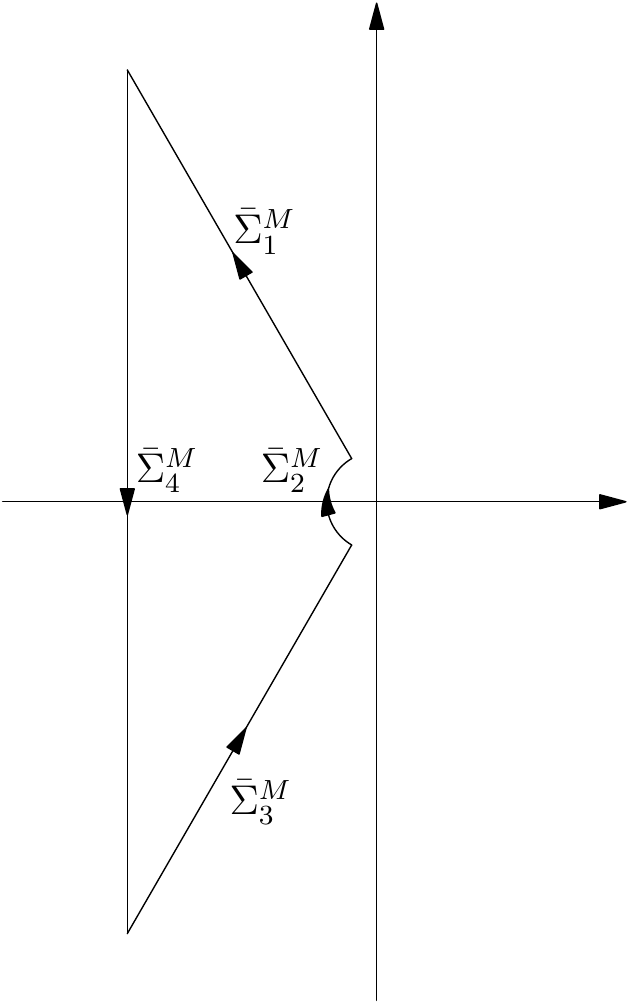}
\caption{$\barSigmaM$}
\end{figure}
For asymptotic analysis, we define
\begin{align}
\barSigmaM_{\local} = & \{ z \in \barSigmaM \mid \Re(z) > -M^{-10/39} \}, \\
\barSigmaM_{\remote} = & (\barSigmaM_1 \cup \barSigmaM_3) \setminus \barSigmaM_{\local}.
\end{align}

Now if we denote
\begin{multline}
F_M(\xi, w) = \frac{(\gamma+1)^{4/3}}{\gamma}M^{1/3} e^{\frac{(1+\gamma)^{4/3}}{\gamma} M^{1/3}\xi w - \frac{(\gamma+1)^4}{3\gamma^3}Mw^3 - MR_1(w)} \\
\frac{\left( w + \frac{\gamma}{\gamma+1} \right)^{r-r'}}{\left( w - \frac{1}{\gamma+1} \right)^r} \left( \prod^{s'-1}_{j=1} \left( w + \frac{a_j - \gamma^{-1}}{(1+\gamma^{-1})(1+a_j)} \right)^{r_j} \right) \left( w + \frac{a_{s'} - \gamma^{-1}}{(1+\gamma^{-1})(1+a_{s'})} \right)^{t'-1},
\end{multline}
we have by \eqref{eq:substitution_of_z_by_w_psi_less} 
\begin{equation} \label{eq:formula_of_variation_of_psi_less}
\frac{(\gamma+1)^{4/3}}{\gamma}M^{1/3} (-1)^N\frac{\gamma^M}{(\gamma+1)^{M-N}} e^{-\frac{\gamma}{\gamma+1}Mx} \psi_{r'}(p+q\xi) = \frac{1}{2\pi i} \oint_{\barSigmaM} F_M(\xi, w) dw,
\end{equation}
and establish several lemmas:

\begin{lemma} \label{lemma:first_lemma_for_Sigma_as_less_than_gammainv}
If $T$ is fixed and $M$ is large enough, 
\begin{equation}
\left\lvert \frac{1}{2\pi i} \int_{\barSigmaM_4} F_M(\xi, w)dw \right\rvert < \frac{1}{3} \frac{e^{-\xi/2}}{M^{1/40}},
\end{equation}
for any $\xi \geq T$.
\end{lemma}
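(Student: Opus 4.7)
The plan is to exploit the fact that $\barSigmaM_4$ is a fixed compact vertical segment (independent of $M$) at $\Re(w) = -2\gamma/(\gamma+1)$, hence bounded uniformly away from the saddle point $w=0$ and from all singularities of the integrand. On such a segment $|e^{-Mf(z_0+w)}|$ should be exponentially smaller than its value at the saddle, which will yield decay in $M$ far stronger than the stated $M^{-1/40}$ bound.

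The key computation is to show that $\Re(f(z)) > \Re(f(z_0))$ with a uniformly positive gap on $\barSigmaM_4$. In the $z = z_0 + w$ coordinate, $\barSigmaM_4$ is the vertical segment $\Re(z) = -\gamma/(\gamma+1)$, $|\Im(z)| \leq 2\sqrt{3}\gamma/(\gamma+1)$. Parametrizing $z = -\gamma/(\gamma+1) + is$, the real part of $f$ becomes an explicit function of $s$ involving $\log|z|$ and $\log|z-1|$; a short differentiation shows it is minimized at $s=0$ (the factor $1/|z|^2 - \gamma^{-2}/|z-1|^2$ is positive on the entire segment for $\gamma \geq 1$), with the $M$-independent gap
\[
  \delta(\gamma) := \Re f\bigl(-\tfrac{\gamma}{\gamma+1}\bigr) - \Re f(z_0) = \tfrac{2(\gamma+1)}{\gamma} - \gamma^{-2}\log(2\gamma+1),
\]
which is strictly positive for every $\gamma \geq 1$ (it equals $4 - \log 3$ at $\gamma = 1$ and tends to $2$ as $\gamma \to \infty$). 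Using the defining identity \eqref{eq:change_z_into_w} to rewrite $-\frac{(\gamma+1)^4}{3\gamma^3}Mw^3 - MR_1(w) = -M(f(z) - f(z_0))$ globally in $w$, this gives $\bigl|\exp\bigl(-\frac{(\gamma+1)^4}{3\gamma^3}Mw^3 - MR_1(w)\bigr)\bigr| \leq e^{-M\delta(\gamma)}$ uniformly on $\barSigmaM_4$.

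The remaining factors are straightforward. The $\xi$-dependent exponential satisfies $\bigl|\exp\bigl(\tfrac{(1+\gamma)^{4/3}}{\gamma}M^{1/3}\xi w\bigr)\bigr| = e^{-2(1+\gamma)^{1/3}M^{1/3}\xi}$ along $\barSigmaM_4$ (constant, since $\Re w$ is), and the rational factors in $F_M$ are bounded on the compact $\barSigmaM_4$ by some $(\gamma, \{a_j\})$-dependent constant, since the hypothesis $a_{s'} < \gamma^{-1}$ keeps the poles at $w = -(a_j - \gamma^{-1})/((1+\gamma^{-1})(1+a_j))$ and $w = 1/(\gamma+1)$ a fixed distance away. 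Combining these bounds and integrating over the bounded length of $\barSigmaM_4$ yields an estimate of the form $C\, M^{1/3} e^{-M\delta(\gamma) - 2(1+\gamma)^{1/3}M^{1/3}\xi}$. To match $\tfrac{1}{3}e^{-\xi/2}M^{-1/40}$ uniformly in $\xi \geq T$, I will split at $\xi = 0$: for $\xi \in [T,0]$ the possible growth $e^{2(1+\gamma)^{1/3}M^{1/3}|T|}$ is absorbed by $e^{-M\delta(\gamma)}$ once $M$ is large depending on $T$; for $\xi \geq 0$ one has $e^{-2(1+\gamma)^{1/3}M^{1/3}\xi} \leq e^{-\xi/2}$ as soon as $4(1+\gamma)^{1/3}M^{1/3} \geq 1$, leaving the $\xi$-free factor $C M^{1/3+1/40}e^{-M\delta(\gamma)} \leq 1/3$ for $M$ large. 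The only substantive step in the whole argument is verifying the positivity of $\delta(\gamma)$ on the relevant $\gamma$-range; everything else is bookkeeping.
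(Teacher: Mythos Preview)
Your proposal is correct and follows essentially the same route as the paper: both rewrite the exponent on $\barSigmaM_4$ via the identity $\frac{(\gamma+1)^4}{3\gamma^3}w^3 + R_1(w) = f(z_0+w)-f(z_0)$, show its real part is bounded below by a strictly positive $M$-independent constant on the segment, and then absorb all remaining polynomial and $\xi$-dependent factors. Your version is slightly sharper in that you locate the exact minimum at $s=0$ and obtain the clean formula $\delta(\gamma)=2(\gamma+1)/\gamma-\gamma^{-2}\log(2\gamma+1)$, whereas the paper bounds the two logarithms separately (getting $2(\gamma+1)/\gamma-\gamma^{-2}\log\sqrt{16\gamma^2+12\gamma+9}$, with an apparent typo $2\gamma/(\gamma+1)$ for the first term); the paper also handles the $\xi$-dependence in one stroke by writing $e^{-2(\xi-T)(\gamma+1)^{1/3}M^{1/3}}\le e^{T/2}e^{-\xi/2}$ rather than splitting at $\xi=0$.
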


\begin{proof}
By \eqref{eq:definition_of_fz} and \eqref{eq:change_z_into_w},
\begin{equation} \label{eq:formula_for_the_remainder_of_degree_3}
\begin{split}
& \frac{(\gamma+1)^4}{3\gamma^3}w^3 + R_1(w) \\
= & f\left( \frac{\gamma}{\gamma+1} + w \right) + \frac{\gamma}{\gamma+1} - \log\gamma + (1-\gamma^{-2})\log(\gamma+1) - \gamma^{-2}\pi i \\
= & -\left( \frac{\gamma+1}{\gamma} \right)^2 w + \log\left( \frac{\gamma+1}{\gamma}w + 1 \right) - \gamma^{-2}\log\left( (\gamma+1)w - 1 \right) - \gamma^{-2}\pi i.
\end{split}
\end{equation}
If $w \in \barSigmaM_4$, $\Re(w) = -2\frac{\gamma}{\gamma+1}$, and denote $\theta = \arg(w) \in [\frac{2\pi}{3}, \frac{4\pi}{3}]$, we have
\begin{equation}
\begin{split}
& \Re\left( \frac{(\gamma+1)^4}{3\gamma^3}w^3 + R_1(w) \right) \\
= & 2\frac{\gamma}{\gamma+1} + \log\left( \sqrt{(-1)^2 + (2\tan\theta)^2} \right) - \gamma^{-2}\log\left( \sqrt{(2\gamma+3)^2 + (2\gamma\tan\theta)^2} \right) \\
\geq & 2\frac{\gamma}{\gamma+1} + 0 - \gamma^{-2}\log\left( \sqrt{ 16\gamma^2 + 12\gamma + 9} \right),
\end{split}
\end{equation}
and we can prove that if $\gamma \geq 1$,
\begin{equation}
2\frac{\gamma}{\gamma+1} + 0 - \gamma^{-2}\log\left( \sqrt{ 16\gamma^2 + 12\gamma + 9} \right) > 2 - \log{\sqrt{37}} > 0.
\end{equation}
Therefore on $\barSigmaM_4$, if $\xi \geq T$, for $0 < \epsilon' < 2 - \log{\sqrt{37}}$ and $M$ large enough, 
\begin{equation}
\begin{split}
& \lvert F_M(\xi, w) \rvert \\
< & \frac{(\gamma+1)^{4/3}}{\gamma}M^{1/3} e^{-2(\xi-T)(\gamma+1)^{1/3}M^{1/3} + (\log{\sqrt{37}} -2 + 2T(\gamma+1)^{1/3}M^{-2/3})M} \\
& \times \left\lvert \frac{\left( w + \frac{\gamma}{\gamma+1} \right)^{r-r'}}{\left( w - \frac{1}{\gamma+1} \right)^r} \left( \prod^{s'-1}_{j=1} \left( w + \frac{a_j - \gamma^{-1}}{(1+\gamma^{-1})(1+a_j)} \right)^{r_j} \right) \left( w + \frac{a_{s'} - \gamma^{-1}}{(1+\gamma^{-1})(1+a_{s'})} \right)^{t'-1} \right\rvert \\
< & e^{-2(\xi-T)(\gamma+1)^{1/3}M^{1/3}}e^{(\log{\sqrt{37}} -2 + \epsilon')M}.
\end{split}
\end{equation}
If $M$ is large enough, 
\begin{align}
e^{(\log{\sqrt{37}} -2 + \epsilon')M} < & \frac{2\pi}{4\sqrt{3} \frac{\gamma}{\gamma+1}} \frac{1}{3} e^{-T/2}M^{-1/40}, \label{eq:first_scalar_inequality_for_lemma_one}\\
e^{-2(\xi-T)(\gamma+1)^{1/3}M^{1/3}} < & e^{T/2}e^{-\xi/2}, \label{eq:second_scalar_inequality_for_lemma_one}
\end{align}
and we get the result, since
\begin{equation} \label{eq:length_of_arc}
\left\lvert \frac{1}{2\pi i} \int_{\barSigmaM_4} F_M(\xi, w)dw \right\rvert \leq \frac{4\sqrt{3} \frac{\gamma}{\gamma+1}}{2\pi} \max_{w \in \barSigmaM_4}\lvert F_M(\xi, w) \rvert
\end{equation}
\end{proof}

\begin{lemma} \label{lemma:second_lemma_for_Sigma_as_less_than_gammainv}
If $T$ is fixed and $M$ is large enough, 
\begin{equation}
\left\lvert \frac{1}{2\pi i} \int_{\barSigmaM_{\remote}} F_M(\xi, w)dw \right\rvert < \frac{1}{3} \frac{e^{-\xi/2}}{M^{1/40}},
\end{equation}
for any $\xi \geq T$.
\end{lemma}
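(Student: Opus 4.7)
The plan is to parametrize $\barSigmaM_{\remote}\cap\barSigmaM_1$ by $w(t)=\tfrac{\gamma}{\gamma+1}\,t\,e^{2\pi i/3}$ for $t\in[t_0,4]$, where $t_0:=\tfrac{2(\gamma+1)}{\gamma}M^{-10/39}$, and use the complex-conjugate parametrization on $\barSigmaM_3$. Using identity \eqref{eq:formula_for_the_remainder_of_degree_3}, the real part of the exponent of $F_M(\xi,w(t))$ becomes $-\tfrac{(\gamma+1)^{1/3}}{2}M^{1/3}\xi\,t - M\,h(t)$, where
\[
h(t)=\tfrac{\gamma+1}{2\gamma}\,t+\tfrac{1}{2}\log(t^2-t+1)-\tfrac{1}{2\gamma^2}\log(\gamma^2 t^2+\gamma\,t+1).
\]
The rational prefactor in $F_M$ is uniformly bounded on this contour by some constant $C_1$, since the relevant poles $w=1/(\gamma+1)$ and $w=(\gamma^{-1}-a_j)/((1+\gamma^{-1})(1+a_j))$ (valid because $a_j<\gamma^{-1}$) all lie on the positive real axis, a bounded distance from the rays $\{te^{\pm 2\pi i/3}:t>0\}$. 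Hence the desired bound reduces to controlling
\[
M^{1/3}\int_{t_0}^{4}\exp\!\Bigl(-\tfrac{(\gamma+1)^{1/3}}{2}M^{1/3}\xi\,t - M\,h(t)\Bigr)\,dt.
\]

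The first analytic step is to establish $h(t)\geq\tfrac{\gamma+1}{6}\,t^3$ on $[t_0,\epsilon_0]$ and $h(t)\geq\delta>0$ on $[\epsilon_0,4]$, for suitable constants $\epsilon_0,\delta>0$. The first bound is the Taylor expansion $h(t)=\tfrac{\gamma+1}{3}\,t^3+O(t^4)$ near the triple critical point at $t=0$, a direct consequence of $f'''(\gamma/(\gamma+1))>0$ together with $f'(\gamma/(\gamma+1))=f''(\gamma/(\gamma+1))=0$. The second reduces by continuity to the sign check $h(4)=\tfrac{2(\gamma+1)}{\gamma}+\tfrac{1}{2}\log 13-\tfrac{1}{2\gamma^2}\log(16\gamma^2+4\gamma+1)>0$ for $\gamma\geq 1$, essentially the calculation already carried out at the analogous endpoint in Lemma~\ref{lemma:first_lemma_for_Sigma_as_less_than_gammainv}. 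After the change of variables $u=M^{1/3}t$, with $u_0:=M^{1/3}t_0\asymp M^{1/13}$, the integral becomes $\int_{u_0}^{4M^{1/3}}\exp\!\bigl(-\tfrac{(\gamma+1)^{1/3}}{2}\xi\,u - M\,h(uM^{-1/3})\bigr)\,du$, with the cubic decay $M\,h(uM^{-1/3})\geq\tfrac{\gamma+1}{6}u^3$ active in the regime $u\leq\epsilon_0 M^{1/3}$ and the exponential decay $M\,h\geq M\delta$ taking over beyond it.

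Finally I split into the subcases $\xi\geq 0$ and $T\leq\xi<0$. For $\xi\geq 0$, the factor $e^{-\tfrac{(\gamma+1)^{1/3}}{2}\xi\,u}$ is monotonically decreasing, so evaluated at the left endpoint it is bounded by $e^{-c\xi M^{1/13}}\leq e^{-\xi/2}$ for $M$ large; the remaining tail $\int_{u_0}^{\infty}e^{-cu^3}\,du$ is of order $e^{-c\,M^{3/13}}/M^{2/13}$, far smaller than $M^{-1/40}$. For $T\leq\xi<0$, the potentially growing linear term $c|T|u$ is dominated by $cu^3/2$ on $[u_0,\infty)$ once $u_0^2\geq 2|T|$, which holds for $M$ large; this produces the same cubic-tail estimate $O(e^{-c\,M^{3/13}/2}/M^{2/13})$, and since $e^{-\xi/2}\geq 1$ on this bounded range of $\xi$ the required inequality is trivial. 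The main technical nuisance is verifying the uniform lower bound $h(t)\geq\delta>0$ on $[\epsilon_0,4]$, an elementary but fiddly calculus check that the positive terms $\tfrac{\gamma+1}{2\gamma}\,t+\tfrac{1}{2}\log(t^2-t+1)$ dominate $\tfrac{1}{2\gamma^2}\log(\gamma^2t^2+\gamma t+1)$ there; everything else is standard saddle-point analysis, and the cutoff $M^{-10/39}$ was chosen precisely so that $u_0\to\infty$ polynomially while $t_0\to 0$, the sweet spot at which both the cubic Taylor lower bound and the Airy-type cubic decay are simultaneously operative.
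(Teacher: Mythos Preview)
Your approach is essentially the same as the paper's steepest-descent argument: parametrize the rays, use \eqref{eq:formula_for_the_remainder_of_degree_3} to write the real part of the exponent explicitly, establish a cubic lower bound near the saddle, and deduce the $e^{-cM^{3/13}}$ decay. The one tactical difference is that the paper obtains the cubic bound globally on $[t_0,4]$ in a single stroke by computing $h'(t)$ explicitly and showing $h'(t)\geq 3\epsilon'' t^2$ throughout, whereas you split into a Taylor region $[t_0,\epsilon_0]$ and a ``bounded-away-from-zero'' region $[\epsilon_0,4]$. Both work, but your sentence ``reduces by continuity to the sign check $h(4)>0$'' is not literally correct---endpoint positivity does not preclude interior zeros---and the paper's derivative computation is precisely the clean way to fill what you later call the ``fiddly calculus check.''
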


\begin{proof}
For $w \in \barSigmaM_{\remote}$, we denote $l = -\Re(w) = \frac{\lvert w \rvert}{2}$. Since $\arg(w) = \pm \frac{2\pi}{3}$, we get by \eqref{eq:formula_for_the_remainder_of_degree_3}
\begin{multline} \label{eq:modulus_in_variable_of_l}
\Re\left( \frac{(\gamma+1)^4}{3\gamma^3}w^3 + R_1(w) \right) = \frac{(\gamma+1)^2}{\gamma^2}l \\
+ \frac{1}{2}\log\left( 1 - 2\frac{\gamma+1}{\gamma}l + 4\left( \frac{\gamma+1}{\gamma}l \right)^2 \right) - \frac{\gamma^{-2}}{2}\log\left( 1 + 2(\gamma+1)l + 4(\gamma+1)^2l^2 \right).
\end{multline}
Then we take the derivative on both sides of \eqref{eq:modulus_in_variable_of_l}
\begin{multline}
\frac{d}{dl} \Re\left( \frac{(\gamma+1)^4}{3\gamma^3}w^3 + R_1(w) \right) = \\
8\frac{(\gamma+1)^4}{\gamma^3}l^2 \frac{1 + (\gamma-1)\frac{\gamma+1}{\gamma}l + 2\gamma\left( \frac{\gamma+1}{\gamma}l \right)^2}{\left( 1 - 2\frac{\gamma+1}{\gamma}l + 4\left( \frac{\gamma+1}{\gamma}l \right)^2 \right) \left( 1 + 2(\gamma+1)l + 4(\gamma+1)^2l^2 \right)},
\end{multline}
and are able to find a positive number $\epsilon''$, such that for $0 \leq l \leq 2\frac{\gamma}{\gamma+1}$,
\begin{equation}
8\frac{(\gamma+1)^4}{\gamma^3}l^2 \frac{1 + (\gamma-1)\frac{\gamma+1}{\gamma}l + 2\gamma\left( \frac{\gamma+1}{\gamma}l \right)^2}{\left( 1 - 2\frac{\gamma+1}{\gamma}l + 4\left( \frac{\gamma+1}{\gamma}l \right)^2 \right) \left( 1 + 2(\gamma+1)l + 4(\gamma+1)^2l^2 \right)} > 3\epsilon''l^2,
\end{equation}
and on the two right most points of $\barSigmaM_{\remote}$, $(-1+\sqrt{3})M^{-10/39}$ and $(-1-\sqrt{3})M^{-10/39}$,
\begin{equation}
\begin{split}
& \left. \Re\left( \frac{(\gamma+1)^4}{3\gamma^3}w^3 + R_1(w) \right) \right\rvert_{w = (-1 \pm \sqrt{3})M^{-10/39}} \\
= & \frac{8}{3} \frac{(\gamma+1)^4}{\gamma^3} M^{-10/13} + \varO(M^{-40/39}) \\
= & \frac{8}{3} \frac{(\gamma+1)^4}{\gamma^3} M^{-10/13} \left( 1 + \varO(M^{-10/39}) \right) \\
> & \int^{M^{10/39}}_0 3\epsilon''t^2 dt.
\end{split}
\end{equation}
Hence we know that for $w \in \barSigmaM_{\remote}$,
\begin{equation}
\Re\left( \frac{(\gamma+1)^4}{3\gamma^3}Mw^3 + MR_1(w) \right) > M \int^{M^{10/39}}_0 3\epsilon''t^2 dt = M\epsilon''l^3,
\end{equation}
and have the estimation that if $\xi \geq T$, for $0 < \epsilon''' < \epsilon''$ and $M$ large enough, ($l \geq M^{-10/39}$)
\begin{equation}
\begin{split}
& \lvert F_M(\xi, w) \rvert \\
< & \frac{(\gamma+1)^{4/3}}{\gamma}M^{1/3} e^{-(\xi-T)\frac{(\gamma+1)^{4/3}}{\gamma}M^{1/3}l - (\epsilon''l^3 + T\frac{(\gamma+1)^{4/3}}{\gamma}M^{-2/3}l)M} \\
& \times \left\lvert \frac{\left( w + \frac{\gamma}{\gamma+1} \right)^{r-r'}}{\left( w - \frac{1}{\gamma+1} \right)^r} \left( \prod^{s'-1}_{j=1} \left( w + \frac{a_j - \gamma^{-1}}{(1+\gamma^{-1})(1+a_j)} \right)^{r_j} \right) \left( w + \frac{a_{s'} - \gamma^{-1}}{(1+\gamma^{-1})(1+a_{s'})} \right)^{t'-1} \right\rvert \\
< & e^{-(\xi-T)\frac{(\gamma+1)^{4/3}}{\gamma}M^{1/13}}e^{-\epsilon'''M^{3/13}}.
\end{split}
\end{equation}
Now we get the result by similar inequalities as \eqref{eq:first_scalar_inequality_for_lemma_one}--\eqref{eq:length_of_arc}.
\end{proof}

If we define
\begin{equation} \label{eq:definition_of_Cbar_rprime}
\bar{C}_{r'} = \left( \prod^{s'-1}_{j=1} \left( \frac{a_j - \gamma^{-1}}{(1+\gamma^{-1})(1+a_j)} \right)^{r_j} \right) \left( \frac{a_{s'} - \gamma^{-1}}{(1+\gamma^{-1})(1+a_{s'})} \right)^{t'},
\end{equation}
we have
\begin{lemma} \label{lemma:fourth_lemma_for_Sigma_as_less_than_gammainv}
If $T$ is fixed and $M$ is large enough, 
\begin{equation}
\left\lvert \frac{1}{2\pi i}\int_{\barSigmaM_{\local}} F_M(\xi, w)dw  - (-1)^r \gamma^{r-r'}(\gamma+1)^{r'} \bar{C}_{r'-1} \Ai(\xi) \right\rvert < \frac{1}{3} \frac{e^{-\xi/2}}{M^{1/40}},
\end{equation}
for  any $\xi \geq T$.
\end{lemma}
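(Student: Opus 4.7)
The plan is to perform a saddle-point rescaling that unfolds the local integral into an Airy integral. The cubic phase in $F_M$ has a double critical point at $w=0$, so I would substitute $u=\frac{(\gamma+1)^{4/3}}{\gamma}M^{1/3}w$, engineered so that $-\frac{(\gamma+1)^4}{3\gamma^3}Mw^3$ becomes $-u^3/3$, the linear term $\frac{(\gamma+1)^{4/3}}{\gamma}M^{1/3}\xi w$ becomes $\xi u$, and the Jacobian $dw=\frac{\gamma}{(\gamma+1)^{4/3}}M^{-1/3}du$ cancels the prefactor $\frac{(\gamma+1)^{4/3}}{\gamma}M^{1/3}$ in $F_M$ exactly. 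Under this rescaling, the image of $\barSigmaM_{\local}$ is a contour $\tildeSigmaM\subset\barSigmainf$: the image of $\barSigmaM_2$ is exactly $\barSigmainf_2$, and the restrictions of $\barSigmaM_1,\barSigmaM_3$ to $|w|\le CM^{-10/39}$ map to subarcs of $\barSigmainf_1,\barSigmainf_3$ extending out to radius $|u|\sim M^{1/13}$.

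Denote by $G(w)$ the rational prefactor of $F_M$. A direct evaluation at $w=0$ gives $G(0)=(-1)^r\gamma^{r-r'}(\gamma+1)^{r'}\bar{C}_{r'-1}$, which is exactly the constant appearing in the statement of the lemma. Since $G$ is analytic in a disk of $M$-independent radius about $0$, we have $G(w)=G(0)+wH(w)$ with $|H|$ uniformly bounded on this disk. Likewise $R_1(w)=O(w^4)$ from \eqref{eq:change_z_into_w}, so after the rescaling the perturbations $wH(w)$ and $MR_1(w)$ become $O(M^{-1/3}|u|)$ and $O(M^{-1/3}|u|^4)$ respectively on $\tildeSigmaM$. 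Substituting these expansions yields
\begin{equation*}
\frac{1}{2\pi i}\int_{\barSigmaM_{\local}}F_M(\xi,w)\,dw
=G(0)\cdot\frac{1}{2\pi i}\int_{\tildeSigmaM}e^{\xi u-u^3/3}\,du
+\mathrm{Err}(\xi),
\end{equation*}
where $\mathrm{Err}(\xi)$ collects the contributions of the two Taylor remainders. The leading integral differs from $G(0)\Ai(\xi)$, via \eqref{eq:one_integral_formula_of_Airy_function}, only by the tail piece $G(0)\cdot\frac{1}{2\pi i}\int_{\barSigmainf\setminus\tildeSigmaM}e^{\xi u-u^3/3}du$, which the direct-calculation bound quoted just below \eqref{eq:one_integral_formula_of_Airy_function} controls by $CM^{-1/13}$.

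The uniform $e^{-\xi/2}$ decay on the right-hand side of the lemma is propagated by the key observation that every point of $\barSigmainf$ satisfies $\Re u\le -\tfrac12$, so $|e^{\xi u}|\le e^{-\xi/2}$ for $\xi\ge 0$ and a $T$-dependent multiple thereof for $\xi\ge T$. Combined with the Gaussian-cubic decay $|e^{-u^3/3}|$ along the rays of $\barSigmainf$, this factor absorbs the polynomial weights $|u|,|u|^4$ from the Taylor remainders and yields $|\mathrm{Err}(\xi)|\le CM^{-1/3}e^{-\xi/2}$. Adding this to the tail bound, the total discrepancy is at most $C(M^{-1/3}+M^{-1/13})e^{-\xi/2}<\tfrac13 e^{-\xi/2}M^{-1/40}$ for $M$ large, since $1/13>1/40$.

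The main obstacle is the bookkeeping required to propagate the $e^{-\xi/2}$ factor uniformly in $\xi\ge T$: inside the Taylor remainders this decay is not a priori visible, and one must repeatedly invoke $\Re u\le -\tfrac12$ on $\barSigmainf$ to extract it, in the same spirit as the extraction in lemmas \ref{lemma:first_lemma_for_Sigma_as_less_than_gammainv} and \ref{lemma:second_lemma_for_Sigma_as_less_than_gammainv}. Apart from this, the proof is a routine stationary-phase expansion at a cubic saddle, with the error exponents $M^{-1/3}$ and $M^{-1/13}$ comfortably smaller than the target $M^{-1/40}$.
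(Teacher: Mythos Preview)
Your proposal is correct and follows essentially the same approach as the paper: the rescaling $u=\frac{(\gamma+1)^{4/3}}{\gamma}M^{1/3}w$ maps $\barSigmaM_{\local}$ onto $\barSigmainf_{>-\frac{(\gamma+1)^{4/3}}{\gamma}M^{1/13}}$, the rational prefactor is frozen at $w=0$ to produce $(-1)^r\gamma^{r-r'}(\gamma+1)^{r'}\bar C_{r'-1}$, and the $e^{-\xi/2}$ decay is extracted from $\Re u\le -\tfrac12$ on $\barSigmainf$ exactly as in the paper. The only (harmless) difference is in the error bookkeeping: the paper bounds the correction factor uniformly on the local contour by $O(M^{-1/39})$, whereas you retain the pointwise bounds $O(M^{-1/3}|u|)$ and $O(M^{-1/3}|u|^4)$ and let the cubic decay of $e^{-u^3/3}$ absorb the polynomial weights after integration, yielding the sharper $O(M^{-1/3})$---either suffices against the target $M^{-1/40}$.
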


\begin{proof}
On $\barSigmaM_{\local}$, $\lvert w \rvert < 2M^{-10/39}$, and $R_1(w) = \varO(M^{-40/39})$, so that
\begin{multline}
F_M(\xi, w) = \frac{\left( \frac{\gamma}{\gamma+1} \right)^{r-r'}}{\left( -\frac{1}{\gamma+1} \right)^r} \left( \prod^{s'-1}_{j=1} \left( \frac{a_j - \gamma^{-1}}{(1+\gamma^{-1})(1+a_j)} \right)^{r_j} \right) \left( \frac{a_{s'} - \gamma^{-1}}{(1+\gamma^{-1})(1+a_{s'})} \right)^{t'-1} \\
\frac{(\gamma+1)^{4/3}}{\gamma}M^{1/3} e^{\frac{(1+\gamma)^{4/3}}{\gamma} M^{1/3}\xi w - \frac{(\gamma+1)^4}{3\gamma^3}Mw^3} \left( 1 + \varO(M^{-1/39}) \right),
\end{multline}
and the $\varO(M^{-1/39})$ term is independent of $\xi$. After the substitution $u = \frac{(1+\gamma)^{4/3}}{\gamma} M^{1/3} w$, we get
\begin{multline} \label{eq:pointwise_convergence_to_barSigmainf}
\frac{1}{2\pi i} \int_{\barSigmaM_{\local}} F_M(\xi, w) dw = \\
\frac{(-1)^r \gamma^{r-r'}(\gamma+1)^{r'} \bar{C}_{r'-1}}{2\pi i} \int_{\barSigmainf_{>-\frac{(1+\gamma)^{4/3}}{\gamma} M^{1/13}}} e^{\xi u - \frac{u^3}{3}} du \left( 1 + \varO(M^{-1/39}) \right).
\end{multline}
On $\barSigmainf$, if $\xi \geq T$, $\lvert e^{(\xi-T) u} \rvert \leq e^{T/2}e^{-\xi/2}$, and we have
\begin{equation}
\begin{split}
& \left\lvert \frac{1}{2\pi i} \int_{\barSigmaM_{\local}} F_M(\xi, w)dw - (-1)^r \gamma^{r-r'}(\gamma+1)^{r'} \bar{C}_{r'-1} \Ai(\xi) \right\rvert \\
\leq & e^{T/2}e^{-\xi/2} \left\lvert \frac{(-1)^r \gamma^{r-r'}(\gamma+1)^{r'} \bar{C}_{r'-1}}{2\pi i} \int_{\barSigmainf_{\leq -\frac{(\gamma+1)^{4/3}}{\gamma}M^{1/13}}} \left\lvert e^{Tu-\frac{u^3}{3}} \right\rvert du (1 + \varO(M^{-1/39})) \right\rvert \\
& + e^{T/2}e^{-\xi/2} \left\lvert \frac{(-1)^r \gamma^{r-r'}(\gamma+1)^{r'}\bar{C}_{r'-1}}{2\pi i} \int_{\barSigmainf_{>-\frac{(\gamma+1)^{4/3}}{\gamma}M^{1/13}}} \left\lvert e^{Tu-\frac{u^3}{3}} \right\rvert du \varO(M^{-1/39}) \right\rvert,
\end{split}
\end{equation}
and we can get the result by direct calculation.
\end{proof}

By lemmas \ref{lemma:first_lemma_for_Sigma_as_less_than_gammainv}--\ref{lemma:fourth_lemma_for_Sigma_as_less_than_gammainv}, and \eqref{eq:formula_of_variation_of_psi_less}, we get the convergence result
\begin{multline} \label{eq:asymptotics_of_psi_rprime_less}
\left\lvert \frac{(\gamma+1)^{4/3}}{\gamma}M^{1/3} (-1)^N\frac{\gamma^M}{(\gamma+1)^{M-N}}e^{-\frac{\gamma}{\gamma+1}Mx} \psi_{r'}(p+q\xi) - \right. \\
\left.
%vphantom
\vphantom{\frac{\gamma^M}{(\gamma+1)^{M-N}}}
%endvphantom
(-1)^r \gamma^{r-r'}(\gamma+1)^{r'} \bar{C}_{r'-1} \Ai(\xi) \right\rvert < \frac{e^{-\xi/2}}{M^{1/40}}.
\end{multline}
In the same way, we have the result for $\psi(p+q\xi)$
\begin{equation} \label{eq:asymptotics_of_psi_less}
\left\lvert \frac{(\gamma+1)^{4/3}}{\gamma}M^{1/3} (-1)^N\frac{\gamma^M}{(\gamma+1)^{M-N}}e^{-\frac{\gamma}{\gamma+1}Mx} \psi(p+q\xi) - (-\gamma)^r \Ai(\xi) \right\rvert < \frac{e^{-\xi/2}}{M^{1/40}}.
\end{equation}

\subsection{Asymptotics of $\varpsi(p+q\eta)$ and $\varpsi_{r'}(p+q\eta)$}

\label{asymptotics_of_varpsi_and+varpsi_rprime}
We only analyze $\varpsi_{r'}(p+q\eta)$, The analysis of $\psi(p+q\eta)$ is similar and simpler, and we only give the result.

We have
\begin{equation}
e^{-Myz} \frac{z^M}{(z-1)^N} = e^{Mf(z) - \frac{(1+\gamma)^{4/3}}{\gamma}M^{1/3}\eta z},
\end{equation}
where $f(z)$ is defined by \eqref{eq:definition_of_fz}. Now we can write \eqref{eq:definition_of_varphi_rprime} as
\begin{multline} \label{eq:integral_representation_of_varpsi_less}
\varpsi_{r'}(y) = \frac{1}{2\pi i} \\
\oint_{\Gamma} e^{Mf(z) - \frac{(1+\gamma)^{4/3}}{\gamma}M^{1/3}\eta z} \frac{(z-1)^r}{z^{r-r'+1}} \left( \prod^{s'-1}_{j=1} \left( z - \frac{1}{1+a_j} \right)^{-r_j} \right) \left( z - \frac{1}{1+a_{s'}} \right)^{-t'} dz.
\end{multline}
After the substitution $z = w + \frac{\gamma}{\gamma+1}$, we get
\begin{equation}
\begin{split}
& \varpsi_{r'}(p+q\eta) \\
= & \frac{1}{2\pi i} \oint_{\barGammaM} e^{M -\left( \frac{\gamma+1}{\gamma} + \log\gamma - (1-\gamma^{-2})\log(\gamma+1) + \gamma^{-2}\pi i + \frac{(\gamma+1)^4}{3\gamma^3}w^3 + R_1(w) \right) - \frac{(1+\gamma)^{4/3}}{\gamma} M^{1/3}\eta \left( w+ \frac{\gamma}{\gamma+1} \right)} \\
& \frac{\left( w - \frac{1}{\gamma+1} \right)^r}{\left( w + \frac{\gamma}{\gamma+1} \right)^{r-r'+1}} \left( \prod^{s'-1}_{j=1} \left( w + \frac{a_j - \gamma^{-1}}{(1+\gamma^{-1})(1+a_j)} \right)^{-r_j} \right) \left( w + \frac{a_{s'} - \gamma^{-1}}{(1+\gamma^{-1})(1+a_{s'})} \right)^{-t'} dw \\
= & \frac{(-1)^N}{2\pi i} \frac{\gamma^M}{(\gamma+1)^{M-N}} e^{-\frac{\gamma}{\gamma+1}Mx} \oint_{\barGammaM} e^{-\frac{(1+\gamma)^{4/3}}{\gamma} M^{1/3}\eta w + \frac{(\gamma+1)^4}{3\gamma^3}Mw^3 + MR_1(w)} \\
& \frac{\left( w - \frac{1}{\gamma+1} \right)^r}{\left( w + \frac{\gamma}{\gamma+1} \right)^{r-r'+1}} \left( \prod^{s'-1}_{j=1} \left( w + \frac{a_j - \gamma^{-1}}{(1+\gamma^{-1})(1+a_j)} \right)^{-r_j} \right) \left( w + \frac{a_{s'} - \gamma^{-1}}{(1+\gamma^{-1})(1+a_{s'})} \right)^{-t'} dw,
\end{split}
\end{equation}

where $\barGammaM$ is a contour containing $\frac{1}{\gamma+1}$ and $\frac{\gamma^{-1} - a_j}{(1+\gamma^{-1})(1+a_j)}$, ($j = 1, \dots, s$), composed of $\barGammaM_1$, $\barGammaM_2$, $\barGammaM_3$, $\barGammaM_4$, $\barGammaM_5$ and $\barGammaM_6$, which are defined as below, with the constant $C_{\rt}$ a large enough positive number, so that $\barGammaM$ contains all the poles if $M$ is large enough.
\begin{align}
\barGammaM_1 = & \left\{ \left. \left( \frac{2}{\gamma} - t \right)\frac{\gamma}{\gamma+1}e^{\frac{\pi i}{3}} \right\rvert 0 \leq t \leq \frac{2}{\gamma} - \frac{1}{(\gamma+1)^{1/3}}M^{-1/3} \right\}, \\
\barGammaM_2 = & \left\{ \left. \frac{\gamma}{(\gamma+1)^{4/3}}M^{-1/3}e^{-t\pi i} \right\rvert -\frac{\pi}{3} \leq t \leq \frac{\pi}{3} \right\}, \\
\barGammaM_3 = & \left\{ \left. t\frac{\gamma}{\gamma+1}e^{\frac{5\pi i}{3}} \right\rvert \frac{1}{(\gamma+1)^{1/3}}M^{-1/3} \leq t \leq \frac{2}{\gamma} \right\}, \\
\barGammaM_4 = & \left\{ \left. -t + \frac{\sqrt{3}}{\gamma+1}i \right\rvert -C_{\rt} \leq t \leq -\frac{1}{\gamma+1} \right\}, \\
\barGammaM_5 = & \left\{ \left. t - \frac{\sqrt{3}}{\gamma+1}i \right\rvert \frac{1}{\gamma+1} \leq t \leq C_{\rt} \right\}, \\
\barGammaM_4 = & \left\{ C_{\rt} + it \left\rvert -\frac{\sqrt{3}}{(\gamma+1)} \leq t \leq \frac{\sqrt{3}}{(\gamma+1)} \right. \right\}.
\end{align}
\begin{figure}[h]
\centering
\includegraphics{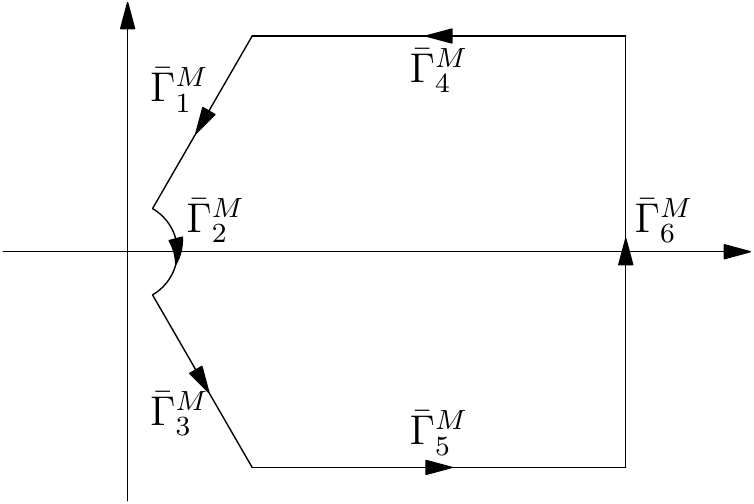}
\caption{$\barGammaM$}
\end{figure}
For asymptotic analysis, we define
\begin{align}
\barGammaM_{\local} = & \{ z \in \barGammaM \mid \Re(z) < M^{-10/39} \}, \label{eq:definition_of_barGammaM_local} \\
\barGammaM_{\remote} = & (\barGammaM_1 \cup \barGammaM_3) \setminus \barGammaM_{\local}.
\end{align}

If we denote
\begin{multline}
G_M(\eta, w) = \frac{(\gamma+1)^{4/3}}{\gamma}M^{1/3} e^{-\frac{(1+\gamma)^{4/3}}{\gamma} M^{1/3}\eta w + \frac{(\gamma+1)^4}{3\gamma^3}Mw^3 + MR_1(w)} \\
\frac{\left( w - \frac{1}{\gamma+1} \right)^r}{\left( w + \frac{\gamma}{\gamma+1} \right)^{r-r'+1}} \left( \prod^{s'-1}_{j=1} \left( w + \frac{a_j - \gamma^{-1}}{(1+\gamma^{-1})(1+a_j)} \right)^{-r_j} \right) \left( w + \frac{a_{s'} - \gamma^{-1}}{(1+\gamma^{-1})(1+a_{s'})} \right)^{-t'},
\end{multline}
we have
\begin{equation} \label{eq:variation_of_varpsi_less}
(-1)^N \frac{(\gamma+1)^{M-N}}{\gamma^M} e^{\frac{\gamma}{\gamma+1}My} \varpsi_{r'}(p+q\eta) = \frac{1}{2\pi i} \oint_{\barGammaM} G_M(\eta, w) dw,
\end{equation}
and establish several lemmas:
\begin{lemma} \label{lemma:first_for_varpsi_less}
If $T$ is fixed and $M$ is large enough, 
\begin{equation}
\left\lvert \frac{1}{2\pi i} \int_{\barGammaM_4 \cup \barGammaM_5 \cup \barGammaM_6} G_M(\eta, w)dw \right\rvert < \frac{1}{3} \frac{e^{-\eta/2}}{M^{1/40}},
\end{equation}
for any $\eta \geq T$.
\end{lemma}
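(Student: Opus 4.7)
The plan is to imitate the strategy employed in Lemma~\ref{lemma:first_lemma_for_Sigma_as_less_than_gammainv} for $\barSigmaM_4$, but with the sign of the cubic exponent reversed. On the three segments $\barGammaM_4$, $\barGammaM_5$, $\barGammaM_6$, which stay a fixed distance from the saddle $w=0$, the key is to show that the real part of $g(w) := \frac{(\gamma+1)^4}{3\gamma^3}w^3 + R_1(w)$ is strictly negative, uniformly in $w$; since the exponent appearing in $G_M$ is $+Mg(w)$ (rather than $-Mg(w)$ as in the $F_M$ case), this negativity is exactly what produces the required decay.

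First I would use the closed-form expression from \eqref{eq:formula_for_the_remainder_of_degree_3}, valid globally in $w$, to write
\begin{equation*}
\Re\, g(w) = -\Bigl(\tfrac{\gamma+1}{\gamma}\Bigr)^2 \Re(w) + \tfrac{1}{2}\log\bigl\lvert\tfrac{\gamma+1}{\gamma}w + 1\bigr\rvert^2 - \tfrac{1}{2\gamma^2}\log\bigl\lvert(\gamma+1)w - 1\bigr\rvert^2.
\end{equation*}
On $\barGammaM_4\cup\barGammaM_5$, $w$ traces horizontal segments with $\Im(w)=\pm\tfrac{\sqrt 3}{\gamma+1}$ and $\Re(w)\in[\tfrac{1}{\gamma+1},C_{\rt}]$; on $\barGammaM_6$, $w$ traces a bounded vertical segment at $\Re(w)=C_{\rt}$. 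Direct substitution verifies $\Re g$ is strictly negative at the four corners, and for each fixed horizontal level increasing $\Re(w)$ makes the linear term $-(\tfrac{\gamma+1}{\gamma})^2\Re(w)$ eventually dominate; on $\barGammaM_6$ with $C_{\rt}$ chosen large, $\Re g$ is very negative. This yields a positive constant $\delta=\delta(\gamma,r,\{a_j\})$ with $\Re g(w)\le -\delta$ uniformly on $\barGammaM_4\cup\barGammaM_5\cup\barGammaM_6$.

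Next I would estimate the remaining factors in $G_M(\eta,w)$. The poles of the rational part lie at $w=-\tfrac{\gamma}{\gamma+1}$ and at $w=\tfrac{\gamma^{-1}-a_j}{(1+\gamma^{-1})(1+a_j)}$ (positive reals, by $a_j<\gamma^{-1}$), all a fixed positive distance from the three segments, so the rational factor is bounded by a constant $K$ independent of $M$. For the $\eta$-dependent exponential, $\Re(w)\ge\tfrac{1}{\gamma+1}$ and $\eta\ge T$ give
\begin{equation*}
\bigl\lvert e^{-\frac{(\gamma+1)^{4/3}}{\gamma}M^{1/3}\eta w}\bigr\rvert \le e^{-\frac{(\gamma+1)^{1/3}}{\gamma}M^{1/3}(\eta-T)}\,e^{-\frac{(\gamma+1)^{1/3}}{\gamma}M^{1/3}T},
\end{equation*}
and once $\tfrac{(\gamma+1)^{1/3}}{\gamma}M^{1/3}\ge\tfrac 12$, the first factor is bounded by $e^{T/2}e^{-\eta/2}$. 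Combining with the total arclength bound and the $e^{-M\delta}$ gain,
\begin{equation*}
\Bigl\lvert \tfrac{1}{2\pi i}\int_{\barGammaM_4\cup\barGammaM_5\cup\barGammaM_6} G_M(\eta,w)\, dw\Bigr\rvert \le K'\, M^{1/3}\, e^{-M\delta-\tfrac{(\gamma+1)^{1/3}}{\gamma}M^{1/3}T + T/2}\, e^{-\eta/2},
\end{equation*}
and the $M$-dependent prefactor is eventually smaller than $\tfrac{1}{3M^{1/40}}$ regardless of the sign of $T$, since $-M\delta$ dominates both $M^{1/3}|T|$ and the $M^{1/3}$ from the prefactor.

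The main obstacle is the uniform bound $\Re g(w)\le -\delta$ near the matching corners $w=\tfrac{1}{\gamma+1}\pm\tfrac{\sqrt 3}{\gamma+1}i$, where $\Re g$ is closest to zero; a direct computation there gives, for instance at $\gamma\to\infty$, leading behaviour $-\tfrac{\log 3}{2\gamma^2}+\varO(\gamma^{-3})<0$, so for each fixed $\gamma$ there is a definite gap. This is the one place where an actual by-hand verification is needed, rather than the purely symmetric argument used for $\barSigmaM_4$ in Lemma~\ref{lemma:first_lemma_for_Sigma_as_less_than_gammainv}.
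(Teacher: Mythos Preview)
Your proposal is correct and follows exactly the approach the paper intends: the paper does not give a separate proof of this lemma, but only states that the argument is ``similar to'' that of Lemma~\ref{lemma:first_lemma_for_Sigma_as_less_than_gammainv}, which is precisely the template you are following (with the sign of the exponent reversed, as you note). Your treatment is in fact more explicit than the paper's, since you identify the corner points $w=\tfrac{1}{\gamma+1}\pm\tfrac{\sqrt{3}}{\gamma+1}i$ as the place where the uniform bound $\Re g(w)\le-\delta$ requires direct verification, and carry out that check.
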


\begin{lemma} \label{lemma:second_for_varpsi_less}
If $T$ is fixed and $M$ is large enough, 
\begin{equation}
\left\lvert \frac{1}{2\pi i} \int_{\barGammaM_{\remote}} G_M(\eta, w)dw \right\rvert < \frac{1}{3} \frac{e^{-\eta/2}}{M^{1/40}},
\end{equation}
for any $\eta \geq T$.
\end{lemma}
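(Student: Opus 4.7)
The plan is to adapt the saddle-point argument from the proof of Lemma~\ref{lemma:second_lemma_for_Sigma_as_less_than_gammainv} (the $\barSigmaM_{\remote}$ bound for $F_M$) to the present ``opposite'' situation: the contour $\barGammaM_{\remote}$ lies on the rays $\arg w = \pm \pi/3$, where $w^3 = -8l^3$ with $l = \Re(w) > 0$, and the exponential in $G_M$ is $e^{+\frac{(\gamma+1)^4}{3\gamma^3}Mw^3 + MR_1(w)}$, so the cubic contributes $e^{-\frac{8(\gamma+1)^4}{3\gamma^3}Ml^3}$, giving decay. The whole proof is structurally a mirror image of the $F_M$ remote bound, with the roles of $\arg = \pm 2\pi/3$ and $\arg = \pm \pi/3$ swapped.

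First I would parametrize $\barGammaM_{\remote}$ by $l = \Re(w) \in [M^{-10/39}, \frac{1}{\gamma+1}]$, using $w = l(1 \pm \sqrt{3} i)$. Plugging this into the closed form
\begin{equation*}
\frac{(\gamma+1)^4}{3\gamma^3}w^3 + R_1(w) = -\left(\frac{\gamma+1}{\gamma}\right)^2 w + \log\!\left(\frac{\gamma+1}{\gamma}w + 1\right) - \gamma^{-2}\log\!\left((\gamma+1)w - 1\right) - \gamma^{-2}\pi i
\end{equation*}
from \eqref{eq:formula_for_the_remainder_of_degree_3} and taking the real part gives an explicit function $h(l)$ whose $l$-derivative factors as $h'(l) = -8\frac{(\gamma+1)^4}{\gamma^3}l^2 \cdot Q(l)$, where $Q(l) > 0$ is a rational function with no singularity in $[0, \frac{1}{\gamma+1}]$ (the log singularities come from $l = \pm \frac{\gamma}{\gamma+1}$ on the real axis, away from our rays). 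A direct computation analogous to the one done for $\barSigmaM_{\remote}$ then yields a constant $\epsilon'' > 0$ with $h'(l) \leq -3\epsilon'' l^2$ uniformly on the range of $l$, and integration from $0$ gives $h(l) \leq -\epsilon'' l^3$ (in particular $h(M^{-10/39}) \leq -\epsilon'' M^{-10/13}$, matching the endpoint check done in the $F_M$ case).

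Next I would bound the non-exponential factor: on $\barGammaM_{\remote}$ we have $\lvert w \rvert = 2l$, and $w$ stays uniformly bounded away from $-\frac{\gamma}{\gamma+1}$ and from all poles at $\frac{\gamma^{-1} - a_j}{(1+\gamma^{-1})(1+a_j)}$, so the algebraic prefactor in $G_M$ is bounded by some constant. Combining with the exponential bound and splitting the linear term as $\eta l = Tl + (\eta - T) l$, I get, for $\eta \geq T$ and $M$ large,
\begin{equation*}
\lvert G_M(\eta, w) \rvert \;<\; \frac{(\gamma+1)^{4/3}}{\gamma} M^{1/3} e^{-(\eta - T)\frac{(\gamma+1)^{4/3}}{\gamma}M^{1/3} l} \, e^{-(\epsilon'' l^3 + T \frac{(\gamma+1)^{4/3}}{\gamma} M^{-2/3} l) M} \,\cdot\, C,
\end{equation*}
which for $l \geq M^{-10/39}$ gives $\lvert G_M(\eta, w) \rvert < e^{-(\eta - T)\frac{(\gamma+1)^{4/3}}{\gamma} M^{1/13}} e^{-\epsilon''' M^{3/13}}$ for any $\epsilon''' < \epsilon''$ once $M$ is large. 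Multiplying by the arc length (which is $O(1)$) and using $e^{-(\eta - T)\frac{(\gamma+1)^{4/3}}{\gamma}M^{1/13}} \leq e^{T/2} e^{-\eta/2}$ yields the claim, exactly as in \eqref{eq:first_scalar_inequality_for_lemma_one}--\eqref{eq:length_of_arc}.

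The main obstacle is verifying the factorization $h'(l) = -8\frac{(\gamma+1)^4}{\gamma^3} l^2 \cdot Q(l)$ with $Q(l) > 0$ on the full interval $[0, \frac{1}{\gamma+1}]$ uniformly in $\gamma \geq 1$: the logarithm $\log((\gamma+1)w - 1)$ has derivative $(\gamma+1)/((\gamma+1)w - 1)$ whose real part changes sign as $l$ crosses certain thresholds, so one has to collect terms and rationalize to confirm positivity of $Q$. Once that algebraic step is pinned down, the rest is a mechanical adaptation of the $F_M$-argument, and the resulting bound $e^{-\eta/2}/(3M^{1/40})$ follows by the same slack choice of exponents as before.
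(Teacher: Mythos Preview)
Your proposal is correct and is precisely the adaptation of Lemma~\ref{lemma:second_lemma_for_Sigma_as_less_than_gammainv} that the paper itself indicates (it gives no separate proof here, only the remark that Lemmas~\ref{lemma:first_for_varpsi_less}--\ref{lemma:last_for_varpsi_less} follow by the same method as Lemmas~\ref{lemma:first_lemma_for_Sigma_as_less_than_gammainv}--\ref{lemma:fourth_lemma_for_Sigma_as_less_than_gammainv}). One minor slip: the logarithmic singularities in \eqref{eq:formula_for_the_remainder_of_degree_3} sit at $w=-\tfrac{\gamma}{\gamma+1}$ and $w=\tfrac{1}{\gamma+1}$, not at $\pm\tfrac{\gamma}{\gamma+1}$; this does not affect your argument since both are on the real axis and hence off the rays $\arg w=\pm\pi/3$.
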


\begin{lemma} \label{lemma:last_for_varpsi_less}
If $T$ is fixed and $M$ is large enough, 
\begin{equation}
\left\lvert \frac{1}{2\pi i}\int_{\barGammaM_{\local}} F_M(\eta, w)dw  - \frac{(-1)^{r-1} (1+\gamma^{-1})}{\gamma^{r-r'}(\gamma+1)^{r'} \bar{C}_{r'}} \Ai(\eta) \right\rvert < \frac{1}{3} \frac{e^{-\eta/2}}{M^{1/40}},
\end{equation}
for any $\eta \geq T$.
\end{lemma}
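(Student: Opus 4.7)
The argument parallels that of Lemma \ref{lemma:fourth_lemma_for_Sigma_as_less_than_gammainv}, with $\barGammaM_{\local}$ taking the role of $\barSigmaM_{\local}$ and with the $\barGammainf$-representation of the Airy function,
\[
\frac{1}{2\pi i}\int_{\barGammainf} e^{-\eta u + u^3/3}\,du = -\Ai(\eta),
\]
replacing its $\barSigmainf$-counterpart. (The $F_M$ in the statement is a typo for $G_M$, the integrand of \eqref{eq:variation_of_varpsi_less}.) The extra minus sign in this Airy identity is precisely what upgrades the $(-1)^r$ from evaluating the rational part of $G_M$ at $w=0$ into the target $(-1)^{r-1}$.

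On $\barGammaM_{\local}$ we have $\lvert w\rvert\le 2M^{-10/39}$, so \eqref{eq:change_z_into_w} gives $MR_1(w)=\varO(M^{-1/39})$ uniformly; and since $a_{s'}<\gamma^{-1}$, every factor in the rational part of $G_M(\eta,w)$ stays bounded and bounded away from zero on $\barGammaM_{\local}$. Hence the rational factor agrees with its value at $w=0$ up to a relative error $\varO(M^{-10/39})$ independent of $\eta$. A short computation using \eqref{eq:definition_of_Cbar_rprime} shows that this value equals $\tfrac{(-1)^r(1+\gamma^{-1})}{\gamma^{r-r'}(\gamma+1)^{r'}\bar C_{r'}}$. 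Rescaling by $u=\tfrac{(\gamma+1)^{4/3}}{\gamma}M^{1/3}w$ then absorbs the $M^{1/3}$ prefactor of $G_M$ into $du$, converts the linear and cubic exponents into $-\eta u$ and $u^3/3$, and identifies the rescaled $\barGammaM_{\local}$ with $\barGammainf_{<c}$ for $c=\tfrac{(\gamma+1)^{4/3}}{\gamma}M^{1/13}$. This yields
\[
\frac{1}{2\pi i}\int_{\barGammaM_{\local}}G_M(\eta,w)\,dw = \frac{(-1)^r(1+\gamma^{-1})}{\gamma^{r-r'}(\gamma+1)^{r'}\,\bar C_{r'}}\cdot\frac{1}{2\pi i}\int_{\barGammainf_{<c}} e^{-\eta u + u^3/3}\,du\cdot\bigl(1+\varO(M^{-1/39})\bigr),
\]
and completing the $u$-integral to all of $\barGammainf$ replaces the truncated integral by $-\Ai(\eta)$, producing the claimed leading term.

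The error bound splits into (i) the tail $\int_{\barGammainf_{\geq c}}e^{-\eta u+u^3/3}\,du$, which by the estimate recorded just below \eqref{eq:definition_of_xbarGammainf_geq_c} together with the uniform inequality $\lvert e^{-(\eta-T)u}\rvert\le e^{-(\eta-T)/2}$ on $\barGammainf$ (valid since $\Re(u)\ge 1/2$ there) is $\varO(c^{-1}e^{-\eta/2})=\varO(M^{-1/13}e^{-\eta/2})$ uniformly in $\eta\ge T$; and (ii) the $\varO(M^{-1/39})$ relative error applied to the bounded main Airy integral, which carries the same $e^{-\eta/2}$ decay. Since both $1/13$ and $1/39$ exceed $1/40$, the sum is dominated by $\tfrac13 M^{-1/40}e^{-\eta/2}$ for $M$ sufficiently large. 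The only real subtlety is the clerical sign-tracking between orientations of $\barGammaM$ and $\barGammainf$, and the careful assembly of the $\eta$-uniform exponential decay; no new analytic input beyond the template of Lemma \ref{lemma:fourth_lemma_for_Sigma_as_less_than_gammainv} is required.
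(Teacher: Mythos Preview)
Your proof is correct and follows essentially the same approach as the paper's sketch: localize on $\barGammaM_{\local}$ where $|w|\le 2M^{-10/39}$, replace the rational factor by its value at $w=0$, rescale by $u=\tfrac{(\gamma+1)^{4/3}}{\gamma}M^{1/3}w$ to land on $\barGammainf_{<c}$ with $c\sim M^{1/13}$, and control tail plus relative error using $\Re(u)\ge 1/2$ on $\barGammainf$. You correctly flag the $F_M$/$G_M$ typo and supply more detail than the paper's sketch (which itself contains a misprint, writing $M^{1/3}$ where your $M^{1/13}$ is correct).
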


Since the proofs of lemma \ref{lemma:first_for_varpsi_less}--\ref{lemma:last_for_varpsi_less} are similar to those of lemmas \ref{lemma:first_lemma_for_Sigma_as_less_than_gammainv}--\ref{lemma:fourth_lemma_for_Sigma_as_less_than_gammainv}, we only give an outline of the proof of lemma \ref{lemma:last_for_varpsi_less}.

\begin{proof}[Sketch of proof of lemma \ref{lemma:last_for_varpsi_less}]
On $\barGammaM_{\local}$, $\lvert w \rvert < 2M^{-10/39}$ and $R_1(w) = \varO(M^{-40/39})$, so that
\begin{multline}
G_M(\eta, w) = \frac{(-1)^r (1+\gamma^{-1})}{\gamma^{r-r'}(\gamma+1)^{r'} \bar{C}_{r'}} \\
\frac{(\gamma+1)^{4/3}}{\gamma}M^{1/3} e^{-\frac{(1+\gamma)^{4/3}}{\gamma} M^{1/3}\eta w + \frac{(\gamma+1)^4}{3\gamma^3}Mw^3} \left( 1 + \varO(M^{-1/39}) \right), 
\end{multline}
after the substitution $u = \frac{(1+\gamma)^{4/3}}{\gamma} M^{1/3} w$, we get
\begin{equation} 
\int_{\barGammaM_{\local}} G_M(\eta, w) dw = \frac{(-1)^r (1+\gamma^{-1})}{\gamma^{r-r'}(\gamma+1)^{r'} \bar{C}_{r'}} \int_{\barGammainf_{<\frac{(1+\gamma)^{4/3}}{\gamma} M^{1/3}}} e^{-\eta u + \frac{u^3}{3}} du \left( 1 + \varO(M^{-1/39}) \right).
\end{equation}
Also we have that on $\barGammainf$, if $\eta \geq T$, $\lvert e^{-(\eta-T) u} \leq e^{T/2}e^{-\eta/2} \rvert$. We can prove lemma \ref{lemma:last_for_varpsi_less} in the same way as proving lemma \ref{lemma:fourth_lemma_for_Sigma_as_less_than_gammainv}.
\end{proof}

By lemmas \ref{lemma:first_for_varpsi_less}--\ref{lemma:last_for_varpsi_less}, and \eqref{eq:variation_of_varpsi_less}, we get the convergence result
\begin{multline} \label{eq:asymptotics_of_varpsi_rprime_less}
\left\lvert \frac{(\gamma+1)^{4/3}}{\gamma}M^{1/3} (-1)^N\frac{(\gamma+1)^{M-N}}{\gamma^M} e^{\frac{\gamma}{\gamma+1}My} \varpsi_{r'}(p+q\eta) - \frac{(-1)^{r-1} (1+\gamma^{-1})}{\gamma^{r-r'}(\gamma+1)^{r'} \bar{C}_{r'}} \Ai(\xi) \right\rvert \\
< \frac{e^{-\eta/2}}{M^{1/40}}.
\end{multline}
And in the same way, we have the result for $\varpsi(p+q\xi)$
\begin{equation} \label{eq:asymptotics_of_varpsi_less}
\left\lvert \frac{(\gamma+1)^{4/3}}{\gamma}M^{1/3} (-1)^N\frac{(\gamma+1)^{M-N}}{\gamma^M} e^{\frac{\gamma}{\gamma+1}My} \varpsi(p+q\eta) - (-1)^{r-1}\gamma^{-r} \Ai(\xi) \right\rvert < \frac{e^{-\eta/2}}{M^{1/40}}.
\end{equation}

\section{Asymptotics of $\psi_{r'}(p+q\xi)$ and $\varpsi_{r'}(p+q\eta)$ when $a_{s'} = \gamma^{-1}$}

In this section, we still assume $x=p+q\xi$, $y=p+q\eta$, $p = (1+\gamma^{-1})^2$ and $q = \frac{(1+\gamma)^{4/3}}{\gamma M^{2/3}}$.

With $\barSigmainf$ defined in last section, we have
\begin{equation}
\frac{1}{2\pi i} \int_{\barSigmainf} e^{\xi u - \frac{u^3}{3}} u^{t'-1} du = (-1)^{t'-1}t^{(t')}(\xi),
\end{equation}
which can be proved by a simple change of variable similar to \eqref{eq:simple_change_of_variable_for_Airy}. For any $T$, if $-c$ is large enough
\begin{equation}
\left\lvert \frac{1}{2\pi i} \int_{\barSigmainf_{\leq c}} e^{Tu - \frac{u^3}{3}} u^{t'-1} du \right\rvert < \frac{1}{-c}.
\end{equation}

we define $\BbarGammainf = \BbarGammainf_1 \cup \BbarGammainf_2 \cup \BbarGammainf_3$, where
\begin{align}
\BbarGammainf_1 = & \{ -t e^{\frac{\pi i}{3}} \mid t \leq -\frac{1}{6} \}, \\
\BbarGammainf_2 = & \{ \frac{1}{6}e^{t\pi i} \mid \frac{1}{3} \leq t \leq \frac{5}{3} \}, \\
\BbarGammainf_3 = & \{ t e^{\frac{5\pi i}{3}} \mid t \geq \frac{1}{6} \}, 
\end{align}
\begin{figure}[h]
\centering
\includegraphics{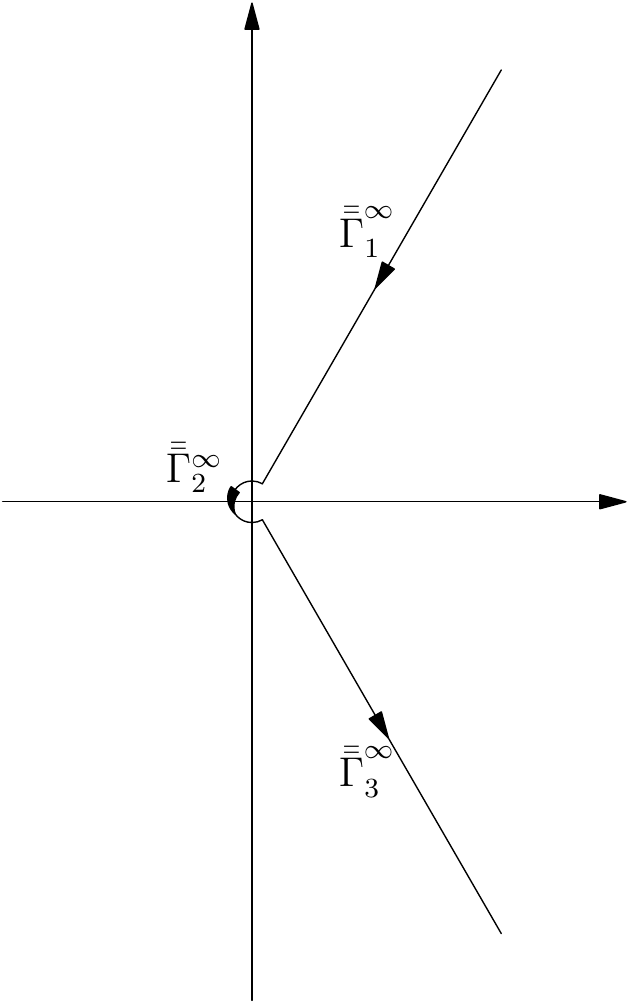}
\caption{$\BbarGammainf$}
\end{figure}
and
\begin{align}
\BbarGammainf_{<c} = & \{ w \in \BbarGammainf \mid \Re(w) < c \}, \\
\BbarGammainf_{\geq c} = & \BbarGammainf \setminus \barSigmainf_{<c}. \label{eq:definition_of_BbarGammainf_geq_c}
\end{align}
We have
\begin{equation}
\frac{1}{2\pi i} \int_{\BbarGammainf} e^{-\eta u + \frac{u^3}{3}} \frac{du}{u^{t'}} = (-1)^{t'-1}s^{(t')}(\xi),
\end{equation}
and for any $T$, if $c$ is large enough
\begin{equation}
\left\lvert \frac{1}{2\pi i} \int_{\BbarGammainf_{\geq c}} e^{-Tu + \frac{u^3}{3}} \frac{du}{u^{t'}} \right\rvert < \frac{1}{c}.
\end{equation}

\subsection{Asymptotics of $\psi_{r'}(p+q\xi)$}

Similar to \eqref{eq:integral_representation_of_psi_by_f}, we have ($f(z)$ is defined by \eqref{eq:definition_of_fz})
\begin{multline}
\psi_{r'}(x) = \frac{1}{2\pi i} \\
\oint_{\Sigma} e^{-Mf(z) + \frac{(1+\gamma)^{4/3}}{\gamma}M^{1/3}\xi z} \frac{z^{r-r'}}{(z-1)^r} \left( \prod^{s'-1}_{j=1} \left( z - \frac{1}{1+a_j} \right)^{r_j} \right) \left( z - \frac{\gamma}{\gamma+1} \right)^{t'-1} dz,
\end{multline}
and after the substitution $z = w + \frac{\gamma}{\gamma+1}$, we get
\begin{equation}
\begin{split}
& \psi_{r'}(p+q\xi) \\
= & \frac{1}{2\pi i} \oint_{\barSigmaM} e^{M \left( \frac{\gamma+1}{\gamma} - \log\gamma + (1-\gamma^{-2})\log(\gamma+1) - \gamma^{-2}\pi i - \frac{(\gamma+1)^4}{3\gamma^3}w^3 - R_1(w) \right) + \frac{(1+\gamma)^{4/3}}{\gamma} M^{1/3}\xi \left( w+ \frac{\gamma}{\gamma+1} \right)} \\
& \frac{\left( w + \frac{\gamma}{\gamma+1} \right)^{r-r'}}{\left( w - \frac{1}{\gamma+1} \right)^r} \left( \prod^{s'-1}_{j=1} \left( w + \frac{a_j - \gamma^{-1}}{(1+\gamma^{-1})(1+a_j)} \right)^{r_j} \right) w^{t'-1} dw \\
= & \frac{(-1)^N}{2\pi i} \frac{(\gamma+1)^{M-N}}{\gamma^M} e^{\frac{\gamma}{\gamma+1}Mx} \oint_{\barSigmaM} e^{\frac{(1+\gamma)^{4/3}}{\gamma} M^{1/3}\xi w - \frac{(\gamma+1)^4}{3\gamma^3}Mw^3 - MR_1(w)} \\
& \frac{\left( w + \frac{\gamma}{\gamma+1} \right)^{r-r'}}{\left( w - \frac{1}{\gamma+1} \right)^r} \left( \prod^{s'-1}_{j=1} \left( w + \frac{a_j - \gamma^{-1}}{(1+\gamma^{-1})(1+a_j)} \right)^{r_j} \right) w^{t'-1} dw,
\end{split}
\end{equation}
with $\barSigmaM$ defined by \eqref{eq:definition_of_barGammaM_1}--\eqref{eq:definition_of_barGammaM_4}.

Now we denote
\begin{multline}
F_{Mt'}(\xi, w) = \left( \frac{(\gamma+1)^{4/3}}{\gamma}M^{1/3} \right)^{t'} e^{\frac{(1+\gamma)^{4/3}}{\gamma} M^{1/3}\xi w - \frac{(\gamma+1)^4}{3\gamma^3}Mw^3 - MR_1(w)} \\
\frac{\left( w + \frac{\gamma}{\gamma+1} \right)^{r-r'}}{\left( w - \frac{1}{\gamma+1} \right)^r} \left( \prod^{s'-1}_{j=1} \left( w + \frac{a_j - \gamma^{-1}}{(1+\gamma^{-1})(1+a_j)} \right)^{r_j} \right) w^{t'-1},
\end{multline}
and have
\begin{equation}
(-1)^N \frac{\gamma^M}{(\gamma+1)^{M-N}} e^{-\frac{\gamma}{\gamma+1}Mx} \psi_{r'}(p+q\xi) = \left( \frac{(\gamma+1)^{4/3}}{\gamma}M^{1/3} \right)^{-t'} \frac{1}{2\pi i} \oint_{\barSigmaM} F_{Mt'}(\xi, w) dw,
\end{equation}

Similar to the $a_{s'} < \gamma^{-1}$ case, we have
\begin{lemma}
If $T$ is fixed and $M$ is large enough, 
\begin{equation}
\left\lvert \frac{1}{2\pi i}\int_{\barSigmaM} F_{Mt'}(\xi, w)dw  - (-1)^r \gamma^{r-r'}(\gamma+1)^r \bar{C}_{r'-t'}(-1)^{t'-1} t^{(t)}(\xi) \right\rvert < \frac{e^{-\xi/2}}{M^{1/40}},
\end{equation}
for any $\xi \geq T$.
\end{lemma}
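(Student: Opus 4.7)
The plan is to mirror the three-piece contour analysis carried out in lemmas \ref{lemma:first_lemma_for_Sigma_as_less_than_gammainv}--\ref{lemma:fourth_lemma_for_Sigma_as_less_than_gammainv} of section \ref{asymotptics_less}, adapted to the critical regime $a_{s'}=\gamma^{-1}$. The sole analytic novelty is that, after the translation $z=w+\frac{\gamma}{\gamma+1}$, the factor $(z-\frac{1}{1+a_{s'}})^{t'-1}$ becomes exactly $w^{t'-1}$, so the integrand now carries a polynomial vanishing at the critical point $w=0$ of $f(z)$. This is exactly compensated by the promotion of the prefactor from $\frac{(\gamma+1)^{4/3}}{\gamma}M^{1/3}$ to its $t'$-th power in the definition of $F_{Mt'}$; under the rescaling $u=\frac{(\gamma+1)^{4/3}}{\gamma}M^{1/3}w$ the combination $w^{t'-1}\,dw$ converts to $\bigl(\frac{\gamma}{(\gamma+1)^{4/3}M^{1/3}}\bigr)^{t'}u^{t'-1}\,du$, and the powers of $M^{1/3}$ cancel.

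With that observation in hand I would decompose $\barSigmaM=\barSigmaM_4\cup\barSigmaM_{\remote}\cup\barSigmaM_{\local}$ and estimate each piece separately. On $\barSigmaM_4$ the bound $\Re\bigl(\frac{(\gamma+1)^4}{3\gamma^3}w^3+R_1(w)\bigr)\geq 2-\log\sqrt{37}>0$ from the proof of lemma \ref{lemma:first_lemma_for_Sigma_as_less_than_gammainv} remains valid; the polynomial factor $|w|^{t'-1}$ is $O(1)$ on this segment and the boosted prefactor contributes only $M^{t'/3}$, both of which are swamped by the exponential decay $e^{-cM}$. On $\barSigmaM_{\remote}$ the cubic lower bound $\Re\bigl(\frac{(\gamma+1)^4}{3\gamma^3}Mw^3+MR_1(w)\bigr)>M\epsilon'' l^3$ from lemma \ref{lemma:second_lemma_for_Sigma_as_less_than_gammainv} produces stretched-exponential decay $e^{-\epsilon'''M^{3/13}}$, against which the factor $|w|^{t'-1}\leq(2M^{-10/39})^{t'-1}$ and the prefactor $M^{t'/3}$ are easily absorbed. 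Each of these two contributions can therefore be driven below $\frac{1}{3}e^{-\xi/2}M^{-1/40}$ for $M$ large.

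The principal computation is on $\barSigmaM_{\local}$. Freezing the smooth nonvanishing rational factors at $w=0$ yields the constant $(-1)^r\gamma^{r-r'}(\gamma+1)^{r'}\bar{C}_{r'-t'}$, where in the critical case $\bar{C}_{r'-t'}$ reduces to $\prod_{j=1}^{s'-1}\bigl(\frac{a_j-\gamma^{-1}}{(1+\gamma^{-1})(1+a_j)}\bigr)^{r_j}$ since the exponent on the $a_{s'}$-factor in definition \eqref{eq:definition_of_Cbar_rprime} collapses to zero. The cubic remainder satisfies $MR_1(w)=O(M^{-1/39})$ uniformly on $\barSigmaM_{\local}$, so after the rescaling $u=\frac{(\gamma+1)^{4/3}}{\gamma}M^{1/3}w$ the local integral is asymptotic to
\begin{equation}
(-1)^r\gamma^{r-r'}(\gamma+1)^{r'}\bar{C}_{r'-t'}\,\frac{1}{2\pi i}\int_{\barSigmainf}e^{\xi u-\frac{u^3}{3}}u^{t'-1}\,du=(-1)^r\gamma^{r-r'}(\gamma+1)^{r'}\bar{C}_{r'-t'}(-1)^{t'-1}t^{(t')}(\xi),
\end{equation}
using the contour identity for $t^{(t')}$ recorded at the start of the section.

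The main obstacle is upgrading this pointwise limit to the uniform bound of order $e^{-\xi/2}M^{-1/40}$ valid for all $\xi\geq T$. This requires two supplementary estimates: first, the tail bound $\bigl|\int_{\barSigmainf_{\leq -cM^{1/13}}}e^{Tu-u^3/3}u^{t'-1}\,du\bigr|=O(M^{-1/13})$, which follows from the cubic decay dominating the polynomial $|u|^{t'-1}$; and second, the standard pointwise inequality $|e^{(\xi-T)u}|\leq e^{T/2}e^{-\xi/2}$ on the part of $\barSigmainf$ with $\Re(u)\leq -1/2$, precisely as in the proof of lemma \ref{lemma:fourth_lemma_for_Sigma_as_less_than_gammainv}. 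Combining these with the $\barSigmaM_4$ and $\barSigmaM_{\remote}$ estimates from the preceding step, and absorbing the multiplicative error $1+O(M^{-1/39})$ produced by $R_1$ and by contour truncation, yields the claimed inequality.
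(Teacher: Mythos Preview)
Your proposal is correct and follows exactly the approach the paper takes: the paper's own proof consists of the single sentence ``Similar to the proofs of lemmas \ref{lemma:first_lemma_for_Sigma_as_less_than_gammainv}--\ref{lemma:fourth_lemma_for_Sigma_as_less_than_gammainv} together,'' and you have filled in precisely those details. Your identification of the key novelty --- that the critical factor $(z-\tfrac{1}{1+a_{s'}})^{t'-1}$ becomes $w^{t'-1}$ and is exactly balanced by the promoted prefactor $\bigl(\tfrac{(\gamma+1)^{4/3}}{\gamma}M^{1/3}\bigr)^{t'}$ under the rescaling --- is the heart of the matter, and your constant $(-1)^r\gamma^{r-r'}(\gamma+1)^{r'}\bar{C}_{r'-t'}$ agrees with the computation (the $(\gamma+1)^r$ in the lemma statement is a typo for $(\gamma+1)^{r'}$, as confirmed by lemma \ref{lemma:fourth_lemma_for_Sigma_as_less_than_gammainv} and equation \eqref{eq:asymptotics_of_psi_rprime_equal}).
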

\begin{proof}
Similar to the proofs of lemmas \ref{lemma:first_lemma_for_Sigma_as_less_than_gammainv}--\ref{lemma:fourth_lemma_for_Sigma_as_less_than_gammainv} together.
\end{proof}
Hence we have the convergence result
\begin{multline} \label{eq:asymptotics_of_psi_rprime_equal}
\left\lvert \left( \frac{(\gamma+1)^{4/3}}{\gamma}M^{1/3} \right)^{t'} (-1)^N \frac{\gamma^M}{(\gamma+1)^{M-N}} e^{-\frac{\gamma}{\gamma+1}Mx} e^{\xi/3} \psi_{r'}(p+q\xi) \right. \\
\left. 
%vphantom
\vphantom{\left( \frac{(\gamma+1)^{4/3}}{\gamma}M^{1/3} \right)^{t'}} 
%endvphantom
-(-1)^r \gamma^{r-r'}(\gamma+1)^r \bar{C}_{r'-t'} (-1)^{t'-1}e^{\xi/3}t^{(t')}(\xi) \right\rvert < \frac{e^{-\xi/6}}{M^{1/40}}.
\end{multline}

\subsection{Asymptotics of $\varpsi_{r'}(p+q\eta)$}

\label{asymptotics_of_varpsi_rprime_equal}

Similar to \eqref{eq:integral_representation_of_varpsi_less}, we have
\begin{multline}
\varpsi_{r'}(y) = \frac{1}{2\pi i} \\
\oint_{\Gamma} e^{Mf(z) - \frac{(1+\gamma)^{4/3}}{\gamma}M^{1/3}\eta z} \frac{(z-1)^r}{z^{r-r'+1}} \left( \prod^{s'-1}_{j=1} \left( z - \frac{1}{1+a_j} \right)^{-r_j} \right) \left( z - \frac{\gamma}{\gamma+1} \right)^{-t'} dz,
\end{multline}
and after the substitution $z = w + \frac{\gamma}{\gamma+1}$, we get
\begin{equation}
\begin{split}
& \varpsi_{r'}(p+q\eta) \\
= & \frac{1}{2\pi i} \oint_{\BbarGammaM} e^{M -\left( \frac{\gamma+1}{\gamma} + \log\gamma - (1-\gamma^{-2})\log(\gamma+1) + \gamma^{-2}\pi i + \frac{(\gamma+1)^4}{3\gamma^3}w^3 + R_1(w) \right) - \frac{(1+\gamma)^{4/3}}{\gamma} M^{1/3}\eta \left( w+ \frac{\gamma}{\gamma+1} \right)} \\
& \frac{\left( w - \frac{1}{\gamma+1} \right)^r}{\left( w + \frac{\gamma}{\gamma+1} \right)^{r-r'+1}} \left( \prod^{s'-1}_{j=1} \left( w + \frac{a_j - \gamma^{-1}}{(1+\gamma^{-1})(1+a_j)} \right)^{-r_j} \right) w^{-t'} dw \\
= & \frac{(-1)^N}{2\pi i} \frac{\gamma^M}{(\gamma+1)^{M-N}} e^{-\frac{\gamma}{\gamma+1}Mx} \oint_{\BbarGammaM} e^{-\frac{(1+\gamma)^{4/3}}{\gamma} M^{1/3}\eta w + \frac{(\gamma+1)^4}{3\gamma^3}Mw^3 + MR_1(w)} \\
& \frac{\left( w - \frac{1}{\gamma+1} \right)^r}{\left( w + \frac{\gamma}{\gamma+1} \right)^{r-r'+1}} \left( \prod^{s'-1}_{j=1} \left( w + \frac{a_j - \gamma^{-1}}{(1+\gamma^{-1})(1+a_j)} \right)^{-r_j} \right) w^{-t'} dw,
\end{split}
\end{equation}
where $\BbarGammaM$ is slightly different from $\barGammaM$: $\BbarGammaM$ is composed of $\BbarGammaM_1$, \dots, $\BbarGammaM_6$, which are
\begin{align}
\BbarGammaM_1 = & \left\{ \left. \left( \frac{2}{\gamma} - t \right)\frac{\gamma}{\gamma+1}e^{\frac{\pi i}{3}} \right\rvert 0 \leq t \leq \frac{2}{\gamma} - \frac{1/6}{(\gamma+1)^{1/3}}M^{-1/3} \right\}, \\
\BbarGammaM_2 = & \left\{ \left. \frac{\gamma/6}{(\gamma+1)^{4/3}}M^{-1/3}e^{t\pi i} \right\rvert \frac{1}{3} \leq t \leq \frac{5}{3} \right\}, \\
\BbarGammaM_3 = & \left\{ \left. t\frac{\gamma}{\gamma+1}e^{\frac{5\pi i}{3}} \right\rvert \frac{1/6}{(\gamma+1)^{1/3}}M^{-1/3} \leq t \leq \frac{2}{\gamma} \right\}, \\
\BbarGammaM_* = & \barGammaM_*, \text{ for } * = 4,5,6.
\end{align}
\begin{figure}[h]
\centering
\includegraphics{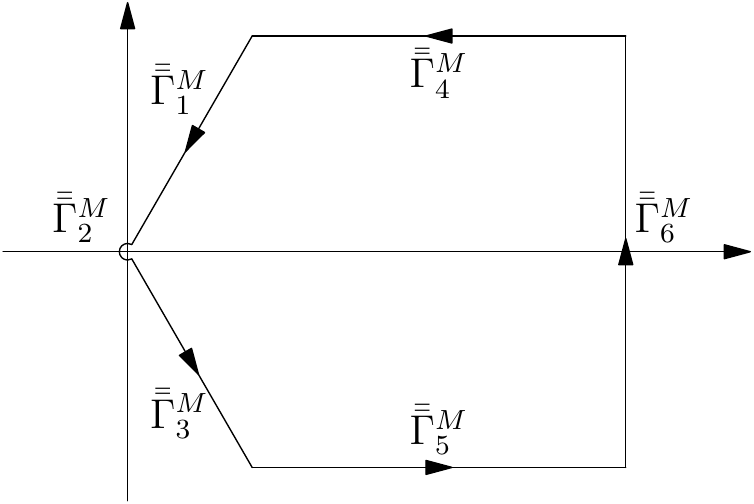}
\caption{$\BbarGammaM$}
\end{figure}
We also define
\begin{align}
\BbarGammaM_{\local} = & \{ z \in \BbarGammaM \mid \Re(z) < M^{-10/39} \}, \\
\BbarGammaM_{\remote} = & (\BbarGammaM_1 \cup \BbarGammaM_3) \setminus \BbarGammaM_{\local}.
\end{align}

If we denote
\begin{multline}
G_{Mt'}(\eta, w) = \left( \frac{(\gamma+1)^{4/3}}{\gamma}M^{1/3} \right)^{1-t'} e^{-\frac{(1+\gamma)^{4/3}}{\gamma} M^{1/3}\eta w + \frac{(\gamma+1)^4}{3\gamma^3}Mw^3 + MR_1(w)} \\
\frac{\left( w - \frac{1}{\gamma+1} \right)^r}{\left( w + \frac{\gamma}{\gamma+1} \right)^{r-r'+1}} \left( \prod^{s'-1}_{j=1} \left( w + \frac{a_j - \gamma^{-1}}{(1+\gamma^{-1})(1+a_j)} \right)^{-r_j} \right) w^{-t'},
\end{multline}
we have
\begin{equation}
(-1)^N \frac{(\gamma+1)^{M-N}}{\gamma^M} e^{\frac{\gamma}{\gamma+1}My} \varpsi_{r'}(p+q\eta) = \left( \frac{(\gamma+1)^{4/3}}{\gamma}M^{1/3} \right)^{t'-1} \frac{1}{2\pi i} \oint_{\BbarGammaM} G_{Mt'}(\eta, w) dw,
\end{equation}
and
\begin{lemma}
If $T$ is fixed and $M$ is large enough, 
\begin{equation}
\left\lvert \frac{1}{2\pi i}\int_{\BbarGammaM} G_{Mt'}(\eta, w)dw  - \frac{(-1)^r (1+\gamma^{-1})}{\gamma^{r-r'}(\gamma+1)^{r'}\bar{C}_{r'-t'}} (-1)^{t'-1} s^{(t)}(\eta) \right\rvert < \frac{e^{\eta/6}}{M^{1/40}},
\end{equation}
for any $\eta \geq T$.
\end{lemma}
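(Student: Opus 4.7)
The plan is to adapt the three-step strategy used for lemma \ref{lemma:last_for_varpsi_less} in section \ref{asymptotics_of_varpsi_and+varpsi_rprime}, accommodating the one essential new feature: in the critical case $a_{s'}=\gamma^{-1}$ the pole of $G_{Mt'}(\eta,w)$ coming from the factor $w^{-t'}$ sits \emph{exactly} at the saddle point $w=0$, which is why $\BbarGammaM_2$ was defined as a small circle of radius $\tfrac{\gamma/6}{(\gamma+1)^{4/3}}M^{-1/3}$ around the origin (rather than shrunk away from the saddle, as $\barGammaM_2$ was). First I would split
\[
\int_{\BbarGammaM}G_{Mt'}(\eta,w)\,dw
=\int_{\BbarGammaM_{\local}}+\int_{\BbarGammaM_{\remote}}+\int_{\BbarGammaM_4\cup\BbarGammaM_5\cup\BbarGammaM_6},
\]
and reduce the lemma to three separate bounds, each of the form $\tfrac13 e^{\eta/6}M^{-1/40}$.

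For the two ``outer'' pieces, the arguments of lemmas \ref{lemma:first_for_varpsi_less} and \ref{lemma:second_for_varpsi_less} go through essentially verbatim. The exponential in $G_{Mt'}$ is controlled by $\Re(-\tfrac{(\gamma+1)^4}{3\gamma^3}Mw^3 -MR_1(w))$, which on $\BbarGammaM_4\cup\BbarGammaM_5\cup\BbarGammaM_6$ is uniformly bounded above by $-\epsilon' M$ for some $\epsilon'>0$ (by the same computation used in lemma \ref{lemma:first_lemma_for_Sigma_as_less_than_gammainv}, noting that the positive contribution from $-\eta w$ against the sign of $\Re w$ can at worst grow like $e^{\eta/6}$), and on $\BbarGammaM_{\remote}$ is bounded by $-\epsilon'' M|w|^3 \leq -\epsilon'''M^{3/13}$ (as in lemma \ref{lemma:second_lemma_for_Sigma_as_less_than_gammainv}). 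The extra factor $w^{-t'}$ contributes only a polynomial $M^{t'/3}$ since $|w|\geq M^{-10/39}$ on these pieces, which is swamped by the exponential decay. The prefactor $(\tfrac{(\gamma+1)^{4/3}}{\gamma}M^{1/3})^{1-t'}$ is harmless in the same sense.

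For the local piece, which is where the main contribution lives, the substitution $u=\tfrac{(\gamma+1)^{4/3}}{\gamma}M^{1/3}w$ maps $\BbarGammaM_{\local}$ onto (a truncation of) the rescaled contour $\BbarGammainf_{<c_M}$ with $c_M=\tfrac{(\gamma+1)^{4/3}}{\gamma}M^{1/13}\to\infty$, and the cutoff radius $\tfrac{\gamma/6}{(\gamma+1)^{4/3}}M^{-1/3}$ of $\BbarGammaM_2$ becomes the circle of radius $1/6$ in $\BbarGammainf_2$, keeping us uniformly away from the pole $u=0$. Using $R_1(w)=\varO(w^4)=\varO(M^{-40/39})$ on $\BbarGammaM_{\local}$ and expanding the rational factors at $w=0$, one checks that
\[
G_{Mt'}(\eta,w)\,dw
=\frac{(-1)^r(1+\gamma^{-1})}{\gamma^{r-r'}(\gamma+1)^{r'}\bar{C}_{r'-t'}}\,e^{-\eta u+u^3/3}\,\frac{du}{u^{t'}}\bigl(1+\varO(M^{-1/39})\bigr),
\]
with the error uniform in $\eta\geq T$. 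Taking the contour integral and invoking the identity
\[
\frac{1}{2\pi i}\int_{\BbarGammainf}e^{-\eta u+u^3/3}\,\frac{du}{u^{t'}}=(-1)^{t'-1}s^{(t')}(\eta)
\]
together with the tail estimate on $\BbarGammainf_{\geq c_M}$ (analogous to the bound stated just below \eqref{eq:definition_of_BbarGammainf_geq_c}) delivers the stated limit, with error of order $M^{-1/39}$ multiplied by the pointwise bound $e^{\eta/6}$ for $|e^{-\eta u+u^3/3}u^{-t'}|$ on $\BbarGammainf$ when $\eta\geq T$.

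The main obstacle I anticipate is the bookkeeping on the arc $\BbarGammaM_2$: one has to verify that the integrand's growth from $|u|^{-t'}\sim 6^{t'}$ on the rescaled circle is compensated by the fact that the remote-contour estimates still cover the union $\BbarGammaM_1\cup\BbarGammaM_3$ starting at the same small scale, so that no piece is left unbounded. The factor $e^{\eta/6}$ (rather than $e^{-\eta/2}$ as in the subcritical case) arises because the saddle-point factor $e^{-\eta u}$ no longer decays on the small circle; this is the reason the error bound in the statement is weaker than in lemma \ref{lemma:last_for_varpsi_less}, and keeping track of this weakening consistently across the three pieces is the only subtle part of the estimate.
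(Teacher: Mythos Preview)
Your proposal is correct and follows essentially the same approach as the paper: split the contour into the local piece $\BbarGammaM_{\local}$ and the complement, show the complement is negligible exactly as in lemmas \ref{lemma:first_for_varpsi_less}--\ref{lemma:second_for_varpsi_less}, and on the local piece rescale by $u=\tfrac{(\gamma+1)^{4/3}}{\gamma}M^{1/3}w$ to reduce to $\BbarGammainf$, where the key new point is that $\Re u\geq -1/6$ (coming from the small circle $\BbarGammainf_2$) forces the weaker bound $\lvert e^{-(\eta-T)u}\rvert\leq e^{-T/6}e^{\eta/6}$ in place of $e^{T/2}e^{-\eta/2}$. The paper's sketch says exactly this; your identification of the circle $\BbarGammaM_2$ as the source of the $e^{\eta/6}$ weakening is the whole content of the argument.
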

\begin{proof}[Sketch of proof]
The integral of $G_{Mt'}(\eta, w)$ over $\BbarGammaM \setminus \BbarGammaM_{\local}$ is negligible, and we can estimate it as lemmas \ref{lemma:first_for_varpsi_less} and \ref{lemma:second_for_varpsi_less}, and get the same result. On $\BbarGammaM_{\local}$, if $\eta \geq T$, $\lvert e^{-(\eta-T) u} \rvert \leq e^{-T/6}e^{\eta/6}$, and we can carry out the proof like that of lemma \ref{lemma:last_for_varpsi_less}, with the upper bound of $\lvert e^{-(\eta-T) u} \rvert$ changed from $e^{T/2}e^{-\eta/2}$ to $e^{-T/6}e^{\eta/6}$.
\end{proof}

Therefore we have the convergence result
\begin{multline} \label{eq:asymptotics_of_varpsi_rprime_equal}
\left\lvert \left( \frac{(\gamma+1)^{4/3}}{\gamma}M^{1/3} \right)^{1-t'} (-1)^N \frac{(\gamma+1)^{M-N}}{\gamma^M} e^{\frac{\gamma}{\gamma+1}My} e^{-\eta/3} \varpsi_{r'}(p+q\eta) \right. \\
\left.
%vphantom
\vphantom{\left( \frac{(\gamma+1)^{4/3}}{\gamma}M^{1/3} \right)^{1-t'}}
%endvphantom
-\frac{(-1)^r (1+\gamma^{-1})}{\gamma^{r-r'}(\gamma+1)^{r'}\bar{C}_{r'-t'}} (-1)^{t'-1}e^{-\eta/3}s^{(t')}(\eta) \right\rvert < \frac{e^{-\eta/6}}{M^{1/40}}.
\end{multline}

\section{Asymptotics of $\psi(p+q\xi)$, $\psi_{r'}(p+q\xi)$, $\varpsi(p+q\eta)$ and $\varpsi_{r'}(p+q\eta)$ when $a_{s'} \leq a$}

\label{asymptotics_more}

In this section, $a$ is a member greater than $\gamma^{-1}$, and we assume $p = (1+a)\left( 1+\frac{1}{\gamma^2 a} \right)$ and  $q = (1+a)\sqrt{1 - \frac{1}{\gamma^2 a^2}} \frac{1}{\sqrt{M}}$. 

We define
\begin{align}
\tildeSigmainf = & \{ it -1 \mid -\infty \leq t \leq \infty \}, \\
\tildeSigmainf_{<c} = & \{ w \in \tildeSigmainf \mid \lvert w \rvert \leq c \}, \\
\tildeSigmainf_{\geq c} = & \tildeSigmainf \setminus \tildeSigmainf_{<c},
\end{align}
\begin{figure}[h]
\begin{minipage}[b]{0.5\linewidth}
\centering
\includegraphics{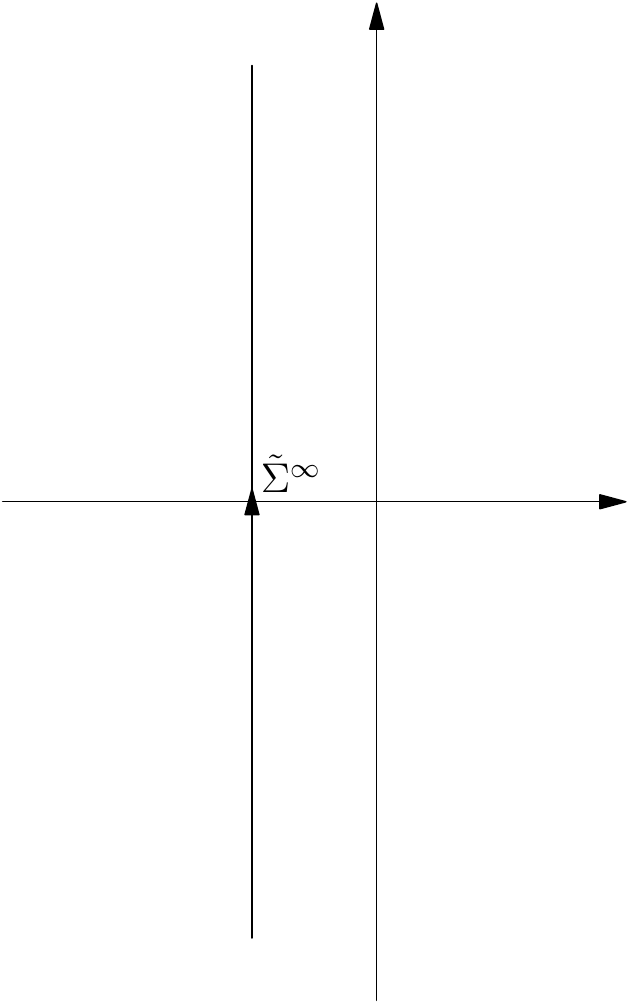}
\caption{$\tildeSigmainf$}
\end{minipage}
\hspace{0.5cm}
\begin{minipage}[b]{0.5\linewidth}
\centering
\includegraphics{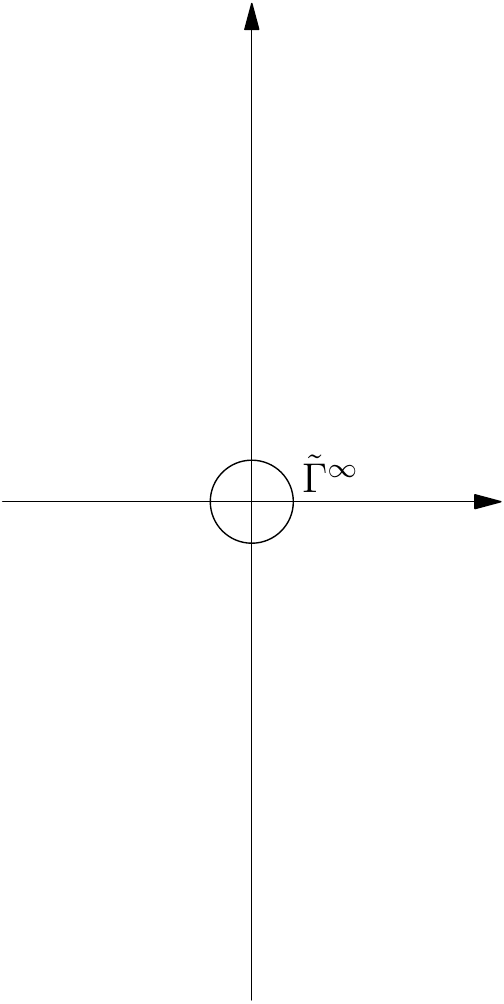}
\caption{$\tildeGammainf$}
\end{minipage}
\end{figure}
and for any $T$, if $c$ is large enough,
\begin{equation}
\left\lvert \frac{1}{2\pi i} \int_{\tildeSigmainf_{\geq c}} e^{T u + \frac{u^2}{2}} u^{t'-1} du \right\rvert < \frac{1}{c}.
\end{equation}
We also define
\begin{equation}
\tildeGammainf = \{ e^{it}/3 \mid 0 \leq t \leq 2\pi \}.
\end{equation}

We need two integral representations of Hermite polynomials. First, we have the explicit formulas for $H_n(x)$
\begin{align}
H_{2k}(x) = & (-1)^k (2k-1)!! \left( 1 + \sum^k_{l=1} \frac{(-2k)(-2k+2) \dots (-2k+2j-2)}{(2j)!} x^{2j} \right), \label{eq:even_Hermite_poly} \\
H_{2k+1}(x) = & (-1)^k (2k+1)!! \left( x + \sum^k_{l=1} \frac{(-2k)(-2k+2) \dots (-2k+2j-2)}{(2j)!} x^{2j+1} \right). \label{eq:odd_Hermite_poly}
\end{align}
On the other hand, ($v = u + \xi$)
\begin{align}
\frac{1}{2\pi i} \int_{\tildeSigmainf} e^{\xi u + \frac{u^2}{2}} u^{t'-1} du = & e^{\frac{\xi^2}{2}} \frac{1}{2\pi i} \int_{\tildeSigmainf} e^{\frac{(\xi + u)^2}{2}} u^{t'-1} du \notag \\
= & e^{-\frac{\xi^2}{2}} \frac{1}{2\pi i} \int^{\infty i + \xi}_{-\infty i + \xi} e^{\frac{v^2}{2}} (v - \xi)^{t'-1} dv \label{eq:change_u_into_v} \\
= & e^{-\frac{\xi^2}{2}} \frac{1}{2\pi i} \int_{\tildeSigmainf} e^{\frac{v^2}{2}} (v - \xi)^{t'-1} dv, \label{eq:change_into_parallel_arc}
\end{align}
where the integral in \eqref{eq:change_u_into_v} is along the vertical line parallel to $\tildeSigmainf$ through the point $v = \xi$, and the equivalency of \eqref{eq:change_u_into_v} and \eqref{eq:change_into_parallel_arc} is a simple application of the Cauchy integral formula. Now if we write
\begin{equation} \label{eq:evaluation_of_complex_integral}
\frac{1}{2\pi i} \int_{\tildeSigmainf} e^{\frac{v^2}{2}} (v - \xi)^{t'-1} dv = \sum^{t'-1}_{j=0} \binom{t'-1}{j} (-\xi)^{t'-j-1} \frac{1}{2\pi i} \int_{\tildeSigmainf} e^{\frac{v^2}{2}} v^j dv,
\end{equation}
and
\begin{equation} \label{eq:evaluation_of_coefficients}
\frac{1}{2\pi i} \int_{\tildeSigmainf} e^{\frac{v^2}{2}} v^j dv = \frac{i^{j}}{2\pi} \int^{\infty}_{-\infty} e^{-\frac{x^2}{2}} x^j dx = 
\begin{cases}
0 & \text{$j$ odd,} \\
\frac{(-1)^k}{\sqrt{2\pi}}(2k-1)!! & j = 2k.
\end{cases}
\end{equation}
Compare \eqref{eq:change_into_parallel_arc}, \eqref{eq:evaluation_of_complex_integral} and \eqref{eq:evaluation_of_coefficients} to \eqref{eq:even_Hermite_poly} and \eqref{eq:odd_Hermite_poly}, we get 
\begin{equation}
\frac{1}{2\pi i} \int_{\tildeSigmainf} e^{\xi u + \frac{u^2}{2}} u^{t'-1} du = (-1)^{t'-1} \frac{H_{t'-1}(\xi)}{\sqrt{2\pi}} e^{-\frac{\xi^2}{2}}.
\end{equation}

The second integral representation of Hermite polynomials is more familar:
\begin{equation}
\frac{1}{2\pi i} \int_{\tildeGammainf} e^{-\eta u - \frac{u^2}{2}} \frac{du}{u^{t'}} = \frac{(-1)^{t'-1}}{(t'-1)!}H_{t'-1}(\eta).
\end{equation}

\subsection{Asymptotics of $\psi(p+q\xi)$ and $\psi_{r'}(p+q\xi)$}

\label{asymptotics_of_psi_rprime_more}

We only consider $\psi_{r'}$ with $a_{s'} = a$. $\psi_{r'}$ with $a_{s'} < a$ and $\psi$ can be solved similarly, and we only give the results.

We have
\begin{equation}
e^{Mxz}\frac{(z-1)^N}{z^M} = e^{-Mg(z) + (1+a)\sqrt{\left( 1 - \frac{1}{\gamma^2 a^2} \right) M} \xi z},
\end{equation}
where
\begin{equation} \label{eq:definition_of_g_x}
g(z) = -(a+1)\left( 1 + \frac{1}{\gamma^2 a} \right)z + \log z - \gamma^{-2}\log(z-1).
\end{equation}
Now we can write \eqref{eq:definition_of_phi_rprime} as
\begin{multline} \label{eq:another_integral_formula_of_psi_rprime_more}
\psi_{r'}(x) = \frac{1}{2\pi i} \oint_{\Sigma} e^{-Mg(z) + (1+a)\sqrt{\left( 1 - \frac{1}{\gamma^2 a^2} \right) M} \xi z} \\
\frac{z^{r-r'}}{(z-1)^r} \left( \prod^{s'-1}_{j=1} \left( z - \frac{1}{1+a_j} \right)^{r_j} \right) \left( z - \frac{1}{1+a} \right)^{t'-1} dz.
\end{multline}

For $g(z)$, we have
\begin{itemize}
\item $\displaystyle g'(z) = -(a+1)\left( 1 + \frac{1}{\gamma^2 a} \right) + \frac{1}{z} - \frac{\gamma^{-2}}{z-1}$, with zero points $\displaystyle z= \frac{\gamma^2 a}{1 + \gamma^2 a}$ and $\displaystyle z = \frac{1}{1+a}$.

\item $\displaystyle g''(z) = -\frac{1}{z^2} + \frac{\gamma^{-2}}{(z-1)^2}$, $\displaystyle g''\left( \frac{\gamma^2 a}{1 + \gamma^2 a} \right) = (\gamma^{-1} + \gamma a)^2 \left( 1 - \frac{1}{\gamma^2 a^2} \right) > 0$ and $\displaystyle g''\left( \frac{1}{1+a} \right) = -(1+a)^2 \left( 1 - \frac{1}{\gamma^2 a^2} \right) < 0$.
\end{itemize}
Hence locally around $z = \frac{1}{1+a}$,
\begin{multline}
g\left( \frac{1}{1+a} + w \right) = -\left( 1 + \frac{1}{\gamma^2 a} \right) - (1 - \gamma^{-2})\log(1+a) - \gamma^{-2}\log a + \gamma^{-2}\pi i \\
- \frac{1}{2} (1+a)^2 \left( 1 - \frac{1}{\gamma^2 a^2} \right) w^2 + R_2(w),
\end{multline}
where
\begin{equation}
R_2(w) = \varO(w^3), \text{ as } w \rightarrow \infty.
\end{equation}
After the substitution $z = w + \frac{1}{1+a}$, we get by \eqref{eq:another_integral_formula_of_psi_rprime_more}
\begin{equation} \label{eq:substitution_of_the_integral_psi_more}
\begin{split}
& \psi_{r'}(p+q\xi) \\
= & \frac{1}{2\pi i} \oint_{\tildeSigmaM} e^{M \left( \left( 1 + \frac{1}{\gamma^2 a} \right) + (1 - \gamma^{-2})\log(1+a) + \gamma^{-2}\log a - \gamma^{-2}\pi i + \frac{1}{2} (1+a)^2 \left( 1 - \frac{1}{\gamma^2 a^2} \right) w^2 - R_2(w) \right)} \\
& e^{ (1+a)\sqrt{\left( 1 - \frac{1}{\gamma^2 a^2} \right) M} \xi \left( w + \frac{1}{1+a} \right)} \frac{\left( w + \frac{1}{1+a} \right)^{r-r'}}{\left( w - \frac{a}{1+a} \right)^r} \left( \prod^{s'-1}_{j=1} \left( w + \frac{a_j - a}{(1+a)(1+a_j)} \right)^{r_j} \right) w^{t'-1} dw \\
= & \frac{(-1)^N}{2\pi i} a^N (1+a)^{M-N} e^{\frac{M}{1+a}x} \oint_{\tildeSigmaM} e^{\frac{1}{2} (1+a)^2 \left( 1 - \frac{1}{\gamma^2 a^2} \right) Mw^2 - MR_2(w)} \\
& \frac{\left( w + \frac{1}{1+a} \right)^{r-r'}}{\left( w - \frac{a}{1+a} \right)^r} \left( \prod^{s'-1}_{j=1} \left( w + \frac{a_j - a}{(1+a)(1+a_j)} \right)^{r_j} \right) w^{t'-1} dw,
\end{split}
\end{equation}
where $\tildeSigmaM$ is a contour around  $w = -\frac{1}{1+a}$, composed of $\tildeSigmaM_1$, $\tildeSigmaM_2$, $\tildeSigmaM_3$ and $\tildeSigmaM_4$, which are defined as ($q = (1+a)\sqrt{1 - \frac{1}{\gamma^2 a^2}} \frac{1}{\sqrt{M}}$)
\begin{align}
\tildeSigmaM_1 = & \{ it - q^{-1}/M \mid -2 \leq t \leq 2 \}, \\
\tildeSigmaM_2 = & \{ 2i - t \mid q^{-1}/M \leq t \leq 4 \}, \\
\tildeSigmaM_3 = & \{ -4 - it \mid -2 \leq t \leq 2 \}, \\
\tildeSigmaM_4 = & \{ t - 2i \mid -4 \leq t \leq -q^{-1}/M \}, 
\end{align}
\begin{figure}[h]
\centering
\includegraphics{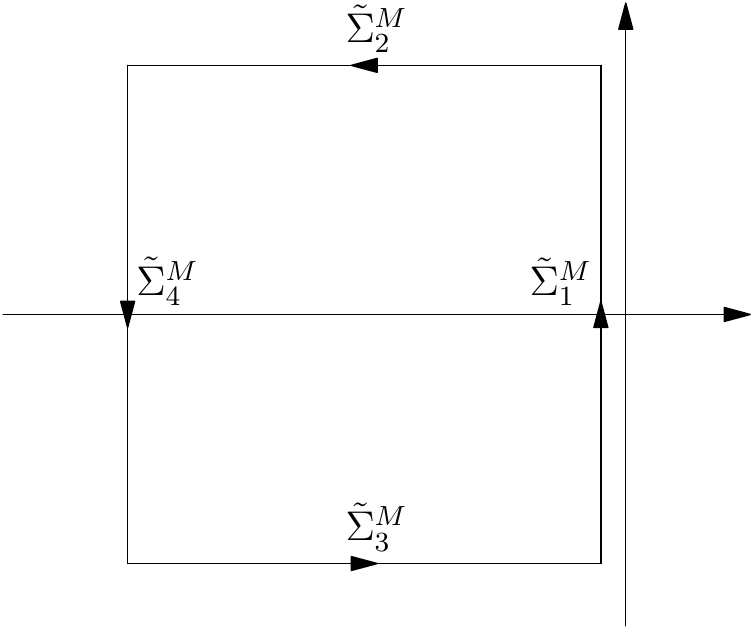}
\caption{$\tildeSigmaM$}
\end{figure}
and for the asymptotic analysis, we define
\begin{align}
\tildeSigmaM_{\local} = & \{ w \in \tildeSigmaM \mid \lvert w \rvert \leq M^{-2/5} \}, \\
\tildeSigmaM_{\remote} = & \tildeSigmaM_1 \setminus \tildeSigmaM_{\local}.
\end{align}
Now if we denote 
\begin{multline}
F_{Mat'}(w) = \\
\left( (1+a)\sqrt{\left( 1 - \frac{1}{\gamma^2 a^2} \right) M} \right)^{t'} e^{(1+a)\sqrt{\left( 1 - \frac{1}{\gamma^2 a^2} \right) M} \xi w + \frac{1}{2} (1+a)^2 \left( 1 - \frac{1}{\gamma^2 a^2} \right) Mw^2 - MR_2(w)} \\
\frac{\left( w + \frac{1}{1+a} \right)^{r-r'}}{\left( w - \frac{a}{1+a} \right)^r} \left( \prod^{s'-1}_{j=1} \left( w + \frac{a_j - a}{(1+a)(1+a_j)} \right)^{r_j} \right) w^{t'-1},
\end{multline}
We have 
\begin{equation} \label{eq:variation_of_integral_formula_of_psi_more}
\left( (1+a)\sqrt{\left( 1 - \frac{1}{\gamma^2 a^2} \right) M} \right)^{t'} \frac{e^{-\frac{M}{1+a}x}}{(-a)^N (1+a)^{M-N}} \psi_{r'}(p+q\xi) = \frac{1}{2\pi i} \oint_{\tildeSigmaM} F_{Mat'}(\xi, w) dw,
\end{equation}
and results similar to lemmas \ref{lemma:first_lemma_for_Sigma_as_less_than_gammainv}--\ref{lemma:fourth_lemma_for_Sigma_as_less_than_gammainv}:
                                                                     
\begin{lemma} \label{lemma:first_lemma_for_psi_more}
\begin{equation}
\left\lvert \frac{1}{2\pi i} \int_{\tildeSigmaM_2 \cup \tildeSigmaM_3 \cup \tildeSigmaM_4} F_{Mat'}(\xi, w)dw \right\rvert < \frac{1}{3} \frac{e^{-\xi}}{M^{1/10}},
\end{equation}
for any $\xi \geq T$.
\end{lemma}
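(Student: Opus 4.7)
The plan is to adapt the argument of Lemma \ref{lemma:first_lemma_for_Sigma_as_less_than_gammainv} to the present setting: on each of the three ``remote'' pieces $\tildeSigmaM_2$, $\tildeSigmaM_3$, $\tildeSigmaM_4$, I will produce a uniform upper bound on the real part of the exponent in $F_{Mat'}(\xi,w)$, so that (after multiplying by the obvious contour length estimate) the integral decays super-polynomially in $M$. The key identity is the unwinding
\begin{equation*}
\tfrac{1}{2}(1+a)^{2}\bigl(1-\tfrac{1}{\gamma^{2}a^{2}}\bigr)w^{2} - R_{2}(w) = -g\!\left(\tfrac{1}{1+a}+w\right) + g\!\left(\tfrac{1}{1+a}\right),
\end{equation*}
so the required estimate reduces to showing that $\Re g(z)$ on each piece of the contour exceeds its saddle-point value $g\bigl(\tfrac{1}{1+a}\bigr)$ by a uniform positive constant.

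First I would handle $\tildeSigmaM_{3}$, the vertical segment at $\Re w = -4$, which corresponds to $z=-4+\tfrac{1}{1+a}+it$ with $|t|\leq 2$. Here $z$ has $\Re z<0$ and is uniformly bounded away from $0$ and $1$, so the logarithmic terms in $g$ contribute only an $\varO(1)$ amount, while the linear piece $-(a+1)\bigl(1+\tfrac{1}{\gamma^{2}a}\bigr)\Re z$ yields a strictly positive $\varO(1)$ contribution to $\Re g$. A routine comparison with $g\bigl(\tfrac{1}{1+a}\bigr)$ (explicitly $-(1+\tfrac{1}{\gamma^{2}a})-(1-\gamma^{-2})\log(1+a)-\gamma^{-2}\log a+\gamma^{-2}\pi i$) then produces the required constant gap $\delta_{3}>0$. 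On the horizontal segments $\tildeSigmaM_{2}$ and $\tildeSigmaM_{4}$ (where $\Im w = \pm 2$ and $\Re w$ ranges from $-4$ up to near $0$) I would parametrize by $\Re w$ and differentiate $\Re g$ along the segment: since the saddle $w=0$ is approached only within $|w|\leq M^{-2/5}$ (which belongs to $\tildeSigmaM_{\local}$, not to $\tildeSigmaM_{2}\cup\tildeSigmaM_{4}$), the imaginary offset $\pm 2$ keeps us at distance $\geq 2$ from the saddle throughout, and a direct calculation yields a uniform gap $\delta_{2},\delta_{4}>0$.

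Setting $\delta = \min(\delta_{2},\delta_{3},\delta_{4})$, the exponential factor in $F_{Mat'}(\xi,w)$ is bounded by $e^{-\delta M}$ times the linear contribution $e^{(1+a)\sqrt{(1-1/(\gamma^{2}a^{2}))M}\,\xi\,\Re w}$. On $\tildeSigmaM_{3}$, $\Re w = -4$, so this linear factor gives $e^{-4(1+a)\sqrt{(1-1/(\gamma^{2}a^{2}))M}\,\xi}\leq e^{4(1+a)\sqrt{(1-1/(\gamma^{2}a^{2}))M}\,|T|}e^{-\xi}$ for $\xi\geq T$; on $\tildeSigmaM_{2}\cup\tildeSigmaM_{4}$, $\Re w\leq 0$, so the linear factor is at most $1$ for $\xi\geq 0$ and absorbed into the $e^{-\delta M}$ factor otherwise (by the Stirling-type inequality $e^{C\sqrt{M}|T|} \ll e^{\delta M/2}$ for $M$ large). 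Meanwhile, the rational/power factor is polynomially bounded in $M$ since $w$ is bounded and uniformly separated from the poles $\tfrac{a}{1+a}$, $\tfrac{a-a_{j}}{(1+a)(1+a_{j})}$, and $0$. Finally, multiplying by the normalization $\bigl((1+a)\sqrt{(1-1/(\gamma^{2}a^{2}))M}\bigr)^{t'}=\varO(M^{t'/2})$ and by the total contour length (bounded), we obtain an estimate of the form $C(T)\,M^{t'/2+\varO(1)}e^{-\delta M}e^{-\xi}$, which is $\leq \tfrac{1}{3}M^{-1/10}e^{-\xi}$ for $M$ large, as required.

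The main obstacle is the careful bookkeeping on $\tildeSigmaM_{2}\cup\tildeSigmaM_{4}$: one must verify that as $\Re w$ sweeps from $-4$ up to $-q^{-1}/M$ with $\Im w=\pm 2$ fixed, $\Re g$ stays strictly above $g\bigl(\tfrac{1}{1+a}\bigr)$ everywhere, which requires checking that $\frac{d}{d(\Re w)}\Re g$ does not vanish in a way that lets $\Re g$ dip below the saddle value. This is a one-variable monotonicity/convexity check that is intrinsically messier than the symmetric analysis in Lemma \ref{lemma:first_lemma_for_Sigma_as_less_than_gammainv}, because the logarithmic singularities at $z=0$ and $z=1$ pull in opposite directions and the coefficient $\gamma^{-2}$ breaks the symmetry. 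I expect this step to absorb most of the work; the rest is bookkeeping analogous to Section \ref{asymotptics_less}.
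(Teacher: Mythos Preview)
Your approach is exactly what the paper intends: it does not prove this lemma separately but declares that the block of lemmas here is proved ``similar to'' Lemmas \ref{lemma:first_lemma_for_Sigma_as_less_than_gammainv}--\ref{lemma:fourth_lemma_for_Sigma_as_less_than_gammainv}, and your unwinding of $\tfrac{1}{2}(1+a)^{2}(1-\tfrac{1}{\gamma^{2}a^{2}})w^{2}-R_{2}(w)$ back to $g(\tfrac{1}{1+a})-g(\tfrac{1}{1+a}+w)$ is the right first step, followed by the same uniform-gap plus length bound.

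There is one genuine slip. On $\tildeSigmaM_{2}\cup\tildeSigmaM_{4}$ you only record that the linear factor $e^{(1+a)\sqrt{(1-1/(\gamma^{2}a^{2}))M}\,\xi\,\Re w}$ is $\leq 1$ for $\xi\geq 0$, yet your final claimed bound carries an $e^{-\xi}$. A bound independent of $\xi$ cannot give $\leq \tfrac{1}{3}e^{-\xi}/M^{1/10}$ once $\xi$ is of order $M$. The fix is the same trick the paper uses in \eqref{eq:second_scalar_inequality_for_lemma_one}: on all three pieces one actually has $\Re w\leq -q^{-1}/M$, and since $(1+a)\sqrt{(1-1/(\gamma^{2}a^{2}))M}\cdot q^{-1}/M=1$, splitting $\xi=(\xi-T)+T$ gives
\[
e^{(1+a)\sqrt{(1-1/(\gamma^{2}a^{2}))M}\,\xi\,\Re w}\;\leq\; e^{-(\xi-T)}\cdot e^{4(1+a)\sqrt{(1-1/(\gamma^{2}a^{2}))M}\,|T|},
\]
and the $T$-dependent factor is absorbed by $e^{-\delta M}$ for $M$ large. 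With this correction your final estimate is justified.

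A smaller comment: you overestimate the ``main obstacle''. On $\tildeSigmaM_{2}\cup\tildeSigmaM_{4}$ one has $|\Im w|=2$, hence $|w|\geq 2$, and both critical points of $g$ (at $w=0$ and at $w=\tfrac{\gamma^{2}a^{2}-1}{(1+\gamma^{2}a)(1+a)}$, both real) are uniformly avoided. A compactness argument on the fixed compact arcs therefore already yields the uniform gap $\delta$; no delicate derivative tracking is needed. The parametrized monotonicity analysis you describe is really the content of the \emph{next} lemma (on $\tildeSigmaM_{\remote}\subset\tildeSigmaM_{1}$), analogous to Lemma \ref{lemma:second_lemma_for_Sigma_as_less_than_gammainv}.
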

                                         
\begin{lemma}                          
\begin{equation}
\left\lvert \frac{1}{2\pi i} \int_{\tildeSigmaM_{\remote}} F_{Mat'}(\xi, w)dw \right\rvert < \frac{1}{3} \frac{e^{-\xi}}{M^{1/10}},
\end{equation}
for any $\xi \geq T$.
\end{lemma}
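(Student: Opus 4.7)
The plan is to mimic the structure of the proof of lemma~\ref{lemma:second_lemma_for_Sigma_as_less_than_gammainv}, adapted to the fact that the saddle point of $g$ at $w=0$ is now non-degenerate (so the relevant expansion is quadratic rather than cubic) and that the contour $\tildeSigmaM_1$ is a vertical line rather than a pair of rays emanating from the saddle. First I would parametrize the remote contour as $w = -q^{-1}/M + it$ with $|t|$ ranging (up to a tiny correction of order $M^{-1/2}$) over $[M^{-2/5},2]$, and record the design feature that makes the calculation clean: because $q^{-1}/M = 1/\bigl((1+a)\sqrt{(1-1/(\gamma^2 a^2))M}\bigr)$, the linear-in-$\xi$ exponent satisfies
\[
\Re\!\bigl((1+a)\sqrt{(1-1/(\gamma^2 a^2))M}\,\xi\,w\bigr) \;=\; -\xi
\]
identically on all of $\tildeSigmaM_1$. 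This peels off the desired $e^{-\xi}$ at once, so the task reduces to showing that the remainder of $|F_{Mat'}(\xi,w)|$ is $o(M^{-1/10})$ after integration over a contour of $O(1)$ arclength.

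Next I would quantify the Gaussian suppression coming from the quadratic part of the exponent. Since $\Re(w^2) = (q^{-1}/M)^2 - t^2$, one has
\[
\bigl|\exp\!\bigl(\tfrac12 (1+a)^2(1-1/(\gamma^2a^2))Mw^2\bigr)\bigr| \;=\; \exp\!\bigl(-\tfrac12(1+a)^2(1-1/(\gamma^2a^2))Mt^2 + O(1)\bigr),
\]
and on $\tildeSigmaM_{\remote}$ we have $t^2 \geq \tfrac12 M^{-4/5}$ for $M$ large, so this factor is at most $\exp(-cM^{1/5})$ for some constant $c>0$. Taken in isolation this beats $M^{-1/10}$ by an enormous margin; only the interaction with the cubic remainder $MR_2(w)$ is in doubt.

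The main obstacle will be controlling $MR_2(w)$ uniformly across the full range $|w|\in[M^{-2/5},2]$, since $R_2(w) = \varO(w^3)$ is only a local statement about $w$ near $0$. I would split $\tildeSigmaM_{\remote}$ into an inner and an outer arc. On the inner arc $|w|\leq\epsilon_0$, choosing a fixed constant $\epsilon_0>0$ small enough that $|R_2(w)|\leq \tfrac14(1+a)^2(1-1/(\gamma^2a^2))|w|^2$ throughout, the cubic correction is absorbed into half the Gaussian, so the bound $\exp(-\tfrac{c}{2}M^{1/5})$ still applies. On the outer arc $|w|\in[\epsilon_0,2]$ I would abandon the Taylor expansion entirely and work directly from the defining formula~\eqref{eq:definition_of_g_x}, showing by explicit differentiation of $\Re\!\bigl(-g(1/(1+a)+it-q^{-1}/M)\bigr)$ with respect to $t$ --- in the same spirit as the computation at \eqref{eq:modulus_in_variable_of_l} inside the proof of lemma~\ref{lemma:second_lemma_for_Sigma_as_less_than_gammainv} --- that this real part is strictly decreasing in $|t|$ on the arc. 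Since the real part of $-Mg$ at $|w|=\epsilon_0$ is already $\exp(-c'M)$ smaller than its saddle value, one gets an even stronger $\exp(-c'M)$ bound on the outer arc.

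Combining the two regimes, the integrand on $\tildeSigmaM_{\remote}$ is dominated by $e^{-\xi}\exp(-\tfrac{c}{2}M^{1/5})$ times the polynomial prefactor $\bigl((1+a)\sqrt{(1-1/(\gamma^2 a^2))M}\bigr)^{t'}$ and a bounded rational factor (which is harmless since $|w|\geq M^{-2/5}$ keeps us away from the zero at $w=0$ and all the poles). Multiplying by the $O(1)$ arclength and using $M^{c_0}\exp(-\tfrac{c}{2}M^{1/5}) \ll M^{-1/10}$ as $M\to\infty$ for any fixed $c_0$ yields the claimed bound $\tfrac13 e^{-\xi}/M^{1/10}$ for $M$ sufficiently large.
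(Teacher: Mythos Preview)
Your proposal is correct and follows exactly the approach the paper indicates: the paper does not spell out this proof but states that it is ``similar to those of lemmas \ref{lemma:first_lemma_for_Sigma_as_less_than_gammainv}--\ref{lemma:fourth_lemma_for_Sigma_as_less_than_gammainv}'', and you have carried out precisely that adaptation of lemma~\ref{lemma:second_lemma_for_Sigma_as_less_than_gammainv} to the quadratic saddle. The key observations --- that $\Re w = -q^{-1}/M$ makes the $\xi$-dependent exponent exactly $-\xi$, that $\Re(w^2)\le -\tfrac12 M^{-4/5}$ on the remote arc gives $\exp(-cM^{1/5})$ Gaussian damping, and that on the outer portion one checks directly via $\Im g'(z)$ that $\Re(-g)$ is monotone in $|t|$ along the vertical line (this reduces to $\gamma^{-2}x_0^2 < (x_0-1)^2$ with $x_0\approx 1/(1+a)$, which holds since $a>\gamma^{-1}$) --- are all sound. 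One cosmetic remark: your parenthetical about ``$|w|\ge M^{-2/5}$ keeps us away from the zero at $w=0$'' is unnecessary, since $w^{t'-1}$ is a \emph{zero} there and only helps the bound; the rational factor is already uniformly bounded on $\tildeSigmaM_1$ because its sole pole $w=a/(1+a)$ lies at a fixed positive distance from the vertical line.
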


\begin{lemma} \label{lemma:last_lemma_for_psi_more}
If $T$ is fixed and $M$ is large enough, 
\begin{equation}
\left\lvert \frac{1}{2\pi i}\int_{\tildeSigmaM_{\local}} F_{Mat'}(\xi, w)dw  - \frac{(1+a)^{r'}}{(-a)^r}\bar{C}_{a,r'-t'} (-1)^{t'-1}\frac{H_{t'-1}(x)}{\sqrt{2\pi}} e^{-\frac{\xi^2}{2}} \right\rvert < \frac{1}{3} \frac{e^{-\xi}}{M^{1/10}},
\end{equation}
for any $\xi \geq T$.
\end{lemma}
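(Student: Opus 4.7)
The plan is to mirror, in the Gaussian regime, the argument that worked at the Airy saddle for Lemma~\ref{lemma:fourth_lemma_for_Sigma_as_less_than_gammainv} and Lemma~\ref{lemma:last_for_varpsi_less}: Taylor-expand every regular factor at the saddle, rescale, extend the truncated contour to the full infinite one, and identify the limit integral via the Hermite representation established in the preamble to Section~\ref{asymptotics_more}.

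First I would work on the local contour $\tildeSigmaM_{\local}$, where $\lvert w \rvert \leq M^{-2/5}$. The cubic remainder in the exponential satisfies $\lvert MR_2(w) \rvert = \varO(M \cdot M^{-6/5}) = \varO(M^{-1/5})$, so $e^{-MR_2(w)} = 1 + \varO(M^{-1/5})$ uniformly on $\tildeSigmaM_{\local}$. Each regular factor in $F_{Mat'}$—namely $(w + 1/(1+a))^{r-r'}$, $(w - a/(1+a))^{-r}$, and the product over $j = 1,\dots, s'-1$—agrees with its value at $w=0$ up to a multiplicative $1 + \varO(M^{-2/5})$ error, producing the constant $\frac{(1+a)^{r'}}{(-a)^r}\barC[a,r'-t']$ together with the factor $w^{t'-1}$. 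The only rapidly varying part is the Gaussian exponential $e^{\sigma\xi w + \sigma^2 w^2/2}$ with $\sigma = (1+a)\sqrt{(1-1/(\gamma^2 a^2))M}$, which is exactly scaled so that $u = \sigma w$ is the natural variable.

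After the substitution $u = \sigma w$, the factors $\sigma^{t'}$ out front of $F_{Mat'}$ cancel against $dw \cdot w^{t'-1} = du \cdot u^{t'-1}/\sigma^{t'}$, leaving
\begin{equation*}
\frac{1}{2\pi i}\int_{\tildeSigmaM_{\local}} F_{Mat'}(\xi, w)dw = \frac{(1+a)^{r'}}{(-a)^r}\barC[a,r'-t'] \cdot \frac{1}{2\pi i}\int_{\tildeSigmainf_{<c_M}} e^{\xi u + u^2/2} u^{t'-1} du \,(1 + \varO(M^{-1/5})),
\end{equation*}
where $c_M \sim \sigma M^{-2/5} = \varO(M^{1/10})$. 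I would then add and subtract the tail on $\tildeSigmainf_{\geq c_M}$ to complete the contour: on the vertical line $u = -1 + it$ one has $\lvert e^{\xi u + u^2/2}\rvert = e^{-\xi + 1/2 - t^2/2}$, so the tail contribution is $\varO(e^{-\xi}/c_M) = \varO(e^{-\xi} M^{-1/10})$, and the Hermite identity derived in Section~\ref{asymptotics_more} identifies the full integral as $(-1)^{t'-1}\frac{H_{t'-1}(\xi)}{\sqrt{2\pi}} e^{-\xi^2/2}$. The same vertical-line bound shows the main term also carries the weight $e^{-\xi}$, so the $1 + \varO(M^{-1/5})$ factor contributes an error $\varO(e^{-\xi} M^{-1/5})$.

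Combining the tail truncation error, the remainder error, and the Taylor error gives a total bound of order $e^{-\xi}/M^{1/10}$ for $M$ large, which is stronger than the claim. The main obstacle is purely bookkeeping: verifying that the $e^{-\xi}$ weight (rather than only $e^{-\xi/2}$ or $e^{-\xi/6}$ as in Sections~\ref{asymotptics_less} and~\ref{asymptotics_of_varpsi_rprime_equal}) holds uniformly for $\xi \geq T$ across both the local approximation and the tail estimate. This comes directly from the relation $\Re(\xi u + u^2/2) = -\xi + 1/2 - (\Im u)^2/2$ on $\tildeSigmainf$, which is the quadratic-saddle analogue of the $e^{-\xi/2}$ bound that governed the Airy saddle; once this uniform bound is in place, the remaining estimates track those of Lemmas~\ref{lemma:fourth_lemma_for_Sigma_as_less_than_gammainv} and~\ref{lemma:last_for_varpsi_less} almost verbatim.
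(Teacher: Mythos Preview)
Your proposal is correct and follows essentially the same approach as the paper's proof: Taylor-expand the regular factors and the $e^{-MR_2(w)}$ term on $\tildeSigmaM_{\local}$ to get a $1+\varO(M^{-1/5})$ multiplicative error, rescale via $u=\sigma w$ so the truncated contour becomes $\tildeSigmainf_{<cM^{1/10}}$, then use $\Re u=-1$ on $\tildeSigmainf$ to pull out a uniform $e^{-\xi}$ weight and bound separately the tail $\tildeSigmainf_{\geq cM^{1/10}}$ and the $\varO(M^{-1/5})$ correction over $\tildeSigmainf_{<cM^{1/10}}$. The paper phrases the $e^{-\xi}$ factor as $\lvert e^{(\xi-T)u}\rvert\leq e^Te^{-\xi}$, which is the same estimate you obtained from $\Re(\xi u+u^2/2)=-\xi+1/2-(\Im u)^2/2$.
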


Since their proofs are similar to those of lemmas \ref{lemma:first_lemma_for_Sigma_as_less_than_gammainv}--\ref{lemma:fourth_lemma_for_Sigma_as_less_than_gammainv}, we only give the proof of lemma \ref{lemma:last_lemma_for_psi_more}.

\begin{proof}[Proof of lemma \ref{lemma:last_lemma_for_psi_more}]
On $\tildeSigmaM_{\local}$, $\lvert w \rvert < M^{-2/5}$, and $R_2(w) = \varO(M^{-6/5})$, so that
\begin{equation}
\begin{split}
 F_{Mat'}(\xi, w) %\\
= & \frac{(1+a)^{r'}}{(-a)^r} \left( \prod^{s'-1}_{j=1} \left( \frac{a_j-a}{(1+a)(1+a_l)} \right)^{r_j} \right) \left( (1+a)\sqrt{\left( 1 - \frac{1}{\gamma^2 a^2} \right) M} \right)^{t'} \\ 
& e^{(1+a)\sqrt{\left( 1 - \frac{1}{\gamma^2 a^2} \right) M} \xi w + \frac{1}{2} (1+a)^2 \left( 1 - \frac{1}{\gamma^2 a^2} \right) Mw^2} w^{t'-1} \left( 1 + \varO(M^{-1/5}) \right),
\end{split}
\end{equation}
and the $\varO(M^{-1/5})$ term is independent of $\xi$. After the substitution $u = (1+a)\sqrt{\left( 1 - \frac{1}{\gamma^2 a^2} \right) M} w$, we get
\begin{multline}
\frac{1}{2\pi i}\int_{\tildeSigmaM_{\local}} F_{Mat'}(\xi, w)dw = \\
\frac{(1+a)^{r'}}{(-a)^r}\bar{C}_{a,r'-t'} \frac{1}{2\pi i} \int_{\tildeSigmainf_{< (1+a)\sqrt{1 - \frac{1}{\gamma^2 a^2}}M^{1/10}}} e^{\xi u + \frac{u^2}{2}}u^{t'-1} du \left( 1 + \varO(M^{-1/5}) \right).
\end{multline}
On $\tildeSigmaM$, if $\xi \geq T$, $\lvert e^{(\xi-T) u} \rvert \leq e^T e^{-\xi}$, and we have
\begin{equation}
\begin{split}
& \left\lvert \frac{1}{2\pi i} \int_{\tildeSigmaM_{\local}} F_M(\xi, w)dw \chi(\xi) - \frac{(1+a)^{r'}}{(-a)^r}\bar{C}_{a,r'-t'} (-1)^{t'-1}\frac{H_{t'-1}(x)}{\sqrt{2\pi}} e^{-\frac{\xi^2}{2}} \right\rvert \\
\leq & e^T e^{-\xi} \left\lvert \frac{(1+a)^{r'}}{(-a)^r}\bar{C}_{a,r'-t'} \frac{1}{2\pi i} \int_{\tildeSigmainf_{\geq (1+a)\sqrt{1 - \frac{1}{\gamma^2 a^2}}M^{1/10}}} \left\lvert e^{Tu -\frac{u^2}{2}}u^{t'-1} \right\rvert du (1 + \varO(M^{-1/5})) \right\rvert \\
& + e^T e^{-\xi} \left\lvert \frac{(1+a)^{r'}}{(-a)^r}\bar{C}_{a,r'-t'} \frac{1}{2\pi i} \int_{\tildeSigmainf_{< (1+a)\sqrt{1 - \frac{1}{\gamma^2 a^2}}M^{1/10}}} \left\lvert e^{Tu -\frac{u^2}{2}}u^{t'-1} \right\rvert du \varO(M^{-1/5}) \right\rvert,
\end{split}
\end{equation}
and we can get the result by direct calculation.
\end{proof}

By lemmas \ref{lemma:first_lemma_for_psi_more}--\ref{lemma:last_lemma_for_psi_more} and \eqref{eq:variation_of_integral_formula_of_psi_more}, we have the convergence result for $\psi_{r'}$ with $a_{s'} = a$
\begin{multline} \label{eq:significant_psi_rprime_more}
\left\lvert \left( (1+a)\sqrt{\left( 1 - \frac{1}{\gamma^2 a^2} \right) M} \right)^{t'} \frac{e^{-\frac{M}{1+a}x}}{(-a)^N (1+a)^{M-N}} e^{2\xi/3} \psi_{r'}(p+q\xi) \right. \\
\left. 
%vphantom
\vphantom{\left( (1+a)\sqrt{\left( 1 - \frac{1}{\gamma^2 a^2} \right) M} \right)^t}
%endvphantom
- \frac{(1+a)^{r'}}{(-a)^r}\bar{C}_{a,r'-t'} (-1)^{t'-1} e^{2\xi/3} \frac{H_{t'-1}(x)}{\sqrt{2\pi}} e^{-\frac{\xi^2}{2}} \right\rvert < \frac{e^{-\xi/3}}{M^{1/10}}.
\end{multline}

By the same method, we can get the convergence result for $\psi_{r'}(\xi)$ with $r' = 1, \dots, r-t$ and $\xi \geq T$
\begin{multline} \label{eq:insignificant_psi_rprime_more}
\left\lvert (1+a)\sqrt{\left( 1 - \frac{1}{\gamma^2 a^2} \right) M} \frac{e^{-\frac{M}{1+a}x}}{(-a)^N (1+a)^{M-N}} e^{2\xi/3} \psi_{r'}(p+q\xi) \right. \\
\left.
%vphantom
\vphantom{(1+a)\sqrt{\left( 1 - \frac{1}{\gamma^2 a^2} \right) M}}
%endvphantom
- \frac{(1+a)^{r'}}{(-a)^r}\bar{C}_{a,r'-1} \frac{e^{\frac{2\xi}{3} - \frac{\xi^2}{2}}}{\sqrt{2\pi}} \right\rvert < \frac{e^{-\xi/3}}{M^{1/10}},
\end{multline}
and for $\psi(\xi)$ with $\xi \geq T$
\begin{equation} \label{eq:insignificant_psi_more}
\left\lvert (1+a)\sqrt{\left( 1 - \frac{1}{\gamma^2 a^2} \right) M} \frac{e^{-\frac{M}{1+a}x}}{(-a)^N (1+a)^{M-N}} e^{2\xi/3} \psi(p+q\xi) - (-a)^{-r} \frac{e^{\frac{2\xi}{3} - \frac{\xi^2}{2}}}{\sqrt{2\pi}} \right\rvert < \frac{e^{-\xi/3}}{M^{1/10}}.
\end{equation}

\subsection{Asymptotics of $\varpsi(p+q\eta)$ and $\varpsi_{r'}(p+q\eta)$}

We only consider $\varpsi_{r'}$ with $a_{s'} = a$, and only state the results for $\varpsi_{r'}$ with $a_{s'} < a$ and $\varpsi$, since they are simpler.

$\varpsi_{r'}(p+q\eta)$ is defined in \eqref{eq:definition_of_varphi_rprime} by a contour integral with poles $z = 1$ and $\frac{1}{1+a_j}$, $j = 1, \dots, s$ inside the contour. By the residue theorem, the value of $\varpsi_{r'}(p+q\eta)$ is the sum of residues at these poles. We will see that the contribution of poles other than $\frac{1}{1+a}$ are negligible, and we are going to calculate the residue at $\frac{1}{1+a}$. To consider these two kinds of poles separately, we deform $\Gamma$ into the sum of two disconnected contours $\Gamma_{\frac{1}{1+a}}$ and $\Gamma_{\rt}$, where $\Gamma_{\frac{1}{1+a}}$ includes $\frac{1}{1+a}$ and excludes other poles, and vice versa for $\Gamma_{\rt}$.

We have
\begin{equation}
e^{-Myz}\frac{(z-1)^N}{z^M} = e^{-Mg(z) + (1+a)\sqrt{\left( 1 - \frac{1}{\gamma^2 a^2} \right) M} \eta z},
\end{equation}
where $g(z)$ is defined in \eqref{eq:definition_of_g_x}. Then we can write \eqref{eq:definition_of_varphi_rprime} as
\begin{multline}
\varpsi_{r'}(y) = \frac{1}{2\pi i} \oint_{\Gamma_{\frac{1}{1+a}} \cup \Gamma_{\rt}} e^{-Mg(z) + (1+a)\sqrt{\left( 1 - \frac{1}{\gamma^2 a^2} \right) M} \eta z} \\
\frac{(z-1)^r}{z^{r-r'+1}} \left( \prod^{s'-1}_{j=1} \left( z - \frac{1}{1+a_j} \right)^{-r_j} \right) \left( z - \frac{1}{1+a} \right)^{-t'} dz,
\end{multline}
and after the substitution $z = w + \frac{1}{1+a}$, we get
\begin{equation}
\begin{split}
& \oint_{\Gamma_{\frac{1}{1+a}}} e^{Mg(z) - (1+a)\sqrt{\left( 1 - \frac{1}{\gamma^2 a^2} \right) M} \eta z} \frac{(z-1)^r}{z^{r-r'+1}} \left( \prod^{s'-1}_{j=1} \left( z - \frac{1}{1+a_j} \right)^{-r_j} \right) \left( z - \frac{1}{1+a} \right)^{-t'} dz \\
= & \oint_{\tildeGammaM_0} e^{M \left( -\left( 1 - \frac{1}{\gamma^2 a} \right) - (1 - \gamma^{-2})\log(1+a) - \gamma^{-2}\log a + \gamma^{-2}\pi i - \frac{1}{2} (1+a)^2 \left( 1 - \frac{1}{\gamma^2 a^2} \right) w^2 + R_2(w) \right)} \\
& e^{ -(1+a)\sqrt{\left( 1 - \frac{1}{\gamma^2 a^2} \right) M} \eta \left( w + \frac{1}{1+a} \right)} \frac{\left( w - \frac{a}{1+a} \right)^r}{\left( w + \frac{1}{1+a} \right)^{r-r'+1}} \left( \prod^{s'-1}_{j=1} \left( w + \frac{a_j - a}{(1+a)(1+a_j)} \right)^{-r_j} \right) \frac{dw}{w^{t'}} \\
= & \frac{(-1)^N}{a^N (1+a)^{M-N}} e^{-\frac{M}{1+a}y} \oint_{\tildeGammaM_0} e^{-(1+a)\sqrt{\left( 1 - \frac{1}{\gamma^2 a^2} \right) M} \eta w - \frac{1}{2} (1+a)^2 \left( 1 - \frac{1}{\gamma^2 a^2} \right) Mw^2 - MR_2(w)} \\
& \frac{\left( w - \frac{a}{1+a} \right)^r}{\left( w + \frac{1}{1+a} \right)^{r-r'+1}} \left( \prod^{s'-1}_{j=1} \left( w + \frac{a_j - a}{(1+a)(1+a_j)} \right)^{-r_j} \right) \frac{dw}{w^{t'}},
\end{split}
\end{equation}
where $\tildeGammaM_0$ is a contour around $w = 0$, defined as
\begin{equation}
\tildeGammaM_0 = \left\{ \left. \frac{e^{it}/3}{(1+a)\sqrt{\left( 1 - \frac{1}{\gamma^2 a^2} \right) M}} \right\rvert 0 \leq t \leq 2\pi \right\}.
\end{equation}
\begin{figure}[h]
\centering
\includegraphics{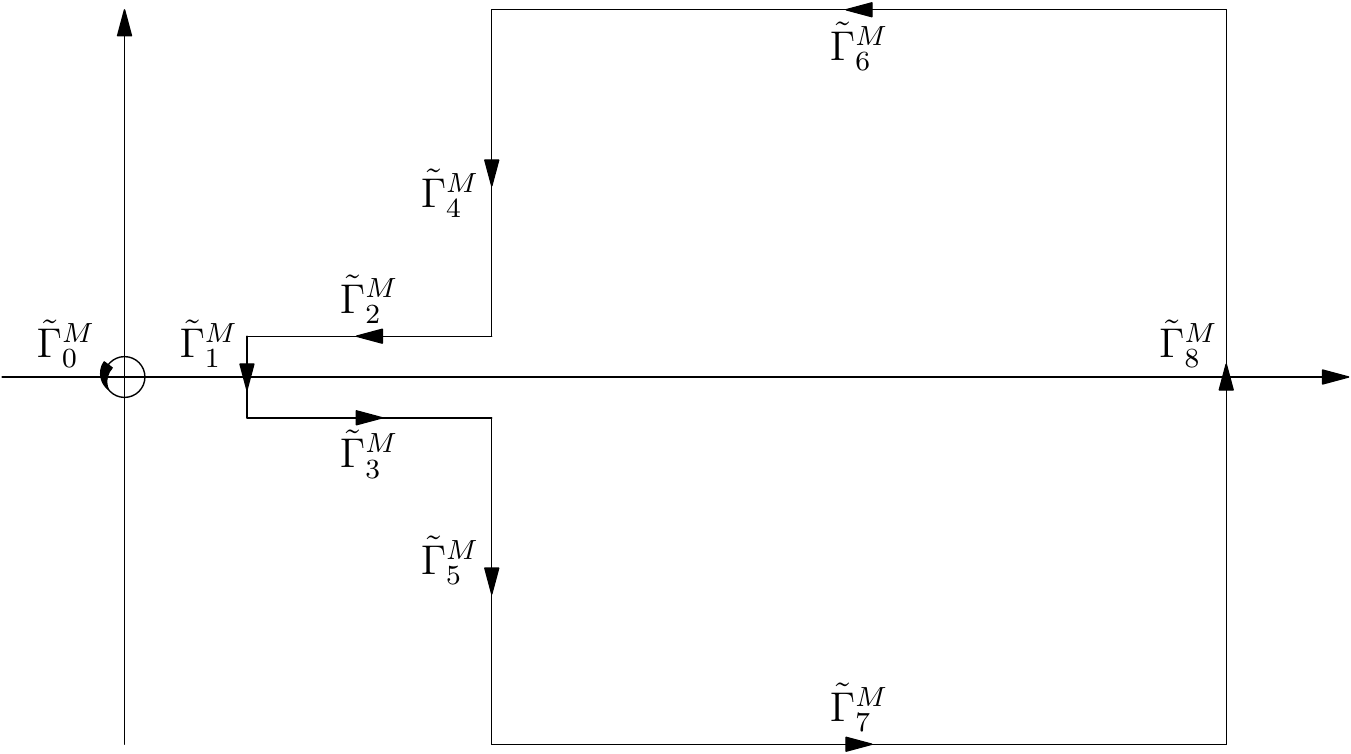}
\caption{$\tildeGammaM_0$ and $\tildeGammaM_{\rt}$}
\end{figure}

If we denote
\begin{multline} 
G_{Mat'}(\eta, w) = \\
\left( (1+a)\sqrt{\left( 1 - \frac{1}{\gamma^2 a^2} \right) M} \right)^{1-t'} e^{-(1+a)\sqrt{\left( 1 - \frac{1}{\gamma^2 a^2} \right) M} \eta w - \frac{1}{2} (1+a)^2 \left( 1 - \frac{1}{\gamma^2 a^2} \right) Mw^2 + MR_2(w)} \\
\frac{\left( w - \frac{a}{1+a} \right)^r}{\left( w + \frac{1}{1+a} \right)^{r-r'+1}} \left( \prod^{s'-1}_{j=1} \left( w + \frac{a_j - a}{(1+a)(1+a_j)} \right)^{-r_j} \right) \frac{1}{w^{t'}},
\end{multline}
We have
\begin{multline} \label{eq:variation_of_varpsi_more}
\left( (1+a)\sqrt{\left( 1 - \frac{1}{\gamma^2 a^2} \right) M} \right)^{1-t'} (-a)^N (1+a)^{M-N} e^{\frac{M}{1+a}y} \varpsi_{r'}(p+q\eta) = \\
\frac{1}{2\pi i} \oint_{\tildeGammaM_0} G_{Mat'}(\eta, w) dw + \oint_{\tildeGammaM_{\rt}} G_{Mat'}(\eta, w) dw,
\end{multline}
with the definition of $\tildeGammaM_{\rt}$ to be introduced later, and
\begin{lemma} \label{lemma:first_lemma_for_varphi_more}
If $T$ is fixed and $M$ is large enough,
\begin{equation}
\left\lvert \frac{1}{2\pi i} \oint_{\tildeGammaM_0} G_{Mat'}(\eta, w) dw - \frac{(-a)^r}{(1+a)^{r'-1}\bar{C}_{r'-t'}} \frac{(-1)^{t'-1}}{(t'-1)!} H_{t'-1}(\eta) \right\rvert < \frac{1}{2} \frac{e^{\eta/3}}{M^{1/3}},
\end{equation}
for any $\eta \geq T$.
\end{lemma}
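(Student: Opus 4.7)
The plan is to treat $\tildeGammaM_0$ as a local contour analogous to $\barSigmaM_{\local}$ and $\BbarGammaM_{\local}$ in the previous sections, but now of size $\varO(M^{-1/2})$ rather than $\varO(M^{-1/3})$. Every point $w \in \tildeGammaM_0$ satisfies $\lvert w \rvert = [3(1+a)\sqrt{(1-1/(\gamma^2 a^2))M}]^{-1}$, so on this contour $Mw^2 = \varO(1)$ while $MR_2(w) = \varO(Mw^3) = \varO(M^{-1/2})$ uniformly. Consequently the entire integrand, apart from the singular factor $w^{-t'}$ and the Gaussian quadratic-plus-linear exponential, can be replaced by its value at $w=0$ up to a uniform multiplicative error $1+\varO(M^{-1/2})$.

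The main step is the substitution $u = (1+a)\sqrt{(1-1/(\gamma^2 a^2))M}\,w$, which is a bijection from $\tildeGammaM_0$ onto $\tildeGammainf$. Under this change, the quadratic exponent becomes $-u^2/2$, the linear exponent becomes $-\eta u$, and the combination $\bigl((1+a)\sqrt{\ldots M}\bigr)^{1-t'} w^{-t'}\,dw$ collapses precisely to $u^{-t'}\,du$ (the factors of $(1+a)\sqrt{\ldots M}$ all cancel). The remaining rational factor $(w - a/(1+a))^r (w + 1/(1+a))^{-(r-r'+1)} \prod_{j=1}^{s'-1}(w + (a_j-a)/((1+a)(1+a_j)))^{-r_j}$ is holomorphic and nonvanishing at $w=0$; its value there evaluates to $(-a)^r/[(1+a)^{r'-1}\bar{C}_{r'-t'}]$ after matching with the definition of $\bar{C}_{r'-t'}$, which (since $a_{s'} = a$) collects precisely the product $\prod_{j=1}^{s'-1}((a_j - a)/((1+a)(1+a_j)))^{r_j}$.

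The resulting integral takes the form
\[
\frac{(-a)^r}{(1+a)^{r'-1}\bar{C}_{r'-t'}} \cdot \frac{1}{2\pi i}\oint_{\tildeGammainf} e^{-\eta u - u^2/2}\frac{du}{u^{t'}} \cdot \bigl(1 + \varO(M^{-1/2})\bigr),
\]
and invoking the Hermite integral identity $(2\pi i)^{-1}\oint_{\tildeGammainf} e^{-\eta u - u^2/2}u^{-t'}\,du = (-1)^{t'-1}H_{t'-1}(\eta)/(t'-1)!$ established earlier in this section produces the stated leading term. For the error estimate, I would use that $\Re(u) \geq -1/3$ on $\tildeGammainf$, so $\lvert e^{-\eta u}\rvert \leq e^{\eta/3}$ uniformly for $\eta \geq T$; combined with the $\eta$-independent multiplicative error $\varO(M^{-1/2})$, the absolute deviation of the integral from its leading value is bounded by a constant times $e^{\eta/3}/\sqrt{M}$, which is less than $\tfrac{1}{2}\,e^{\eta/3}/M^{1/3}$ for $M$ sufficiently large.

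The only nontrivial bookkeeping is verifying that the normalizing constants match exactly as stated, in particular confirming the identification of $\bar{C}_{r'-t'}$ (whose form for $a_{s'} = a$ differs from the $\bar{C}_{a,r'-t'}$ appearing in lemma \ref{lemma:last_lemma_for_psi_more} only in how the $s'$-th cluster is absorbed—into the $w^{-t'}$ pole here rather than into the rational prefactor). This is a purely algebraic check involving powers of $(1+a)$, signs from $-a/(1+a)$, and the $j=1,\dots,s'-1$ product, and does not require further asymptotic work; I do not expect a genuine obstacle beyond careful tracking of exponents.
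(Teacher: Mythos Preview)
Your proposal is correct and follows essentially the same route as the paper's proof: expand the integrand on $\tildeGammaM_0$ using $\lvert w\rvert = \varO(M^{-1/2})$ so that $MR_2(w)$ and the rational factor contribute a multiplicative $1+\varO(M^{-1/2})$, substitute $u=(1+a)\sqrt{(1-1/(\gamma^2a^2))M}\,w$ to land on $\tildeGammainf$, invoke the Hermite contour identity, and bound the error via $\lvert e^{-\eta u}\rvert \le e^{\eta/3}$ on $\tildeGammainf$. The only minor refinement is that the paper writes $\lvert e^{-(\eta-T)u}\rvert \le e^{\lvert T/3\rvert}e^{\eta/3}$ to handle $\eta\in[T,0)$ uniformly, whereas your inequality $\lvert e^{-\eta u}\rvert \le e^{\eta/3}$ as stated needs $\eta\ge 0$; splitting off the fixed factor $e^{-Tu}$ as the paper does repairs this immediately.
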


\begin{proof}
On $\tildeGammaM_0$, $\lvert w \rvert = M^{-1/2}$, and $R_2(w) = \varO(M^{-3/2})$, so that
\begin{multline}
G_{Mat'}(\eta, w) = \\
\frac{(-a)^r}{(1+a)^{r'-1}} \left( \prod^{s'-1}_{j=1} \left( \frac{a_j - a}{(1+a)(1+a_j)} \right)^{-r_j} \right) \left( (1+a)\sqrt{\left( 1 - \frac{1}{\gamma^2 a^2} \right) M} \right)^{1-t'} \\
e^{-(1+a)\sqrt{\left( 1 - \frac{1}{\gamma^2 a^2} \right) M} \eta w - \frac{1}{2} (1+a)^2 \left( 1 - \frac{1}{\gamma^2 a^2} \right) Mw^2} \frac{dw}{w^{t'}} \left( 1 + \varO(M^{-1/2}) \right),
\end{multline}
and the $\varO(M^{-1/2})$ is independent of $\eta$. After the substitution $u = (1+a)\sqrt{\left( 1 - \frac{1}{\gamma^2 a^2} \right) M} w$, we get
\begin{equation}
\frac{1}{2\pi i} \oint_{\tildeGammaM_0} G_{Mat'}(\eta, w) dw = \frac{1}{2\pi i} \oint_{\tildeGammainf} e^{-\eta u - \frac{u^2}{2}} \frac{du}{u^{t'}} \left( 1 + \varO(M^{-1/2}) \right).
\end{equation}
On $\tildeGammainf$, if $\eta \geq T$, $\lvert e^{-(\eta-T)u} \rvert \leq e^{\lvert T/3 \rvert}e^{\eta/3}$, and we have
\begin{multline}
\left\lvert \frac{1}{2\pi i} \oint_{\tildeGammaM_0} G_{Mat'}(\eta, w) dw - \frac{(-a)^r}{(1+a)^{r'-1}\bar{C}_{r'-t'}} \frac{(-1)^{t'-1}}{(t'-1)!} H_{t'-1}(\eta) \right\rvert 
\leq \\
e^{\lvert T/3 \rvert}e^{\eta/3} \left\lvert \frac{(-a)^r}{(1+a)^{r'-1}\bar{C}_{r'-t'}} \frac{1}{2\pi i} \oint_{\tildeGammainf} \left\lvert e^{-\eta u - \frac{u^2}{2}} u^{-t'} \right\rvert du \varO(M^{-1/2}) \right\rvert,
\end{multline}
and we can get the result by direct calculation.
\end{proof}

Now we estimate the integral of $G_{Mat'}$ over $\tildeGammaM_{\rt}$, where $\tildeGammaM_{\rt}$ is defined as the union of $\tildeGammaM_1 \cup \dots \cup \tildeGammaM_8$, as
\begin{align}
\tildeGammaM_1 = & \left\{ \delta -it \left\lvert -\frac{\delta}{3} \leq t \leq \frac{\delta}{3} \right. \right\}, \\
\tildeGammaM_2 = & \left\{ -t + \frac{\delta}{3}i \left\lvert -\frac{\gamma^2 a^2 - 1}{(1+\gamma^2 a)(1+a)} \leq t \leq -\delta \right. \right\}, \\
\tildeGammaM_3 = & \left\{ t - \frac{\delta}{3}i \left\lvert \delta \leq t \leq \frac{\gamma^2 a^2 - 1}{(1+\gamma^2 a)(1+a)} \right. \right\}, \\
\tildeGammaM_4 = & \left\{ \frac{\gamma^2 a^2 - 1}{(1+\gamma^2 a)(1+a)} - it \left\lvert -4 \leq t \leq -\frac{\delta}{3} \right. \right\}, \\
\tildeGammaM_5 = & \left\{ \frac{\gamma^2 a^2 - 1}{(1+\gamma^2 a)(1+a)} - it \left\lvert \frac{\delta}{3} \leq t \leq 4 \right. \right\}, \\
\tildeGammaM_6 = & \left\{ -t + 4i \left\lvert -C_{\rt} \leq t \leq -\frac{\gamma^2 a^2 - 1}{(1+\gamma^2 a)(1+a)} \right. \right\}, \\
\tildeGammaM_7 = & \left\{ t - 4i \left\lvert \frac{\gamma^2 a^2 - 1}{(1+\gamma^2 a)(1+a)} \leq t \leq C_{\rt} \right. \right\}, \\
\tildeGammaM_8 = & \left\{ C_{\rt} + it \mid -4 \leq t \leq 4 \right\},
\end{align}
where $\delta$ is a very small positive number and $C_{\rt}$ is the same large positive number as that in the definition of $\barGammaM$. Then we can prove 

\begin{lemma} \label{lemma:second_lemma_for_varphi_more}
If $T$ is fixed and $M$ is large enough,
\begin{equation}
\left\lvert \frac{1}{2\pi i} \oint_{\tildeGammaM_{\rt}} G_{Mat'}(\eta, w) dw \right\rvert < \frac{1}{2} \frac{e^{\eta/3}}{M^{1/3}},
\end{equation}
for any $\eta \geq T$.
\end{lemma}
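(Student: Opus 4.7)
The plan is to obtain pointwise exponential decay of $|G_{Mat'}(\eta,w)|$ on $\tildeGammaM_{\rt}$ via a steepest-descent analysis of $g$; multiplying by the uniformly bounded arc length of $\tildeGammaM_{\rt}$ then yields the stated bound. The key observation is that
\begin{equation}
MR_2(w) - \tfrac{1}{2}(1+a)^2\bigl(1-\tfrac{1}{\gamma^2 a^2}\bigr)Mw^2 = M\bigl[g(w+\tfrac{1}{1+a})-g(\tfrac{1}{1+a})\bigr],
\end{equation}
which lets one rewrite
\begin{equation}
G_{Mat'}(\eta,w) = K^{1-t'}\,e^{-K\eta w}\,e^{M[g(w+\frac{1}{1+a})-g(\frac{1}{1+a})]}\,A(w),
\end{equation}
where $K = (1+a)\sqrt{(1-1/(\gamma^2 a^2))M}$ and $A(w)$ is the algebraic prefactor, uniformly bounded on $\tildeGammaM_{\rt}$ because all poles of the integrand sit strictly inside the contour. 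Once we establish
\begin{equation} \label{eq:negativity_target}
\Re\bigl(g(w+\tfrac{1}{1+a})-g(\tfrac{1}{1+a})\bigr) \leq -c \qquad \text{for all } w \in \tildeGammaM_{\rt},
\end{equation}
for some $c>0$ depending only on $a,\gamma,\delta,C_{\rt}$, we obtain $|G_{Mat'}(\eta,w)| \leq C(a,\gamma,\delta,C_{\rt},T)\,M^{(1-t')/2}e^{-cM + K|T|C_{\rt}}$ uniformly in $\eta \geq T$ and $w \in \tildeGammaM_{\rt}$ (using $|e^{-K\eta w}| \leq e^{K|T|C_{\rt}}$ since $\Re w \in [\delta, C_{\rt}]$); this decays faster than any negative power of $M$, so for large $M$ it is dominated by $\tfrac{1}{2}e^{\eta/3}M^{-1/3}$.

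The heart of the proof is therefore \eqref{eq:negativity_target}, which I propose to verify piece by piece using the structure of $g$. The two real critical points of $g$ are $z = \tfrac{1}{1+a}$ (with $g''<0$, a local max of $\Re g$ along the real axis) and $z_* = \tfrac{\gamma^2 a}{1+\gamma^2 a}$ (with $g''>0$, a saddle whose steepest-descent direction is vertical); in the $w$-coordinate these are $w=0$ and $w_* = \tfrac{\gamma^2 a^2-1}{(1+\gamma^2 a)(1+a)} > 0$. On $\tildeGammaM_1$ (the short vertical segment $\Re w = \delta$, $|\Im w|\leq \delta/3$), the Taylor expansion of $g$ about $w=0$ yields
\begin{equation}
\Re\bigl(g(w+\tfrac{1}{1+a})-g(\tfrac{1}{1+a})\bigr) = -\tfrac{1}{2}(1+a)^2\bigl(1-\tfrac{1}{\gamma^2a^2}\bigr)\bigl(\delta^2-(\Im w)^2\bigr) + O(\delta^3) \leq -c_1\delta^2
\end{equation}
for $\delta$ fixed sufficiently small. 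On $\tildeGammaM_4 \cup \tildeGammaM_5$ (the vertical segments $\Re w = w_*$, $|\Im w|\in[\delta/3,4]$), the Taylor expansion around $w_*$ combined with $g''(z_*)>0$ shows that $\Re g$ attains a local maximum at $w_*$ along the vertical direction, so the essential scalar inequality is
\begin{equation}
\Re g(z_*) < \Re g(\tfrac{1}{1+a}),
\end{equation}
obtained by direct computation followed by a monotonicity check in $a$; equality holds at $a=\gamma^{-1}$ and strict inequality for all $a>\gamma^{-1}$, which is precisely the standing assumption of the section.

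The remaining pieces are routine. On $\tildeGammaM_2, \tildeGammaM_3$ (horizontal at $\Im w = \pm\delta/3$ from $\Re w = \delta$ to $\Re w = w_*$), checking that $\partial_s\Re g(s\pm i\delta/3 + \tfrac{1}{1+a})$ has definite sign (using that $g'$ vanishes only at $w=0$ and $w=w_*$, neither of which lies on these segments) shows $\Re g$ is monotone in $s$, so the supremum is attained at an endpoint already controlled by the analyses above. On $\tildeGammaM_6,\tildeGammaM_7,\tildeGammaM_8$ (the outer box at $|\Im w|=4$ or $\Re w = C_{\rt}$), the linear term $-(a+1)(1+\tfrac{1}{\gamma^2 a})\Re z$ in $g$ dominates the logarithmic terms once $|w|$ is of order $C_{\rt}$ or $4$, and taking $C_{\rt}$ large enough forces $\Re g$ arbitrarily negative there. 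The principal technical obstacle is the saddle-value inequality $\Re g(z_*) < \Re g(\tfrac{1}{1+a})$ for $a>\gamma^{-1}$, together with the uniformity in $\delta$ of the combined lower bound, since the decrement on $\tildeGammaM_1$ is only $O(\delta^2)$ while that on $\tildeGammaM_4,\tildeGammaM_5$ is $O(1)$; one must therefore fix $\delta$ small enough before letting $M\to\infty$ so that a single constant $c$ works uniformly across all eight pieces.
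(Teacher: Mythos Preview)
Your approach is essentially the same as the paper's, which defers the proof to the methods of Lemmas~\ref{lemma:first_lemma_for_Sigma_as_less_than_gammainv} and~\ref{lemma:second_lemma_for_Sigma_as_less_than_gammainv}: direct estimation of the real part of the exponent $g$ on each segment of the contour, yielding a uniform negative bound that kills the integrand at rate $e^{-cM}$.

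One point to tighten: your monotonicity argument on $\tildeGammaM_2$ and $\tildeGammaM_3$ does not follow from the stated reason. You write that $\partial_s\Re g(s\pm i\delta/3 + \tfrac{1}{1+a})$ has definite sign because $g'$ vanishes only at $w=0$ and $w=w_*$, but $\partial_s\Re g = \Re g'$, and the zero set of $\Re g'$ is a curve, not the two isolated critical points of $g'$. So nonvanishing of $g'$ on these segments does not by itself exclude a sign change of $\Re g'$. The fix is exactly what the paper does in Lemma~\ref{lemma:second_lemma_for_Sigma_as_less_than_gammainv}: compute $\frac{d}{ds}\Re g$ explicitly along the segment (it is a rational function in $s$) and check its sign directly, or alternatively bound $\Re g$ at all points of the segment by an explicit calculation rather than via endpoint comparison. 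Similarly, on $\tildeGammaM_4\cup\tildeGammaM_5$ your Taylor argument around $w_*$ only controls $\Re g$ for $|\Im w|$ small; for $|\Im w|$ up to $4$ you again need a direct computation of $\frac{d}{dt}\Re g(z_*+it)$ in the style of Lemma~\ref{lemma:first_lemma_for_Sigma_as_less_than_gammainv}. These are routine computations, and once patched your argument goes through.
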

The proof is similar to those of lemmas \ref{lemma:first_lemma_for_Sigma_as_less_than_gammainv} and \ref{lemma:second_lemma_for_Sigma_as_less_than_gammainv}.

Hence by lemmas \ref{lemma:first_lemma_for_varphi_more} and \ref{lemma:second_lemma_for_varphi_more}, and \eqref{eq:variation_of_varpsi_more}, we have
\begin{multline} \label{eq:significant_varpsi_rprime_more}
\left\lvert \left( (1+a)\sqrt{\left( 1 - \frac{1}{\gamma^2 a^2} \right) M} \right)^{1-t'} (-a)^N (1+a)^{M-N} e^{\frac{M}{1+a}y} e^{-2\eta/3} \varpsi_{r'}(p+q\eta) \right. \\
\left.
%vphantom
\vphantom{\left( (1+a)\sqrt{\left( 1 - \frac{1}{\gamma^2 a^2} \right) M} \right)^{1-t'}}
%endvphantom
- \frac{(-a)^r}{(1+a)^{r'-1}\bar{C}_{a,r'-t'}} \frac{(-1)^{t'-1}}{(t'-1)!} e^{-2\eta/3} H_{t'-1}(\eta) \right\rvert < \frac{e^{-\eta/3}}{M^{1/3}}.
\end{multline}

For $\varpsi_{r'}$ with $a_{s'} < a$ or $\varpsi$, we can choose $\tildeGammaM_{\rt}$ so that all poles are included in it, and we can get the estimation by results similar to lemma \ref{lemma:second_lemma_for_varphi_more}. The result is
\begin{equation} \label{eq:insignificant_part_of_varpsi_more}
\left\lvert (1+a)\sqrt{\left( 1 - \frac{1}{\gamma^2 a^2} \right) M} (-a)^N (1+a)^{M-N} e^{\frac{M}{1+a}y} e^{-2\eta/3} \varpsi_*(p+q\eta) \right\rvert < \frac{e^{-\eta/3}}{M^{1/3}},
\end{equation}
where $\varpsi_*$ stands for $\varpsi_{r'}$ with $a_{s'} < a$, or simply $\varpsi$.

\section{Asymptotics of Laguerre polynomials and related functions}

\label{asymptotics_Laguerre}

In chapter \ref{quaternionic_spiked_model_rank_1}, we need asymptotic results of $\qLag{2N-2}$, $\qLag{2N-1}$ $\psi_{2N-1}$, $\psi'_{2N-1}$, etc. They can be expressed by linear combination of Laguerre polynomials and has the similar integral representation. Like \eqref{eq:first_integral_formula_of_Laguerre}, we have
\begin{equation}
\qLag{2N-1}(Mx) = \frac{e^{2Mx}}{2\pi i} \oint_{\Gamma} e^{-2Mxz} \frac{z^{2M-1}}{(z-1)^{2N}} dz,
\end{equation}
and by \eqref{eq:psi_2N-1_raw_formula}, we get
\begin{equation}
\begin{split}
\varphi_{2N-1}(x) = &  e^{\frac{a}{1+a}2Mx} - \frac{(1+a)^{2(M-N)+1}}{2\pi i}e^{2Mx} \oint_{\Gamma} e^{-2Mxz} \frac{1 + \left( a\frac{z}{z-1} \right)^{2N-1}}{1 + a\frac{z}{z-1}} \frac{z^{2(M-N)}}{z-1} dz \\
= & e^{\frac{a}{1+a}2Mx} - \frac{(1+a)^{2(M-N)+1}}{2\pi i}e^{2Mx} \oint_{\Gamma} e^{-2Mxz} \frac{z^{2(M-N)}}{z - \frac{1}{1+a}} dz \\
&  - \frac{(1+a)^{2(M-N)+1}a^{2N-1}}{2\pi i}e^{2Mx} \oint_{\Gamma} e^{-2Mxz} \frac{z^{2M}}{(z-1)^{2N}} \frac{z-1}{((1+a)z - 1)z} dz.
\end{split}
\end{equation}
If the pole $z = \frac{1}{1+a}$ is inside of $\Gamma$, then 
\begin{equation}
\frac{(1+a)^{2(M-N)+1}}{2\pi i}e^{2Mx} \oint_{\Gamma} e^{-2Mxz} \frac{z^{2(M-N)}}{z - \frac{1}{1+a}} dz = e^{\frac{a}{1+a}2Mx},
\end{equation}
and
\begin{equation} \label{eq:integral_rep_of_phi_when_lucky}
\varphi_{2N-1}(x) = - \frac{(1+a)^{2(M-N)+1}a^{2N-1}}{2\pi i}e^{2Mx} \oint_{\Gamma} e^{-2Mxz} \frac{z^{2M}}{(z-1)^{2N}} \frac{z-1}{((1+a)z - 1)z} dz.
\end{equation}
All other relevant functions can also be expressed by integrals by \eqref{eq:first_integral_formula_of_Laguerre}. We analyze three typical examples, and all other results can be derived similarly.

\subsection{$\qLag{2N-1}(2Mx)$ around $(1+\gamma^{-1})^2$}

We assume in $p = (1+\gamma^{-1})^2$ and $q = \frac{(\gamma+1)^{4/3}}{\gamma(2M)^{2/3}}$, and take $x = p+q\xi$. By methods in the analysis of  $\varpsi(p+q\xi)$ and $\varpsi_{r'}(p+q\xi)$ in subsection \ref{asymptotics_of_varpsi_and+varpsi_rprime}, if we denote
\begin{equation} \label{eq:def_of_Ttildepsi}
\Ttildepsi(\xi) = \frac{1}{2\pi i} \oint_{\Gamma} e^{-2Mxz} \frac{z^{2M-1}}{(z-1)^{2N}} dz,
\end{equation}
we have for $\xi \geq T$
\begin{equation}
\left\lvert \frac{(\gamma+1)^{4/3}}{\gamma}(2M)^{1/3} \frac{(\gamma+1)^{2(M-N)-1}}{\gamma^{2M}} e^{\frac{\gamma}{\gamma+1}2Mx}\Ttildepsi(\xi) - (-1)\Ai(\xi) \right\rvert < \frac{e^{-\xi/2}}{M^{1/40}}.
\end{equation}
The relation between $\Ttildepsi(\xi)$ and $\qLag{2N-1}(2Mx)$ is
\begin{equation} \label{eq:asymptotics_of_Laguerre_with_exp}
\qLag{2N-1}(2Mx) x^{M-N-1/2} e^{-Mx} = \frac{x^{M-N-1/2}}{e^{\frac{\gamma-1}{\gamma+1}Mx}} e^{\frac{\gamma}{\gamma+1}2Mx} \Ttildepsi(\xi).
\end{equation}
For $x = p+q\xi$, $\xi \geq T$, we have the point-wise with respect to $\xi$
\begin{equation} \label{eq:pointwise_convergence_for_Laguerre}
\lim_{M \rightarrow} \frac{e^{M-N}}{\left( \frac{\gamma+1}{\gamma} \right)^{2(M-N)-1}} \frac{x^{M-N-1/2}}{e^{\frac{\gamma-1}{\gamma+1}Mx}} = 1,
\end{equation}
and for $\xi \geq 0$,
\begin{equation} \label{eq:global_boundedness_for_Laguerre}
0 < \frac{e^{M-N}}{\left( \frac{\gamma+1}{\gamma} \right)^{2(M-N)-1}} \frac{x^{M-N-1/2}}{e^{\frac{\gamma-1}{\gamma+1}Mx}} \leq 1.
\end{equation}
Therefore we know that for $M$ large enough and $\xi \geq T$
\begin{multline} \label{eq:asymototics_Laguerre_subcritical}
\left\lvert \gamma^{-2N-1}(\gamma+1)^{4/3}(2M)^{1/3} e^{M-N} \qLag{2N-1}(2Mx) x^{M-N-1/2} e^{-Mx} - (-1)\Ai(\xi) \right\rvert \\
< \frac{e^{-\xi/2}}{M^{1/40}}.
\end{multline}

\subsection{$\psi_{2N-1}(x)$ around $(1+\gamma^{-1})^2$ when $a = \gamma^{-1}$}

We still take $p = (1+\gamma^{-1})^2$, $q = \frac{(\gamma+1)^{4/3}}{\gamma(2M)^{2/3}}$, and $z = p+q\xi$. We use the integral representation \eqref{eq:integral_rep_of_phi_when_lucky} of $\psi_{2N-1}(x)$, and we will make sure that the pole $z = \frac{1}{1+a}$ is inside $\Gamma$ when we deform it. Since $a = \gamma^{-1}$, we can write \eqref{eq:integral_rep_of_phi_when_lucky} as 
\begin{equation}
\psi_{2N-1}(x) = -\frac{(\gamma+1)^{2(M-N)+1}}{\gamma^{2M}} \frac{e^{2Mx}}{2\pi i} \oint_{\Gamma} e^{-2Mxz} \frac{z^{2M}}{(z-1)^{2N}} \frac{z-1}{((1+\gamma^{-1})z - 1)z} dz.
\end{equation}
If we denote
\begin{equation}
\Ttildepsi_{2N-1}(\xi) = \frac{1}{2\pi i} \oint_{\Gamma} e^{-2Mxz} \frac{z^{2M}}{(z-1)^{2N}} \frac{z-1}{((1+\gamma^{-1})z - 1)z} dz,
\end{equation}
by methods in the analysis of $\varpsi_{r'}(p+q\xi)$ in subsection \ref{asymptotics_of_varpsi_rprime_equal}, we have for $\xi \geq T$,
\begin{equation}
\left\lvert \frac{(\gamma+1)^{2(M-N)+1}}{\gamma^{2M}}e^{\frac{\gamma}{\gamma+1}2Mx} \Ttildepsi_{2N-1}(\xi) - (-1)s^{(1)}(\xi) \right\rvert < \frac{e^{\xi/6}}{M^{1/40}}.
\end{equation}
We can express $\psi_{2N-1}(x)$ as
\begin{equation}
\psi_{2N-1}(x) = \frac{x^{M-N+1/2}}{e^{\frac{\gamma-1}{\gamma+1}Mx}} \frac{(\gamma+1)^{2(M-N)+1}}{\gamma^{2M}}e^{\frac{\gamma}{\gamma+1}2Mx} \Ttildepsi_{2N-1}(\xi).
\end{equation}
Similar to \eqref{eq:pointwise_convergence_for_Laguerre} and \eqref{eq:global_boundedness_for_Laguerre}, we have point wise convergence with respect to $\xi$
\begin{equation}
\lim_{M \rightarrow \infty} \frac{e^{M-N}}{\left( \frac{\gamma+1}{\gamma} \right)^{2(M-N)+1}} \frac{x^{M-N+1/2}}{e^{\frac{\gamma-1}{\gamma+1}Mx}} = 1,
\end{equation}
and if $\xi \geq T$, then
\begin{equation}
0 < \frac{e^{M-N}}{\left( \frac{\gamma+1}{\gamma} \right)^{2(M-N)+1}} \frac{x^{M-N+1/2}}{e^{\frac{\gamma-1}{\gamma+1}Mx}} < 1 + \varO \left( \frac{1}{M} \right).
\end{equation}
Therefore, we have
\begin{equation} \label{eq:asymptotics_psi_2N-1_critical}
\left\lvert e^{M-N} \left( \frac{\gamma}{\gamma+1} \right)^{2(M-N)+1} \psi_{2N-1}(x) - s^{(1)}(\xi) \right\rvert < \frac{e^{\xi/6}}{M^{1/40}}.
\end{equation}

\subsection{$\qLag{2N-1}(2Mx)$ around $(1+a)\left( 1 + \frac{1}{\gamma^2 a} \right)$ when $a > \gamma^{-1}$}

We take $p = (1+a)\left( 1 + \frac{1}{\gamma^2 a} \right)$, $q = (1+a)\sqrt{1 - \frac{1}{\gamma^2 a^2}} \frac{1}{\sqrt{2M}}$ and $x = p + q\xi$. Like \eqref{eq:def_of_Ttildepsi}, we define
\begin{equation}
\Ttildepsi_a(\xi) = \frac{1}{2\pi i} \oint_{\Gamma} e^{-2Mxz} \frac{z^{2M-1}}{(z-1)^{2N}} dz.
\end{equation}

With $g(z)$ defined in \eqref{eq:definition_of_g_x}, we have
\begin{equation}
\Ttildepsi_a(\xi) = \frac{1}{2\pi i} \oint_{\Gamma} e^{2Mg(z) - (1+a)\sqrt{\left( 1 - \frac{1}{\gamma^2 a^2} \right) 2M} \xi z} \frac{dz}{z}.
\end{equation}
After the substitution $z = w + \frac{\gamma^2 a}{1 + \gamma^2 a}$, we get similar to \eqref{eq:substitution_of_the_integral_psi_more}, 
\begin{equation}
\begin{split}
& \oint_{\Gamma} e^{2Mg(z) - (1+a)\sqrt{\left( 1 - \frac{1}{\gamma^2 a^2} \right) 2M} \xi z} \frac{dz}{z} \\
= & \oint_{\TtildeGammaM} e^{-2M \left( (1 + a) + (1 - \gamma^{-2})\log(1+\gamma^2 a) - \log(\gamma6 2 a） - \gamma^{-2}\pi i － \frac{1}{2} (\gamma^{-1} + \gamma a)^2 \left( 1 - \frac{1}{\gamma^2 a^2} \right) w^2 - R_3(w) \right)} \\
& %phantom
\phantom{\oint_{\TtildeGammaM}}
%endphantom
e^{ -(1+a)\sqrt{\left( 1 - \frac{1}{\gamma^2 a^2} \right) 2M} \xi \left( w + \frac{\gamma^2 a}{1 + \gamma^2 a} \right)} \frac{dw}{w + \frac{\gamma^2 a}{1 + \gamma^2 a}} \\
= & \frac{(\gamma^2 a)^{2M} e^{-\frac{\gamma^2 a}{1 + \gamma^2 a}2Mx}}{(1 + \gamma^2 a)^{2(M-N)}} \\
& \oint_{\TtildeGammaM} e^{(\gamma^{-1} + \gamma a)^2 \left( 1 - \frac{1}{\gamma^2 a^2} \right) Mw^2 -  -(1+a)\sqrt{\left( 1 - \frac{1}{\gamma^2 a^2} \right) 2M} \xi w + 2MR_3(w)} \frac{dw}{w + \frac{\gamma^2 a}{1 + \gamma^2 a}},
\end{split}
\end{equation}
where 
\begin{equation}
R_3(w) = \varO(w^3), \text{ as } w \rightarrow 0,
\end{equation}
and $\TtildeGammaM$ is a contour around $w = \frac{1}{1 + \gamma^2 a}$, composed of $\TtildeGammaM_1$, $\TtildeGammaM_2$, $\TtildeGammaM_3$ and $\TtildeGammaM_4$, which are defined as
\begin{align}
\TtildeGammaM_1 = & \{ -it + q^{-1}/(2M) \mid -2 \leq t \leq 2 \}, \\
\TtildeGammaM_2 = & \{ 4 - t + 2i \mid 0 \leq t \leq 4 - q^{-1}/M \}, \\
\TtildeGammaM_3 = & \{ 4 + it \mid -2 \leq t \leq 2 \}, \\
\TtildeGammaM_4 = & \{ t - 2i \mid q^{-1}/(2M) \leq t \leq 4 \}.
\end{align}
\begin{figure}[h]
\centering
\includegraphics{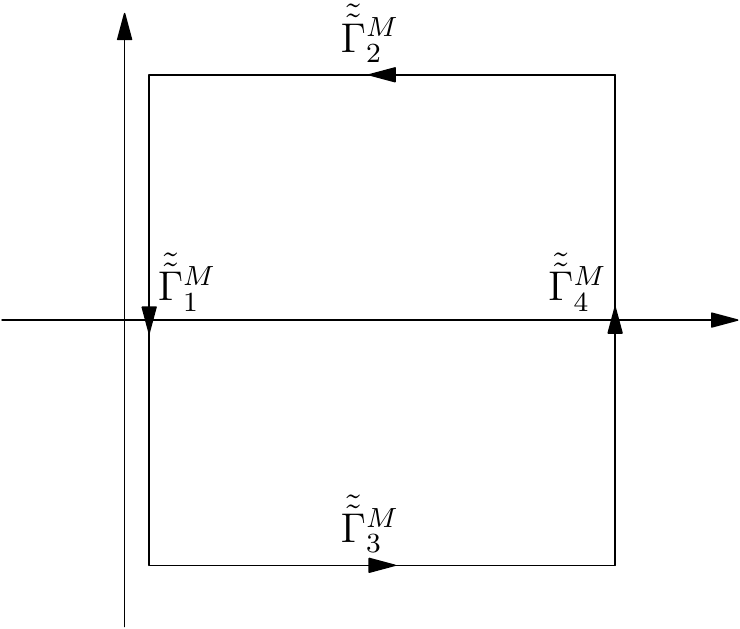}
\caption{$\TtildeGammaM$}
\end{figure}
By methods in the analysis of $\psi_{r'}(p+q\xi)$ in subsection \ref{asymptotics_of_psi_rprime_more}, we have for $\xi \geq T$,
\begin{equation}
\left\lvert \frac{(1 + \gamma^2 a)^{2(M-N)}}{(\gamma^2 a)^{2M}} \sqrt{(\gamma^2 a^2 -1)2M} e^{\frac{\gamma^2 a}{1 + \gamma^2 a}2Mx} \Ttildepsi_a(\xi) - \frac{-1}{\sqrt{2\pi}} e^{-\frac{1}{2} \left( \frac{\gamma + \gamma a}{1 + \gamma^2 a}\xi \right)^2} \right\rvert < \frac{e^{-\xi}}{M^{1/10}}.
\end{equation}
We have the result similar to \eqref{eq:asymptotics_of_Laguerre_with_exp}
\begin{equation}
\qLag{2N-1}(Mx)x^{M-N-1/2} e^{-\frac{\gamma^2 a + a + 2}{(a+1)(\gamma^2 a + 1)}Mx} = \frac{x^{M-N-1/2}}{e^{\frac{M-N}{(1+a)\left( 1 + \frac{1}{\gamma^2 a} \right)}x}} e^{\frac{\gamma^2 a}{1 + \gamma^2 a}2Mx} \Ttildepsi_a(\xi).
\end{equation}
For $x = p+q\xi$, $\xi \geq T$, we have the pointwise convergence with respect to $\xi$
\begin{equation}
\lim_{M \rightarrow \infty} \frac{e^{M-N}}{\left( (1+a) \left( 1 + \frac{1}{\gamma^2 a} \right) \right)^{M-N-1/2}} \frac{x^{M-N-1/2}}{e^{\frac{M-N}{(1+a) \left( 1 + \frac{1}{\gamma^2 a} \right)}x}} = e^{-\frac{1}{4}\frac{(\gamma^2 a^2 - 1)(\gamma^2 - 1)}{(\gamma^2 a + 1)^2}\xi^2},
\end{equation}
and for $\xi \geq 0$,
\begin{equation}
0 < \frac{e^{M-N}}{\left( (1+a) \left( 1 + \frac{1}{\gamma^2 a} \right) \right)^{M-N-1/2}} \frac{x^{M-N-1/2}}{e^{\frac{M-N}{(1+a) \left( 1 + \frac{1}{\gamma^2 a} \right)}x}} < 1.
\end{equation}
Therefore if $M$ is large enough,
\begin{multline} \label{eq:asymototics_Laguerre_supercritical}
\left\lvert \frac{(\gamma^2 a + 1)^{M-N+1/2} e^{M-N} \sqrt{(\gamma^2 a^2 - 1)2M}}{(\gamma^2 a)^{M+N+1/2} (a+1)^{M-N-1/2}}  \qLag{2N-1}(Mx)x^{M-N-1/2} e^{-\frac{\gamma^2 a + a + 2}{(a+1)(\gamma^2 a + 1)}Mx} \right. \\
\left.
%vphantom
\vphantom{\frac{(\gamma^2 a + 1)^{M-N+1/2} e^{M-N} \sqrt{(\gamma^2 a^2 - 1)2M}}{(\gamma^2 a)^{M+N+1/2} (a+1)^{M-N-1/2}}}
%endvphantom
- \frac{-1}{\sqrt{2\pi}} e^{-\frac{1}{4} \frac{\gamma^4 a^2 + \gamma^2 a^2 + 4 \gamma^2 a + \gamma^2 + 1}{(\gamma^2 a + 1)^2}\xi^2} \right\rvert < \frac{e^{-\xi}}{M^{1/10}}.
\end{multline}
%\end{CJK*}
\bibliographystyle{plain}
\bibliography{bibliography}

\begin{thebibliography}{10}

\bibitem{Abramowitz-Stegun64}
Milton Abramowitz and Irene~A. Stegun.
\newblock {\em Handbook of mathematical functions with formulas, graphs, and
  mathematical tables}, volume~55 of {\em National Bureau of Standards Applied
  Mathematics Series}.
\newblock For sale by the Superintendent of Documents, U.S. Government Printing
  Office, Washington, D.C., 1964.

\bibitem{Adler-Forrester-Nagao-van_Moerbeke00}
M.~Adler, P.~J. Forrester, T.~Nagao, and P.~van Moerbeke.
\newblock Classical skew orthogonal polynomials and random matrices.
\newblock {\em J. Statist. Phys.}, 99(1-2):141--170, 2000.

\bibitem{Anderson63}
T.~W. Anderson.
\newblock Asymptotic theory for principal component analysis.
\newblock {\em Ann. Math. Statist.}, 34:122--148, 1963.

\bibitem{Andersson75}
Steen Andersson.
\newblock Invariant normal models.
\newblock {\em Ann. Statist.}, 3:132--154, 1975.

\bibitem{Baik06}
Jinho Baik.
\newblock Painlev\'e formulas of the limiting distributions for nonnull complex
  sample covariance matrices.
\newblock {\em Duke Math. J.}, 133(2):205--235, 2006.

\bibitem{Baik-Ben_Arous-Peche05}
Jinho Baik, G{\'e}rard Ben~Arous, and Sandrine P{\'e}ch{\'e}.
\newblock Phase transition of the largest eigenvalue for nonnull complex sample
  covariance matrices.
\newblock {\em Ann. Probab.}, 33(5):1643--1697, 2005.

\bibitem{Bleher-Kuijlaars05}
Pavel~M. Bleher and Arno B.~J. Kuijlaars.
\newblock Integral representations for multiple {H}ermite and multiple
  {L}aguerre polynomials.
\newblock {\em Ann. Inst. Fourier (Grenoble)}, 55(6):2001--2014, 2005.

\bibitem{de_Bruijn55}
N.~G. de~Bruijn.
\newblock On some multiple integrals involving determinants.
\newblock {\em J. Indian Math. Soc. (N.S.)}, 19:133--151 (1956), 1955.

\bibitem{Forrester93}
P.~J. Forrester.
\newblock The spectrum edge of random matrix ensembles.
\newblock {\em Nuclear Phys. B}, 402(3):709--728, 1993.

\bibitem{Forrester00}
P.~J. Forrester.
\newblock Painlev\'e transcendent evaluation of the scaled distribution of the
  smallest eigenvalue in the laguerre orthogonal and symplectic ensembles.
\newblock http://www.citebase.org/abstract?id=oai:arXiv.org:nlin/0005064, 2000.

\bibitem{Forrester-Nagao-Honner99}
P.~J. Forrester, T.~Nagao, and G.~Honner.
\newblock Correlations for the orthogonal-unitary and symplectic-unitary
  transitions at the hard and soft edges.
\newblock {\em Nuclear Phys. B}, 553(3):601--643, 1999.

\bibitem{Gohberg-Krein69}
I.~C. Gohberg and M.~G. Kre{\u\i}n.
\newblock {\em Introduction to the theory of linear nonselfadjoint operators}.
\newblock Translated from the Russian by A. Feinstein. Translations of
  Mathematical Monographs, Vol. 18. American Mathematical Society, Providence,
  R.I., 1969.

\bibitem{Goodman63}
N.~R. Goodman.
\newblock Statistical analysis based on a certain multivariate complex
  {G}aussian distribution. ({A}n introduction).
\newblock {\em Ann. Math. Statist.}, 34:152--177, 1963.

\bibitem{Hanlon-Stanley-Stembridge92}
Philip~J. Hanlon, Richard~P. Stanley, and John~R. Stembridge.
\newblock Some combinatorial aspects of the spectra of normally distributed
  random matrices.
\newblock In {\em Hypergeometric functions on domains of positivity, Jack
  polynomials, and applications (Tampa, FL, 1991)}, volume 138 of {\em Contemp.
  Math.}, pages 151--174. Amer. Math. Soc., Providence, RI, 1992.

\bibitem{Johansson00}
Kurt Johansson.
\newblock Shape fluctuations and random matrices.
\newblock {\em Comm. Math. Phys.}, 209(2):437--476, 2000.

\bibitem{Johnstone01}
Iain~M. Johnstone.
\newblock On the distribution of the largest eigenvalue in principal components
  analysis.
\newblock {\em Ann. Statist.}, 29(2):295--327, 2001.

\bibitem{Lax02}
Peter~D. Lax.
\newblock {\em Functional analysis}.
\newblock Pure and Applied Mathematics (New York). Wiley-Interscience [John
  Wiley \& Sons], New York, 2002.

\bibitem{Macdonald86}
I.~G. Macdonald.
\newblock Commuting differential operators and zonal spherical functions.
\newblock In {\em Algebraic groups Utrecht 1986}, volume 1271 of {\em Lecture
  Notes in Math.}, pages 189--200. Springer, Berlin, 1987.

\bibitem{Macdonald95}
I.~G. Macdonald.
\newblock {\em Symmetric functions and {H}all polynomials}.
\newblock Oxford Mathematical Monographs. The Clarendon Press Oxford University
  Press, New York, second edition, 1995.
\newblock With contributions by A. Zelevinsky, Oxford Science Publications.

\bibitem{Marcenko-Pastur63}
V.~A. Mar{\v{c}}enko and L.~A. Pastur.
\newblock Distribution of eigenvalues in certain sets of random matrices.
\newblock {\em Mat. Sb. (N.S.)}, 72 (114):507--536, 1967.

\bibitem{Mehta04}
Madan~Lal Mehta.
\newblock {\em Random matrices}, volume 142 of {\em Pure and Applied
  Mathematics (Amsterdam)}.
\newblock Elsevier/Academic Press, Amsterdam, third edition, 2004.

\bibitem{Muirhead82}
Robb~J. Muirhead.
\newblock {\em Aspects of multivariate statistical theory}.
\newblock John Wiley \& Sons Inc., New York, 1982.
\newblock Wiley Series in Probability and Mathematical Statistics.

\bibitem{Paul08}
Debashis Paul.
\newblock Asymptotics of the leading sample eigenvalues for a spiked covariance
  model.
\newblock {\em Statist. Sinica}, to appear.

\bibitem{Prahofer-Spohn02a}
Michael Pr{\"a}hofer and Herbert Spohn.
\newblock Current fluctuations for the totally asymmetric simple exclusion
  process.
\newblock In {\em In and out of equilibrium (Mambucaba, 2000)}, volume~51 of
  {\em Progr. Probab.}, pages 185--204. Birkh\"auser Boston, Boston, MA, 2002.

\bibitem{Sekiguchi76}
Jir{\=o} Sekiguchi.
\newblock Zonal spherical functions on some symmetric spaces.
\newblock In {\em Proceedings of the Oji Seminar on Algebraic Analysis and the
  RIMS Symposium on Algebraic Analysis (Kyoto Univ., Kyoto, 1976)}, volume~12,
  pages 455--459, 1976/77 supplement.

\bibitem{Stanley89}
Richard~P. Stanley.
\newblock Some combinatorial properties of {J}ack symmetric functions.
\newblock {\em Adv. Math.}, 77(1):76--115, 1989.

\bibitem{Szego75}
G{\'a}bor Szeg{\H{o}}.
\newblock {\em Orthogonal polynomials}.
\newblock American Mathematical Society, Providence, R.I., fourth edition,
  1975.
\newblock American Mathematical Society, Colloquium Publications, Vol. XXIII.

\bibitem{Tracy-Widom94}
Craig~A. Tracy and Harold Widom.
\newblock Level-spacing distributions and the {A}iry kernel.
\newblock {\em Comm. Math. Phys.}, 159(1):151--174, 1994.

\bibitem{Tracy-Widom96}
Craig~A. Tracy and Harold Widom.
\newblock On orthogonal and symplectic matrix ensembles.
\newblock {\em Comm. Math. Phys.}, 177(3):727--754, 1996.

\bibitem{Tracy-Widom98}
Craig~A. Tracy and Harold Widom.
\newblock Correlation functions, cluster functions, and spacing distributions
  for random matrices.
\newblock {\em J. Statist. Phys.}, 92(5-6):809--835, 1998.

\bibitem{Tracy-Widom05}
Craig~A. Tracy and Harold Widom.
\newblock Matrix kernels for the {G}aussian orthogonal and symplectic
  ensembles.
\newblock {\em Ann. Inst. Fourier (Grenoble)}, 55(6):2197--2207, 2005.

\end{thebibliography}
\end{document}